\numberwithin{equation}{section}
\newtheorem{theorem}{Theorem}[section]
\newtheorem{corollary}[theorem]{Corollary}
\newtheorem{lemma}[theorem]{Lemma}
\newtheorem{proposition}[theorem]{Proposition}
\newtheorem{definition}[theorem]{Definition}
\newcommand{\N}{\mathbb{N}}
\newcommand{\Q}{\mathbb{Q}}
\newcommand{\R}{\mathbb{R}}
\newcommand{\restr}[1]{\lower3pt\hbox{$|_{#1}$}}
\newcommand{\pr}{\mathscr P}
\newcommand{\X}{{\rm X}}
\newcommand{\Y}{{\rm Y}}
\newcommand{\sfd}{{\sf d}}
\newcommand{\mm}{{\mathfrak m}}
\renewcommand{\L}{{\rm L}}
\newcommand{\LIP}{{\rm Lip}}
\newcommand{\RCD}{{\sf RCD}}
\renewcommand{\d}{{\rm d}}
\newcommand{\lip}{{\rm lip}}
\newcommand{\Lip}{{\rm Lip}}
\newcommand{\weakto}{\rightharpoonup}
\newcommand{\nchi}{{\raise.3ex\hbox{$\chi$}}}
\newcommand{\supp}{{\rm supp}}
\newcommand{\lims}{\varlimsup}
\newcommand{\limi}{\varliminf}
\newcommand{\eps}{\varepsilon}
\DeclareMathOperator*{\esssup}{\rm ess-sup}
\newcommand{\Fl}{{\sf Fl}}
\renewcommand{\div}{{\rm div}}
\newcommand{\KS}[1]{{\sf KS}_{#1}^p}
\newcommand{\KSt}[1]{{\sf KS}_{#1}^2}
\newcommand{\la}{\langle}
\newcommand{\ra}{\rangle}
\newcommand{\CAT}{{\sf CAT}}
\begin{document}
\author{Nicola Gigli}
\author{Alexander Tyulenev}
\address{SISSA, Trieste}
\email{ngigli@sissa.it, tyulenev-math@yandex.ru }

\title{Korevaar-Schoen's directional energy and Ambrosio's regular Lagrangian flows}

\maketitle

\begin{abstract}
We develop Korevaar-Schoen's theory of directional energies for metric-valued Sobolev maps in the case of $\RCD$  source spaces;  to do so we crucially rely on Ambrosio's concept of Regular Lagrangian Flow.

Our review of Korevaar-Schoen's spaces brings new (even in the smooth category) insights on some aspects of the theory, in particular concerning the notion of `differential of a map along a vector field' and about the parallelogram identity for $\CAT(0)$ targets. To achieve these, one of the ingredients we use is a  new (even in the Euclidean setting) stability result for Regular Lagrangian Flows.
\end{abstract}

\tableofcontents

\section{Introduction}
In the seminal paper \cite{ES64}, Eells-Sampson proved Lipschitz regularity of harmonic maps from a manifold $M$ to a simply connected manifold $N$ with non-positive sectional curvature, the estimate being in term of a bound from below on the Ricci curvature and an upper bound on the dimension of the source manifold. A crucial step in their argument is the proof of the now-called Bochner-Eells-Sampson inequality, namely:
\begin{equation}
\label{eq:BES}
\Delta\frac{|\d u|_{\sf HS}^2}{2}\geq \d u(\Delta u)+K|\d u|^2_{\sf HS}
\end{equation}
valid for smooth maps $u:\Omega\subset M\to N$ provided ${\rm Ric}_M\geq K$. From this one sees that if $u$ is harmonic then
\begin{equation}
\label{eq:BES2}
\Delta\frac{|\d u|_{\sf HS}^2}{2}\geq K|\d u|^2_{\sf HS}
\end{equation}
and then a Moser's iteration argument gives that $|\d u|_{\sf HS}$ is locally bounded from above (the upper dimension bound comes into play in the constants appearing in this process), which was the claim.

Given that the final estimate does not depend on the smoothness of $M,N$ but only in the stated curvature bounds, it is natural to wonder whether such smoothness can be removed. This problem attracted the attention of several mathematicians, see in particular \cite{GS92}, \cite{KS93}, \cite{ZZ18} and the survey \cite{Hajlasz2009} for an overview on this and related topics. We remark that given the kind of assumptions in Eells-Sampson work, the natural non-smooth class of spaces for which such Lipschitz regularity is expected to hold is that of $\RCD(K,N)$ spaces as source and $\CAT(0)$ ones as target; so far this generality has been out of reach.

This paper is part of a bigger project aiming at reproducing \eqref{eq:BES} in such fully synthetic setting, see also \cite{GPS18} and \cite{DMGSP18} for other contributions in this direction. The purpose of the current manuscript is to generalize part of Korevaar-Schoen's theory in \cite{KS93} to the case of source spaces which are  $\RCD$. Specifically, one of the definitions proposed in \cite{KS93} is that of  `map from a smooth manifold to a metric space which is Sobolev along a given direction': we adapt this construction to the case of $\RCD$ source and postpone to a future contribution the study of what in \cite{KS93} has been called `total energy functional'. Our main results here are:
\begin{itemize}
\item[i)] We obtain new stability results for Regular Lagrangian Flows both on $\RCD$ spaces and in the Euclidean setting, see Theorems \ref{thm:stabfl}, \ref{thm:stabfleu}.
\item[ii)] We reproduce the theory of what we call Korevaar-Schoen (Sobolev) space relying on the aforementioned concept of Regular Lagrangian Flow. In particular we introduce the Korevaar-Schoen space $\KS Z(\Omega,\Y)$ of maps from $\Omega\subset \X$ to  $\Y$ which are Sobolev along the vector field $Z$, and for $u\in \KS Z(\Omega,\Y)$ we define the quantity $|\d u(Z)|$ which plays the role of the modulus of the differential of $u$ applied to $Z$ (and corresponding to the $p$-th root of the directional energy density in \cite{KS93}). See Section \ref{se:ks}.
\item[iii)] Using our stability result for RLF we prove the `triangle inequality'
\[
|\d u(\alpha_1 Z_1+\alpha_2Z_2)|\leq|\alpha_1 |\, |\d u(Z_1)|+|\alpha_2|\,|\d u(Z_2)|
\]
\item[iv)] We show that for $u\in \KS Z(\Omega,\Y)$ not only the quantity $|\d u(Z)|$ is well defined, but also the differential $\d u(Z)$ of $u$ applied to $Z$ makes sense, see Definition \ref{def:duz}.
\item[v)] Using the previous point and a duality argument we show that under some kind of Sobolev  condition on the target space $\Y$, we also have the parallelogram identity
\begin{equation}
\label{eq:paridintro}
|\d u(Z_1+Z_2)|^2+|\d u(Z_1-Z_2)|^2=2\big(|\d u(Z_1)|^2+|\d u(Z_2)|^2\big),
\end{equation}
see Theorem \ref{thm:parid}. According to \cite{DMGSP18}, $\CAT(0)$ spaces have the required condition.
\end{itemize}
Let us briefly comment the above results. In $(i)$  the relevant notion of convergence of the underlying vector fields is that of `weak convergence in time and strong in space', see Definition \ref{def:wtss}. Previous results in this direction (see \cite[Remark 5.11]{Ambrosio2008}) required quantitative estimates on the regularity of the flows which are not available  neither in the $\RCD$ setting (but see \cite{BS18a}, \cite{BS18}) nor in the Euclidean one for $BV$ vector fields (but see \cite{BB17}). More recent contributions \cite{ACF15} avoid the use of such  quantitative estimates, but still our setting was not covered.

To the expert's eye, what claimed in $(ii)$ is perhaps not so surprising, as it is by now clear that the concept of Regular Lagrangian Flow provides the correct replacement for the notion of flow of a vector field in a non-smooth environment. Even so, let us mention that our presentation offers some (marginal) improvement w.r.t.\ the original one in \cite{KS93}, see in particular Definition \ref{def:mwug} and compare to the original proof of the absolute continuity of the directional energy densities.

For what concerns the triangle inequality mentioned  in $(iii)$, we can obtain it under the only assumption that the map $u$ is in $\KS {Z_1}(\Omega,\Y)\cap\KS {Z_2}(\Omega,\Y)$, without needing a control of the total energy as in  \cite{KS93} (and indeed we won't mention total energy at all in this manuscript). In particular, even  in the case of smooth source space,  our result strengthens previously existing ones. This is possible thanks to a kind of Trotter-Kato formula for Regular Lagrangian Flows that we obtain as a corollary of  the stability results in $(i)$, see Proposition \ref{prop:sumvect}.

The definition in $(iv)$ makes use of the theory of $L^0$-normed modules as tools to develop first-order calculus on metric measure spaces as proposed in \cite{Gigli14}. More in detail, our approach for defining $\d u(Z)$ should be seen as an adaptation to the current framework of the recent construction of differential of a metric-valued Sobolev map proposed in \cite{GPS18}.

Finally, the proof of the parallelogram identity \eqref{eq:paridintro} is perhaps what conceptually differs the most from the approach in \cite{KS93}. Indeed, in  \cite{KS93} it is observed that $\CAT(0) $ spaces have a sort of metric parallelogram (in)equality and this information is directly exploited to obtain \eqref{eq:paridintro}; here, instead, in some sense we decouple the study of the geometry of the target from the one of Sobolev maps valued in it. More precisely, thanks to the existence of a sort of linear differential (point $(iv)$) we can easily prove that if the target space $(\Y,\sfd_\Y)$ is so that `for any Radon measure $\mu$ on it the Sobolev space $W^{1,2}(\Y,\sfd_\Y,\mu)$ is Hilbert' (see Definition \ref{def:uih}), then necessarily \eqref{eq:paridintro} holds. Thus here the discussion is fully at the `Sobolev' level. The question is then evidently whether there are spaces $\Y$ as above, and in particular if $\CAT(0)$ spaces have this property: the non-trivial affirmative answer (valid more generally for locally $\CAT(k)$ spaces) has been obtained in \cite{DMGSP18}.

\bigskip

We conclude this introduction remarking that it seems impossible to obtain the desired Lipschitz regularity of harmonic maps from $\RCD$ to $\CAT(0)$ spaces fully mimicking the approach in \cite{KS93}. The problem is that Regular Lagrangian Flows are not Lipschitz in general (because typically there are not Lipschitz vector fields on $\RCD$ spaces) and as such they cannot be used in the same spirit as in \cite{KS93} to provide any kind of Euler's equation for our minimizers of the energy functional. This is one of the reasons that led to the attempt of establishing the `full' inequality \eqref{eq:BES} rather than focussing `only' on its version for harmonic maps \eqref{eq:BES2}.

\bigskip

\noindent{\bf Acknowledgments} This research has been supported by the MIUR SIR-grant `Nonsmooth Differential Geometry' (RBSI147UG4).

\section{Preliminaries}
\subsection{Sobolev calculus}\label{se:sobcal}

To keep the presentation short we assume that the reader is familiar with the concept of Sobolev functions on a metric measure space (\cite{Cheeger00}, \cite{Shanmugalingam00}, \cite{AmbrosioGigliSavare11}, \cite{AmbrosioGigliSavare11-3}), with that of $L^0$-normed modules and differentials of real valued Sobolev maps  and with second order calculus on $\RCD$ spaces (\cite{Gigli14}, \cite{Gigli17}).

Here we only recall those concepts we shall use most frequently. For a generic metric space $(\X,\sfd)$ we shall denote by $\Lip(\X),\Lip_{bs}(\X)$ the spaces of real valued Lipschitz functions and Lipschitz functions with bounded support, respectively. The Lipschitz constant of $f\in \Lip(\X)$ will be denoted $\Lip(f)\in[0,+\infty)$;  the local Lipschitz constant $\lip f:\X\to[0,\infty)$ is defined as
\[
\lip f(x):=\lims_{y\to x}\frac{|f(y)-f(x)|}{\sfd_\X(x,y)}\quad\text{ if $x$ is not isolated, $0$ otherwise.}
\]
We shall most often work with a metric measure space $(\X,\sfd,\mm)$ which is  complete and separable as metric space and equipped with a non-negative and non-zero Radon measure giving finite mass to bounded sets.
\begin{definition}[The Sobolev space $W^{1,p}(\X,\sfd,\mm)$]\label{def:sob}
Let $p \in (1,\infty)$ and $f\in L^p(\X)$. We say that  $f\in W^{1,p}(\X)$ provided there is a function $G\in L^p(\mm)$ and a sequence $(f_n)\subset \LIP_{bs}(\X)$ converging to $f$ in $L^p(\mm)$  such that $(\lip(f_n))$ weakly converges to $G$ in $L^p(\mm)$.
\end{definition}
For $f\in W^{1,p}(\X)$ we recall that there is a minimal, in the $\mm$-a.e.\ sense, non-negative function $G\in L^p(\mm)$ for which the situation in Definition \ref{def:sob} occurs: it will be denoted $|D f|$ and called minimal weak upper gradient. One can check that
\begin{equation}
\label{eq:optlip}
\text{$\forall f\in W^{1,p}(\X)$ there is $(f_n)\subset \LIP_{bs}(\X)$ converging to $f$ in $L^{p}(\mm)$ such that $\lip(f_n)\to |D f|$ in $L^p(\mm)$.}
\end{equation}
Now suppose that $\mm'$ is another Radon measure on $\X$ giving finite mass to bounded sets and such that $\mm\leq \mm'$. Then, given $p \in (1,\infty)$, it is clear that $L^p(\mm')\subset L^p(\mm)$ with continuous inclusion, thus a direct consequence of Definition \ref{def:sob} above and of minimal weak upper gradient is that
\begin{equation}
\label{eq:gradord}
W^{1,p}(\X,\sfd,\mm')\subset W^{1,p}(\X,\sfd,\mm)\qquad\text{ and }\qquad |D_\mm f|\leq |D_{\mm' }f|\quad\mm-a.e.\ \forall f\in W^{1,p}(\X,\sfd,\mm'),
\end{equation}
where with $|D_\mm f|$, $|D_{\mm' }f|$ we denoted the minimal weak upper gradients in $W^{1,p}(\X,\sfd,\mm)$, $W^{1,p}(\X,\sfd,\mm')$ respectively.

\bigskip

The concept of $L^0(\mm)$-normed module is introduced in order to `extract' a notion of differential from that of minimal weak upper gradient:
\begin{theorem}[Cotangent module and differential]\label{thm:defdif} With the above notation and assumptions, there is a unique (up to unique isomorphism) couple $(L^0(T^*\X),\d)$ with $L^0(T^*\X)$ being a $L^0(\mm)$ normed module, $\d:W^{1,2}(\X)\to L^0(T^*\X)$ linear and such that: $|\d f|=|Df|$ $\mm$-a.e.\ for every $f\in W^{1,2}(\X)$ and $\{\d f:f\in W^{1,2}(\X)\}$ generates $L^0(T^*\X)$.
\end{theorem}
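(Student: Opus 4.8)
The plan is to prove existence by an explicit construction and uniqueness by a soft density-plus-isometry argument. For the existence part I would realize $L^0(T^*\X)$ starting from the \emph{pre-cotangent module} $\mathcal V$: the set of all formal sums $v=\sum_{i\in\N}\chi_{A_i}\d f_i$ with $(A_i)_i$ a Borel partition of $\X$ and $(f_i)_i\subset W^{1,2}(\X)$, endowed with the obvious sum, with multiplication by simple functions, and with the candidate pointwise norm
\[
\Big|\sum_i\chi_{A_i}\d f_i\Big|:=\sum_i\chi_{A_i}|D f_i|\qquad\mm\text{-a.e.}
\]
The main point to verify is that all of this is well posed — in particular that $|\cdot|$ depends only on the formal object, is subadditive, and satisfies $|hv|=|h|\,|v|$ for $h$ simple — and this rests on the \emph{locality} of minimal weak upper gradients ($|D f|=|D g|$ $\mm$-a.e.\ on $\{f=g\}$) together with the inequality $|D(f+g)|\le|D f|+|D g|$ and the identity $|D(\lambda f)|=|\lambda|\,|D f|$. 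One then passes to the quotient of $\mathcal V$ by $\{v:|v|=0\ \mm\text{-a.e.}\}$ and takes the completion with respect to the distance $(v,w)\mapsto\int(|v-w|\wedge1)\,\d\tilde\mm$, with $\tilde\mm$ a finite Borel measure equivalent to $\mm$; one checks that the operations and the pointwise norm extend continuously to the completion, that multiplication by simple functions extends to an action of all of $L^0(\mm)$, and that the axioms of an $L^0(\mm)$-normed module hold. Finally one sets $\d f:=[\chi_\X\d f]$: linearity of $\d$ is built into the algebraic operations on $\mathcal V$, the identity $|\d f|=|D f|$ holds by construction, and $\{\d f:f\in W^{1,2}(\X)\}$ generates $L^0(T^*\X)$ because the finite combinations $\sum_i\chi_{A_i}\d f_i$ — which lie in the submodule generated by $\{\d f\}$ — are dense.

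For uniqueness, let $(M,\d')$ be any other couple with the stated properties. On the dense set of finite combinations $\sum_i\chi_{A_i}\d f_i$ in $L^0(T^*\X)$ define $\Phi\big(\sum_i\chi_{A_i}\d f_i\big):=\sum_i\chi_{A_i}\d'f_i\in M$. Since the identity $|\sum_i\chi_{A_i}\d'f_i|=\sum_i\chi_{A_i}|D f_i|$ holds in $M$ exactly as its analogue holds in $L^0(T^*\X)$, the map $\Phi$ sends elements of equal pointwise norm to elements of equal pointwise norm — in particular distinct representatives of the same element, whose difference has zero pointwise norm, have images whose difference has zero pointwise norm — so $\Phi$ is well defined and pointwise-norm-preserving. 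Hence it extends uniquely to a pointwise-norm-preserving $L^0(\mm)$-linear map $L^0(T^*\X)\to M$, which is injective by norm preservation and whose image is a closed submodule of $M$ containing the generating set $\{\d'f\}$, hence equals $M$. Thus $\Phi$ is an isomorphism with $\Phi\circ\d=\d'$; and any $L^0(\mm)$-module morphism intertwining $\d$ and $\d'$ must agree with $\Phi$ on the generators $\{\d f\}$ and therefore everywhere, so the isomorphism is unique.

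The conceptual heart of the argument is the locality — together with subadditivity and $1$-homogeneity — of minimal weak upper gradients, which is precisely what makes the pointwise norm on $\mathcal V$ well defined; I do not expect any other step to require genuine input from Sobolev calculus. The residual difficulty is the bookkeeping in the completion: since $L^0(\mm)$ carries merely a ring topology and no norm, one has to check directly — approximating $L^0$ functions by simple ones while controlling pointwise norms — that the $L^0(\mm)$-action and the pointwise norm are well defined and continuous on the completed object, and that it genuinely satisfies all the axioms of an $L^0$-normed module.
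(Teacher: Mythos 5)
Your proposal is correct and follows exactly the standard construction: note that the paper does not prove Theorem \ref{thm:defdif} at all, but recalls it from \cite{Gigli14}, \cite{Gigli17}, where the proof is precisely your argument (pre-cotangent module of formal sums $\sum_i\chi_{A_i}\d f_i$, pointwise norm well defined by locality, subadditivity and $1$-homogeneity of minimal weak upper gradients, quotient and $L^0$-completion, then uniqueness via the norm-preserving extension of $\sum_i\chi_{A_i}\d f_i\mapsto\sum_i\chi_{A_i}\d' f_i$, using linearity of $\d'$ and the axiom $|h v|=|h|\,|v|$ in the competitor module). No gaps worth flagging.
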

Among the various constructions related to $L^0$-normed modules, we shall make use of the one of pullback:
\begin{theorem}[Pullback]\label{thm:pb}
Let $(\X,\sfd_\X,\mm_\X),(\Y,\sfd_\Y,\mm_\Y)$ be metric measure spaces as above, $u:\X\to\Y$ a Borel map such that $u_*\mm_\X\ll\mm_\Y$ and $\mathscr M$ an $L^0(\mm_\Y)$-normed module. Then there is a unique (up to unique isomorphism) couple $(u^*\mathscr M,[u^*])$ such that $u^*\mathscr M$ is a $L^0(\mm_\X)$-normed module and $[u^*]:\mathscr M\to u^*\mathscr M$ is linear, continuous and such that $|[u^*v]|=|v|\circ u$ $\mm_\X$-a.e.\ for every $v\in\mathscr M$ and $\{[u^*v]:v\in\mathscr M\}$ generates $u^*\mathscr M$.
\end{theorem}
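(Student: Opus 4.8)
The plan is the by-now standard construction of a pullback module (as in \cite{Gigli14}), carried out in two moves — uniqueness via a density/closure argument, existence via an explicit quotient-and-completion — after one preliminary observation. \emph{The role of the hypothesis $u_*\mm_\X\ll\mm_\Y$}: it is precisely what makes $g\mapsto g\circ u$ a well-defined, continuous, order-preserving, unital ring homomorphism from $L^0(\mm_\Y)$ to $L^0(\mm_\X)$. Indeed $g\circ u$ is Borel whenever $g$ is, and if $g=\tilde g$ $\mm_\Y$-a.e.\ then $\{g\ne\tilde g\}$ is $\mm_\Y$-negligible, hence so is $u^{-1}(\{g\ne\tilde g\})$ since its $\mm_\X$-measure equals $(u_*\mm_\X)(\{g\ne\tilde g\})=0$; continuity follows along the same lines after passing to equivalent finite measures (note that $u_*\nu_\X\ll\mm_\Y$ for any finite measure $\nu_\X$ equivalent to $\mm_\X$). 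In particular $|v|\circ u\in L^0(\mm_\X)$ is well defined for every $v\in\mathscr M$, and this is the quantity the pointwise norm of $[u^*v]$ is required to equal.

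\emph{Uniqueness.} Let $(\mathscr N_1,T_1)$ and $(\mathscr N_2,T_2)$ both satisfy the statement. For any finite Borel partition $(A_i)$ of $\X$ and any $v_i\in\mathscr M$, the locality of the pointwise norm together with $|T_k(v)|=|v|\circ u$ give
\[
\Big|\sum_i\nchi_{A_i}T_1(v_i)\Big|=\sum_i\nchi_{A_i}\,(|v_i|\circ u)=\Big|\sum_i\nchi_{A_i}T_2(v_i)\Big|\qquad\mm_\X\text{-a.e.}
\]
Hence $\sum_i\nchi_{A_i}T_1(v_i)\mapsto\sum_i\nchi_{A_i}T_2(v_i)$ is well defined (take differences) and preserves the pointwise norm; since in an $L^0$-normed module the submodule generated by a set is the closure of the set of finite combinations $\sum_i\nchi_{A_i}s_i$ of its elements, this map is defined on a dense subset of $\mathscr N_1$, and it therefore extends by uniform continuity to an isometric module morphism $\Phi:\mathscr N_1\to\mathscr N_2$. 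The image of $\Phi$ is complete, hence closed, and contains the generating set $T_2(\mathscr M)$, so $\Phi$ is onto, i.e.\ an isomorphism of $L^0(\mm_\X)$-normed modules; and it is the only morphism with $\Phi\circ T_1=T_2$, because any such morphism is forced on a generating set.

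\emph{Existence.} Let $\mathcal V$ be the set of finite families $\big\{(A_i,v_i)\big\}_{i=1}^n$, $n\in\N$, with $(A_i)$ a Borel partition of $\X$ and $v_i\in\mathscr M$, to be thought of as the formal object $\sum_i\nchi_{A_i}[u^*v_i]$. Declare $\big\{(A_i,v_i)\big\}\sim\big\{(B_j,w_j)\big\}$ when $|v_i-w_j|\circ u=0$ $\mm_\X$-a.e.\ on $A_i\cap B_j$ for all $i,j$, set $\big|\big\{(A_i,v_i)\big\}\big|:=\sum_i\nchi_{A_i}\,(|v_i|\circ u)\in L^0(\mm_\X)$, and define the obvious sum and multiplication by simple functions on $\mathcal V$. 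One checks that $\sim$ is an equivalence relation, that all these structures pass to $\mathcal V/\!\sim$, and that $|\cdot|$ is there a well-defined pointwise norm. Then one equips $\mathcal V/\!\sim$ with the distance $(\zeta,\xi)\mapsto\int(|\zeta-\xi|\wedge1)\,\d\nu$ for a fixed finite measure $\nu$ with the same negligible sets as $\mm_\X$, lets $u^*\mathscr M$ be the completion, and extends the module operations (for multiplication by a general $g\in L^0(\mm_\X)$, approximate $g$ by simple functions) and the pointwise norm by continuity: the outcome is an $L^0(\mm_\X)$-normed module. Finally set $[u^*]v:=\big[\big\{(\X,v)\big\}\big]$; linearity and continuity of $[u^*]$ follow from the definitions and from the continuity of $g\mapsto g\circ u$, the identity $|[u^*v]|=|v|\circ u$ holds by construction, and $\{[u^*v]:v\in\mathscr M\}$ generates $u^*\mathscr M$ exactly because the finite combinations $\sum_i\nchi_{A_i}[u^*v_i]$ are dense in the completion by construction.

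\emph{Main obstacle.} None of the steps is deep, but the part asking for the most care is the existence argument: checking that $\sim$ is an equivalence relation and that the pointwise norm and all algebraic operations genuinely descend to $\mathcal V/\!\sim$, and — above all — that multiplication by $L^0(\mm_\X)$-functions and the pointwise norm extend \emph{continuously} to the completion while retaining all the normed-module axioms (non-negativity, the triangle inequality, and the gluing identity $|\nchi_A\eta|=\nchi_A|\eta|$). Everything ultimately rests on the preliminary remark above; without $u_*\mm_\X\ll\mm_\Y$ the transplanted objects $|v|\circ u$ are not even well defined and the construction cannot get started.
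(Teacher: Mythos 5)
Your construction is correct and is essentially the argument the paper itself relies on: the paper states this theorem without proof, deferring to the standard quotient-and-completion construction of the pullback module (as in \cite{Gigli14}, extended to the $L^0$ setting in \cite{GR17}, \cite{Benatti18}), which is exactly what you reproduce, with the hypothesis $u_*\mm_\X\ll\mm_\Y$ correctly identified as what makes $g\mapsto g\circ u$ well defined and continuous from $L^0(\mm_\Y)$ to $L^0(\mm_\X)$, and with uniqueness obtained in the standard way from norm-preservation on the dense set of simple combinations.
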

The module $u^*\mathscr M$ is called the pullback module and $[u^*]$ the pullback map. These   can also be characterized by the following universal property:
\begin{proposition}[Universal property of the pullback]\label{prop:univprop}
With the same notation and assumptions as in Theorem \ref{thm:pb} above, let $V\subset \mathscr M$ a generating subspace, $\mathscr N$ a $L^0(\mm_\X)$-normed module and $T:V\to \mathscr N$ a linear map such that $ |T(v)|\leq f|v|\circ u$ $\mm_\X$-a.e. $\forall v\in V$ for some nonnegative $\mm_\X$-a.e. function $f \in L^0(\mm_\X)$. Then there exists a unique $L^0(\mm_\X)$-linear and continuous map $\tilde T:u^*\mathscr M\to\mathscr N$ such that $\tilde T([u^*v])=T(v)$ for every $v\in V$ and this map satisfies
\begin{equation}
\label{eq:pb1}
 |\tilde T(w)|\leq f | w|\quad\mm_\X-a.e.\qquad\forall w\in u^*\mathscr M.
\end{equation}
\end{proposition}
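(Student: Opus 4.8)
The plan is to first build $\tilde T$ on the $L^0(\mm_\X)$-linear span of $\{[u^*v]:v\in V\}$ and then extend it by continuity to all of $u^*\mathscr M$. As a preliminary, I would record the elementary consequence of Theorem \ref{thm:pb} that $[u^*(gv)]=(g\circ u)\,[u^*v]$ for every $g\in L^0(\mm_\Y)$ and $v\in\mathscr M$: for $g=\nchi_B$ this follows by writing $v=\nchi_B v+\nchi_{B^c}v$, noting via $|[u^*w]|=|w|\circ u$ and locality of the pointwise norm that $[u^*(\nchi_B v)]$ is concentrated on $u^{-1}(B)$ and $[u^*(\nchi_{B^c}v)]$ on $u^{-1}(B^c)$, and then the general case follows by linearity of $[u^*]$ for simple $g$ and by its continuity for arbitrary $g$. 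Using this identity together with the continuity of $[u^*]$ and the fact that $V$ generates $\mathscr M$, one checks that $\mathcal V:=\big\{\sum_{i=1}^n g_i[u^*v_i]:n\in\N,\ g_i\in L^0(\mm_\X),\ v_i\in V\big\}$ is an $L^0(\mm_\X)$-submodule whose closure contains $[u^*v]$ for every $v\in\mathscr M$, hence $\overline{\mathcal V}=u^*\mathscr M$. On $\mathcal V$ I would then define $\tilde T\big(\sum_i g_i[u^*v_i]\big):=\sum_i g_i T(v_i)$, which is the only possible choice compatible with the required identity $\tilde T([u^*v])=T(v)$ and with $L^0(\mm_\X)$-linearity.

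The heart of the matter — and what I expect to be the main obstacle — is the pointwise estimate
\[
\Big|\sum_{i=1}^n g_i T(v_i)\Big|\le f\,\Big|\sum_{i=1}^n g_i[u^*v_i]\Big|\qquad\mm_\X\text{-a.e.}
\]
for all finite families $g_i\in L^0(\mm_\X)$, $v_i\in V$. Granting it, $\tilde T$ is well defined (if $\sum_i g_i[u^*v_i]=\sum_j h_j[u^*w_j]$, apply the bound to the difference, whose module norm vanishes) and satisfies \eqref{eq:pb1} on $\mathcal V$. To prove the estimate I would first treat the case of simple $g_i$: choosing a common finite Borel partition $\{A_l\}$ of $\X$ on which every $g_i$ is constant, say $g_i=\sum_l c_{il}\nchi_{A_l}$, and setting $\tilde v_l:=\sum_i c_{il}v_i\in V$ (this is where $V$ being a linear subspace is used), locality of the pointwise norm gives $|\sum_i g_i[u^*v_i]|=|\sum_l\nchi_{A_l}[u^*\tilde v_l]|=\sum_l\nchi_{A_l}\,(|\tilde v_l|\circ u)$, while linearity of $T$ and the hypothesis on $T$ give $|\sum_i g_iT(v_i)|=\sum_l\nchi_{A_l}|T(\tilde v_l)|\le\sum_l\nchi_{A_l}\,f\,(|\tilde v_l|\circ u)=f\,|\sum_i g_i[u^*v_i]|$. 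For general $g_i\in L^0(\mm_\X)$ I would approximate each $g_i$ $\mm_\X$-a.e.\ by simple functions $g_i^{(k)}$, apply the simple case, and pass to the limit using that $\xi\mapsto|\xi|$ is continuous from an $L^0$-normed module to $L^0(\mm_\X)$ and that the multiplication $L^0(\mm_\X)\times\mathscr M\to\mathscr M$ is continuous.

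It remains to verify that $\tilde T$ is additive and $L^0(\mm_\X)$-linear on $\mathcal V$, which is immediate from the definition after writing two elements of $\mathcal V$ over a common list of $v_i$'s, and then to extend. Since $|\tilde T(w)-\tilde T(w')|=|\tilde T(w-w')|\le f\,|w-w'|$ $\mm_\X$-a.e., the map $\tilde T$ is continuous on $\mathcal V$ for the $L^0$-topologies, hence admits a unique continuous extension $\tilde T:u^*\mathscr M\to\mathscr N$; additivity, the bound \eqref{eq:pb1}, $L^0(\mm_\X)$-linearity with respect to simple functions, and the identity $\tilde T([u^*v])=T(v)$ for $v\in V$ all pass to the limit, and $L^0(\mm_\X)$-linearity with respect to an arbitrary $g\in L^0(\mm_\X)$ follows by approximating $g$ by simple functions and using continuity of scalar multiplication on both modules. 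Uniqueness is then clear: any continuous $L^0(\mm_\X)$-linear $\hat T$ with $\hat T([u^*v])=T(v)$ for $v\in V$ agrees with $\tilde T$ on $\mathcal V$ and hence, by density of $\mathcal V$ in $u^*\mathscr M$, everywhere. All steps other than the pointwise inequality are routine bookkeeping with $L^0$-normed modules.
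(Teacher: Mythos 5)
Your argument is correct: the construction (define $\tilde T$ on the $L^0(\mm_\X)$-span of $\{[u^*v]:v\in V\}$, establish the pointwise bound first for simple coefficients via a common partition and locality of the pointwise norm, then extend by $L^0$-continuity and check that density of this span follows from $V$ generating $\mathscr M$ together with $[u^*(gv)]=(g\circ u)[u^*v]$) is exactly the standard proof. The paper itself states Proposition \ref{prop:univprop} without proof, referring to \cite{Gigli14}, \cite{Gigli17}, \cite{GR17}, \cite{Benatti18}, and your argument is essentially the one given in those references, so there is nothing to add.
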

These properties of pullbacks have been studied in \cite{Gigli14}, \cite{Gigli17} for maps satisfying $u_*\mm_\X\leq C\mm_\Y$; the generalization to the case of $L^0$-normed modules has been considered in \cite{GR17} and \cite{Benatti18}.

\bigskip

Finally, let us present the construction of the `extension functor'. Informally speaking, it might happen that one deals with a measure $\mu\ll\mm$ and with a $L^0(\mu)$-normed module $\mathscr M$ and would like to think  $\mathscr M$ as $L^0(\mm)$-normed module, where its elements are 0 on those regions which $\mu$ does not see. The extension functor formalizes this construction.

Thus let $\mathscr M$ be a $L^0(\mu)$-normed module with $\mu\ll\mm$. Notice that we have a natural projection/restriction operator ${\rm proj}:L^0(\mm)\to L^0(\mu)$ given by passage to the quotient up to equality $\mu$-a.e.\ and a natural right inverse of it, namely   an `extension' operator ${\rm ext}:L^0(\mu)\to L^0(\mm)$ which sends $f\in L^0(\mu)$ to the function equal to $f$ $\mm$-a.e.\ on $\{\frac{\d\mu}{\d\mm}>0\}$ and to $0$ on $\{\frac{\d\mu}{\d\mm}=0\}$.

Then  we put ${\rm Ext}(\mathscr M):=\mathscr M$ as set,  define the multiplication of $v\in {\rm Ext}(\mathscr M) $ by $f\in L^0(\mm)$ as ${\rm proj}(f)v\in \mathscr M={\rm Ext}(\mathscr M)$ and the pointwise norm as ${\rm ext}(|v|)\in L^0(\mm)$. We shall denote by ${\rm ext}:\mathscr M\to {\rm Ext}(\mathscr M)$ the identity map and notice that in a rather trivial way we have
\[
{\rm Ext}(\mathscr M^*)\sim {\rm Ext}(\mathscr M)^*\qquad\text{ via the coupling }\qquad{\rm ext}(L)\big({\rm ext}(v)\big):={\rm ext}(L(v)).
\]
In what follows we shall  always implicitly make this identification.

\bigskip

For the definition of $\RCD(K,\infty)$ space see \cite{AmbrosioGigliSavare11-2} and for the second order calculus see \cite{Gigli14}. Here we just recall that the space ${\rm Test}(\X)$ of test functions (introduced in \cite{Savare13}) is defined as
\[
{\rm Test}(\X):=\Big\{f\in L^\infty\cap W^{1,2}(\X)\ :\ f\in D(\Delta),\ |D f|\in L^\infty(\X),\ \Delta f\in W^{1,2}(\X)\Big\},
\]
and that the space of ${\rm TestV}(\X)$ of test vector fields is defined as
\[
{\rm TestV}(\X):=\Big\{\sum_{i=1}^ng_i\nabla f_i\ :\ n\in\N,\ f_i,g_i\in {\rm Test}(\X)\quad\forall i=1,\ldots,n\Big\}.
\]
For the definition of covariant derivative - defined through integration by parts - and the space $W^{1,2}_C(T\X)$ of $L^2$ vector fields with covariant derivative in  $L^2$ see \cite{Gigli14}. Recall that ${\rm TestV}(\X)\subset W^{1,2}_C(T\X)$ and thus so does its $W^{1,2}_C$-closure, which is denoted $H^{1,2}_C(T\X)$ (it is an open problem to understand whether $H^{1,2}_C(T\X)=W^{1,2}_C(T\X)$ or not). In particular for $f\in {\rm Test}(\X)$ we have $\nabla f\in L^\infty\cap H^{1,2}_C(T\X)$. Also, the identity $\nabla(\nabla f)={\rm Hess}(f)$ holds, having freely identified tangent and cotangent modules via the Riesz isomorphism.

Finally recall the following form of Leibniz rule: for $v\in L^\infty\cap H^{1,2}_{C}(T\X)$ and $w\in L^\infty \cap W^{1,2}_{C}(T\X)$ we have
\begin{equation}
\label{eq:leibgrad}
\la v,w\ra\in W^{1,2}(\X)\qquad\text{ and }\qquad \d \la v,w\ra=\la\nabla_\cdot v,w\ra+\la v,\nabla_\cdot w\ra.
\end{equation}

\subsection{Sobolev and absolutely continuous curves}

We recall here some basic properties of Sobolev and absolutely continuous curves with values in a metric space.

Throughout this section, $(\Y,\sfd_\Y)$ will be a complete metric space.

\begin{definition}[Absolutely continuous curves]
A curve $\gamma:[0,T]\to \Y$ is said to be absolutely continuous provided there is $f\in L^1((0,T))$ such that
\begin{equation}
\label{eq:ac}
\sfd_\Y(\gamma_s,\gamma_t)\leq \int_t^sf(r)\,\d r\qquad\forall t,s\in[0,T],\ t<s.
\end{equation}
For  $p\geq 1$, the space $AC^p([0,T],\Y)$ consists of those absolutely continuous curves for which we can find $f$ as above in the space $L^p((0,T))$. Finally, $AC^p_{loc}([0,T),\Y)$ is the collection of all those curves which are in $AC^p(I,\Y)$ for every compact interval $I\subset [0,T)$.
\end{definition}
The following result is well-known; its proof can be found e.g.\ in \cite[Theorem 1.1.2]{AmbrosioGigliSavare08}:
\begin{theorem}[Metric speed]\label{thm:ms}
Let $p\geq 1$ and $\gamma\in AC^p([0,T],\Y)$. Then for a.e.\ $t\in[0,T]$ there exists the limit
\[
|\dot\gamma_t|:=\lim_{h\to 0}\frac{\sfd_\Y(\gamma_{t+h},\gamma_t)}{|h|},
\]
it defines a function in $L^p((0,T))$ and is the least - in the a.e.\ sense - function $f$ for which \eqref{eq:ac} holds.
\end{theorem}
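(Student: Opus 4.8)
The plan is to reduce the metric-valued statement to a countable family of real-valued absolutely continuous functions. Since $\gamma\in AC^p([0,T],\Y)$ is continuous, its image $\gamma([0,T])$ is a separable subset of $\Y$; fix a countable dense set $\{y_j\}_{j\in\N}\subset\gamma([0,T])$ and set $\varphi_j(t):=\sfd_\Y(\gamma_t,y_j)$. By the triangle inequality and \eqref{eq:ac}, each $\varphi_j$ satisfies $|\varphi_j(s)-\varphi_j(t)|\le\sfd_\Y(\gamma_s,\gamma_t)\le\int_t^sf(r)\,\d r$, hence is absolutely continuous on $[0,T]$ with $|\varphi_j'|\le f$ a.e.; in particular $\varphi_j'$ exists a.e. Let $N\subset[0,T]$ be the negligible set off which every $\varphi_j$ is differentiable (a countable union of null sets), and define $m(t):=\sup_{j}|\varphi_j'(t)|$ for $t\notin N$, which is measurable as a countable supremum and satisfies $m\le f$ a.e., so $m\in L^p((0,T))$.

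Next I would verify that $m$ is itself an admissible integrand in \eqref{eq:ac}. The point is that, by density of $\{y_j\}$ in $\gamma([0,T])$, one has $\sfd_\Y(\gamma_s,\gamma_t)=\sup_j|\varphi_j(s)-\varphi_j(t)|$: the inequality ``$\ge$'' is the triangle inequality above, while ``$\le$'' follows by choosing $y_j$ close to $\gamma_t$. Then for $t<s$,
\[
\sfd_\Y(\gamma_s,\gamma_t)=\sup_j\Big|\int_t^s\varphi_j'(r)\,\d r\Big|\le\sup_j\int_t^s|\varphi_j'(r)|\,\d r\le\int_t^s m(r)\,\d r,
\]
so $m$ is one of the functions $f$ appearing in the definition.

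Finally I would identify $m$ with the pointwise limit. On one side, for \emph{any} admissible $f\in L^1((0,T))$ (e.g.\ $f=m$), at every Lebesgue point $t$ of $f$ one has
\[
\limsup_{h\to0}\frac{\sfd_\Y(\gamma_{t+h},\gamma_t)}{|h|}\le\limsup_{h\to0}\frac1{|h|}\Big|\int_t^{t+h}f(r)\,\d r\Big|=f(t),
\]
so the $\limsup$ of the difference quotients is $\le f(t)$ for a.e.\ $t$, and with $f=m$ this gives $\limsup\le m$ a.e. On the other side, for $t\notin N$ and every $j$,
\[
\liminf_{h\to0}\frac{\sfd_\Y(\gamma_{t+h},\gamma_t)}{|h|}\ge\liminf_{h\to0}\frac{|\varphi_j(t+h)-\varphi_j(t)|}{|h|}=|\varphi_j'(t)|,
\]
and taking the supremum over $j$ yields $\liminf\ge m(t)$ for a.e.\ $t$. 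Combining the two bounds, the limit $|\dot\gamma_t|$ exists and equals $m(t)$ for a.e.\ $t$; since we showed $m$ is admissible and $m\le f$ a.e.\ for every admissible $f$, it is the least such function, which is the claim.

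I do not expect a genuine obstacle here; the only points demanding care are the measurability of the countable supremum $m$ and the full-measure simultaneous-differentiability set $N$ (both handled by countability), and the density argument giving $\sfd_\Y(\gamma_s,\gamma_t)=\sup_j|\varphi_j(s)-\varphi_j(t)|$, which is exactly where separability of $\gamma([0,T])$ enters. Everything else is a routine application of the Lebesgue differentiation theorem together with the fundamental theorem of calculus for real absolutely continuous functions.
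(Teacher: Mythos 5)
Your proof is correct, and it is essentially the standard argument: the paper itself does not prove Theorem \ref{thm:ms} but cites \cite[Theorem 1.1.2]{AmbrosioGigliSavare08}, and your construction (the countable family $\varphi_j=\sfd_\Y(\gamma_\cdot,y_j)$, the supremum $m=\sup_j|\varphi_j'|$, the identity $\sfd_\Y(\gamma_s,\gamma_t)=\sup_j|\varphi_j(s)-\varphi_j(t)|$, and the two-sided comparison of the difference quotients via Lebesgue points) is precisely the proof given there. No gaps: the only delicate points you flag (measurability of the countable supremum, the common full-measure differentiability set, and the density step where separability of $\gamma([0,T])$ enters) are handled correctly.
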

We now turn to the definition of Sobolev curves and in order to do so we begin by spending few words on metric-valued $L^p$ spaces. Let $(\X,\sfd,\mm)$ be a metric measure space as before (i.e.\ complete, separable and with $\mm$ finite on bounded sets) and $(\Y,\sfd_\Y,\bar y)$ a pointed complete space.

For $p\geq 1$ the space $L^p(\X,\Y_{\bar y})$ consists of those (equivalence class up to $\mm$-a.e.\ equality) Borel maps $u:\X\to \Y$ which are essentially separably valued, i.e.\ for some negligible set $\mathcal N\subset\X$ we have that $u(\X\setminus\mathcal N)\subset\Y$ is separable, and satisfying $\int\sfd_\Y^p(u(x),\bar y)\,\d\mm<\infty$. If $\mm(\X)<\infty$ the particular choice of $\bar y$ is irrelevant and the reference to it will be omitted and if $\Y$ is a Banach space we shall always pick $\bar y=0$ and, again, omit the choice from the notation.

The space $L^p(\X,\Y_{\bar y})$ is equipped with the distance
\[
\sfd_{L^p(\X,\Y)}(u,v):=\left(\int\sfd_\Y^p(u,v)\,\d\mm\right)^{\frac 1p}.
\]
It is easy to see that $L^p(\X,\Y_{\bar y})$ is complete w.r.t.\ this distance and separable if $(\Y,\sfd_\Y)$ is so. Moreover, arguing as for the study of so-called `strong measurability' of Banach-valued functions, it is not hard to check that
\begin{equation}
\label{eq:simpledense}
\text{the collection of simple maps in $L^p(\X,\Y_{\bar y})$ is dense in $L^p(\X,\Y_{\bar y})$,}
\end{equation}
where `simple' means `attaining a finite number of values'.

\bigskip

We now come back to the study of Sobolev curves and consider the above construction for $\X:=[0,T]$, $T > 0$ equipped with the canonical distance and measure.

For $\gamma:[0,T]\to \Y$ Borel and $\eps\in(0,T)$ we define  $\mathcal E_{p,\eps}(\gamma)\in[0,\infty]$ as
\[
\mathcal E_{p,\eps}(\gamma):=\int_0^{T-\eps}\frac{\sfd_\Y^p(\gamma_{t+\eps},\gamma_{t})}{\eps^p}\,\d t
\]

\begin{definition}[Sobolev curves]
Let $(\Y,\sfd_\Y)$ be a complete space and $p\in(1,\infty)$. Given $T > 0$, the space of Sobolev curves $W^{1,p}([0,T],\Y)$ consists of those $\gamma\in L^p([0,T],\Y)$ Borel such that
\[
\mathcal E_p(\gamma):=\lims_{\eps\downarrow0}\mathcal E_{p,\eps}(\gamma)<\infty.
\]
\end{definition}
In order to study the properties of Sobolev curves, let us first study the functionals $\mathcal E_{p,\eps}$:
\begin{lemma}[Basic properties of $\mathcal E_{p,\eps}$]\label{le:baseep}
Let $\gamma\in L^p([0,T], \Y)$. Then:
\begin{itemize}
\item[i)] We have
\[
\lim_{\eps\downarrow0}\eps\big(\mathcal E_{p,\eps}(\gamma)\big)^{1/p}=0.
\]
\item[ii)] Let  $\eps\in(0,T)$ and $\lambda_i\in[0,1]$, $i=1,\ldots,n$, $n\in\N$, with $\sum_i\lambda_i=1$. Then
\begin{equation}
\label{eq:subpar}
\big(\mathcal E_{p,\eps}(\gamma)\big)^{1/p}\leq\sum_{i=1}^n\lambda_i\big(\mathcal E_{p,\lambda_i\eps}(\gamma)\big)^{1/p}.
\end{equation}
\end{itemize}
\end{lemma}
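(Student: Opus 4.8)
The plan is to treat the two items independently; both are soft and follow from standard $L^p$-machinery, so I do not expect any serious obstacle, only some care with domains of integration in item (ii).

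For (i), the first observation is that $\eps\big(\mathcal E_{p,\eps}(\gamma)\big)^{1/p}=\big(\int_0^{T-\eps}\sfd_\Y^p(\gamma_{t+\eps},\gamma_t)\,\d t\big)^{1/p}$, so the statement is precisely continuity at $0$ of translations in $L^p([0,T],\Y)$. I would reduce to a dense class via Minkowski's inequality: for any Borel $\tilde\gamma\in L^p([0,T],\Y)$ one has, using $\int_0^{T-\eps}\sfd_\Y^p(\gamma_{t+\eps},\tilde\gamma_{t+\eps})\,\d t\le\sfd_{L^p([0,T],\Y)}(\gamma,\tilde\gamma)^p$ and the analogous bound without the shift,
\[
\eps\big(\mathcal E_{p,\eps}(\gamma)\big)^{1/p}\le 2\,\sfd_{L^p([0,T],\Y)}(\gamma,\tilde\gamma)+\eps\big(\mathcal E_{p,\eps}(\tilde\gamma)\big)^{1/p}.
\]
By \eqref{eq:simpledense} it then suffices to prove the claim when $\gamma=\sum_j y_j\mathbf 1_{A_j}$ is simple, with $\{A_j\}$ a finite Borel partition of $[0,T]$; for such $\gamma$ one has
\[
\int_0^{T-\eps}\sfd_\Y^p(\gamma_{t+\eps},\gamma_t)\,\d t\le\Big(\max_{j,k}\sfd_\Y^p(y_j,y_k)\Big)\,\big|\{t\in[0,T-\eps]:\gamma_{t+\eps}\ne\gamma_t\}\big|,
\]
and the last measure tends to $0$ as $\eps\downarrow0$ since each $\mathbf 1_{A_j}$ is continuous under translation in $L^1$ (approximate the $A_j$'s by finite unions of intervals). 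Taking $\limsup_{\eps\downarrow0}$ in the displayed inequality and then letting $\tilde\gamma$ approach $\gamma$ concludes (i).

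For (ii), I would first discard the indices with $\lambda_i=0$, which contribute nothing to the right-hand side, and then set $c_0:=0$, $c_i:=\lambda_1+\dots+\lambda_i$ (so that $c_n=1$) and, for $t\in(0,T-\eps)$, $s_i(t):=t+c_i\eps$. Since $s_0(t)=t$, $s_n(t)=t+\eps$ and $s_i(t)-s_{i-1}(t)=\lambda_i\eps$, the triangle inequality in $\Y$ gives $\sfd_\Y(\gamma_{t+\eps},\gamma_t)\le\sum_i\sfd_\Y(\gamma_{s_i(t)},\gamma_{s_{i-1}(t)})$; dividing by $\eps$, writing $\tfrac1\eps=\tfrac{\lambda_i}{\lambda_i\eps}$, taking $L^p$-norms in $t\in(0,T-\eps)$ and using Minkowski's inequality one arrives at
\[
\big(\mathcal E_{p,\eps}(\gamma)\big)^{1/p}\le\sum_{i}\lambda_i\left(\int_0^{T-\eps}\Big(\tfrac{\sfd_\Y(\gamma_{t+c_{i-1}\eps+\lambda_i\eps},\gamma_{t+c_{i-1}\eps})}{\lambda_i\eps}\Big)^p\,\d t\right)^{1/p}.
\]
In the $i$-th integral the substitution $r=t+c_{i-1}\eps$ turns the domain into $(c_{i-1}\eps,\,T-\eps+c_{i-1}\eps)$, which is contained in $(0,\,T-\lambda_i\eps)$ precisely because $c_{i-1}+\lambda_i=c_i\le1$; enlarging the domain (the integrand being nonnegative) the $i$-th integral is bounded by $\mathcal E_{p,\lambda_i\eps}(\gamma)$, which is exactly \eqref{eq:subpar}.

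The only slightly delicate point, and the one I would double-check carefully, is the chain of domain inclusions $(c_{i-1}\eps,\,T-\eps+c_{i-1}\eps)\subset(0,\,T-\lambda_i\eps)$ used at the end of (ii): it is exactly here that the hypothesis $\sum_i\lambda_i=1$ (equivalently $c_i\le1$ for every $i$) enters, and without it the estimate would fail.
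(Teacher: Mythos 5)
Your proof is correct and follows essentially the same route as the paper: in (i) the uniform (equi-Lipschitz) estimate in $\gamma$ plus density of simple curves, and in (ii) the telescoping triangle inequality with the partial sums $c_i$ of the $\lambda_i$'s, Minkowski's inequality and the shift-of-domain argument using $c_i\le 1$. The only difference is that you spell out the details the paper leaves as "readily checked" (the simple-curve case and the domain inclusion), which is fine.
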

\begin{proof}\ \\
\noindent{\bf (i)} For $\eps\in(0,T)$ consider the map $T_\eps:L^p([0,T],\Y)\to \R$ given by $T_\eps(\gamma):=\eps (\mathcal E_{p,\eps}(\gamma))^{1/p}=\|\sfd_\Y(\gamma_\cdot,\gamma_{\cdot+\eps})\|_{L^p([0,T-\eps],\R)}$. Then we have
\[
\begin{split}
|T_\eps(\gamma)-T_\eps(\tilde\gamma)|&=\big|\|\sfd_\Y(\gamma_\cdot,\gamma_{\cdot+\eps})\|_{L^p([0,T-\eps],\R)}-\|\sfd_\Y(\tilde\gamma_\cdot,\tilde\gamma_{\cdot+\eps})\|_{L^p([0,T-\eps],\R)}\big|\\
&\leq \|\sfd_\Y(\gamma_\cdot,\gamma_{\cdot+\eps})-\sfd_\Y(\tilde\gamma_\cdot,\tilde\gamma_{\cdot+\eps})\|_{L^p([0,T-\eps],\R)}.
\end{split}
\]
Noticing that the triangle inequality on $\Y$ gives $|\sfd_\Y(a,b)-\sfd_\Y(c,d)|\leq \sfd_\Y(a,c)+\sfd_\Y(b,d)$ for every $a,b,c,d\in\Y$ and using again the triangle inequality in $L^p([0,T-\eps],\R)$ we then have
\[
\begin{split}
\|\sfd_\Y(\gamma_\cdot,\gamma_{\cdot+\eps})-\sfd_\Y(\tilde\gamma_\cdot,\tilde\gamma_{\cdot+\eps})\|_{L^p([0,T-\eps],\R)}&\leq\|\sfd_\Y(\gamma_\cdot,\tilde\gamma_{\cdot})\|_{L^p([0,T-\eps],\R)}+
\|\sfd_\Y(\gamma_{\cdot+\eps},\tilde\gamma_{\cdot+\eps})\|_{L^p([0,T-\eps],\R)}\\
&\leq 2\|\sfd_\Y(\gamma_\cdot,\tilde\gamma_{\cdot})\|_{L^p([0,T],\R)},
\end{split}
\]
i.e.\ $|T_\eps(\gamma)-T_\eps(\tilde\gamma)|\leq 2\sfd_{L^p([0,T],\Y)}(\gamma,\tilde\gamma) $. This shows that the maps $T_\eps$ are equiLipschitz, hence to conclude the proof it is sufficient to find a dense subset of $L^p([0,T],\Y)$ such that for any $\gamma$ in this subset it holds $T_\eps(\gamma)\to 0$ as $\eps\downarrow0$. It is readily checked that simple curves have this property, hence the proof is complete.

\noindent{\bf (ii)}
Put $\mu_0:=0$ and $\mu_i:=\sum_{j=1}^i\lambda_j$ for $i=1,\ldots,n$ and notice that from
\[
\sfd_\Y(\gamma_{t+\eps},\gamma_t)\leq\sum_{i=1}^n\sfd_\Y(\gamma_{t+\mu_{i}\eps},\gamma_{t+\mu_{i-1}\eps})
\]
and the triangle inequality in $L^p$ we obtain
\[
\|\sfd_\Y(\gamma_\cdot,\gamma_{\cdot+\eps})\|_{L^p([0,T-\eps])}\leq\sum_{i=1}^n\|\sfd_\Y(\gamma_{\cdot+\mu_i\eps},\gamma_{\cdot+\mu_{i-1}\eps})\|_{L^p([0,T-\eps])}\leq\sum_{i=1}^n\|\sfd_\Y(\gamma_{\cdot+\lambda_i\eps},\gamma_{\cdot})\|_{L^p([0,T-\lambda_i\eps])}.
\]
Then \eqref{eq:subpar} follows from the identity $(\mathcal E_{p,\eps}(\gamma))^{1/p}=\eps^{-1}\|\sfd_\Y(\gamma_\cdot,\gamma_{\cdot+\eps})\|_{L^p([0,T-\eps])}$.
\end{proof}

\begin{lemma}\label{le:contin0}
Let $T>0$ and $f:(0,T)\to\R^+$ be such that
\begin{equation}
\label{eq:subpf}
\lim_{\eps\downarrow0}\eps f(\eps)=0\qquad\textrm{ and }\qquad f(\eps)\leq\sum_{i=1}^n\lambda_if(\lambda_i\eps)
\end{equation}
for every $\eps\in(0,T)$, $n\in\N$ and  $\lambda_i\in[0,1]$, $i=1,\ldots,n$, $n\in\N$, with $\sum_i\lambda_i=1$.

Then there exists the limit as $\eps\downarrow0$ of $f(\eps)$ and
\begin{equation}
\label{eq:exlim}
f(\eps)\leq \lim_{\eps'\downarrow0}f(\eps')\qquad\forall \eps\in(0,T).
\end{equation}
\end{lemma}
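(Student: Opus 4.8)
The plan is to deduce the statement from the single inequality $f(\eps)\le\ell$ for all $\eps\in(0,T)$, where $\ell:=\liminf_{\eps'\downarrow0}f(\eps')\in[0,+\infty]$. Granting this, taking the $\limsup$ as $\eps\downarrow0$ on the left gives $\limsup_{\eps\downarrow0}f(\eps)\le\ell=\liminf_{\eps\downarrow0}f(\eps)$; since the opposite inequality is automatic, the two limits agree, so $\lim_{\eps\downarrow0}f(\eps)$ exists in $[0,+\infty]$ and equals $\ell$, and \eqref{eq:exlim} is exactly the inequality $f(\eps)\le\ell$. Hence one may assume $\ell<+\infty$ from now on.

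To prove $f(\eps)\le\ell$, fix $\eps\in(0,T)$ and choose $\delta_k\downarrow0$ with $f(\delta_k)\to\ell$. For $k$ large enough that $\delta_k<\eps$, perform the Euclidean division $\eps=m_k\delta_k+r_k$ with $m_k:=\lfloor\eps/\delta_k\rfloor\ge1$ and $r_k\in[0,\delta_k)$, and apply the subadditivity hypothesis in \eqref{eq:subpf} with $n:=m_k+1$, $\lambda_i:=\delta_k/\eps$ for $i=1,\dots,m_k$ and $\lambda_{m_k+1}:=r_k/\eps$. These weights lie in $[0,1]$ (because $\delta_k<\eps$) and sum to $(m_k\delta_k+r_k)/\eps=1$; since $\lambda_i\eps=\delta_k$ for $i\le m_k$ and $\lambda_{m_k+1}\eps=r_k$, we obtain
\[
f(\eps)\le\frac{m_k\delta_k}{\eps}\,f(\delta_k)+\frac{r_k}{\eps}\,f(r_k),
\]
with the convention that the last summand is $0$ when $r_k=0$ (in which case one just uses $n=m_k$ equal weights $1/m_k$).

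Then I would let $k\to\infty$. Since $m_k\delta_k=\eps-r_k$ and $0\le r_k<\delta_k\to0$, we have $m_k\delta_k/\eps\to1$, so the first term tends to $\ell$. For the second term, $r_k\to0$, hence the first condition in \eqref{eq:subpf} gives $r_kf(r_k)\to0$ (splitting off the indices with $r_k=0$, which contribute nothing), and therefore $\frac1\eps\,r_kf(r_k)\to0$. Passing to the limit yields $f(\eps)\le\ell$, which is what we wanted.

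The argument is essentially routine; the only point that requires a little thought is the choice of the partition — cutting the interval of length $\eps$ into $m_k$ blocks of length $\delta_k$ plus one short leftover block of length $r_k<\delta_k$ — which turns \eqref{eq:subpf} into a comparison of $f(\eps)$ with $f(\delta_k)$ together with the single error term $\frac{r_k}{\eps}f(r_k)$. It is precisely this error term that forces the use of the hypothesis $\lim_{\eps\downarrow0}\eps f(\eps)=0$, since $f(r_k)$ itself need not stay bounded as $k\to\infty$. The minor bookkeeping concerning $r_k=0$ is handled by the convention noted above.
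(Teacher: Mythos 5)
Your proof is correct and follows essentially the same route as the paper's: partition $\eps$ into $\lfloor\eps/\delta_k\rfloor$ blocks of size $\delta_k$ plus a short remainder, use the hypothesis $\eps f(\eps)\to 0$ to kill the remainder term, and conclude $f(\eps)\le\liminf_{\eps'\downarrow0}f(\eps')$, whence the limit exists. The only (harmless) differences are that you extract a sequence realizing the liminf instead of taking the liminf of the inequality directly, and that you explicitly treat the case of a vanishing remainder, a detail the paper's proof passes over.
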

\begin{proof}
Let $\eps',\eps\in(0,T)$ be with $\eps'\leq \eps$ and denote by $[x]$ the integer part of $x\in\R$. Apply the second in \eqref{eq:subpf} with $n:=[\frac{\eps}{\eps'}]+1$, $\lambda_i:=\frac{\eps'}{\eps}$ for every $i\leq n-1$ and $\lambda_n=1-[\frac{\eps}{\eps'}]\frac{\eps'}{\eps}$ to get
\[
f(\eps)\leq \Big[\frac{\eps}{\eps'}\Big]\frac{\eps'}{\eps}\,f(\eps')+\frac1\eps\Big(\eps-\eps'\Big[\frac{\eps}{\eps'}\Big] \Big) f\Big(\eps-\eps'\Big[\frac{\eps}{\eps'}\Big]\Big).
\]
Fix $\eps$ and notice that $[\frac{\eps}{\eps'}]\frac{\eps'}{\eps}\to1$ and $\eps-\eps'[\frac{\eps}{\eps'}]\to0 $ as $\eps'\downarrow0$, thus letting $\eps'\downarrow0$ in the above and taking into account the first in \eqref{eq:subpf} we deduce
\begin{equation}
\label{eq:forlim2}
f(\eps)\leq\limi_{\eps'\downarrow0} f(\eps').
\end{equation}
In particular $\lims_{\eps\downarrow0}f(\eps)\leq\limi_{\eps'\downarrow0} f(\eps')$, showing the existence of the limit; then \eqref{eq:exlim} follows from \eqref{eq:forlim2}.
\end{proof}
\begin{corollary}\label{cor:energy0}
Let $\gamma:[0,T]\to \Y$ be Borel. Then there exists the limit of $\mathcal E_{p,\eps}(\gamma)$ as $\eps\downarrow 0$ and
\begin{equation}
\label{eq:monotener}
 \mathcal E_p(\gamma)=\sup_{\eps\in(0,T)}\mathcal E_{p,\eps}(\gamma).
\end{equation}
\end{corollary}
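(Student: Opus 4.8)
The plan is to apply Lemma~\ref{le:contin0} to the function $f(\eps):=\big(\mathcal E_{p,\eps}(\gamma)\big)^{1/p}$, $\eps\in(0,T)$, and then to pass from $f$ to $f^p=\mathcal E_{p,\eps}(\gamma)$ through the power map. First I would record that, in the relevant case $\gamma\in L^p([0,T],\Y)$ (Sobolev curves forming a subclass of $L^p$ ones), one has $\mathcal E_{p,\eps}(\gamma)<\infty$ for each fixed $\eps\in(0,T)$: indeed $\sfd_\Y(\gamma_{t+\eps},\gamma_t)\leq\sfd_\Y(\gamma_{t+\eps},\bar y)+\sfd_\Y(\gamma_t,\bar y)$ and $\gamma\in L^p$, so $f$ is a genuine $\R^+$-valued function on $(0,T)$.

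Next I would verify the two hypotheses \eqref{eq:subpf} required by Lemma~\ref{le:contin0}: the condition $\lim_{\eps\downarrow0}\eps f(\eps)=0$ is exactly Lemma~\ref{le:baseep}(i), and the sub-partition inequality $f(\eps)\leq\sum_i\lambda_i f(\lambda_i\eps)$ is exactly \eqref{eq:subpar} of Lemma~\ref{le:baseep}(ii). Lemma~\ref{le:contin0} then gives both that $\lim_{\eps\downarrow0}f(\eps)$ exists and that $f(\eps)\leq\lim_{\eps'\downarrow0}f(\eps')$ for every $\eps\in(0,T)$; the latter inequality immediately yields $\sup_{\eps\in(0,T)}f(\eps)=\lim_{\eps\downarrow0}f(\eps)$.

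Finally I would transfer this back to $\mathcal E_{p,\eps}(\gamma)=f(\eps)^p$: since $t\mapsto t^p$ is continuous and increasing on $[0,\infty)$, the existence of $\lim_{\eps\downarrow0}f(\eps)$ gives the existence of $\lim_{\eps\downarrow0}\mathcal E_{p,\eps}(\gamma)$, which therefore coincides with $\mathcal E_p(\gamma)=\lims_{\eps\downarrow0}\mathcal E_{p,\eps}(\gamma)$, and the same monotonicity turns $\sup_\eps f(\eps)=\lim_\eps f(\eps)$ into the asserted identity $\mathcal E_p(\gamma)=\sup_{\eps\in(0,T)}\mathcal E_{p,\eps}(\gamma)$.

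Since all the substance is already contained in Lemmas~\ref{le:baseep} and \ref{le:contin0}, there is no genuine difficulty here; the only points deserving a line of care are the finiteness of $\mathcal E_{p,\eps}(\gamma)$ for fixed $\eps$ (so that Lemma~\ref{le:contin0} applies verbatim) and the elementary passage through the power map $t\mapsto t^p$.
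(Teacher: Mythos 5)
Your proof is correct and takes essentially the same route as the paper: both apply Lemma \ref{le:contin0} to $f(\eps):=\big(\mathcal E_{p,\eps}(\gamma)\big)^{1/p}$, with the two hypotheses \eqref{eq:subpf} supplied exactly by Lemma \ref{le:baseep}, and then pass to $\mathcal E_{p,\eps}(\gamma)=f(\eps)^p$ via the monotone power map. The brief checks you add (finiteness of $\mathcal E_{p,\eps}(\gamma)$ for $\gamma\in L^p([0,T],\Y)$ so that $f$ is genuinely $\R^+$-valued) are details the paper leaves implicit, not a difference in method.
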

\begin{proof}
Thanks to  Lemma \ref{le:baseep}  we can apply Lemma \ref{le:contin0} to the function $f(\eps):= (\mathcal E_{p,\eps}(\gamma))^{1/p}$. The conclusion follows.
\end{proof}
As in the real-valued case, there is a tight connection between the notions of absolutely continuous and Sobolev curves:
\begin{theorem}[Sobolev and AC curves]\label{thm:sobaccurve}
Let $(\Y,\sfd_\Y)$ be a complete space and $p\in(1,\infty)$. Then:
\begin{itemize}
\item[i)] Let $\gamma\in AC^p([0,T],\Y)$. Then (the equivalence class up to a.e.\ equality of) $\gamma$ belongs to $W^{1,p}([0,T],\Y)$.
\item[ii)] Let $\gamma\in W^{1,p}([0,T],\Y)$. Then
\begin{itemize}
\item[ii-a)]  $\gamma$ admits a continuous representative and such representative belongs to $AC^p([0,T],\Y)$.
\item[ii-b)] The functions $t\mapsto \frac{\sfd_\Y(\gamma_{t+h},\gamma_t)}{h}$ (set equal to 0 if $t+h\notin [0,T]$) have a strong limit $|\partial_t\gamma|$ - called distributional derivative of $\gamma$ - in $L^p([0,T])$ as $h\to 0$. In particular it holds
\begin{equation}
\label{eq:derint}
 \mathcal E_p(\gamma)=\int_0^T|\partial_t\gamma|^p\,\d t.
\end{equation}
\item[ii-c)] $|\partial_t\gamma|$ coincides for a.e.\ $t$ with the metric speed of the continuous representative of $\gamma$
\end{itemize}
\end{itemize}
Finally, $W^{1,p}([0,T],\Y)$ is a Borel subset of $L^p([0,T],\Y)$ and equipping it with the induced Borel structure we have that the map from $W^{1,p}([0,T],\Y)$ to $C([0,T],\Y)$ (resp.\ $L^p((0,T))$) sending a curve to its continuous representative (resp.\ distributional derivative) is Borel.
\end{theorem}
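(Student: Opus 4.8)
\emph{Part (i).} The plan is to pass from the defining inequality \eqref{eq:ac} for absolutely continuous curves to a control on $\mathcal E_{p,\eps}(\gamma)$. If $\gamma\in AC^p([0,T],\Y)$ with $f\in L^p((0,T))$ as in \eqref{eq:ac}, then for $\eps\in(0,T)$ and $t\in[0,T-\eps]$ we have $\sfd_\Y(\gamma_{t+\eps},\gamma_t)\leq\int_t^{t+\eps}f(r)\,\d r$, hence by Jensen's inequality $\sfd_\Y^p(\gamma_{t+\eps},\gamma_t)\leq\eps^{p-1}\int_t^{t+\eps}f^p(r)\,\d r$. Integrating in $t$ over $[0,T-\eps]$ and using Fubini gives $\mathcal E_{p,\eps}(\gamma)\leq\eps^{-1}\int_0^{T-\eps}\int_t^{t+\eps}f^p(r)\,\d r\,\d t\leq\|f\|_{L^p((0,T))}^p$, uniformly in $\eps$. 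Since $\gamma$ is continuous it is certainly Borel and in $L^p([0,T],\Y)$, so taking $\lims_{\eps\downarrow0}$ yields $\mathcal E_p(\gamma)\leq\|f\|_{L^p}^p<\infty$, i.e.\ $\gamma\in W^{1,p}([0,T],\Y)$.

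\emph{Part (ii).} For \emph{(ii-a)}, the strategy is to first show that a curve with $\sup_\eps\mathcal E_{p,\eps}(\gamma)<\infty$ (which is the content of Corollary \ref{cor:energy0}) has an absolutely continuous representative. The idea is to compose with $1$-Lipschitz functions $y\mapsto\sfd_\Y(y,z)$ for $z$ ranging in a countable dense subset of the (essential) range of $\gamma$: each $\varphi_z:=\sfd_\Y(\gamma_\cdot,z)$ is a real-valued function with $\int_0^{T-\eps}|\varphi_z(t+\eps)-\varphi_z(t)|^p\eps^{-p}\,\d t\leq\mathcal E_{p,\eps}(\gamma)$, so by the classical real-valued characterization $\varphi_z\in W^{1,p}((0,T))$ and admits an absolutely continuous representative $\bar\varphi_z$ with $\int_0^T|\bar\varphi_z'|^p\leq\mathcal E_p(\gamma)$. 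One then builds the continuous representative of $\gamma$ by a completeness/selection argument: for a.e.\ $t$ the values $\gamma_t$ are determined by the data $(\bar\varphi_z(t))_z$ up to the separability of the range, and one checks that $t\mapsto\gamma_t$ has a uniformly continuous (indeed $AC^p$) modification. Concretely, choosing $f(t):=\big(\sum_z 2^{-z}\,|\bar\varphi_z'|(t)^p\big)^{1/p}$-type majorants, or more cleanly invoking that $\sfd_\Y(\gamma_s,\gamma_t)=\sup_z|\varphi_z(s)-\varphi_z(t)|$, one gets the estimate \eqref{eq:ac} for the good representative with an $L^p$ function $f$, giving \emph{(ii-a)}. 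For \emph{(ii-b)} and \emph{(ii-c)}: once the $AC^p$ representative is in hand, Theorem \ref{thm:ms} provides the pointwise metric speed $|\dot\gamma_t|$ as the least $f$ in \eqref{eq:ac}; the strong $L^p$ convergence of the difference quotients $\sfd_\Y(\gamma_{t+h},\gamma_t)/h$ to $|\dot\gamma_t|$ follows by dominating these quotients (via \eqref{eq:ac}) by averages of $f$, which converge in $L^p$ by the Lebesgue differentiation theorem / continuity of translations in $L^p$, together with the a.e.\ pointwise convergence from Theorem \ref{thm:ms}; then \eqref{eq:derint} is immediate from the definition of $\mathcal E_p$ and Fatou/dominated convergence, and \emph{(ii-c)} is the identification already recorded.

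\emph{Borel measurability of the structural maps.} The plan here is a monotone-class / approximation argument. The set $W^{1,p}([0,T],\Y)=\{\gamma:\sup_\eps\mathcal E_{p,\eps}(\gamma)<\infty\}$ is Borel because each $\gamma\mapsto\mathcal E_{p,\eps}(\gamma)$ is Borel on $L^p([0,T],\Y)$ — indeed it is even continuous after noting $\eps(\mathcal E_{p,\eps})^{1/p}=\|\sfd_\Y(\gamma_\cdot,\gamma_{\cdot+\eps})\|_{L^p}$ is $2$-Lipschitz as shown in the proof of Lemma \ref{le:baseep}(i) — and one takes a countable supremum over rational $\eps$ using \eqref{eq:monotener}. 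For the map sending $\gamma$ to its distributional derivative $|\partial_t\gamma|\in L^p((0,T))$: by \emph{(ii-b)} it is the $L^p$-limit of the maps $\gamma\mapsto\sfd_\Y(\gamma_{\cdot+h},\gamma_\cdot)/h$, each of which is continuous from $L^p([0,T],\Y)$ to $L^p((0,T))$ (again by the Lipschitz-type estimate on $\sfd_\Y$), so it is a pointwise limit of continuous maps, hence Borel. For the map $\gamma\mapsto$ (continuous representative) $\in C([0,T],\Y)$: approximate, e.g.\ $\gamma\mapsto\gamma^{(\eps)}$ where $\gamma^{(\eps)}_t:=\gamma_{\min(t,T-\eps)}$ regularized by averaging — more safely, use that the evaluation $\gamma\mapsto\gamma_t$ for fixed $t$ (via the continuous representative) is a pointwise $L^p$-limit of averaging maps $\gamma\mapsto\fint_{t}^{t+\eps}\gamma_r\,\d r$ computed in $\Y$ only along the range after an isometric embedding into a Banach space (Kuratowski), which are continuous; then a curve-valued map into $C([0,T],\Y)$ is Borel iff all evaluations are, by separability of $C([0,T],\Y)$.

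\emph{Main obstacle.} The delicate point is \emph{(ii-a)}: producing the genuine continuous representative of $\gamma$ from the merely-summable energy bound, since a priori $\gamma$ is only an equivalence class and the values $\gamma_t$ are individually undefined. The clean way is to embed $\Y$ isometrically into $\ell^\infty$ (or into $C_b$ of a countable dense set, which is what the functions $\varphi_z$ achieve) so that the problem reduces to the Banach-valued case, where one can use the standard fact that a function whose $\eps$-difference quotients are bounded in $L^p$ coincides a.e.\ with an $AC^p$ function; one must then check that the resulting $AC^p$ Banach-valued curve actually takes values in (the closure of) the range of $\gamma$, which follows from completeness of $\Y$ and the a.e.\ agreement. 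Managing this embedding-and-return step carefully — and likewise making the Borel-measurability arguments precise via an isometric embedding together with separability of the target function spaces — is where the real work lies; everything else is a routine consequence of the real-valued theory recalled above and of Jensen's inequality.
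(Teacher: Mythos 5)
Your route for (i) is exactly the paper's (Jensen plus Fubini), and for (ii) you follow the same basic scheme as the paper: compose with the $1$-Lipschitz functions $\sfd_\Y(\cdot,x_n)$, $x_n$ in a countable dense subset of the essential image, use the real-valued $W^{1,p}$ theory, and reassemble via $\sfd_\Y(\gamma_s,\gamma_t)=\sup_n\big|\sfd_\Y(\gamma_s,x_n)-\sfd_\Y(\gamma_t,x_n)\big|$. Your argument for the strong $L^p$ convergence in (ii-b) (a.e.\ convergence from Theorem \ref{thm:ms} plus domination of the quotients by averages of the metric speed and generalized dominated convergence) is a legitimate alternative to the paper's mechanism (weak limit, convergence of the norms, uniform convexity of $L^p$).

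There is, however, a genuine gap at the crux of (ii-a): to obtain \eqref{eq:ac} after taking the supremum over $z$ you need a \emph{single} function $f\in L^p((0,T))$ with $|\bar\varphi_z'|\le f$ a.e.\ for \emph{every} $z$ simultaneously, and the individual bounds $\int_0^T|\bar\varphi_z'|^p\le\mathcal E_p(\gamma)$ do not produce one. Your explicit candidate $f:=\big(\sum_z2^{-z}|\bar\varphi_z'|^p\big)^{1/p}$ does not dominate $\sup_z|\bar\varphi_z'|$: it only gives $|\bar\varphi_{z_0}'|\le 2^{z_0/p}f$, with constant blowing up in $z_0$. The fallback you suggest --- embed $\Y$ into $\ell^\infty$ and invoke ``the standard fact'' that $L^p$-bounded difference quotients yield an $AC^p$ representative for Banach-valued maps --- begs the question: $\ell^\infty$ lacks the Radon--Nikodym property, and that ``fact'' in this generality is precisely the metric statement being proved. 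The paper closes exactly this gap by extracting a weak $L^p((0,T))$ limit $g$ of the quotients $\sfd_\Y(\gamma_{\cdot+h_k},\gamma_\cdot)/h_k$ (possible since $p\in(1,\infty)$) and noting that each $\partial_t\big(\sfd_\Y(\gamma_\cdot,x_n)\big)$ is dominated by this one $g$; alternatively, a Fatou argument on $\liminf_k\sfd_\Y(\gamma_{t+h_k},\gamma_t)/h_k$ along a fixed sequence $h_k\downarrow0$ would serve the same purpose, but some such step must be supplied. A secondary weak point: your Borel-measurability argument for the continuous-representative map invokes separability of $C([0,T],\Y)$ (so of $\Y$), which is not among the hypotheses; the paper instead shows that this map is continuous on each sublevel set $\{\mathcal E_p\le c\}$ (uniform $(1-\frac1p)$-H\"older bounds plus a.e.\ convergence along a subsequence give uniform convergence), which requires no separability of $\Y$ and immediately yields Borel measurability.
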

\begin{proof}\ \\
\noindent{\bf (i)} Being continuous, $\gamma$ is Borel and $\gamma([0,T])\subset \Y$ is compact, hence separable. It follows that $\gamma\in L^p([0,T],\Y)$. Given $h > 0$, the bound $\sfd(\gamma_{t+h},\gamma_t)\leq\int_t^{t+h}|\dot\gamma_s|\,\d s$ gives
\begin{equation}
\label{eq:oneside}
\int_0^{T-h}\frac{\sfd_\Y^p(\gamma_{t+h},\gamma_t)}{|h|^p}\,\d t\leq\int_0^{T-h}\Big|\frac1h\int_t^{t+h}|\dot\gamma_s|\,\d s\Big|^p\,\d t\leq \int_0^{T-h}\frac1h\int_t^{t+h}|\dot\gamma_s|^p\,\d s \,\d t\leq \int_0^T|\dot\gamma_s|^p\,\d s
\end{equation}
and given that this holds for every $h\in(0,T)$, the claim is proved.

\noindent{\bf (ii)}  Let $h_k\downarrow0$ be such that the functions $\frac{\sfd_\Y(\gamma_{t+h_k},\gamma_t)}{h_k}$ defined to be 0 if $t+h_k\notin[0,T]$ (which are uniformly bounded in $L^p((0,T))$ by assumption) weakly converge to some limit function $g$. Also, let  $\{x_n:n\in\N\}\subset\Y$ be countable and dense set and define $f_n(t):=\sfd_\Y(\gamma_t,x_n)$. Then the triangle inequality ensures  that $f_n\in L^p((0,T))$ and $|f_n(t+h)-f_n(t)|\leq\sfd_\Y(\gamma_{t+h},\gamma_t)$ and hence up to pass to a subsequence we can assume that for every $n\in\N$ the functions $t\mapsto \frac{f_n(t+h_k)-f_n(t)}{h_k}$ converge to some limit $g_n$ in the weak convergence of $L^p((0,T))$ as $h_k\downarrow0$. The construction ensures that $|g_n|\leq g$ and  for any  $\varphi\in C^\infty_c((0,T))$ we have
\[
-\int_0^Tf_n(t)\varphi'(t)\,\d t=\lim_{h_k\downarrow 0}\int\frac{f_{n}(t+h_k)-f_{n}(t)}{h_k}\varphi(t)\,\d t=\int_0^T g_n(t)\varphi(t)\,\d t,
\]
which shows that $f_n\in W^{1,p}((0,T))$ with $\partial_tf_n=g_n$.  It turn, it is well known - and easy to prove - that this implies that $f_n$ admits a continuous representative $\tilde f_n$ satisfying
\[
|\tilde f_n(s)-\tilde f_n(t)|\leq\int_t^s|\partial_r f_n|\,\d r\leq\int_t^s g(r)\,\d r,\qquad\forall t,s\in[0,T].
\]
Thus for  $\mathcal N\subset [0,T]$ Borel, negligible and such that $f_n(t)=\tilde f_n(t)$ for any $t\notin\mathcal N$ and $n\in\N$ we have
\[
\sfd_\Y(\gamma_t,\gamma_s)=\sup_n|f_n(t)-f_n(s)|\leq \int_t^s g(r)\,\d r\qquad\forall t,s\in [0,T]\setminus \mathcal N,
\]
which, e.g.\ by the absolute continuity of the integral, shows that the restriction of $\gamma$ to $[0,T]\setminus \mathcal N$ is uniformly continuous and thus it can be extended to a continuous curve $\tilde\gamma$ which clearly satisfies
\[
\sfd_\Y(\tilde\gamma_t,\tilde\gamma_s) \leq \int_t^s g(r)\,\d r\qquad\forall t,s\in [0,T].
\]
By the very definition, this means $\tilde\gamma\in AC^p([0,T],\Y)$ and Theorem \ref{thm:ms} also tells that
\begin{equation}
\label{eq:secondside}
|\dot{\tilde \gamma}_t|\leq g(t)\qquad \text{a.e.}\ t,
\end{equation}
therefore the chain of inequalities
\[
\begin{split}
\|g\|_{p}^p\leq\limi_{h_k\downarrow0}\Big\|\frac{\sfd_\Y(\gamma_{\cdot+h_k},\gamma_\cdot)}{h_k}\Big\|_p^p\leq\lims_{h_k\downarrow0}\Big\|\frac{\sfd_\Y(\gamma_{\cdot+h_k},\gamma_\cdot)}{h_k}\Big\|_p^p\stackrel{\eqref{eq:oneside}}\leq \||\dot{\tilde \gamma}|\|_p^p\stackrel{\eqref{eq:secondside}}\leq \|g\|_{p}^p
\end{split}
\]
is actually made of equalities. In particular, the functions $\frac{\sfd_\Y(\gamma_{\cdot+h_k},\gamma_\cdot)}{h_k}$ converge to $g$ also in norm, thus strongly in $L^p((0,T))$
(because $L_{p}$-spaces are uniformly convex when $p \in (1,\infty)$). Also, the equality $ \||\dot{\tilde \gamma}|\|_p^p= \|g\|_{p}^p$ and \eqref{eq:secondside} force $g=|\dot{\tilde \gamma}|$ showing at once that the limit of $\frac{\sfd_\Y(\gamma_{\cdot+h},\gamma_\cdot)}{h}$ as $h\downarrow0$ does not depend on the particular subsequence chosen and that it coincides with the metric speed of the continuous representative of $\gamma$.

\noindent{\bf Final statements} It is clear that the functionals $\mathcal E_{p,\eps}:L^p([0,T],\Y)\to[0,+\infty]$ are continuous, thus \eqref{eq:monotener} grants that $\mathcal E_p$ is lower semicontinuous. Hence it is  Borel and since $W^{1,p}([0,T],\Y)=\{\gamma:\mathcal E_p(\gamma)<\infty\}$ we see that the set of all Sobolev curves is a Borel subset
of $L_{p}([0,T],\Y)$ as well.

For the Borel regularity of the  'continuous representative' map it is sufficient to show that for any $c>0$ such map is continuous from $\{\mathcal E_p\leq c\}\subset L^p([0,T],\Y)$ to $C([0,T],\Y)$. Thus let $\gamma^n\to\gamma$ in $L^p([0,T],\Y)$ be with $\sup_n\mathcal E_p(\gamma_n)\leq c$. As for the classical $L^p$ spaces - and with the same proof - up to pass to a subsequence we can assume that $\gamma^n_t\to \gamma_t$ for a.e.\ $t$. Now notice that the bound
\[
\sfd_\Y(\gamma^n_t,\gamma^n_s)\leq \int_t^s|\partial_r\gamma^n_r|\,\d r \leq \Big|\int_t^s|\partial_r\gamma^n_r|^p\,\d r\Big|^{\frac1p} \,\Big|\int_t^s1\,\d r\Big|^{1-\frac1p}\leq c|s-t|^{1-\frac1p}
\]
grants that the curves $\gamma^n$ are uniformly continuous, thus from pointwise a.e.\ convergence we deduce uniform convergence to a limit curve $\tilde\gamma$. It is then  clear that $\tilde\gamma$ is the continuous representative of $\gamma$, thus concluding the proof of the claim.

Finally, the Borel regularity of the `distributional derivative' map follows easily noticing that for any $n\in\N$ the map $\gamma\mapsto \frac{\sfd_\Y(\gamma_{t+1/n},\gamma_t)}{1/n}$ from $L^p([0,T],\Y)$ to $L^p((0,T))$  (set 0 if $t+1/n\notin[0,T]$) is continuous, hence Borel. The conclusion follows noticing that $W^{1,p}([0,T],\Y)$ coincides with the class of $\gamma$'s such that the maps have limit in $L^p((0,T))$ as $n\to\infty$, the distributional derivative being such limit.
\end{proof}
A direct and simple corollary of the above is the following chain rule:
\begin{corollary}\label{cor:chain}
Let $p\in(1,\infty)$, $\gamma\in W^{1,p}([0,1],\X)$ (resp. $AC^p([0,1],\X)$) and $\varphi:\X\to\Y$ Lipschitz. Then $\varphi\circ\gamma\in W^{1,p}([0,1],\Y)$ (resp. $AC^p([0,1],\Y)$) with distributional derivative (resp. metric speed) bounded from above by $\lip\varphi\circ\gamma |\partial_t\gamma|$.
\end{corollary}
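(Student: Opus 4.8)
The plan is to establish the $AC^p$ statement by a direct estimate on the metric speed and then to deduce the $W^{1,p}$ statement from it by bookkeeping through the equivalences recorded in Theorem~\ref{thm:sobaccurve}.

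\emph{The $AC^p$ case.} Put $L:=\Lip(\varphi)$, let $\gamma\in AC^p([0,1],\X)$, and set $\eta:=\varphi\circ\gamma$, which is a continuous curve since $\gamma$ is. Composing the defining inequality \eqref{eq:ac} for $\gamma$ with $\varphi$ and using $\sfd_\Y(\varphi(x),\varphi(x'))\le L\,\sfd_\X(x,x')$ we get $\sfd_\Y(\eta_s,\eta_t)\le L\int_t^s|\dot\gamma_r|\,\d r$ for $t<s$, so $\eta\in AC^p([0,1],\Y)$ with metric speed dominated by $L|\dot\gamma|$; in particular, by Theorem~\ref{thm:ms}, the limit $|\dot\eta_t|$ exists for a.e.\ $t$. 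To sharpen $L$ into the pointwise slope, fix $t$ in the full-measure set of times at which both $|\dot\gamma_t|$ and $|\dot\eta_t|$ exist, fix $\eps>0$, and pick $\delta>0$ so that $\sfd_\Y(\varphi(y),\varphi(\gamma_t))\le(\lip\varphi(\gamma_t)+\eps)\,\sfd_\X(y,\gamma_t)$ whenever $\sfd_\X(y,\gamma_t)<\delta$ (this is the very definition of $\lip\varphi$ at $\gamma_t$, in the natural metric-valued reading of the formula recalled in Section~\ref{se:sobcal}; the inequality is trivially true also for $y=\gamma_t$, and if $\gamma_t$ is an isolated point then continuity of $\gamma$ forces $|\dot\gamma_t|=0$ and nothing has to be proved). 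Since $\gamma$ is continuous, $\sfd_\X(\gamma_{t+h},\gamma_t)<\delta$ for $|h|$ small, hence
\[
\frac{\sfd_\Y(\eta_{t+h},\eta_t)}{|h|}\le(\lip\varphi(\gamma_t)+\eps)\,\frac{\sfd_\X(\gamma_{t+h},\gamma_t)}{|h|};
\]
letting $h\to0$ and then $\eps\downarrow0$ yields $|\dot\eta_t|\le\lip\varphi(\gamma_t)\,|\dot\gamma_t|$ for a.e.\ $t$, which is the assertion (the function $t\mapsto\lip\varphi(\gamma_t)$ being Borel, as $\lip\varphi$ is Borel on $\X$ and $\gamma$ is continuous).

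\emph{The $W^{1,p}$ case.} Let $\gamma\in W^{1,p}([0,1],\X)$ and let $\bar\gamma\in AC^p([0,1],\X)$ be its continuous representative, which exists by Theorem~\ref{thm:sobaccurve}(ii-a). Then $\varphi\circ\bar\gamma$ is a continuous curve coinciding a.e.\ with $\varphi\circ\gamma$, hence it is the continuous representative of the class $\varphi\circ\gamma$; by the $AC^p$ case it belongs to $AC^p([0,1],\Y)$, and therefore Theorem~\ref{thm:sobaccurve}(i) gives $\varphi\circ\gamma\in W^{1,p}([0,1],\Y)$. Finally, by Theorem~\ref{thm:sobaccurve}(ii-c) the distributional derivative $|\partial_t(\varphi\circ\gamma)|$ agrees a.e.\ with the metric speed of $\varphi\circ\bar\gamma$, which by the $AC^p$ case is dominated by $\lip\varphi(\bar\gamma_t)\,|\dot{\bar\gamma}_t|$; since $\bar\gamma=\gamma$ a.e.\ and, again by Theorem~\ref{thm:sobaccurve}(ii-c), $|\dot{\bar\gamma}_t|$ agrees a.e.\ with $|\partial_t\gamma|$, we conclude $|\partial_t(\varphi\circ\gamma)|\le\lip\varphi\circ\gamma\,|\partial_t\gamma|$ a.e., as claimed.

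\emph{Where the only work lies.} The statement is, as announced, a direct corollary; the single point that is not completely automatic is the passage from the global Lipschitz constant $\Lip(\varphi)$ to the pointwise local slope $\lip\varphi$ along the curve. This is handled by first securing membership in $AC^p$ (so that the metric speed exists a.e.\ and there is actually something to estimate) and then exploiting the continuity of the curve to make the $\delta$ coming from the definition of $\lip\varphi(\gamma_t)$ effective along $\gamma$ near $t$. Everything else is bookkeeping across the dictionary between Sobolev and absolutely continuous curves provided by Theorem~\ref{thm:sobaccurve}.
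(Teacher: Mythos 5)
Your proof is correct and follows essentially the same route as the paper: the $AC^p$ case is handled directly from the definitions of absolute continuity, metric speed and $\lip\varphi$ (your $\eps$--$\delta$ sharpening from $\Lip(\varphi)$ to $\lip\varphi(\gamma_t)$ is exactly what the paper's "from the definition of metric speed" compresses), and the Sobolev case is then deduced by transferring through Theorem \ref{thm:sobaccurve}. No gaps; your version just spells out the details the paper leaves implicit.
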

\begin{proof}
Assume that $\gamma\in AC^p([0,1],\X)$. Then directly from the definition it is clear that $\varphi\circ\gamma\in AC^p([0,1],\Y)$, while from the definition of metric speed it follows the desired bound on the metric speed of the composition. The case of Sobolev curves now follows from Theorem \ref{thm:sobaccurve}.
\end{proof}

For later use let us point out the following simple lemma:
\begin{lemma}\label{le:spcount}
Let $(\X,\sfd)$ be a complete and separable space. Then there exists a countable family $(f_n)$ of 1-Lipschitz functions with bounded support such that
\begin{equation}
\label{eq:distsup}
\sfd(x,y)=\sup_n(f_n(x)-f_n(y))\qquad\forall x,y\in\X
\end{equation}
and any such family has the following property:

For any $p\in(1,\infty)$ and $\gamma\in L^p([0,1],\X)$ we have that $\gamma\in  W^{1,p}([0,1],\X)$ if and only if $f_n\circ\gamma\in W^{1,p}((0,1))$ for every $n\in\N$ with $\sup_n\partial_t(f_n\circ\gamma)\in L^p((0,1))$. Moreover, if these holds we also have
\[
|\partial_t\gamma_t|=\sup_n\partial_t(f_{n}\circ\gamma)_t.
\]
\end{lemma}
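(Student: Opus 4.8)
The plan is to first produce the countable family $(f_n)$ and then establish the ``moreover'' characterization. For the existence part, let $(x_k)_{k\in\N}$ be a countable dense subset of $\X$ and for each $k$ and each $m\in\N$ set $f_{k,m}(x):=\big(m-\sfd(x,x_k)\big)^+-\big(m-\sfd(x_k,x_k)\big)^+$; equivalently, truncate $\sfd(\cdot,x_k)$ at level $m$. Each such function is $1$-Lipschitz with bounded support, and for fixed $x,y$ and $\eps>0$ one can pick $k$ with $\sfd(y,x_k)<\eps$ and $m$ large enough that $\sfd(x,x_k),\sfd(y,x_k)<m$, so that $f_{k,m}(x)-f_{k,m}(y)=\sfd(x,x_k)-\sfd(y,x_k)\geq \sfd(x,y)-2\eps$; the reverse inequality $f_{k,m}(x)-f_{k,m}(y)\leq\sfd(x,y)$ is just $1$-Lipschitzness. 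Relabel $(f_{k,m})$ as $(f_n)$; this gives \eqref{eq:distsup}.

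Next I would prove the equivalence, for an arbitrary family $(f_n)$ of $1$-Lipschitz bounded-support functions satisfying \eqref{eq:distsup}. The forward implication is immediate from Corollary \ref{cor:chain}: if $\gamma\in W^{1,p}([0,1],\X)$ then each $f_n\circ\gamma\in W^{1,p}((0,1))$ with $|\partial_t(f_n\circ\gamma)|\leq \lip f_n\circ\gamma\,|\partial_t\gamma|\leq|\partial_t\gamma|$ a.e., whence $\sup_n|\partial_t(f_n\circ\gamma)|\leq|\partial_t\gamma|\in L^p((0,1))$. For the converse, suppose each $f_n\circ\gamma\in W^{1,p}((0,1))$ and $G:=\sup_n|\partial_t(f_n\circ\gamma)|\in L^p((0,1))$. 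Each $f_n\circ\gamma$ has a continuous (indeed absolutely continuous) representative with $|f_n(\gamma_s)-f_n(\gamma_t)|\leq\int_t^s|\partial_r(f_n\circ\gamma)|\,\d r\leq\int_t^sG(r)\,\d r$ for all $s,t$ outside a common negligible set $\mathcal N$ (take the union over $n$ of the negligible sets where the representatives disagree). Then for $t,s\notin\mathcal N$, \eqref{eq:distsup} gives $\sfd(\gamma_t,\gamma_s)=\sup_n\big(f_n(\gamma_t)-f_n(\gamma_s)\big)\leq\int_t^s G(r)\,\d r$, so by absolute continuity of the integral $\gamma|_{[0,1]\setminus\mathcal N}$ extends to a curve in $AC^p([0,1],\X)$ whose metric speed is $\leq G$; by Theorem \ref{thm:sobaccurve}(i) this continuous representative — hence $\gamma$ itself — lies in $W^{1,p}([0,1],\X)$.

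It remains to identify $|\partial_t\gamma|$ with $\sup_n|\partial_t(f_n\circ\gamma)|$ a.e. The inequality $|\partial_t\gamma|\geq\sup_n|\partial_t(f_n\circ\gamma)|$ a.e.\ is the bound from Corollary \ref{cor:chain} noted above (it holds for each $n$ a.e., hence the countable supremum is $\leq|\partial_t\gamma|$ a.e.). For the opposite inequality, work with the continuous representative and the metric speed $|\dot\gamma_t|$ (which equals $|\partial_t\gamma|$ a.e.\ by Theorem \ref{thm:sobaccurve}(ii-c)): from \eqref{eq:distsup},
\[
\frac{\sfd(\gamma_{t+h},\gamma_t)}{|h|}=\sup_n\frac{|f_n(\gamma_{t+h})-f_n(\gamma_t)|}{|h|}\leq\sup_n\frac1{|h|}\int_t^{t+h}|\partial_r(f_n\circ\gamma)|\,\d r\leq\frac1{|h|}\int_t^{t+h}G(r)\,\d r,
\]
and letting $h\to0$ at a Lebesgue point $t$ of $G$ (here $G=\sup_n|\partial_t(f_n\circ\gamma)|$) yields $|\dot\gamma_t|\leq G(t)$. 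Combined with the reverse bound we get $|\partial_t\gamma_t|=G(t)=\sup_n\partial_t(f_n\circ\gamma)_t$ for a.e.\ $t$, where I write $\partial_t(f_n\circ\gamma)_t$ for the absolute value of the derivative as in the statement. The only mildly delicate point is the bookkeeping of null sets in the converse implication — ensuring a single negligible $\mathcal N$ works simultaneously for all $n$ — but since the family is countable this is routine; no real obstacle is expected.
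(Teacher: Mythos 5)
Your main argument (the equivalence and the identification of $|\partial_t\gamma|$) is correct and essentially the paper's route: forward direction by the chain rule, converse by reproducing the argument of point (ii) of Theorem \ref{thm:sobaccurve} with a single null set handling all $n$, and the identification either by minimality of the metric speed (the paper) or, as you do, by Lebesgue differentiation at Lebesgue points of $G$ --- a harmless variant. The one defect is in your construction of the family: as written, $f_{k,m}(x)=\big(m-\sfd(x,x_k)\big)^+ - \big(m-\sfd(x_k,x_k)\big)^+=\big(m-\sfd(x,x_k)\big)^+-m$, which equals $-m$ outside $B_m(x_k)$, so these functions do \emph{not} have bounded support (and, with this definition, $f_{k,m}(x)-f_{k,m}(y)=\sfd(y,x_k)-\sfd(x,x_k)$ for large $m$, the opposite sign of what you display, so choosing $x_k$ near $y$ gives approximately $-\sfd(x,y)$ rather than $\sfd(x,y)$). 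Both slips disappear if you drop the superfluous subtracted constant and take $f_{k,m}:=\big(m-\sfd(\cdot,x_k)\big)\vee 0$, supported in $\bar B_m(x_k)$, which is exactly the paper's choice; then for given $x,y$ pick $x_k$ close to $x$ and $m$ large to get $f_{k,m}(x)-f_{k,m}(y)=\sfd(y,x_k)-\sfd(x,x_k)\geq\sfd(x,y)-2\eps$, and \eqref{eq:distsup} follows. With this correction the proof is complete and matches the paper's.
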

\begin{proof} To prove that a countable family for which \eqref{eq:distsup} holds exists, simply pick $f_{k,n}(x):=(k-\sfd(x,x_n))\vee 0$, where  $(x_n)\subset\X$ is countable and dense.

For the second part of the claim, let $(f_n)$ be an arbitrary sequence of 1-Lipschitz functions for which \eqref{eq:distsup} holds and notice that for every absolutely continuous curve $\gamma$ the function $f_n\circ\gamma$ is absolutely continuous with $(f_n\circ\gamma)'_t\leq |\dot\gamma_t|$. Also, for any $t,s\in[0,1]$, $t\leq s$ we have
\[
\sfd(\gamma_t,\gamma_s)=\sup_nf_n(s)-f_n(t)=\sup_n\int_t^s (f_{n}\circ\gamma)'_r\,\d r\leq\int_t^s\sup_n (f_{n}\circ\gamma)'_r\,\d r,
\]
showing that $\sup_n (f_{n}\circ\gamma)'_t\geq|\dot\gamma_t|$ for a.e.\ $t$. Therefore Theorem \ref{thm:sobaccurve} ensures that if $\gamma\in W^{1,p}([0,1],\X)$, then the claimed properties of $f_n\circ\gamma$ all hold.

Conversely, assume that $\gamma\in L^p([0,1],\X)$ is such that $f_n\circ\gamma\in W^{1,p}((0,1))$ for every $n\in\N$ with $\sup_n\partial_t(f_n\circ\gamma)\in L^p((0,1))$. Then the same arguments used in the proof of point $(ii)$ of Theorem \ref{thm:sobaccurve} give that $\gamma$ admits a continuous representative in $AC^p([0,1],\X)$ and the conclusion follows.
\end{proof}
We now study the particular case of  curves with values in some $L^p$ space.
\begin{lemma}\label{le:basequ} Let $(\X,\sfd,\mm)$ be a complete and separable metric space equipped with a non-negative and non-zero Radon measure finite on bounded sets. Equip $\X\times[0,1]$ with the product of $\mm$ and the Lebesgue measure and  let $(t,x)\mapsto f_t(x)\in \R$ be a given Borel map in $L^p(\X\times[0,1])$, for $p\in(1,\infty)$.

Then the following are equivalent:
\begin{itemize}
\item[i)] The map $[0,1]\ni t\mapsto f_t\in L^p(\X)$ belongs to $W^{1,p}([0,1],L^p(\X))$ (resp. $AC^p([0,1],L^p(\X))$).
\item[ii)] There is a function $g\in L^p(\X\times[0,1])$, $g\geq 0$, such that for a.e.\ (resp.\ every) $t,s\in[0,1]$, $t<s$ it holds
\begin{equation}
\label{eq:geq}
|f_s-f_t|\leq \int_t^s g_r\,\d r\quad\mm-a.e..
\end{equation}
\item[iii)] For $\mm$-a.e.\ $x\in\X$ we have $f_\cdot(x)\in W^{1,p}([0,1])$ and the function $(t,x)\mapsto |\partial_tf_t(x)|=:h_t(x)$ belongs to $L^p(\X\times[0,1])$ (resp.\ and moreover $(f_t)\in C([0,1],L^p(\X))$).
\end{itemize}
Moreover, if these holds $h$ is the least function $g\geq 0$, in the $\mm\times\mathcal L^1$-a.e.\ sense, for which \eqref{eq:geq} holds and
\begin{equation}
\label{eq:splpu}
\frac{|f_{t+\eps}(x)-f_t(x)|}{|\eps|}\quad\to\quad h_t(x)\qquad\text{ in $L^p(\X\times[0,1])$ as $\eps\to 0$,}
\end{equation}
where the incremental ratios are defined to be 0 if $t+\eps\notin[0,1]$.
\end{lemma}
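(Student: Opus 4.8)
The plan is to prove the cycle of implications $(i)\Rightarrow(iii)\Rightarrow(ii)\Rightarrow(i)$ and to read off the two ``moreover'' statements along the way. The one elementary computation used throughout is the Tonelli identity
\[
\mathcal E_{p,\eps}(f_\cdot)=\int_\X \mathcal E_{p,\eps}\big(f_\cdot(x)\big)\,\d\mm(x),\qquad\eps\in(0,1),
\]
valid because $\sfd_{L^p(\X)}(u,v)^p=\int_\X|u-v|^p\,\d\mm$; here $f_\cdot$ on the left denotes the curve $t\mapsto f_t\in L^p(\X)$ and $f_\cdot(x)$ on the right the real curve $t\mapsto f_t(x)$. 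That $t\mapsto f_t$ lies in $L^p([0,1],L^p(\X))$ (using separability of $L^p(\X)$) and that $f_\cdot(x)\in L^p((0,1))$ for $\mm$-a.e.\ $x$ are immediate from Fubini.

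For $(i)\Rightarrow(iii)$, which is the substantial step: by Corollary \ref{cor:energy0} we have $\mathcal E_{p,\eps}(f_\cdot)\le\mathcal E_p(f_\cdot)<\infty$ for every $\eps$, so combining the displayed identity along a sequence $\eps_k\downarrow0$ with Fatou's lemma and Corollary \ref{cor:energy0} applied to each real curve $f_\cdot(x)$ gives $\int_\X\mathcal E_p(f_\cdot(x))\,\d\mm(x)\le\mathcal E_p(f_\cdot)<\infty$; in particular $f_\cdot(x)\in W^{1,p}([0,1])$ for $\mm$-a.e.\ $x$. For such $x$, Theorem \ref{thm:sobaccurve}(ii-b) applied to $f_\cdot(x)$ gives that $\tilde h_n(t,x):=\frac{|f_{t+1/n}(x)-f_t(x)|}{1/n}$ (set $=0$ if $t+1/n\notin[0,1]$) converges strongly in $L^p((0,1))$ to $h_\cdot(x):=|\partial_tf_\cdot(x)|$, with $\|h_\cdot(x)\|_{L^p((0,1))}^p=\mathcal E_p(f_\cdot(x))$. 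Since then $\|\tilde h_n(\cdot,x)-h_\cdot(x)\|_{L^p((0,1))}^p\le 2^p\mathcal E_p(f_\cdot(x))$ and $x\mapsto\mathcal E_p(f_\cdot(x))$ is $\mm$-integrable, dominated convergence upgrades this to $\tilde h_n\to h$ in $L^p(\X\times(0,1))$; as the $\tilde h_n$ are jointly Borel, $h$ admits a jointly measurable representative and $h\in L^p(\X\times[0,1])$. Repeating the last two sentences with $\eps\downarrow0$ in place of $1/n$ proves \eqref{eq:splpu}. In the ``resp.''\ case $f_\cdot\in AC^p([0,1],L^p(\X))\subset C([0,1],L^p(\X))$ is automatic, and the rest follows since $AC^p\subset W^{1,p}$.

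For $(iii)\Rightarrow(ii)$: for $\mm$-a.e.\ $x$, Theorem \ref{thm:sobaccurve}(ii-a) furnishes a representative $\tilde f_\cdot(x)\in AC^p([0,1])$ of $f_\cdot(x)$ with $|\tilde f_s(x)-\tilde f_t(x)|\le\int_t^sh_r(x)\,\d r$ for all $t,s$; since $\tilde f_\cdot(x)=f_\cdot(x)$ a.e.\ on $[0,1]$, Fubini turns this into: for a.e.\ $(t,s)$ with $t<s$, $|f_s-f_t|\le\int_t^sh_r\,\d r$ $\mm$-a.e., which is $(ii)$ with $g=h$. In the ``resp.''\ case we additionally assume $t\mapsto f_t\in C([0,1],L^p(\X))$; then an arbitrary pair $t<s$ is reached by good pairs $(t_n,s_n)$ and one passes to the limit using $L^p(\X)$-continuity of $t\mapsto f_t$ and absolute continuity of $r\mapsto\int_0^rh_\cdot(x)\,\d r$. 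The minimality of $h$ is obtained the same way: if $g\ge0$ satisfies \eqref{eq:geq}, passing to $\tilde f_\cdot(x)$ and using continuity gives $|\tilde f_s(x)-\tilde f_t(x)|\le\int_t^sg_r(x)\,\d r$ for all $t,s$ and $\mm$-a.e.\ $x$, so the minimality clause of Theorem \ref{thm:ms} yields $h_t(x)=|\dot{\tilde f}_t(x)|\le g_t(x)$ a.e.\ $t$, hence $h\le g$ $\mm\times\mathcal L^1$-a.e. Finally $(ii)\Rightarrow(i)$: Minkowski's integral inequality gives $\sfd_{L^p(\X)}(f_s,f_t)\le\int_t^sm(r)\,\d r$ for a.e.\ (resp.\ every) $t<s$, with $m(r):=\|g_r\|_{L^p(\X)}\in L^p((0,1))$; in the ``resp.''\ case this is the definition of $AC^p([0,1],L^p(\X))$, while in the a.e.\ case a Fubini argument on the shear $(t,s)\mapsto(t,s-t)$ shows that for a.e.\ $\eps$ the bound $\sfd_{L^p(\X)}(f_{t+\eps},f_t)\le\int_t^{t+\eps}m$ holds for a.e.\ $t$, whence $\mathcal E_{p,\eps}(f_\cdot)\le\|m\|_{L^p((0,1))}^p$ by Jensen and Fubini and Corollary \ref{cor:energy0} gives $\mathcal E_p(f_\cdot)<\infty$.

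I expect the main obstacle to be purely the bookkeeping of ``a.e.\ in $(t,s)$'' statements: one must pass from an inequality holding for a.e.\ pair $(t,s)$ (produced by Fubini in $(iii)\Rightarrow(ii)$, or assumed in $(ii)$) to something admissible for the one-dimensional metric-speed machinery of Theorem \ref{thm:ms} and Theorem \ref{thm:sobaccurve}, which is phrased for continuous representatives and for all $t,s$. The three technical hinges that carry this out are the shear-map Fubini trick in $(ii)\Rightarrow(i)$, the continuity argument in the ``resp.''\ cases, and the dominated-convergence upgrade from per-$x$ to joint convergence in $(i)\Rightarrow(iii)$, which simultaneously secures the joint measurability of $h$.
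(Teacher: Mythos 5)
Your proof is correct, but it runs the implications in a genuinely different order than the paper and with a different engine. The paper proves the cycle $(i)\Rightarrow(ii)\Rightarrow(i)$, $(ii)\Rightarrow(iii)\Rightarrow(ii)$ for the absolutely continuous case (and declares the Sobolev case analogous): its $(i)\Rightarrow(ii)$ leans on the Radon--Nikodym property of $L^p(\X)$ to produce a Bochner derivative $\tilde g_t$ and sets $g_t:=|\tilde g_t|$, and its delicate step is $(ii)\Rightarrow(iii)$, carried out by testing against a countable dense family of $C^1_c((0,1))$ functions and using the distributional characterization of $W^{1,p}((0,1))$; the convergence \eqref{eq:splpu} is then extracted at the end by per-$x$ convergence plus domination coming from \eqref{eq:geq}. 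You instead prove $(i)\Rightarrow(iii)$ directly via the Tonelli splitting $\mathcal E_{p,\eps}(f_\cdot)=\int_\X\mathcal E_{p,\eps}(f_\cdot(x))\,\d\mm$, Corollary \ref{cor:energy0}, Fatou and Theorem \ref{thm:sobaccurve}, which treats the Sobolev case as the primary one, bypasses both the Radon--Nikodym property and the test-function argument, and delivers \eqref{eq:splpu} and the membership $h\in L^p(\X\times[0,1])$ in the same stroke; your $(iii)\Rightarrow(ii)$ (Fubini on the AC representative, continuity in the ``resp.'' case, and minimality of $h$ via Theorem \ref{thm:ms}) and $(ii)\Rightarrow(i)$ (Minkowski, plus the shear-Fubini to handle the a.e.-pair hypothesis, legitimate because Corollary \ref{cor:energy0} guarantees the limit of $\mathcal E_{p,\eps}$ exists so bounds along a.e.\ $\eps$ suffice) are close in spirit to the paper's corresponding fragments. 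What each route buys: the paper's use of RNP makes $(i)\Rightarrow(ii)$ essentially immediate, while your argument is more ``metric'' and reuses exactly the curve machinery developed in the same section, arguably in closer analogy with the Korevaar--Schoen energy treatment later in the paper.

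One small point to tighten: in your dominated-convergence upgrade you integrate $x\mapsto\|\tilde h_n(\cdot,x)-h_\cdot(x)\|_{L^p((0,1))}^p$ before having secured a jointly measurable version of $h$, which is slightly circular as written. The standard fix is to apply the same domination to $\|\tilde h_n(\cdot,x)-\tilde h_m(\cdot,x)\|_{L^p((0,1))}^p$ (jointly Borel, hence measurable in $x$) to see that $(\tilde h_n)$ is Cauchy in $L^p(\X\times[0,1])$, and then identify the $t$-sections of its limit with $|\partial_t f_t(x)|$ for $\mm$-a.e.\ $x$. This is bookkeeping, not a gap; the paper faces the analogous measurability issue in its $(i)\Rightarrow(ii)$ and dispatches it by citing a measurable-selection-type argument.
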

\begin{proof}\  We shall deal with the absolutely continuous case, as the Sobolev one can be obtained through very similar arguments taking also into account Theorem \ref{thm:sobaccurve}.

\noindent{$\mathbf{ (i)\Rightarrow(ii)}$} Recall that $L^p(\X)$ has the Radon-Nikodym property and let $(\tilde g_t)\in L^p([0,1],L^p(\X))$ be the derivative of   $(f_t)$. Then for every  $t,s\in[0,1]$, $t<s$ it holds
\[
f_s-f_t=\int_t^s \tilde g_r\,\d r,
\]
the integral being intended in the Bochner sense. By classical arguments (see e.g.\ \cite[Proposition 1.3.19]{GP19}) we deduce the existence
of a Borel function $g \in L^{p}(\X\times[0,1])$ such that for $\mathcal{L}^{1}$-a.e. $t \in [0,1]$ we have $g_{t}(x)=|\tilde g_t(x)|$ for $\mm-a.e.$ $x \in \X$.
It is then easy to see that the same formula holds also $\mm$-a.e., so that the conclusion holds with $g_t:=|\tilde g_t|$.

\noindent{$\mathbf{ (ii)\Rightarrow(i)}$}  For any $t,s\in[0,1]$, $t<s$ from  \eqref{eq:geq}  we have
\[
\|f_s-f_t\|_{L^p(\X)}\leq \int_t^s \|g_r\|_{L^p(\X)}\,\d r
\]
and since the identity $\int_0^1\|g_t\|_{L^p(\X)}^p\,\d t=\iint_0^1|g_t|^p\,\d t\,\d\mm<\infty$ shows that $(\|g_t\|_{L^p(\X)})\in L^p((0,1))$, this is sufficient to conclude.

\noindent{$\mathbf{ (ii)\Rightarrow(iii)}$} Continuity follows from the already proved implication $(ii)\Rightarrow(i)$. The assumption is equivalent to asking that for a.e.\ $t,\eps\in[0,1]$ with $t+\eps\in[0,1]$ it holds
\begin{equation}
\label{eq:geqvar}
|f_{t+\eps}-f_t|\leq \int_t^{t+\eps} g_r\,\d r\quad\mm-a.e..
\end{equation}
Notice also that  Fubini's theorem and the assumption $g\in L^p(\X\times [0,1])$ give that $t\mapsto g_t(x)\in L^p((0,1))$ for $\mm$-a.e.\ $x$.

Now observe that for given $\tilde f\in L^1((0,1))$, $\tilde g\in L^p((0,1))$ we have $\tilde f\in W^{1,p}((0,1))$ with $|\partial_t\tilde f_t|\leq \tilde g_t$ a.e.\ $t$ if and only if for every $\varphi\in C^1_c((0,1))$, $\varphi\geq 0$ it holds
\[
\Big|\int_0^1 \tilde f_t\varphi'_t\,\d t\Big|\leq\int_0^1\tilde g_t\varphi_t\,\d t.
\]
Also, by a simple approximation argument  it is sufficient to check the above for $\varphi$ running in a countable set $D$, dense in the $C^1$-topology in the class of admissible $\varphi$'s.

With this said, for any $\varphi\in D$ we have
\[
\begin{split}
\int_0^1 f_t\varphi'_t\,\d t=\lim_{\eps\downarrow0}\int_0^1 f_t\frac{\varphi_{t+\eps}-\varphi_t}\eps \,\d t=-\lim_{\eps\downarrow0}\int_0^1\varphi_t\frac{f_t-f_{t-\eps}}{\eps}\,\d t\qquad\mm-a.e.
\end{split}
\]
and therefore \eqref{eq:geqvar} gives that $\mm$-a.e.\ the bound
\[
\Big|\int_0^1 f_t\varphi'_t\,\d t\Big|=\lim_{\eps\downarrow0}\Big|\int_0^1\varphi_t\frac{f_t-f_{t-\eps}}{\eps}\,\d t\Big|\leq\lim_{\eps\downarrow0}\int_0^1\varphi_t\frac1\eps\int_{t}^{t+\eps}g_r\,\d r\,\d t=\int_0^1\varphi_tg_t\,\d t
\]
holds for every $\varphi\in D$. According to what previously said, this is sufficient to conclude.

\noindent{$\mathbf{ (iii)\Rightarrow(ii)}$}  For $\mm$-a.e.\ $x$ we know that: for a.e.\ $t,s\in[0,1]$ with $t<s$ it holds
\[
|f_s(x)-f_t(x)|=\big|\int_t^s\partial_rf_r(x)\,\d r\big|\leq\int_t^s|\partial_rf_r(x)|\,\d r
\]
and thus Fubini's theorem gives that for a.e.\ $t,s\in[0,1]$ with $t<s$ it holds
\[
|f_s-f_t|\leq \int_t^s|\partial_rf_r|\,\d r\qquad\mm-a.e..
\]
To conclude that the same holds for every $t,s$, notice that the continuity assumption on $(f_t)$ grants that the left hand side of the above is continuous in $t,s$ with values in $L^p(\X)$. The same holds for the right hand side due to the assumption $\partial_tf_t\in L^p(\X\times[0,1])$.

\noindent{\bf{Last statements}} The fact that $h$ is the least $g\geq 0$ for which \eqref{eq:geq} holds follows by the arguments given. To prove \eqref{eq:splpu} notice that by standard results about $W^{1,p}((0,1))$ functions we know that for $\mm$-a.e.\ $x\in\X$ the given incremental ratios converge to $t\mapsto h_t(x)$ in $L^p((0,1))$. Hence by dominate convergence and Fubini's theorem the conclusion follows if we show that the incremental ratios are dominated in $L^p(\X\times[0,1])$. This is a direct consequence of \eqref{eq:geq}.
\end{proof}

\subsection{Regular Lagrangian Flows}
Here we very briefly recall the main definitions and results of the metric theory of Regular Lagrangian Flows as developed in \cite{Ambrosio-Trevisan14}. The concept of Regular Lagrangian Flow provides the correct substitute, in this setting, for the concept of solution of the ODE
\[
\gamma_t'=Z_t(\gamma_t),
\]
see also \cite{Ambrosio2008} and \cite{AT15} for  overviews of the subject and historical remarks on $\R^d$ and $\RCD$ spaces respectively.

\bigskip

We begin with:

\begin{definition}[Regular Lagrangian Flow]
Let  $(\X,\sfd,\mm)$ be a complete and separable metric space equipped with a non-negative and non-zero Radon measure giving finite mass to bounded sets.

Let $(Z_t)\subset L^1([0,1],L^2(T\X))$ be a given family of vector fields. A Regular Lagrangian Flow of $(Z_t)$ is a Borel map $\Fl^Z:[0,1]\times \X\to \X$ such that:
\begin{itemize}
\item[i)] For some $C>0$ it holds
\begin{equation}
\label{eq:bdcompr}
(\Fl^Z_t)_*\mm\leq C\mm\qquad\forall t\in[0,1].
\end{equation}
\item[ii)] For every $f\in W^{1,2}(\X)$ it holds: for $\mm$-a.e.\ $x$  the map $t\mapsto f(\Fl^Z_t(x))$ is in $W^{1,1}(0,1)$  with
\[
\partial_tf(\Fl^Z_t(x))=\d f(Z_t)\big( \Fl^Z_t(x)\big)\qquad a.e.\ t\in[0,1].
\]
\end{itemize}
\end{definition}
Notice that part of the role of $(i)$ is to ensure that $(ii)$ makes sense, as the function $\d f(Z_t)$ is only defined up to $\mm$-a.e.\ equality, so that its composition with $\Fl^Z_t$ makes sense because $(\Fl^Z_t)_*\mm\ll\mm$. We also point out that in \cite{Ambrosio-Trevisan14} property $(ii)$ is only required for a certain family of functions, labelled $\mathcal A$, dense in $W^{1,2}(\X)$. It is easy to see that this is the same as imposing the same property for any $f\in W^{1,2}(\X)$ because given such $f$ and $(f_n)\subset\mathcal A$ converging to $f$ in the $W^{1,2}$-topology, we have that $\d f_n\to \d f$ in $L^2(T^*\X)$. Therefore taking into account the integrability assumption on $(Z_t)$ and \eqref{eq:bdcompr} we deduce that the sequence of maps $(t,x)\mapsto \d f_n(Z_t)(\Fl^Z_t(x))$ converge to $(t,x)\mapsto \d f(Z_t)(\Fl^Z_t(x))$ in $L^1_t(L^1_x)\sim L^1_x(L^1_t)$. Similarly, $(t,x)\mapsto f_n(\Fl^Z_t(x))$ converge to $(t,x)\mapsto f(\Fl^Z_t(x))$ in $L^2_t(L^2_x)\sim L^2_x(L^2_t)$. Hence up to pass to a not-relabeled subsequence we deduce that for $\mm$-a.e.\ $x$ the functions $t\mapsto f_n(\Fl^Z_t(x)),\,\d f_n(Z_t)(\Fl^Z_t(x))$ converge to $t\mapsto f(\Fl^Z_t(x)),\,\d f(Z_t)(\Fl^Z_t(x))$ in the $L^1$ and $L^2$ topologies respectively. This is sufficient to pass to the limit in property $(ii)$ for the $f_n$'s and obtain that the same property holds for $f$.

The concept of Regular Lagrangian Flow is tightly linked to the continuity equation:
\begin{definition}[Continuity equation]Let  $(\X,\sfd,\mm)$ be a complete and separable metric space equipped with a non-negative and non-zero Radon measure giving finite mass to bounded sets.

Let $T>0$, $[0,T]\ni t\mapsto \rho_t\in L^\infty(\X)$ be a weakly$^*$ continuous curve of probability densities and $[0,T]\ni t\mapsto v_t\subset L^0(T\X)$ be a Borel map. We say that $(\rho_t,v_t)$ solves the continuity equation provided:
\begin{itemize}
\item[i)] We have $\displaystyle\iint_0^T|v_t|^2\rho_t\,\d t\,\d\mm<\infty$
\item[ii)] For any $f\in W^{1,2}(\X)$ the map $t\mapsto\int f\rho_t\,\d\mm$ is absolutely continuous and its derivative is given for $\mathcal{L}^{1}$-a.e. $t \in [0,T]$ by
\[
\frac{\d}{\d t}\int f\rho_t\,\d\mm=\int \d f(v_t)\rho_t\,\d\mm.
\]
\end{itemize}
\end{definition}
Albeit the last two definitions make sense on arbitrary metric measure spaces, to develop a good theory it seems necessary to impose a lower bound on the Ricci curvature. In particular, the following notion of regularity for vector fields is important:
\begin{definition}[Regular vector fields]
We say that a vector field $Z$ over a $\RCD(K,\infty)$ space $\X$ is regular provided $Z\in L^\infty \cap W^{1,2}_{C}(T\X)$ and moreover it  is in the domain of the divergence with $(\div Z)^-\in L^\infty(\X)$. For a Borel time-dependent vector field $(Z_t)$ defined for $t\in[0,T]$ we say that it is regular provided  $Z_t$ is regular in the previous sense for a.e.\ $t$ and \[
\int_0^T\||Z_t|\|_{L^\infty}+\||\nabla Z_t|_{\sf HS}\|_{L^2}+\|(\div Z_t)^-\|_{L^\infty}\,\d t<\infty.
\]
\end{definition}
The main/basic result of the theory of Regular Lagrangian Flows on $\RCD$ spaces is:
\begin{theorem}\label{thm:AT}
Let $(\X,\sfd,\mm)$ be a $\RCD(K,\infty)$ space and $(Z_t)$ a regular vector field parametrized on $t\in I\subset \R$. Then:
\begin{itemize}
\item[i)] There is a unique Regular Lagrangian flow $(\Fl^{Z}_t)$ of $(Z_t)$ (uniqueness is intended at the level of curves, i.e.: if $(\tilde \Fl_t)$ is another flow, then for $\mm$-a.e.\ $x$ it holds $\Fl^{Z}_t(x)=\tilde\Fl_t(x)$ for any $t\in I$).
\item[ii)] For any bounded probability density $\rho_0$ with bounded support there is a unique family $(\rho_t)$, $t\in I$, such that $\rho_t\leq C$ for some $C>0$ and every $t\in I$ and $(\rho_t,Z_t)$ solves the continuity equation. Moreover,  $\rho_t$ is the density w.r.t.\ $\mm$ of $(\Fl^{Z}_t)_*(\rho_0\mm)$ and the Regular Lagrangian Flow is the only flow with this property.
\item[iii)] For $\mm$-a.e.\ $x$ the curve $t\mapsto \Fl^Z_t(x)$ is absolutely continuous and its  metric speed ${\rm ms}(\Fl^Z_\cdot(x),t)$ at time $t$ is given by
\begin{equation}
\label{eq:ms}
{\rm ms}(\Fl^Z_\cdot(x),t)=|Z_t|(\Fl^Z_t(x))\qquad\mm-a.e.\ x\ a.e.\ t.
\end{equation}
Also,  for $(\rho_t)$ as above we have
\begin{equation}
\label{eq:bcompr}
\|\rho_s\|_{L^\infty}\leq \|\rho_t\|_{L^\infty}\exp\Big(\int_t^s\|(\div Z_r)^-\|_{L^\infty}\,\d r\Big)\qquad\forall t,s\in I,\ t\leq s.
\end{equation}
\end{itemize}
\end{theorem}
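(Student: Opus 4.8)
The plan is to follow the Ambrosio--Trevisan strategy in three movements: first produce bounded solutions of the continuity equation driven by $(Z_t)$, then represent them probabilistically via the superposition principle, and finally promote this to existence and uniqueness of a genuine flow by establishing uniqueness at the level of the continuity equation. For the first movement, given a bounded probability density $\rho_0$ with bounded support, I would build a weakly$^*$ continuous curve $(\rho_t)$ of bounded densities solving the continuity equation by regularizing $(Z_t)$ (or $\rho$) with the heat semigroup $\h_s$ of the $\RCD$ space and passing to the limit. The quantitative heart here is \eqref{eq:bcompr}: testing the equation against $\rho_t^{p-1}$ gives, at least formally, $\frac{\d}{\d t}\int\rho_t^p\,\d\mm=-(p-1)\int\rho_t^p\,\div Z_t\,\d\mm\le (p-1)\|(\div Z_t)^-\|_{L^\infty}\int\rho_t^p\,\d\mm$, so Gr\"onwall followed by $p\to\infty$ yields the claimed exponential bound on $\|\rho_t\|_{L^\infty}$, and in particular the compression property \eqref{eq:bdcompr}. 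The integrability $\iint_0^T|v_t|^2\rho_t\,\d t\,\d\mm<\infty$ is immediate from $Z_t\in L^\infty$ and $\int\rho_t\,\d\mm=1$.

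For the second movement I would invoke the metric superposition principle: the solution $(\rho_t)$ is represented by a measure $\boldsymbol\eta$ on $C([0,1],\X)$, concentrated on $AC^2$ curves $\gamma$ solving $\gamma'_t=Z_t(\gamma_t)$ in duality with $W^{1,2}(\X)$, with $(\et)_*\boldsymbol\eta=\rho_t\mm$ for every $t$ and $\iint|\dot\gamma_t|^2\,\d t\,\d\boldsymbol\eta=\iint|Z_t|^2\rho_t\,\d t\,\d\mm$; the latter equality forces $|\dot\gamma_t|=|Z_t|(\gamma_t)$ for a.e.\ $t$, $\boldsymbol\eta$-a.e.\ $\gamma$. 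Disintegrating $\boldsymbol\eta$ along the evaluation at $t=0$ against $\rho_0\mm$ and (after the uniqueness step below) selecting the curve through each point defines the flow $\Fl^Z_t(x):=\gamma_t$.

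The third movement, uniqueness of bounded solutions of the continuity equation, is where I expect the genuine difficulty. Following the DiPerna--Lions renormalization philosophy in its metric form, one regularizes a solution $(\rho_t)$ by the heat flow, derives an approximate continuity equation for $\h_s\rho_t$ with a commutator remainder of the schematic form $R_s=\d(\h_s\rho_t)(Z_t)-\h_s\big(\d\rho_t(Z_t)+\rho_t\div Z_t\big)+\rho_t\div Z_t$, and proves $R_s\to 0$ in $L^1_{\rm loc}$ as $s\downarrow0$. Exactly this commutator estimate is what forces the curvature hypothesis: one uses the Bakry--\'Emery contraction and Bochner-type bounds to control $\nabla\h_s$ acting on vector fields together with the $L^2$-regularity $\nabla Z_t\in L^2$, so that the first-order term in the Taylor expansion of the commutator cancels in the limit. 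Once renormalization is available, testing the equation for $\beta(\rho_t)$ with $\beta$ convex against cutoffs and using \eqref{eq:bcompr} gives that a solution with $\rho_0=0$ vanishes identically, hence uniqueness.

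Finally, uniqueness of the continuity equation shows that $(\Fl^Z_t)_*(\rho_0\mm)=\rho_t\mm$ is the unique bounded solution with datum $\rho_0\mm$, giving item (ii) and showing two flows push $\rho_0\mm$ to the same measure; to obtain uniqueness \emph{at the level of curves} (item (i)) one runs the standard doubling argument, observing that if two flows $\Fl,\tilde\Fl$ differed on a set of positive $\mm$-measure then the half-sum of the associated superposition measures would give two distinct representations of one solution of the continuity equation, contradicting the previous step. Item (iii) then follows: absolute continuity of $t\mapsto\Fl^Z_t(x)$ and the metric-speed identity \eqref{eq:ms} are read off from the superposition representation, while \eqref{eq:bcompr} is precisely the a priori estimate from the first step. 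The single hardest ingredient is the commutator estimate; the remaining pieces are either soft (superposition, disintegration, the doubling trick) or direct Gr\"onwall computations.
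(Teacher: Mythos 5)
Your outline follows exactly the Ambrosio--Trevisan machinery that the paper does not reprove but simply assembles by citation (existence of bounded solutions of the continuity equation with the Gr\"onwall bound giving \eqref{eq:bcompr}, uniqueness via heat-semigroup renormalization and the commutator estimate where the curvature hypothesis enters, the superposition principle, and the splitting argument upgrading Eulerian uniqueness to uniqueness at the level of curves); so the architecture is the right one and coincides with Theorems 6.1 and 6.4 of \cite{AT15}, Theorem 8.3 and estimate (4-15) of \cite{Ambrosio-Trevisan14}, combined in the paper with Lemmas \ref{le:basequ} and \ref{le:spcount}.

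There is, however, one point where your sketch assumes what has to be proved: the metric-speed identity \eqref{eq:ms}. You deduce it from the claimed energy equality $\iint_0^1|\dot\gamma_t|^2\,\d t\,\d\boldsymbol\eta=\iint_0^1|Z_t|^2\rho_t\,\d t\,\d\mm$, but the superposition principle does not give this equality: it gives a lifting $\boldsymbol\eta$ concentrated on $AC^2$ curves solving the ODE only in the weak sense (in duality with $W^{1,2}(\X)$), and from that formulation, testing with the countable family of $1$-Lipschitz functions of Lemma \ref{le:spcount}, one obtains the pointwise bound $|\dot\gamma_t|\le|Z_t|(\gamma_t)$ and hence only the inequality $\le$ between the two integrals. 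The reverse integral inequality is, given this, equivalent to the nontrivial half of \eqref{eq:ms}, so as written your argument is circular. The missing ingredient is a duality argument recovering $|Z_t|$ along the trajectories: either realize $|Z_t|$ as an essential supremum of $\d f(Z_t)$ over a suitable countable family with $|\d f|\le 1$ and use $(f\circ\gamma)'_t=\d f(Z_t)(\gamma_t)\le|\dot\gamma_t|$ together with the bounded compression of the flow to transfer the $\mm$-a.e.\ identity along curves, or, equivalently, observe that $(\Fl^Z_\cdot)_*\mu$ is a test plan for $\mu\le C\mm$ whose derivative is $e_t^*Z_t$ and invoke the identification of the speed of a test plan (Theorem 2.3.18 of \cite{Gigli14}); this is precisely what the paper cites for the $\ge$ direction, and without it item (iii) is incomplete. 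A smaller caveat on the existence step: heat-semigroup regularization of $Z_t$ does not yield a Lipschitz vector field on an $\RCD$ space, so there is no classical flow for the regularized field to fall back on; the standard route is a vanishing-viscosity approximation of the continuity equation itself, which is also what makes your formal computation $\frac{\d}{\d t}\int\rho_t^p\,\d\mm\le(p-1)\|(\div Z_t)^-\|_{L^\infty}\int\rho_t^p\,\d\mm$ rigorous before sending $p\to\infty$ to get \eqref{eq:bcompr}.
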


\begin{proof}\ \\

\noindent{\bf (i)} follows from (ii) and Theorem 8.3 in \cite{Ambrosio-Trevisan14}.

\noindent{\bf (ii)} for the existence see for instance Theorem 6.1 in \cite{AT15} and notice that Example 4.1 ensures that in our setting such theorem is applicable. For uniqueness see Theorem 6.4 in \cite{AT15} (and recall that Corollary 6.3 in \cite{Ambrosio-Trevisan14} grants that a $L^4-\Gamma$ estimate holds).

\noindent{\bf (iii)} the fact that the flow is concentrated on a family of absolutely continuous curves and inequality $\leq$ in \eqref{eq:ms} follows from the superposition principle, Definition 7.3 in \cite{Ambrosio-Trevisan14} and our Lemma \ref{le:basequ}. The opposite inequality follows from Lemma \ref{le:spcount} and the definition of norm of a vector field (equivalently, notice that for every probability measure $\mu\leq C\mm$ on $X$ the plan $\pi:=( \Fl_\cdot)_*\mu$ is a test plan, that the superposition principle tells that $\pi'_t= e_t^*Z_t$ and conclude with Theorem 2.3.18 in \cite{Gigli14}).

Finally, the estimate \eqref{eq:bcompr} comes from estimate (4-15) in \cite{Ambrosio-Trevisan14}.
\end{proof}

\section{New stability results for Regular Lagrangian flows}

\subsection{The $\RCD$ setting}\label{se:rcdrlf}
Aim of this section is to prove new stability results for Regular Lagrangian Flows on $\RCD$ spaces. The main application that we have in mind is a sort of Trotter-Kato formula for Regular Lagrangian Flows, namely  that if $Z_1,Z_2$ are regular vector fields, then for every $t\geq 0$ the maps
\[
\underbrace{(\Fl^{Z_1}_{\frac t n}\circ\Fl^{Z_2}_{\frac tn})\circ\cdots\circ(\Fl^{Z_1}_{\frac tn}\circ\Fl^{Z_2}_{\frac tn})}_{n\ \text{times}}
\]
converge to $\Fl^{Z_1+Z_2}_t$ as $n\to\infty$, see Theorem \ref{thm:stabfl} and Proposition \ref{prop:sumvect} for the precise statement.

\bigskip

We start with the following very general fact:
\begin{lemma}\label{le:convmeas}
Let $(\X,\sfd,\mm)$ be a metric measure space, $(\Y,\sfd_\Y)$ a complete and separable metric space and $T_n:\X\to \Y$, $n\in\N\cup\{\infty\}$, be such that: for any bounded probability density $\rho$ with bounded support  the sequence $n\mapsto(T_n)_*(\rho\mm)$ weakly converges to $(T_\infty)_*(\rho\mm)$ in duality with $C_b(\Y)$.

Then $T_n\to T_\infty$ locally in measure.
\end{lemma}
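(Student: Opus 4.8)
The plan is to encode local convergence in measure via the joint laws of $(T_n,T_\infty)$ on $\Y\times\Y$ and to show that these laws concentrate on the diagonal. Recall that $T_n\to T_\infty$ locally in measure is equivalent to $\int(1\wedge\sfd_\Y(T_n,T_\infty))\,\rho\,\d\mm\to0$ for every bounded probability density $\rho$ with bounded support (it suffices to treat normalized indicators of bounded sets). So I would fix such a $\rho$ and set $\mu_n:=(T_n,T_\infty)_*(\rho\mm)\in\mathcal P(\Y\times\Y)$, the push-forward of $\rho\mm$ under $x\mapsto(T_n(x),T_\infty(x))$. Since $1\wedge\sfd_\Y\in C_b(\Y\times\Y)$ and vanishes exactly on the diagonal $\Delta$, it is enough to prove that $\mu_n$ converges weakly to a probability measure supported on $\Delta$. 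The first marginal of $\mu_n$ is $(T_n)_*(\rho\mm)$, which by assumption converges weakly to $(T_\infty)_*(\rho\mm)$ and is hence tight by Prokhorov's theorem; the second marginal is the fixed measure $(T_\infty)_*(\rho\mm)$. Thus $(\mu_n)$ has tight marginals, so it is tight and relatively compact for the weak topology.

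Next I would take an arbitrary weak limit $\mu$ of a subsequence of $(\mu_n)$ and show $\mu(\Delta)=1$. Since the coordinate projections are continuous, both marginals of $\mu$ are the weak limits of the corresponding marginals of $\mu_n$, hence both equal $(T_\infty)_*(\rho\mm)$. The key use of the hypothesis is the following: for $\varphi\in C_b(\Y)$ with $0\le\varphi\le1$, the function $\tilde\rho:=(\varphi\circ T_\infty)\,\rho$ is again a bounded, compactly supported, nonnegative density, so after normalization the assumption gives $(T_n)_*(\tilde\rho\,\mm)\weakto(T_\infty)_*(\tilde\rho\,\mm)$; testing against $\varphi$,
\[
\int\varphi(y_1)\varphi(y_2)\,\d\mu_n=\int(\varphi\circ T_n)(\varphi\circ T_\infty)\,\rho\,\d\mm\ \longrightarrow\ \int(\varphi\circ T_\infty)^2\,\rho\,\d\mm .
\]
As $\varphi\otimes\varphi\in C_b(\Y\times\Y)$, passing to the limit along the subsequence yields $\int\varphi(y_1)\varphi(y_2)\,\d\mu=\int(\varphi\circ T_\infty)^2\rho\,\d\mm$, while the identification of the marginals of $\mu$ gives $\int\varphi(y_i)^2\,\d\mu=\int(\varphi\circ T_\infty)^2\rho\,\d\mm$ for $i=1,2$. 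Subtracting, $\int(\varphi(y_1)-\varphi(y_2))^2\,\d\mu=0$, i.e.\ $\varphi(y_1)=\varphi(y_2)$ for $\mu$-a.e.\ $(y_1,y_2)$. Applying this to a countable family of such $\varphi$ that separates the points of the separable space $\Y$ — for instance $y\mapsto1\wedge\sfd_\Y(y,x_k)$ with $(x_k)$ dense — forces $\mu(\Delta)=1$.

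Finally, since $1\wedge\sfd_\Y$ vanishes on $\Delta$, every subsequential weak limit $\mu$ satisfies $\int(1\wedge\sfd_\Y)\,\d\mu=0$; by the usual subsequence argument the whole sequence $\int(1\wedge\sfd_\Y(T_n,T_\infty))\,\rho\,\d\mm=\int(1\wedge\sfd_\Y)\,\d\mu_n$ then converges to $0$, which is the claim. I expect the only step requiring genuine care to be the identification of the limiting joint law with a measure on the diagonal: a naive appeal to equality of the two marginals of $\mu$ does not suffice, and the point is that one must feed the hypothesis the larger class of densities $(\varphi\circ T_\infty)\rho$ in order to control the off-diagonal behaviour. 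The tightness argument and the final extraction are standard.
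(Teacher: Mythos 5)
Your proof is correct, but it follows a genuinely different route from the paper's. The paper argues by contradiction and never touches compactness of measures: assuming failure of local convergence in measure on a bounded set $E$, it uses inner regularity of $\nu=(T_\infty)_*(\mm\restr E)$ to cover most of the image by finitely many sets $B_i$ of small diameter, picks a bad index $\bar i$, and then tests the hypothesis with the single density proportional to $\nchi_{T_\infty^{-1}(B_{\bar i})}$ against a bump function $\varphi$ supported near $B_{\bar i}$, producing a quantitative violation of the assumed weak convergence. You instead pass to the joint laws $\mu_n:=(T_n,T_\infty)_*(\rho\mm)\in\mathscr P(\Y\times\Y)$, obtain tightness from the marginals via Prokhorov, and identify every subsequential weak limit $\mu$ as concentrated on the diagonal through the identity $\int(\varphi(y_1)-\varphi(y_2))^2\,\d\mu=0$, which you get by feeding the hypothesis the modified densities $(\varphi\circ T_\infty)\rho$ (after normalization, with the degenerate case handled) and comparing with the marginals; a countable separating family such as $1\wedge\sfd_\Y(\cdot,x_k)$ then forces $\mu(\Delta)=1$, and the standard subsequence argument concludes. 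Both proofs exploit exactly the same essential freedom in the hypothesis, namely test densities built from $T_\infty$ (the paper via $\nchi_{T_\infty^{-1}(B_{\bar i})}$, you via $(\varphi\circ T_\infty)\rho$), but the trade-off is clear: the paper's covering/contradiction scheme is more elementary, needing only that a finite Borel measure on a Polish space is Radon, whereas your argument invokes Prokhorov's theorem and weak compactness in $\mathscr P(\Y\times\Y)$; in exchange yours is direct rather than by contradiction and packages the mechanism conceptually as ``the limit joint law sits on the diagonal''. You also correctly flagged the one step where a naive argument would fail — equality of the two marginals of $\mu$ alone does not localize $\mu$ on the diagonal — and supplied the cross-term computation that fixes it, so there is no gap.
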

\begin{proof}
Suppose not. Then there are $\eps >0$ and $E\subset\X$ bounded such that
\begin{equation}
\label{eq:man}
\text{for $A_n:=\{x\in E:\sfd_\Y(T_n(x),T_\infty(x))\geq \eps\}$ it holds}\quad\mm(A_n)\geq \eps\qquad\text{for infinitely many $n$'s}.
\end{equation}
Now consider the measure $\nu:=(T_\infty)_*(\mm\restr E)$ on $\Y$ which, being Borel and finite, is Radon. Thus there is a compact $K\subset \Y$ such that $\nu(Y\setminus K)\leq \frac\eps2$ and by compactness of $K$ we can find a finite number of sets $B_1,\ldots,B_k$ of diameter $\leq \frac\eps2$ covering $K$. Define $C_i:=T_\infty^{-1}(B _i)$, $i=1,\ldots,k$, so that by definition $\mm(E\setminus\cup_iC_i)\leq\frac\eps2$ and thus by \eqref{eq:man} we deduce that  $\mm(A_n\cap\cup_iC_i)\geq\frac\eps2$ for an infinite number of $n$'s. Hence
\begin{equation}
\label{eq:man2}
\text{for some $\bar i\in\{1,\ldots,k\}$ we have $\mm(A_n\cap C_{\bar i})\geq\frac\eps{2k}$ for an infinite number of $n$'s.}
\end{equation}
Now let $\varphi\in C_b(\Y)$ be with values in $[0,1]$ identically 1 on $B_{\bar i}$ and with support in the $\frac\eps2$-neighbourhood of $B_{\bar i}$, so that by construction $\varphi\circ T_\infty\restr{C_i}\equiv1$ and
\begin{equation}
\label{eq:man3}
\text{$\varphi\circ T_n\restr{A_n\cap C_i}\equiv0$ for every $n$ such that \eqref{eq:man} holds}
\end{equation}
(because if $T_\infty(x)\in B_{\bar i}$ and $\sfd_\Y(T_n(x),T_\infty(x))\geq \eps$ then since the diameter of  $B_{\bar i}$ is $\leq \frac\eps2$ we necessarily have that $T_n(x)$ is not in the $\frac\eps2$-neighbourhood of $B_{\bar i}$).

Finally, put $\rho:=\nchi_{C_{\bar i}}\mm(C_{\bar i})^{-1}$ (notice that \eqref{eq:man2} ensures that $\mm(C_{\bar i})>0$ so that $\rho$ is well defined), observe that $\rho$ is a bounded probability density with bounded support and that putting $\mu:=\rho\mm$ by construction we have $\int\varphi\,\d (T_{\infty})_*\mu=1$ and
\[
\int\varphi\,\d(T_n)_*\mu=\int_{A_n}\varphi\circ T_n\,\d \mu+\int_{\X\setminus A_n}\varphi\circ T_n\,\d \mu\stackrel{\eqref{eq:man3}}\leq\mu(\X\setminus A_n)\leq 1-\mu(A_n\cap C_{\bar i})\stackrel{\eqref{eq:man2}}\leq 1-\frac\eps{2k}
\]
for every $n$ for which \eqref{eq:man2} holds. Hence $\limi_{n\to\infty}\int\varphi\,\d(T_n)_*\mu<\int\varphi\,\d(T_\infty)_*\mu$ violating the weak convergence of $((T_n)_*\mu)$ to $(T_\infty)_*\mu$ and thus concluding the proof.
\end{proof}
We can use such abstract result in conjunction with the theory of Regular Lagrangian Flows to deduce the following stability result, which links stability of solutions of the continuity equation to stability of the associated flows:
\begin{proposition}\label{le:convfl}
Let $(\X,\sfd,\mm)$ be a $\RCD(K,\infty)$ space and $(Z_n)$, $n\in\N\cup\{\infty\}$ regular time dependent vector fields such that $S_t:=\sup_{n\in\N\cup\{\infty\}}\||Z_{n,t}|\|_{L^\infty(\X)}\in L^1((0,1))$. Assume that for any probability density $\rho_0\in L^\infty(\X)$ with bounded support, letting  $(\rho_{n,t})$ be  the solution of the continuity equation for $Z_n$ starting from $\rho_0$, we have
\[
\rho_{n,t}\quad\weakto\quad\rho_{\infty,t}\qquad\forall t\in[0,1],
\]
weakly in duality with $C_b(\X)$.

Then $\Fl^{Z_n}\to\Fl^{Z_\infty}$ locally in measure as maps from $\X$ to $C([0,1],\X)$. In particular, for every $t\in[0,1]$ we have $\Fl_t^{Z_n}\to\Fl_t^{Z_\infty}$ locally in measure as maps from $\X$ to $\X$.
\end{proposition}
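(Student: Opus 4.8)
The plan is to reduce the statement to the abstract measure-theoretic criterion of Lemma \ref{le:convmeas}, applied to the maps $\Fl^{Z_n}\colon\X\to C([0,1],\X)$. Thus I must show: for every bounded probability density $\rho_0$ with bounded support, the laws $(\Fl^{Z_n})_*(\rho_0\mm)$ converge weakly in duality with $C_b(C([0,1],\X))$ to $(\Fl^{Z_\infty})_*(\rho_0\mm)$. Once this is in hand, Lemma \ref{le:convmeas} gives convergence of $\Fl^{Z_n}\to\Fl^{Z_\infty}$ locally in measure as $C([0,1],\X)$-valued maps, and composing with the (continuous, hence measure-continuous) evaluation map $e_t\colon C([0,1],\X)\to\X$ yields the final assertion that $\Fl^{Z_n}_t\to\Fl^{Z_\infty}_t$ locally in measure for each $t$.

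To prove the weak convergence of the laws, I would fix $\rho_0$ and set $\ppi_n:=(\Fl^{Z_n})_*(\rho_0\mm)\in\mathscr P(C([0,1],\X))$. The first step is \emph{tightness} of $\{\ppi_n\}_{n\in\N}$: by Theorem \ref{thm:AT}(iii), for $\mm$-a.e.\ $x$ the curve $t\mapsto\Fl^{Z_n}_t(x)$ is in $AC^1([0,1],\X)$ with metric speed $|Z_{n,t}|(\Fl^{Z_n}_t(x))\le S_t\in L^1((0,1))$; hence all the curves in the support of $\ppi_n$ share a common modulus of continuity $\omega(s,t):=\int_s^t S_r\,\d r$, and moreover the time-zero marginal is $\rho_0\mm$ regardless of $n$. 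A standard Ascoli-type argument (tightness of the one-time marginals $(\Fl^{Z_n}_t)_*(\rho_0\mm)$, which follows because $(\Fl^{Z_n}_t)_*(\rho_0\mm)\le C\mm$ with $C$ uniform via \eqref{eq:bdcompr}/\eqref{eq:bcompr} and the uniform bound on $S_t$ keeps mass from escaping to infinity) then gives tightness of $\{\ppi_n\}$ in $\mathscr P(C([0,1],\X))$. The second step is to \emph{identify every weak limit}. Let $\ppi_\infty$ be any subsequential weak limit. Using the superposition principle (Theorem \ref{thm:AT}(iii) and the cited results of Ambrosio–Trevisan), each $\ppi_n$ is concentrated on solutions of $\dot\gamma_t=Z_{n,t}(\gamma_t)$ and represents $(\rho_{n,t})$ in the sense that $(e_t)_*\ppi_n=\rho_{n,t}\mm$. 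Passing to the limit: the hypothesis $\rho_{n,t}\weakto\rho_{\infty,t}$ tells us $(e_t)_*\ppi_\infty=\rho_{\infty,t}\mm$ for every $t$, so $\ppi_\infty$ is a \emph{lift} of the limiting solution $(\rho_{\infty,t})$; one checks (testing with functions $\d f(Z_{\infty,t})$ and using that $\d f_n\to\d f$ in $L^2(T^*\X)$-type convergence together with the $L^1$-bound on $S_t$ to pass limits in the defining relation, as in the remark after the definition of Regular Lagrangian Flow) that $\ppi_\infty$ is concentrated on integral curves of $Z_\infty$. Then the uniqueness part of Theorem \ref{thm:AT}(i)–(ii) — the fact that the Regular Lagrangian Flow is the \emph{unique} flow whose pushforwards solve the continuity equation — forces $\ppi_\infty=(\Fl^{Z_\infty})_*(\rho_0\mm)$. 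Since the limit is the same along every subsequence, the whole sequence $\ppi_n$ converges, and Lemma \ref{le:convmeas} applies.

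The main obstacle I expect is the limit-identification step, specifically showing that a weak limit plan $\ppi_\infty$ is genuinely concentrated on curves solving $\dot\gamma_t=Z_{\infty,t}(\gamma_t)$ rather than merely on absolutely continuous curves with the right marginals. The delicate point is that $Z_{\infty,t}$ is only defined $\mm$-a.e., so the equation $\dot\gamma_t=Z_{\infty,t}(\gamma_t)$ has to be interpreted in the integrated/weak form $\frac{\d}{\d t}f(\gamma_t)=\d f(Z_{\infty,t})(\gamma_t)$ for $f\in W^{1,2}(\X)$, and one must pass to the limit inside this relation along the curves. Here the uniform control $\sup_n\||Z_{n,t}|\|_{L^\infty}\le S_t\in L^1((0,1))$ is exactly what prevents concentration of mass or loss of speed in the limit, while the weak-in-space convergence of the densities identifies the marginals; combining these with the superposition principle and, crucially, the uniqueness statement in Theorem \ref{thm:AT} is what closes the argument. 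A secondary technical nuisance is making the tightness argument fully rigorous when $\X$ is non-compact, which is handled by the uniform comparison $(\Fl^{Z_n}_t)_*\mm\le C\mm$ together with the equicontinuity modulus $\omega$ to confine the relevant mass to a fixed bounded region up to arbitrarily small error.
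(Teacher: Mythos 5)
There is a genuine gap, and it sits exactly where you anticipated: the identification of subsequential limits on path space. Your plan is to apply Lemma \ref{le:convmeas} to the maps $\Fl^{Z_n}\colon\X\to C([0,1],\X)$, which requires showing that the laws $\ppi_n:=(\Fl^{Z_n})_*(\rho_0\mm)$ converge weakly on $C([0,1],\X)$ to $(\Fl^{Z_\infty})_*(\rho_0\mm)$. Tightness is fine (uniform compression plus the modulus $\int_t^s S_r\,\d r$ coming from \eqref{eq:ms}), and the hypothesis does give that any limit plan $\ppi_\infty$ has the correct one--time marginals $\rho_{\infty,t}\mm$. But your key step --- showing that $\ppi_\infty$ is concentrated on integral curves of $Z_\infty$ by ``passing to the limit in the defining relation'' --- has nothing to pass to the limit with: Proposition \ref{le:convfl} assumes \emph{no} convergence of $Z_n$ to $Z_\infty$ in any sense; the only link between the $Z_n$'s and $Z_\infty$ is through the convergence of the continuity-equation solutions. (The remark after the definition of Regular Lagrangian Flow that you invoke concerns approximating the test function $f$ for a \emph{fixed} vector field, not limits in $n$ of the vector fields.) This is not a removable technicality: in the intended application, the Trotter--Kato situation of Proposition \ref{prop:sumvect} and Theorem \ref{thm:stabfl}, the $Z_n$ converge only weakly in time, so curve-wise the relation $\partial_t f(\gamma_t)=\d f(Z_{n,t})(\gamma_t)$ does not pass to the limit with $Z_{n}$ replaced by $Z_\infty$; and knowing only the marginals of $\ppi_\infty$ (without concentration on integral curves of $Z_\infty$) does not identify $\ppi_\infty$ as the law of the flow, since a plan with prescribed marginals need not be induced by a map.

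The paper's proof sidesteps all of this by applying Lemma \ref{le:convmeas} \emph{time by time}: for each fixed $t$, Theorem \ref{thm:AT}(ii) identifies $\rho_{n,t}\mm$ with $(\Fl^{Z_n}_t)_*(\rho_0\mm)$, so the hypothesis is literally the weak convergence of the pushforwards towards the known limit map $\Fl^{Z_\infty}_t$, and the lemma immediately yields $\Fl^{Z_n}_t\to\Fl^{Z_\infty}_t$ locally in measure. The upgrade to convergence as $C([0,1],\X)$-valued maps then uses no weak convergence on path space at all: by \eqref{eq:ms} all the flows share the modulus of continuity $\int_t^s S_r\,\d r$ with $S\in L^1((0,1))$, so $\sup_{t\in[0,1]}\sfd(\Fl^{Z_n}_t,\Fl^{Z_\infty}_t)$ is controlled, up to $2\eps$, by finitely many time slices $\sfd(\Fl^{Z_n}_{i/k},\Fl^{Z_\infty}_{i/k})$, and the already-established convergence at those times concludes. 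If you want to salvage a path-space argument from your hypotheses alone, you would have to identify $\ppi_\infty$ through the two-time laws obtained by varying $\rho_0$ (localizing the initial datum), which in effect reproves Lemma \ref{le:convmeas}; the route through integral curves of $Z_\infty$ and uniqueness of the RLF is not available here.
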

\begin{proof}
Recall from $(ii)$ of Theorem \ref{thm:AT}  that $\rho_{n,t}$ is the density w.r.t.\ $\mm$ of $(\Fl^{Z_n}_t)_*(\rho_0\mm)$. Hence our assumptions and Lemma \ref{le:convmeas} above grant that  $\Fl_t^{Z_n}\to\Fl_t^{Z_\infty}$ locally in measure as maps from $\X$ to $\X$ for every $t\in[0,1]$.

Now let $\mm'\in\pr(\X)$ be such that $\mm\ll\mm'\ll\mm$ and recall that the local convergence in measure of maps from $\X$ to $C([0,1],\X)$ is metrized by the distance
\[
\bar \sfd(\Fl,\Fl'):=\int1\wedge \sup_{t\in[0,1]}\sfd(\Fl_t(x),\Fl'_t(x))\,\d\mm'(x).
\]
Fix $\eps>0$ and let $k\in\N$ be such that $\int_{\frac ik}^{\frac{i+1}k}S_t\,\d t<\eps$ for every $i=0,\ldots,k-1$. Notice that   \eqref{eq:ms} grants that  $\sfd(\Fl^{Z_n}_t(x),\Fl^{Z_n}_s(x))\leq \int_t^s S_r\,\d r$ for $\mm$-a.e.\ $x\in \X$  and $n\in\N\cup\{\infty\}$ and therefore
\[
\sfd(\Fl_t(x),\Fl'_t(x))\leq \sfd(\Fl_{\frac ik}(x),\Fl'_{\frac ik}(x))+2\eps\qquad\text{for $i\in\{0,\ldots,k-1\}$ such that }t\in[\tfrac ik,\tfrac{i+1}k),
\]
for $\mm$-a.e.\ $x\in\X$, which gives
\[
\sup_{t\in[0,1]}\sfd(\Fl_t(x),\Fl'_t(x))\leq 2\eps+\sum_{i=0}^{k-1}\sfd(\Fl_{\frac ik}(x),\Fl'_{\frac ik}(x)),\qquad\mm-a.e.\ x.
\]
Hence we have
\[
\lims_{n\to\infty}\bar\sfd(\Fl^{Z_n},\Fl^{Z_\infty})\leq 2\eps+\lims_{n\to\infty}\sum_{i=0}^{k-1}\int 1\wedge \sfd(\Fl^{Z_n}_{\frac ik}(x),\Fl^{Z_{\infty}}_{\frac ik}(x))\,\d\mm'(x)=2\eps
\]
having used the local convergence in measure of $(\Fl^{Z_n}_{\frac ik})$ to $\Fl^{Z_\infty}_{\frac ik}$. The arbitrariness of $\eps$ gives the claim.
\end{proof}
Our question is now to find appropriate conditions on a sequence of vector fields which ensure convergence of solutions of the continuity equation. We shall work with:
\begin{definition}\label{def:wtss} Let $(Z_n)\subset L^1([0,1],L^1(T\X))$, $n\in\N\cup\{\infty\}$. We say that $Z_n\to Z_\infty$ weakly in time and strongly in space provided for any $\varphi\in C_c(\R)$ we have $Z_n^\varphi\to Z_\infty^\varphi$ strongly in $L^1([0,1],L^1(T\X))$, where  we put
\[
Z_{n,t}^\varphi:=\int_\R \varphi(t-s)Z_{n,s}\,\d s \qquad \forall t\in[0,1],\ n\in\N\cup\{\infty\},
\]
and it is intended that $Z_{n,s}=0$ for $s\notin[0,1]$.
\end{definition}
The main result of this section is the following theorem:
\begin{theorem}\label{thm:stabfl} Assume that $(Z_n)\subset L^1([0,1],L^1(T\X))$ converges weakly in time and strongly in space to the regular vector field $Z_\infty$. Assume also that \begin{equation}
\label{eq:unifZ}
\int_0^1\sup_{n\in \N\cup\{\infty\}}\Big(\||Z_{n,t}|\|_{L^\infty}+\||\nabla Z_{n,t}|_{\sf HS}\|_{L^2}\Big)\,\d t+\sup_{n\in \N\cup\{\infty\}}\int_0^1\|(\div Z_{n,t})^-\|_{L^\infty}\,\d t <\infty.
\end{equation}
Then the Regular Lagrangian Flows $(\Fl^{Z_n})$ converge locally in measure to the Regular Lagrangian Flow $\Fl^{Z_\infty}$ and, for every $t\in[0,1]$, the maps $(\Fl^{Z_n}_t)$ converge locally in measure to $\Fl^{Z_\infty}_t$.
\end{theorem}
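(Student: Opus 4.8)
The strategy is to combine Proposition~\ref{le:convfl} with a compactness-and-identification argument for the solutions of the continuity equation. By Proposition~\ref{le:convfl}, it suffices to show that for every bounded probability density $\rho_0$ with bounded support, the solutions $(\rho_{n,t})$ of the continuity equation for $Z_n$ starting from $\rho_0$ converge weakly in duality with $C_b(\X)$ to $\rho_{\infty,t}$ for every $t\in[0,1]$ (note that the uniform $L^\infty$-in-space bound $S_t=\sup_n\||Z_{n,t}|\|_{L^\infty}\in L^1((0,1))$ required there is exactly part of \eqref{eq:unifZ}). So the whole job is to prove this stability of the continuity equation.

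\textbf{Step 1: uniform bounds and compactness.} First I would record, via \eqref{eq:bcompr} of Theorem~\ref{thm:AT}, that $\|\rho_{n,t}\|_{L^\infty}\leq\|\rho_0\|_{L^\infty}\exp\big(\sup_n\int_0^1\|(\div Z_{n,r})^-\|_{L^\infty}\,\d r\big)=:C$ uniformly in $n$ and $t$. Moreover, since $\rho_{n,t}$ is the density of $(\Fl^{Z_n}_t)_*(\rho_0\mm)$ and \eqref{eq:ms} gives $\sfd(\Fl^{Z_n}_t(x),\Fl^{Z_n}_0(x))\leq\int_0^t S_r\,\d r$ for $\mm$-a.e.\ $x$, the measures $\rho_{n,t}\mm$ all have uniformly bounded supports (inside a fixed bounded enlargement of $\supp\rho_0$) and hence uniformly bounded second moments; so for each $t$ the family $\{\rho_{n,t}\mm\}_n$ is tight. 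From the continuity equation and the $L^1([0,1],L^\infty)$-bound on $|Z_{n,t}|$ one gets an equicontinuity estimate: for $f\in\Lip_{bs}(\X)$,
\[
\Big|\int f\rho_{n,t}\,\d\mm-\int f\rho_{n,s}\,\d\mm\Big|\leq\Lip(f)\,C\int_s^t S_r\,\d r,
\]
so the curves $t\mapsto\rho_{n,t}\mm$ are equi-(uniformly-)continuous in a metric inducing weak convergence on the relevant tight set of measures. By a diagonal/Arzel\`a--Ascoli argument one extracts a subsequence along which $\rho_{n,t}\weakto\tilde\rho_t$ for every $t\in[0,1]$, with $\|\tilde\rho_t\|_{L^\infty}\leq C$.

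\textbf{Step 2: passage to the limit in the continuity equation.} Next I would show that the limit curve $(\tilde\rho_t)$ solves the continuity equation for $Z_\infty$. Fix $f\in\Lip_{bs}(\X)\cap W^{1,2}(\X)$. The delicate point is the nonlinear term $\int\d f(Z_{n,t})\rho_{n,t}\,\d\mm$, where one must pass to the limit in a product of a weakly converging factor ($\rho_{n,t}$) and a factor ($Z_{n,t}$) that converges only weakly in time. Here is where the mollification in Definition~\ref{def:wtss} and Lemma~\ref{le:basequ}-type reasoning enter: test the (time-integrated) equation against $\varphi(t)f(x)$ with $\varphi\in C^1_c((0,1))$, integrate by parts in time, and replace $Z_{n,t}$ by its temporal mollification $Z_{n,t}^\psi$ against an approximate identity $\psi$; the error is controlled uniformly in $n$ by the modulus of continuity of $t\mapsto\rho_{n,t}\mm$ from Step~1 together with the $L^\infty$ bound, and goes to $0$ with the mollification parameter. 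For fixed mollification, $Z^\psi_{n,t}\to Z^\psi_{\infty,t}$ strongly in $L^1([0,1],L^1(T\X))$ by hypothesis, while $\d f\in L^2\cap L^\infty$ and $\rho_{n,t}$ is bounded and weakly converging, so the product passes to the limit. A standard $3\eps$ argument then yields that $(\tilde\rho_t,Z_{\infty,t})$ satisfies condition (ii) of the continuity equation; condition (i) follows from the uniform $L^\infty$ bounds on $\rho_{n,t}$, the bounded supports, and \eqref{eq:unifZ} via lower semicontinuity.

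\textbf{Step 3: identification and conclusion.} Since $Z_\infty$ is a regular vector field and $\tilde\rho_0=\rho_0$ with $\tilde\rho_t\leq C$, the uniqueness statement in (ii) of Theorem~\ref{thm:AT} forces $\tilde\rho_t=\rho_{\infty,t}$ for all $t$. Because the limit is independent of the subsequence, the full sequence $\rho_{n,t}$ converges weakly to $\rho_{\infty,t}$ for every $t\in[0,1]$ and every bounded $\rho_0$ with bounded support. Proposition~\ref{le:convfl} then gives $\Fl^{Z_n}\to\Fl^{Z_\infty}$ locally in measure as maps $\X\to C([0,1],\X)$, and in particular $\Fl^{Z_n}_t\to\Fl^{Z_\infty}_t$ locally in measure for each $t$, which is the claim.

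\textbf{Main obstacle.} The crux is Step~2: handling the term $\int\d f(Z_{n,t})\rho_{n,t}\,\d\mm$ when $Z_{n}$ converges only \emph{weakly in time and strongly in space}. Weak convergence does not pass to products, so one genuinely needs the time-mollification trick of Definition~\ref{def:wtss} together with a uniform-in-$n$ time-equicontinuity estimate for $(\rho_{n,t})$ to trade the weak-in-time convergence for strong-in-time convergence of the mollified fields; everything else (uniform bounds, tightness, uniqueness) is comparatively routine given Theorem~\ref{thm:AT}.
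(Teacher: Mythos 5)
Your overall architecture is the same as the paper's: reduce to stability of the continuity equation via Proposition \ref{le:convfl}, get uniform $L^\infty$ bounds, bounded supports and time-equicontinuity to extract a limit curve, identify the limit through the time-mollified fields of Definition \ref{def:wtss}, and conclude by the uniqueness part of Theorem \ref{thm:AT}. The gap is precisely at the point you yourself flag as the crux. You claim the error committed by replacing $Z_{n,t}$ with $Z^{\psi}_{n,t}$ is ``controlled uniformly in $n$ by the modulus of continuity of $t\mapsto\rho_{n,t}\mm$ from Step~1 together with the $L^\infty$ bound''. But the modulus of continuity produced in Step~1 is equicontinuity in duality with \emph{Lipschitz} test functions (a $W_1$-type estimate), whereas the quantity you must control is of the form $\int \d f(Z_{n,t})(\rho_{n,t}-\rho_{n,s})\,\d\mm$, and the observable $\d f(Z_{n,t})$ is only bounded and Sobolev, not Lipschitz (there is no Lipschitz bound on $Z_{n,t}$ in \eqref{eq:unifZ}, and typically none is available on $\RCD$ spaces). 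Pairing a $W_1$-small signed measure with a merely bounded function gives nothing, and the $L^\infty$ bound on $\rho_{n,t}$ does not help either, since no uniform-in-$n$ strong $L^1$-in-time continuity of $\rho_{n,t}$ is available. So as written, the key estimate is asserted by a mechanism that does not yield it.

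What actually closes this step — and the reason the hypothesis \eqref{eq:unifZ} contains the term $\||\nabla Z_{n,t}|_{\sf HS}\|_{L^2}$, which your argument never uses — is spatial Sobolev regularity of the observable itself: taking $f\in{\rm Test}(\X)$ (not just $\Lip_{bs}\cap W^{1,2}$), the Leibniz rule \eqref{eq:leibgrad} gives $\d f(Z_{n,t})\in W^{1,2}(\X)$ with $|\d(\d f(Z_{n,t}))|\leq C(f)\big(|Z_{n,t}|+|\nabla Z_{n,t}|_{\sf HS}\big)$. One then tests the continuity equation for $\rho_{n,\cdot}$ with the $W^{1,2}$ function $\d f(Z_{n,t})$ to differentiate $s\mapsto I_n(t,s):=\int \d f(Z_{n,t})\rho_{n,s}\,\d\mm$ and obtain the quantitative equicontinuity $|I_n(t,s)-I_n(t,t)|\leq C(f)\,|s-t|\,g_t$ with $g\in L^1(0,1)$ uniform in $n$ (this is \eqref{eq:lateron} in the paper, made possible by \eqref{eq:unifZ} and the uniform compression bound). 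With that estimate in hand, the mollification error is $O(1/k)$ uniformly in $n$ and your three-$\eps$ scheme goes through; Steps~1 and~3 of your proposal are otherwise fine and coincide with the paper's argument.
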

\begin{proof}
Let $\rho_0\in L^\infty$ be a probability density with bounded support and let  $(\rho_{n,t})$ be the solution of the continuity equation for $Z_n$ starting from $\rho_0$. According to  Proposition \ref{le:convfl}, to conclude it is sufficient to prove that $\rho_{n,t}\weakto\rho_{\infty,t}$ in duality with $C_b(\X)$ for any $t\in[0,1]$.

{\bf Compactness} To this aim start observing that by the assumption \eqref{eq:unifZ} and the bound \eqref{eq:bcompr} we have that $\rho_{n,t}\leq C\mm$ for some $C>0$ independent on $n,t$. Analogously, from the bound \eqref{eq:ms} it easily follows that $\supp(\rho_{n,t})\subset B$ for some bounded closed set $B\subset \X$ independent on $n,t$.  Thus
\begin{equation}
\label{eq:boundunif}
\rho_{n,t}\mm\leq C\mm\restr B\qquad\forall n\in\N,\ t\in[0,1]
\end{equation}
and since $\mm\restr B$ is a finite Radon measure we can conclude that the family $\{\rho_{n,t}\}_{n,t}$ is tight. Finally, again the bound \eqref{eq:ms}  gives that the curves $t\mapsto \rho_{n,t}\mm$ are $W_1$-equiLipschitz. This and the previous observations imply that such sequence of curves is precompact in $C([0,1],(\mathscr P_1(\X),W_1))$ and thus up to pass to a non-relabeled subsequence we can assume that it converges to a limit curve  $(\mu_t)\in C([0,1],(\mathscr P_1(\X),W_1))$. Since the bound \eqref{eq:boundunif} passes to the limit, we have that $\mu_t=\eta_t\mm$ for some $\eta_t\leq C\nchi_B$ for every $t\in[0,1]$.

{\bf Identification of the limit}  To conclude it is sufficient to show that $\eta_t=\rho_t$ for every $t\in[0,1]$ and since clearly $\eta_0=\rho_0$,  this will follow if we show that $(\eta_t)$ solves the continuity equation for $Z_\infty$. Thus let $f\in {\rm Test}(\X)$, recall that $t\mapsto\int f\rho_{n,t}\,\d\mm$ is absolutely continuous with
\begin{equation}
\label{eq:CEn}
\frac{\d}{\d t}\int f\rho_{n,t}\,\d\mm=\int\d f(Z_{n,t})\rho_{n,t}\,\d \mm\qquad a.e.\ t
\end{equation}
and notice that what already proved grants that $\int f\rho_{n,t}\,\d\mm\to \int f\eta_{t}\,\d\mm$ as $n\to\infty$ for every $t\in[0,1]$. Hence to conclude it is sufficient to show that $\int\d f(Z_{n,t})\rho_{n,t}\,\d \mm$ converges to $\int\d f(Z_{\infty,t})\eta_{t}\,\d \mm$ in the sense of distributions. Here and below we shall put $Z_{n,t}\equiv 0$ for $t\notin[0,1]$, $\rho_{n,t}=\rho_0$ for $t\leq 0$ and $\rho_{n,t}=\rho_{n,1}$ for $t\geq 1$, similarly for $\eta_t$.

Now fix $\varphi\in C^\infty_c(\R)$ and let $(\psi_k)\subset C_c(\R)$ be a sequence of functions such that $\int\psi_k=1$ and $\psi_k\geq 0$ and $\supp(\psi_k)\subset [-\frac1k,\frac1k]$ for every $k\in\N$. Then for every $k\in\N$ we have
\[
\begin{split}
\Big|\iint_0^1\varphi_t\Big( \d f(Z_{n,t})\rho_{n,t}-\d f(Z_{\infty,t})\eta_{t}\Big)\,\d t\,\d \mm\Big|\leq&\Big|\iint_0^1\varphi_t\Big( \d f(Z_{n,t})\rho_{n,t}-\d f(Z^{\psi_k}_{n,t})\rho_{n,t}\Big)\,\d t\,\d \mm\Big|\\
&+\Big|\iint_0^1\varphi_t\Big( \d f(Z^{\psi_k}_{n,t})\rho_{n,t}- \d f(Z^{\psi_k}_{\infty,t})\eta_{t}\Big)\,\d t\,\d \mm\Big|\\
&+\Big|\iint_0^1\varphi_t\Big( \d f(Z^{\psi_k}_{\infty,t})\eta_{t}-\d f(Z_{\infty,t})\eta_{t}\Big)\,\d t\,\d \mm\Big|.
\end{split}
\]
Now notice that what previously proved ensures that $(\rho_{n,t})$ converges to $(\eta_t)$ in the weak$^*$ topology of $L^\infty([0,1]\times\X)$, while the assumption and the fact that $|\d f|\in L^\infty(\X)$ grant that $\d f(Z^{\psi_k}_{n,t})\to \d f(Z^{\psi_k}_{\infty,t})$ in $L^1([0,1]\times\X)$ as $n\to\infty$ for every $k\in\N$. Since moreover it is clear that $ \d f(Z^{\psi_k}_{\infty,t})\to  \d f(Z_{\infty,t})$ in $L^1([0,1]\times\X)$ as $k\to\infty$, by letting first $n\to\infty$ and then $k\to\infty$ in the above we obtain:
\[
\begin{split}
\lims_{n\to\infty}\Big|\iint_0^1\varphi_t\Big( \d f(Z_{n,t})\rho_{n,t}-\d f(Z_{\infty,t})\eta_{t}&\Big)\,\d t\,\d \mm\Big|\\
&\leq \lims_{k\to\infty}\sup_{n\in\N}\Big|\iint_0^1\varphi_t\Big( \d f(Z_{n,t})\rho_{n,t}-\d f(Z^{\psi_k}_{n,t})\rho_{n,t}\Big)\,\d t\,\d \mm\Big|.
\end{split}
\]
Thus to conclude the proof it is sufficient to show that the right hand side in the above is $0$. Put for brevity $I_n(t,s):=\int \d f(Z_{n,t})\rho_{n,s}\,\d\mm$ and notice that since $f\in {\rm Test}(\X)$, from \eqref{eq:unifZ} we have that $t\mapsto I_t:=\sup_n|I_n(t,t)|\in L^1(0,1)$ and recalling also \eqref{eq:leibgrad} we see that   $\d f(Z_{n,t})\in W^{1,2}(\X)$ for a.e.\ $t$, with $|\d (\d f(Z_{n,t}))|\leq C(f)(|Z_{n,t}|+|\nabla Z_{n,t}|_{\sf HS})$. Thus from \eqref{eq:CEn} (which is valid for functions $f\in W^{1,2}(\X)$) we obtain
\begin{equation}
\label{eq:lateron}
|I_n(t,s)-I_n(t,t)|\leq\iint_t^s\d(\d f(Z_{n,t}))(Z_{n,r})\rho_{n,r}\,\d r\,\d\mm\leq|s-t|C(f) g_t
\end{equation}
where $g_t:=\sup_n\int_B|Z_{n,t}|^2+|\nabla Z_{n,t}||Z_{n,t}|\,\d\mm$. The assumption  \eqref{eq:unifZ} give  $g\in L^1(0,1)$. Now observe that $\iint_0^1\varphi_t \d f(Z_{n,t})\rho_{n,t}\,\d t\,\d \mm=\iint_0^1\varphi_t\psi_{t-s}I_n(t,t)\,\d s\,\d t$ and that
\[
\begin{split}
\iint_0^1\varphi_t\d f(Z^{\psi_k}_{n,t})\rho_{n,t}\,\d t\,\d \mm=\iiint_0^1\varphi_t\psi^k_{t-s}\d f(Z_{n,s})\rho_{n,t}\,\d s\,\d t\,\d \mm=\iint_0^1\varphi_s\psi^k_{t-s}\,I_n(t,s)\,\d s\,\d t
\end{split}
\]
having used the fact that $\psi^k$ is even in the last step. Therefore using also \eqref{eq:lateron} we get
\[
\begin{split}
\Big|\iint_0^1\varphi_t\Big( \d f(Z_{n,t})\rho_{n,t}-\d f(Z^{\psi_k}_{n,t})\rho_{n,t}\Big)\,\d t\,\d \mm\Big|&=\Big|\iint_0^1\varphi_t\psi^k_{t-s}I_n(t,t)-\varphi_s\psi^k_{t-s}\,I_n(t,s)\,\d s\,\d t  \Big|\\
&\leq \Lip(\varphi)\iint_0^1|s-t|\psi^k_{t-s}|I_n(t,t)|\,\d s\,\d t\\
&\qquad+C(f)\iint_0^1|s-t|\varphi_s\psi^k_{t-s}g_t \,\d s\,\d t.
\end{split}
\]
Recalling that by construction we have $\psi^k_{t-s}=0$ for $|s-t|>\frac2k$ we obtain
\[
\sup_n\Big|\iint_0^1\varphi_t\Big( \d f(Z_{n,t})\rho_{n,t}-\d f(Z^{\psi_k}_{n,t})\rho_{n,t}\Big)\,\d t\,\d \mm\Big|\leq 2\frac{\Lip(\varphi)+C(f)\int_0^1\varphi}k\int_0^1I_t+g_t\,\d t
\]
and the conclusion follows letting $k\to\infty$.
\end{proof}
Our main example of sequence of vector fields converging weakly in time and strongly in space is the one given by the following proposition:
\begin{proposition}\label{prop:sumvect}
Let  $Z^1,Z^2\in \L^1(T\X)$ and for every $n\in\N$ define $Z_n\in L^1([0,1],L^1(T\X))$ by putting
\begin{equation}
\label{eq:zn}
Z_{n,t}:=2Z^{i(j)}\quad\text{\rm for }t\in[\tfrac j{2^n},\tfrac {j+1}{2^n})\ \text{\rm where $i(j)=1$ if $j$ is even and $i(j)=2$ if $j$ is odd}.
\end{equation}
Then $(Z_n)$ converges weakly in time and strongly in space to the vector field constantly equal to $Z^1+Z^2 \in L^1(T\X)$.
\end{proposition}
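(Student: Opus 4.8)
The plan is to reduce the claim to an elementary one-dimensional statement about oscillating functions. For $n\in\N$ write $E_n\subset[0,1]$ for the union of the dyadic intervals $[j2^{-n},(j+1)2^{-n})$ with $j$ even and $O_n:=[0,1]\setminus E_n$, so that $\chi_{E_n}+\chi_{O_n}=\chi_{[0,1]}$ and, recalling the convention $Z_{n,s}=0$ for $s\notin[0,1]$, we have $Z_{n,s}=2\chi_{E_n}(s)Z^1+2\chi_{O_n}(s)Z^2$ for every $s\in\R$ (in particular $Z_n\in L^1([0,1],L^1(T\X))$ trivially, being piecewise constant in $t$ with values $2Z^1$ or $2Z^2$). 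Fix $\varphi\in C_c(\R)$ and set $\beta(t):=\int_0^1\varphi(t-s)\,\d s$. Since the limit field is constantly $Z^1+Z^2$ on $[0,1]$ we get $Z_{\infty,t}^\varphi=\beta(t)(Z^1+Z^2)$, while
\[
Z_{n,t}^\varphi=2\Big(\int_0^1\varphi(t-s)\chi_{E_n}(s)\,\d s\Big)Z^1+2\Big(\int_0^1\varphi(t-s)\chi_{O_n}(s)\,\d s\Big)Z^2 .
\]
Using $\chi_{O_n}=\chi_{[0,1]}-\chi_{E_n}$, a direct computation collapses the difference to
\[
Z_{n,t}^\varphi-Z_{\infty,t}^\varphi=c_n(t)\,(Z^1-Z^2),\qquad\text{where}\quad c_n(t):=\int_\R\varphi(t-s)\,w_n(s)\,\d s
\]
and $w_n:=\chi_{E_n}-\chi_{O_n}$ is extended by $0$ outside $[0,1]$ (so $w_n(s)=(-1)^{\lfloor 2^n s\rfloor}$ on $[0,1]$). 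Since $Z^1,Z^2\in L^1(T\X)$ and $c_n(t)$ is a scalar, taking pointwise norms and integrating in $t$ gives
\[
\|Z_n^\varphi-Z_\infty^\varphi\|_{L^1([0,1],L^1(T\X))}=\|Z^1-Z^2\|_{L^1(T\X)}\int_0^1|c_n(t)|\,\d t ,
\]
so the proposition reduces to proving $\int_0^1|c_n(t)|\,\d t\to0$ for each fixed $\varphi$.

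The engine for this is that the primitive $W_n(s):=\int_0^s w_n(r)\,\d r$ satisfies $0\le W_n\le 2^{-n}$ on all of $\R$: it vanishes on $(-\infty,0]$, equals $0$ at every point $2k2^{-n}\le1$, moves by at most $2^{-n}$ on each elementary interval, and is constant on $[1,\infty)$ (equal to $0$ for $n\ge1$). I would first treat $\varphi\in C^1_c(\R)$: integrating by parts in the variable $s$, the boundary terms vanish because $\varphi$ has compact support and $W_n$ is bounded, and one gets $c_n(t)=\int_\R\varphi'(t-s)W_n(s)\,\d s$, whence $|c_n(t)|\le 2^{-n}\|\varphi'\|_{L^1(\R)}$ and therefore $\int_0^1|c_n(t)|\,\d t\le 2^{-n}\|\varphi'\|_{L^1(\R)}\to0$. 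For general $\varphi\in C_c(\R)$ I would pick $\varphi_k\in C^1_c(\R)$ with $\varphi_k\to\varphi$ uniformly (e.g.\ by mollification, so the supports stay in a fixed compact set); since $\|w_n\|_{L^1(\R)}\le1$ one has $\sup_t|c_n(t)-c_n^{(k)}(t)|\le\|\varphi-\varphi_k\|_\infty$, where $c_n^{(k)}$ is the quantity built from $\varphi_k$, so $\lims_{n\to\infty}\int_0^1|c_n(t)|\,\d t\le\|\varphi-\varphi_k\|_\infty$ for every $k$, and letting $k\to\infty$ finishes the proof.

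The argument is essentially routine once the reduction is in place; the two points that deserve care are the bookkeeping behind the identity $Z_{n,t}^\varphi-Z_{\infty,t}^\varphi=c_n(t)(Z^1-Z^2)$ — here the convention $Z_{n,s}=0$ outside $[0,1]$ is exactly what makes the factor $\beta(t)$ appear also on the $Z_\infty$ side, allowing the two terms to cancel against each other — and the uniform bound $\|W_n\|_\infty\le 2^{-n}$ on the primitives, which is the quantitative incarnation of the fact that "the oscillations average out". As an alternative to the $C^1$-approximation step one could observe that $w_n\weakto0$ weakly$^*$ in $L^\infty([0,1])$ (again a consequence of the bound on $W_n$ together with density of step functions in $L^1$), deduce $c_n(t)\to0$ for every $t$, and conclude by dominated convergence on the finite-measure interval $[0,1]$ using $|c_n(t)|\le\|\varphi\|_\infty$; but the route above has the advantage of an explicit rate $O(2^{-n})$ for smooth $\varphi$.
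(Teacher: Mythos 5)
Your argument is correct, and it differs from the paper's proof in the key one–dimensional step. Both proofs start the same way, by writing $Z_{n,t}^\varphi$ as scalar convolutions (of the indicators of the even/odd dyadic intervals with $\varphi$) multiplying $Z^1$ and $Z^2$, so everything reduces to a statement about functions on the line. The paper then shows that $\nchi_{I_{n,1}}\ast\psi$ converges to $\tfrac12\nchi_{[0,1]}\ast\psi$ \emph{strongly} in $L^1(\R)$ by combining weak $L^1$-convergence of $\nchi_{I_{n,1}}$ with a compactness upgrade (the convolutions are uniformly Lipschitz with uniformly bounded support, hence relatively compact in $L^1$), and treats the two coefficients separately; you instead exploit the cancellation $2a_n+2b_n=2\beta$ to collapse the whole difference into a single term $c_n(t)(Z^1-Z^2)$ and then kill $c_n$ by an explicit estimate: the primitive of the oscillating sign function $w_n$ is trapped in $[0,2^{-n}]$, so integration by parts gives $|c_n(t)|\le 2^{-n}\|\varphi'\|_{L^1}$ for $C^1_c$ test functions, with a uniform-approximation step for general $\varphi\in C_c(\R)$. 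What your route buys is an explicit rate $O(2^{-n})$ for smooth $\varphi$ and a completely elementary, quantitative argument with no compactness; it also handles arbitrary $\varphi\in C_c(\R)$ explicitly, whereas the paper works with $\psi\in C^\infty_c(\R)$ (its ``uniformly Lipschitz'' step uses the regularity of $\psi$), leaving the passage to merely continuous test functions --- which is what Definition \ref{def:wtss} formally asks for --- implicit. What the paper's route buys is robustness: the weak-convergence-plus-compactness scheme does not depend on the specific dyadic structure of the intervals and would apply verbatim to any sequence of coefficient functions converging weakly in $L^1$.
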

\begin{proof}
Let $I_{n,i}:=\{t\in[0,1]:Z_{n,t}=2Z^i\}$, $i=1,2$, so that we can write $Z_n=2\nchi_{I_{n,1}}Z^1+2\nchi_{I_{n,2}}Z^2$, being intended that $Z_{n,t}=0$ for $t\notin[0,1]$. Then for $\psi\in C^\infty_c(\R)$ directly from the definition we see that $Z^\psi_{n}=2\nchi_{I_{n,1}}\ast\psi Z^1+2\nchi_{I_{n,2}}\ast\psi Z^2$. Now notice that Young's inequality $\|f\ast\psi\|_{L^1}\leq\|f\|_{L^1}\|\psi\|_{L^1}$ ensures that the operator `convolution with $\psi$' is continuous from $L^1(\R)$ into itself, thus since  $(\nchi_{I_{n,1}})$ converges to $\frac12\nchi_{[0,1]}$ weakly in $L^1(\R)$, we see that  $(\nchi_{I_{n,1}}\ast\psi)$ weakly converges to $\frac12\nchi_{[0,1]}\ast\psi$ in $L^1(\R)$. Moreover, since the functions  $\nchi_{I_{n,1}}\ast\psi$ are uniformly Lipschitz and with uniformly bounded support, they  form a  relatively compact subset in $L^1(\R)$, hence the convergence of  $(\nchi_{I_{n,1}}\ast\psi)$ to $\frac12\nchi_{[0,1]}\ast\psi$ is strong in $L^1(\R)$. From this it easily follows that the sequence $(2\nchi_{I_{n,1}}\ast\psi Z^1)$ converges to $ \nchi_{[0,1]}\ast\psi Z^1$ strongly in $L^1([0,1],L^1(T\X))$ and since a similar argument works for $(2\nchi_{I_{n,2}}\ast\psi Z^2)$, the claim follows.
\end{proof}
For later use, we notice the following:
\begin{proposition}\label{prop:pertr}
Let $Z^1,Z^2$ be two regular vector fields, define $Z_n$ as in \eqref{eq:zn} and let $\rho_0$ be a bounded probability density with bounded support. Let $\rho_t$ be the density of $(\Fl^{Z^1+Z^2}_t)_*(\rho_0\mm)$  and define $\rho^1_{n,t},\rho^2_{n,t}$ by
\[
\begin{split}
\rho^1_{n,t}\mm&:=(\Fl^{Z_1}_{t-\frac {2i}{2^n}})_*(\Fl^{Z_n}_{\frac {2i}{2^n}})_*(\rho_0\mm)\qquad\text{\rm for }t\in[\tfrac{2i}{2^n},\tfrac{2(i+1)}{2^n}),\\
\rho^2_{n,t}\mm&:=(\Fl^{Z_1}_{t-\frac {2i+1}{2^n}})_*(\Fl^{Z_n}_{\frac {2i+1}{2^n}})_*(\rho_0\mm)\qquad\text{\rm for }t\in[\tfrac{2i+1}{2^n},\tfrac{2(i+1)+1}{2^n}).
\end{split}
\]
Then for every $t\in[0,1]$ both the sequences $(\rho^1_{n,t}),(\rho^2_{n,t})$ converge to $\rho_t$ in the weak$^*$ topology of $L^\infty(\X)$.
\end{proposition}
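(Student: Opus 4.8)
The plan is to reduce the statement to the stability result of Theorem~\ref{thm:stabfl}, by comparing both curves $(\rho^1_{n,t})$ and $(\rho^2_{n,t})$ with the genuine solution $(\rho_{n,t})$ of the continuity equation for $Z_n$ with initial datum $\rho_0$ --- which, by Theorem~\ref{thm:AT}$(ii)$, is exactly the density of $(\Fl^{Z_n}_t)_*(\rho_0\mm)$.

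\textbf{Step 1: setup.} First I would reduce to compactly supported fields: since $\rho_0$ has bounded support and, by \eqref{eq:ms}, every Regular Lagrangian Flow occurring below keeps $\mm$-a.e.\ trajectory inside one fixed bounded set (all displacements being dominated by $\int_0^1\sup_n\||Z_{n,t}|\|_{L^\infty}\,\d t<\infty$), one may multiply $Z^1,Z^2$ by a Lipschitz cutoff equal to $1$ on that set without altering any of the objects $\rho_t,\rho^1_{n,t},\rho^2_{n,t}$ (by the uniqueness part of Theorem~\ref{thm:AT}), so we may assume $Z^1,Z^2$, hence the $Z_n$ of \eqref{eq:zn}, lie in $L^\infty\cap L^1\cap W^{1,2}_C(T\X)$. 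Then I would check that $(Z_n)$ meets the hypotheses of Theorem~\ref{thm:stabfl}: by Proposition~\ref{prop:sumvect} it converges weakly in time and strongly in space to the constant-in-time field $Z^1+Z^2$, which is regular since regular fields form a vector space (for the divergence, $(\div(Z^1+Z^2))^-\le(\div Z^1)^-+(\div Z^2)^-$); and \eqref{eq:unifZ} holds because for a.e.\ $t$ one has $|Z_{n,t}|\le 2\big(|Z^1|\vee|Z^2|\big)$, $|\nabla Z_{n,t}|_{\sf HS}\le 2\big(|\nabla Z^1|_{\sf HS}\vee|\nabla Z^2|_{\sf HS}\big)$, while $\int_0^1\|(\div Z_{n,t})^-\|_{L^\infty}\,\d t=\|(\div Z^1)^-\|_{L^\infty}+\|(\div Z^2)^-\|_{L^\infty}$ for every $n$. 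The \textbf{Compactness} and \textbf{Identification of the limit} parts of the proof of Theorem~\ref{thm:stabfl} then apply unchanged and give (i) $\rho_{n,t}\weakto\rho_t$ in duality with $C_b(\X)$ for every $t\in[0,1]$ and (ii) a uniform bound $\rho_{n,t}\le C\nchi_B$ with $C$ and the bounded set $B$ independent of $n,t$; the same two estimates applied to the pushforwards defining $\rho^i_{n,t}$ give likewise $\rho^i_{n,t}\le C\nchi_B$ (enlarging $C,B$ if needed).

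\textbf{Step 2: a $W_1$ estimate.} Fix $t\in(0,1]$; I treat $\rho^1$, the case of $\rho^2$ being word for word the same (for $\rho^2$ and fixed $t$ the relevant odd index is nonnegative as soon as $2^n>1/t$, and $t=0$ is trivial). Let $2i$ be the largest even integer with $2i/2^n\le t$, so $0\le t-2i/2^n<2/2^n$, and put $\mu_n:=(\Fl^{Z_n}_{2i/2^n})_*(\rho_0\mm)=\rho_{n,2i/2^n}\mm$. Then $\rho^1_{n,t}\mm=(\Fl^{Z^1}_{t-2i/2^n})_*\mu_n$, whereas $\rho_{n,t}\mm=(\Fl^{Z_n}_t)_*(\rho_0\mm)$ is the image of $\mu_n$ under the $Z_n$-flow from time $2i/2^n$ to $t$. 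By Theorem~\ref{thm:AT}$(iii)$ and \eqref{eq:ms}, for $\mu_n$-a.e.\ $x$ both $\sfd\big(x,\Fl^{Z^1}_{t-2i/2^n}(x)\big)$ and the displacement of $x$ along the $Z_n$-flow over $[2i/2^n,t]$ are at most $(t-2i/2^n)\,L\le 2L/2^n$, with $L:=2\big(\||Z^1|\|_{L^\infty}\vee\||Z^2|\|_{L^\infty}\big)$. Inserting the corresponding transport plans into the definition of $W_1$ and using the triangle inequality,
\[
W_1\big(\rho^1_{n,t}\mm,\,\rho_{n,t}\mm\big)\ \le\ W_1\big(\rho^1_{n,t}\mm,\,\mu_n\big)+W_1\big(\mu_n,\,\rho_{n,t}\mm\big)\ \le\ \frac{4L}{2^n}\,,
\]
which tends to $0$ as $n\to\infty$, uniformly in $t$.

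\textbf{Step 3: conclusion.} From Step~1, the uniform bounds on densities and supports upgrade the narrow convergence $\rho_{n,t}\mm\weakto\rho_t\mm$ to weak$^*$ convergence $\rho_{n,t}\to\rho_t$ in $L^\infty(\X)$ (test against $g\in L^1(\mm)$, which one may take supported in $B$, after approximating it in $L^1(\mm)$ by a bounded-support Lipschitz function --- legitimate because $\mm$ is Radon and finite on bounded sets --- and using $\rho_{n,t}\le C\nchi_B$). Combining the Kantorovich--Rubinstein bound $\big|\int\varphi\,\d\mu-\int\varphi\,\d\nu\big|\le\Lip(\varphi)\,W_1(\mu,\nu)$ with Step~2 and the same uniform bounds yields $\rho^i_{n,t}-\rho_{n,t}\to0$ weakly$^*$ in $L^\infty(\X)$; adding, $\rho^i_{n,t}\to\rho_t$ weakly$^*$ in $L^\infty(\X)$ for $i=1,2$ and every $t\in[0,1]$, which is the claim. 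The delicate points are precisely this passage to weak$^*$ $L^\infty$-convergence and the localization in Step~1, both of which rely on the uniform-in-$(n,t)$ control of densities and supports furnished by \eqref{eq:bcompr}, \eqref{eq:ms} and \eqref{eq:unifZ}; everything else is the routine bookkeeping of Step~2 together with the already established Theorem~\ref{thm:stabfl}.
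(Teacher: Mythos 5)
Your proposal is correct and follows essentially the same route as the paper: convergence of $\rho_{n,t}$ to $\rho_t$ comes from Theorem \ref{thm:stabfl} together with Proposition \ref{prop:sumvect}, while the discrepancy between $\rho^i_{n,t}$ and $\rho_{n,t}$ is controlled by the displacement bound \eqref{eq:ms} over a time window of length at most $2^{1-n}$ tested against Lipschitz functions, which is exactly the paper's coupling estimate for $A_n$ rephrased as a $W_1$/Kantorovich--Rubinstein bound. The cutoff in Step 1 (to secure the $L^1$ integrability needed for Proposition \ref{prop:sumvect}) and the remark on small $t$ for $\rho^2_{n,t}$ are harmless technical additions, not a different argument.
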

\begin{proof}
We shall prove the result for $\rho^1_{n,t}$ only, as the study of $\rho^2_{n,t}$ follows along similar lines.  Fix $t\in[0,1]$ and let $i_n\in\N$ be such that $t\in[\tfrac{2i_n}{2^n},\tfrac{2(i_n+1)}{2^n})$.

The estimates \eqref{eq:bcompr} and \eqref{eq:ms} grant that the densities $\{\rho^1_{n,t}\}_n$ are equibounded, hence to conclude it is sufficient to prove that $\int \varphi\rho^1_{n,t}\,\d\mm\to\int\varphi\rho_t\,\d\mm$ for a set $\varphi$'s dense in $L^1(\X)$. We shall consider $\varphi$ bounded and Lipschitz and notice that
\[
\begin{split}
\Big|\int \varphi\rho^1_{n,t}\,\d\mm-\int\varphi\rho_t\,\d\mm\Big|\leq&\underbrace{\Big|\int \varphi\,\d (\Fl^{Z_1}_{t-\frac {2i_n}{2^n}})_*(\Fl^{Z_n}_{\frac {2i_n}{2^n}})_*(\rho_0\mm)-\int\varphi\,\d(\Fl^{Z_n}_{t})_*(\rho_0\mm)\Big|}_{A_n}\\
&\qquad\qquad\qquad\qquad\qquad+\underbrace{\Big|\int\varphi\,\d(\Fl^{Z_n}_{t})_*(\rho_0\mm)-\int\varphi\rho_t\,\d\mm\Big|}_{B_n}.
\end{split}
\]
Theorem \ref{thm:stabfl} ensures that $B_n\to0$ as $n\to\infty$, for $A_n$ we put $\mu_n:=(\Fl^{Z_n}_{\frac {2i_n}{2^n}})_*(\rho_0\mm)\in\mathscr P(\X)$,  $\sigma_n:=(\Fl^{Z^1}_{t-\frac {2i_n}{2^n}},\Fl^{Z_n}_{t-\frac {2i_n}{2^n}} )_*\mu_n\in\mathscr P(\X^2) $ and notice that
\[
\begin{split}
A_n&=\Big|\int\varphi\,\d(\pi_1)_*\sigma_n-\int\varphi\,\d(\pi_2)_*\sigma_n\Big|=\Big|\int\varphi(x)-\varphi(y)\,\d\sigma_n(x,y)\Big|\leq\Lip(\varphi)\int \sfd(x,y)\,\d\sigma_n(x,y).
\end{split}
\]
Thus recalling, from \eqref{eq:ms}, that for $\mm$-a.e.\ $x$ we have
\begin{equation}
\label{eq:estdist}
\begin{split}
\sfd(\Fl^{Z^1}_{t-\frac {2i_n}{2^n}}(x),x)&\leq 2\|Z^1\|_{L^\infty} \big|t-\frac {2i_n}{2^n}\big|\leq 2^{2-n}\|Z^1\|_{L^\infty} ,\\
\sfd(\Fl^{Z_n}_{t-\frac {2i_n}{2^n}}(x),x)&\leq 2\max\{\|Z^1\|_{L^\infty},\|Z_n\|_{L^\infty}\} \big|t-\frac {2i_n}{2^n}\big|\leq 2^{2-n}\max\{\|Z^1\|_{L^\infty},\|Z_n\|_{L^\infty}\},
\end{split}
\end{equation}
the conclusion follows from
\[
\begin{split}
\int \sfd(x,y)\,\d\sigma_n(x,y)&=\int \sfd\big(\Fl^{Z_1}_{t-\frac {2i}{2^n}}(x),\Fl^{Z_n}_{t-\frac {2i}{2^n}}(x)\big)\,\d\mu_n (x)\\
&\leq \int \sfd\big(\Fl^{Z_1}_{t-\frac {2i_n}{2^n}}(x),x\big)\,\d\mu_n (x)+\int \sfd\big(x,\Fl^{Z_n}_{t-\frac {2i_n}{2^n}}(x)\big)\,\d\mu_n(x)\\
\text{\rm by \eqref{eq:estdist}}\qquad\qquad&\leq2^{3-n}\max\{\|Z^1\|_{L^\infty},\|Z^2\|_{L^\infty}\}.
\end{split}
\]
\end{proof}

\subsection{The Euclidean case}
The arguments used in the previous section can also be used in the Euclidean context to extend known stability results for vector fields converging weakly in time and strongly in space, to the $BV$ case. Compare with \cite[Remark 5.11]{Ambrosio2008}.

The following is a rather trivial observation:
\begin{lemma}\label{le:bvrn}
Let $(\rho_t,v_t)$ be a solution of the continuity equation on $\R^d$ with $(\rho_t)\in L^\infty([0,1],L^\infty(\R^d))$, $(|v_t|)\in L^1([0,1],L^\infty(\R^d))$ and $f\in BV(\R^d)$.

Then $t\mapsto\int f\rho_t\,\d\mathcal L^d$ is absolutely continuous and for its derivative it holds
\[
\Big|\frac{\d}{\d t}\int f\rho_t\,\d\mathcal L^d\Big|\leq \|Df \|_{\sf TV}\|\rho_t\|_{L^\infty}\|v_t\|_{L^\infty}\qquad a.e.\ t.
\]
\end{lemma}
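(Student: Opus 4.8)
The plan is to reduce the $BV$ case to the already-understood smooth case by mollification, using that the continuity equation test is stable under $W^{1,2}$-approximation of the test function combined with the uniform $L^\infty$ bounds on $\rho_t$ and $v_t$. First I would fix a standard mollifier $(\eta_\eps)_{\eps>0}$ on $\R^d$ and set $f_\eps:=f\ast\eta_\eps$, which is smooth (in particular Lipschitz with bounded gradient on the support of the $\rho_t$'s, once we localize) and satisfies $f_\eps\to f$ in $L^1_{\rm loc}$ together with the key total-variation estimate $\int|\nabla f_\eps|\,\d\mathcal L^d\leq\|Df\|_{\sf TV}$ for every $\eps>0$, plus $\nabla f_\eps\,\mathcal L^d\weakto Df$ as measures. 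Since $(\rho_t)$ is uniformly bounded with, by the speed bound coming from $(|v_t|)\in L^1([0,1],L^\infty)$, support contained in a fixed bounded set $B$ for all $t$, it suffices to work with $f$ (and hence $f_\eps$) restricted to a neighbourhood of $B$, so integrability issues at infinity disappear.

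Next I would apply the definition of solution of the continuity equation with the Lipschitz test function $f_\eps$ (which is legitimate: $f_\eps\in W^{1,2}$ locally, and by a cutoff one reduces to genuinely admissible test functions): $t\mapsto\int f_\eps\rho_t\,\d\mathcal L^d$ is absolutely continuous with
\[
\frac{\d}{\d t}\int f_\eps\rho_t\,\d\mathcal L^d=\int \nabla f_\eps\cdot v_t\,\rho_t\,\d\mathcal L^d\qquad\text{a.e.\ }t,
\]
whence the pointwise bound $\big|\frac{\d}{\d t}\int f_\eps\rho_t\,\d\mathcal L^d\big|\leq \|\rho_t\|_{L^\infty}\|v_t\|_{L^\infty}\int|\nabla f_\eps|\,\d\mathcal L^d\leq \|Df\|_{\sf TV}\|\rho_t\|_{L^\infty}\|v_t\|_{L^\infty}=:m(t)$, with $m\in L^1((0,1))$. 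In integrated form, for every $0\le t<s\le 1$,
\[
\Big|\int f_\eps\rho_s\,\d\mathcal L^d-\int f_\eps\rho_t\,\d\mathcal L^d\Big|\leq\int_t^s m(r)\,\d r .
\]
Now let $\eps\downarrow 0$: since $f_\eps\to f$ in $L^1$ on the bounded set $B$ and $\rho_t$ is uniformly bounded with support in $B$, the left-hand side converges to $\big|\int f\rho_s\,\d\mathcal L^d-\int f\rho_t\,\d\mathcal L^d\big|$, giving the same inequality for $f$. This Lipschitz-type estimate in $t$ (with $L^1$ modulus $m$) shows that $t\mapsto\int f\rho_t\,\d\mathcal L^d$ is absolutely continuous, and then its derivative is bounded a.e.\ by $\limsup$ of difference quotients, hence by $m(t)=\|Df\|_{\sf TV}\|\rho_t\|_{L^\infty}\|v_t\|_{L^\infty}$ for a.e.\ $t$, which is the claim.

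The only genuinely delicate point is the passage to the limit $\eps\to 0$ at the level of derivatives rather than at the level of the function $t\mapsto\int f\rho_t$ itself; the clean way around it, adopted above, is to pass to the limit in the \emph{integrated} inequality and then re-derive absolute continuity and the derivative bound for $f$ directly from that, so that no weak convergence of the densities $Df$ against the (merely $L^\infty$, not continuous) integrand $v_t\rho_t$ is ever needed. A minor technical nuisance is checking that $f_\eps$, suitably cut off, is an admissible test function in the chosen definition of the continuity equation and that the cutoff error vanishes because $\rho_t$ is supported in $B$; this is routine given the uniform support bound.
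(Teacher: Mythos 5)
Your argument is essentially the paper's: the paper proves the claim directly for $f\in C^\infty_c(\R^d)$ from the weak formulation of the continuity equation together with $\|Df\|_{\sf TV}=\||\nabla f|\|_{L^1}$, and then invokes a ``standard approximation procedure'' whose details are omitted; you carry out precisely that approximation by mollification, and your key device --- passing to the limit $\eps\downarrow0$ in the \emph{integrated} inequality and only then re-deriving absolute continuity and the a.e.\ derivative bound --- is the correct way to avoid any weak convergence of $\nabla f_\eps\,\mathcal L^d$ against the merely bounded integrand $v_t\rho_t$. One inaccuracy should be fixed: the claim that the $\rho_t$ are supported in a fixed bounded set $B$ does not follow from the hypotheses of the lemma (no compact support of $\rho_0$ is assumed), so the localization step is unjustified as stated. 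Fortunately it is also unnecessary: since $f\in BV(\R^d)\subset L^1(\R^d)$ one has $f_\eps\to f$ in $L^1(\R^d)$, and with $\sup_t\|\rho_t\|_{L^\infty}<\infty$ this already gives $\int f_\eps\rho_t\,\d\mathcal L^d\to\int f\rho_t\,\d\mathcal L^d$ for every $t$ with no support restriction; moreover $f_\eps\in L^1\cap L^\infty$ and $\nabla f_\eps\in L^1\cap L^\infty$, so $f_\eps\in W^{1,2}(\R^d)$ is an admissible test function in the paper's definition of the continuity equation and no cutoff is needed either. With the bounded-support claim deleted and these observations substituted, your proof is complete and matches the intended argument.
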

\begin{proof}
For $f\in C^{\infty}_c(\R^d)$ the claim is a direct consequence of the distributional formulation of the continuity equation, which ensures that
\[
\frac{\d}{\d t}\int f\rho_t\,\d\mathcal L^d=\int \d f(v_t)\rho_t\,\d\mathcal L^d\qquad a.e.\ t.
\]
Then the conclusion follows recalling that $\|Df \|_{\sf TV}=\| |\d f|\|_{L^1 }$. The general case follows from a standard approximation procedure; we omit the details.
\end{proof}
With this last lemma and adapting the arguments used for Theorem \ref{thm:stabfl} we deduce the following result:
\begin{theorem}\label{thm:stabfleu} Assume that $(Z_n)\subset L^1([0,1],L^1(\R^d,\R^d;\mathcal L^d))$ converges weakly in time and strongly in space to $Z_\infty$. Assume also that
\begin{equation}
\label{eq:unifZ2}
\int_0^1\sup_{n\in \N\cup\{\infty\}}\||Z_{n,t}|\|_{L^\infty}+\| D Z_{n,t}\|_{\sf TV} \,\d t+\sup_{n\in \N\cup\{\infty\}}\int_0^1\|(\div Z_{n,t})^-\|_{L^\infty}\,\d t<\infty.
\end{equation}
Then the Regular Lagrangian Flows $(\Fl^{Z_n})$ converge locally in measure to the Regular Lagrangian Flow $\Fl^{Z_\infty}$ and, for every $t\in[0,1]$, the maps $(\Fl^{Z_n}_t)$ converge locally in measure to $\Fl^{Z_\infty}_t$.
\end{theorem}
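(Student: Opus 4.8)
\emph{Strategy and reduction.} The plan is to rerun the proof of Theorem~\ref{thm:stabfl} in the Euclidean setting, replacing the $\RCD$ ingredients it uses --- Theorem~\ref{thm:AT}, Proposition~\ref{le:convfl}, and the Leibniz estimate \eqref{eq:leibgrad} for $\d f(Z_{n,t})$ --- by the corresponding facts of Ambrosio's $BV$ theory on $\R^d$ (see \cite{Ambrosio2008}) and by Lemma~\ref{le:bvrn}. Note first that, since \eqref{eq:unifZ2} holds also for $n=\infty$, the field $Z_\infty$ is a regular $BV$ vector field, so its Regular Lagrangian Flow exists and is unique and for every bounded probability density $\rho_0$ with bounded support the solution $(\rho_{n,t})$ of the continuity equation for $Z_n$ starting from $\rho_0$ is unique and equals the $\mathcal L^d$-density of $(\Fl^{Z_n}_t)_*(\rho_0\mathcal L^d)$; the same is true for $n=\infty$. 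Since Lemma~\ref{le:convmeas} is stated for general metric measure spaces and the proof of Proposition~\ref{le:convfl} uses the $\RCD$ structure only through these facts and through the velocity bound \eqref{eq:ms}, that proof carries over verbatim to $\R^d$. Therefore, exactly as in the proof of Theorem~\ref{thm:stabfl}, everything reduces to showing that for every such $\rho_0$ we have $\rho_{n,t}\weakto\rho_{\infty,t}$ in duality with $C_b(\R^d)$ for every $t\in[0,1]$.

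\emph{Compactness.} Set $S_t:=\sup_{n\in\N\cup\{\infty\}}\||Z_{n,t}|\|_{L^\infty}\in L^1(0,1)$. The $\R^d$-analogue of the compression estimate \eqref{eq:bcompr} together with \eqref{eq:unifZ2} gives $\rho_{n,t}\le C$ for some $C$ independent of $n,t$, while the bound $\sfd(\Fl^{Z_n}_t(x),x)\le\int_0^tS_r\,\d r$ (the $\R^d$-analogue of \eqref{eq:ms}) shows that $\supp\rho_{n,t}$ lies in a fixed bounded set $B$ and that the curves $t\mapsto\rho_{n,t}\mathcal L^d$ are $W_1$-equiLipschitz. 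Hence $\{(\rho_{n,t}\mathcal L^d)_t\}_n$ is precompact in $C([0,1],(\mathscr P_1(\R^d),W_1))$, and up to a subsequence it converges to some $(\eta_t)$ with $\eta_t\le C\nchi_B\mathcal L^d$ and $\eta_0=\rho_0$. As $W_1$-convergence implies weak convergence in duality with $C_b$, it now suffices to show that $(\eta_t)$ solves the continuity equation for $Z_\infty$: by uniqueness this forces $\eta_t=\rho_{\infty,t}$, and then a standard sub-subsequence argument yields convergence of the full sequence.

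\emph{Identification of the limit and the role of Lemma~\ref{le:bvrn}.} Fix $f\in C^\infty_c(\R^d)$ and $\varphi\in C^\infty_c(\R)$ and write $w_{n,t}:=\d f(Z_{n,t})=\langle\nabla f,Z_{n,t}\rangle$. Since $\frac{\d}{\d t}\int f\rho_{n,t}\,\d\mathcal L^d=\int w_{n,t}\rho_{n,t}\,\d\mathcal L^d$ and $\int f\rho_{n,t}\,\d\mathcal L^d\to\int f\eta_t\,\d\mathcal L^d$ for every $t$, it is enough to prove that $\int w_{n,t}\rho_{n,t}\,\d\mathcal L^d\to\int w_{\infty,t}\eta_t\,\d\mathcal L^d$ as distributions in $t$. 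Following the scheme of Theorem~\ref{thm:stabfl} I would mollify $Z_n$ in time by an even kernel $\psi_k\ge 0$ with $\int\psi_k=1$ and $\supp\psi_k\subset[-\tfrac1k,\tfrac1k]$, split the difference into the same three terms, and observe that the last two vanish --- letting first $n\to\infty$, then $k\to\infty$ --- because $(\rho_{n,\cdot})\weakto(\eta_\cdot)$ weakly$^*$ in $L^\infty([0,1]\times\R^d)$, because $|\nabla f|\in L^\infty$, and because $w^{\psi_k}_{n,\cdot}\to w^{\psi_k}_{\infty,\cdot}$ as $n\to\infty$ and $w^{\psi_k}_{\infty,\cdot}\to w_{\infty,\cdot}$ as $k\to\infty$, both in $L^1([0,1]\times\R^d)$. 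For the remaining term one introduces $I_n(t,s):=\int w_{n,t}\rho_{n,s}\,\d\mathcal L^d$ and, after rewriting the two integrals via convolution with the even kernel $\psi_k$ and using $\Lip(\varphi)<\infty$ and $\sup_n|I_n(t,t)|\le C(f)S_t\in L^1(0,1)$, one is left with controlling $|I_n(t,s)-I_n(t,t)|$ for $|t-s|\le\tfrac1k$. This is where Lemma~\ref{le:bvrn} enters: for fixed $t$ one has $w_{n,t}\in BV(\R^d)$ with $\|Dw_{n,t}\|_{\sf TV}\le C(f)\big(\||Z_{n,t}|\|_{L^\infty}+\|DZ_{n,t}\|_{\sf TV}\big)=:b_{n,t}$, and $b_t:=\sup_nb_{n,t}\in L^1(0,1)$ by \eqref{eq:unifZ2}; applying Lemma~\ref{le:bvrn} to the continuity equation $(\rho_{n,\cdot},Z_{n,\cdot})$ with the $BV$ test function $w_{n,t}$ gives $\big|\frac{\d}{\d s}\int w_{n,t}\rho_{n,s}\,\d\mathcal L^d\big|\le\|Dw_{n,t}\|_{\sf TV}\,\|\rho_{n,s}\|_{L^\infty}\,\||Z_{n,s}|\|_{L^\infty}\le C\,b_t\,S_s$, hence $|I_n(t,s)-I_n(t,t)|\le C\,b_t\int_t^sS_r\,\d r$.

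\emph{Conclusion and main obstacle.} Plugging this bound and $\supp\psi_k\subset[-\tfrac1k,\tfrac1k]$ into the remaining term yields an estimate of the form $C(f,\varphi)\big(\tfrac1k\|S\|_{L^1(0,1)}+\|b\|_{L^1(0,1)}\,\|\omega_k\|_{L^\infty}\big)$ with $\omega_k(t):=\int_{|r-t|\le 1/k}S_r\,\d r$; since $\omega_k\to 0$ uniformly by absolute continuity of the Lebesgue integral, this tends to $0$ as $k\to\infty$, uniformly in $n$, which closes the argument and shows that $(\eta_t)$ solves the continuity equation for $Z_\infty$. I expect the genuine difficulty to be precisely this last step: in contrast with the situation underlying \cite[Remark~5.11]{Ambrosio2008}, the modulus of continuity in time of $(\rho_{n,t})$ is controlled only by the $L^1$-function $S$ and not by an $L^\infty$ bound, so the naive Lipschitz-in-$|t-s|$ estimate is not available and must be replaced by the averaged quantity $\omega_k$. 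A secondary point requiring care is to check that the Euclidean counterparts of Theorem~\ref{thm:AT} (existence and uniqueness of the flow and of solutions of the continuity equation for $BV$ fields, the compression estimate, the ODE representation of the flow) and of Proposition~\ref{le:convfl} (the passage from pointwise-in-$t$ to uniform-in-$t$ convergence of the flows) really hold under the present hypotheses; by \cite{Ambrosio2008} they do.
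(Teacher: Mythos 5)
Your proof is correct and takes essentially the same route as the paper's: rerun the proof of Theorem \ref{thm:stabfl} with $f\in C^\infty_c(\R^d)$, replacing the estimate \eqref{eq:lateron} by the $BV$ bound of Lemma \ref{le:bvrn}, namely $|I_n(t,s)-I_n(t,t)|\leq \|D(\d f(Z_{n,t}))\|_{\sf TV}\int_t^s\|\rho_{n,r}\|_{L^\infty}\||Z_{n,r}|\|_{L^\infty}\,\d r$, and then conclude as before. The only difference is that you make explicit the final step the paper leaves implicit in ``arguing as in Theorem \ref{thm:stabfl}'': since the time-regularity is governed only by the $L^1$ function $S$, the naive $|s-t|$-Lipschitz bound is replaced by the averaged quantity $\omega_k$, which tends to $0$ uniformly by absolute continuity of the integral --- a correct and welcome completion of the argument.
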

\begin{proof} The argument is verbatim the same used in the proof of Theorem \ref{thm:stabfl}, with  the following differences:   the function $f$ is taken in $C^\infty_c(\R^d)$, so that  the assumption \eqref{eq:unifZ2} yields that  $\d f(Z_{n,t}) \in BV(\R^d)$ and taking into account Lemma \ref{le:bvrn} above we obtain the estimate
\[
\begin{split}
|I_n(t,s)-I_n(t,t)|&=\Big|\int_t^s\frac{\d}{\d r}\int \d f(Z_{n,t})\rho_{n,r}\,\d\mathcal L^d\,\d r \Big|\leq \|D( \d f(Z_{n,t}))\|_{\sf TV}\int_t^s\|\rho_{n,r}\|_{L^\infty}\|Z_{n,r}\|_{L^\infty}\,\d r.\\
\end{split}
\]
Using this estimate in place of \eqref{eq:lateron}, the conclusion is obtained arguing as in Theorem \ref{thm:stabfl}.
\end{proof}

\section{Directional Energy}

\subsection{Basic considerations about the space $L^p(\X,\Y_{\bar y})$}
In this short section we collect some basic simple properties of the space $L^p(\X,\Y_{\bar y})$. Let us fix a complete and separable metric space $(\X,\sfd)$ equipped with a non-negative Radon measure $\mm$ giving finite mass to bounded sets and a pointed complete space $(\Y,\sfd_\Y,\bar y)$ (which often, but not always, will be separable).
\bigskip

The behaviour of $AC^p$ curves with values in $L^p(\X,\Y_{\bar y})$ is described in the following lemma (compare with  Lemma \ref{le:basequ}):
\begin{lemma}\label{le:basequ2}
Let $(x,t)\mapsto f_t(x)\in \Y$ be a given Borel map in $L^p(\X\times[0,1],\Y_{\bar y})$ and $p\in(1,\infty)$. Then the following are equivalent:
\begin{itemize}
\item[i)] The curve $[0,1]\ni t\mapsto f_t\in L^p(\X,\Y)$ belongs to $W^{1,p}([0,1],L^p(\X,\Y_{\bar y}))$ (resp.\  $AC^{p}([0,1],L^p(\X,\Y_{\bar y}))$).
\item[ii)] There is a function $G\in L^p(\X\times[0,1])$, $G\geq 0$, such that for a.e.\ (resp.\ every) $t,s\in[0,1]$, $t<s$ it holds
\begin{equation}
\label{eq:geq2}
\sfd_\Y(f_s,f_t)\leq \int_t^s G_r\,\d r\quad\mm-a.e..
\end{equation}
\item[iii)] For $\mm$-a.e.\ $x\in\X$ we have $f_\cdot(x)\in W^{1,p}([0,1],\Y)$ and the function $(t,x)\mapsto |\partial_tf_t(x)|=:H_t(x)$ belongs to $L^p(\X\times[0,1])$ (resp.\ and moreover $(f_t)\in C([0,1],L^p(\X,\Y_{\bar y}))$).
\end{itemize}
Moreover, if these holds $H$ is the least function $G\geq 0$ in the $\mm\times\mathcal L^1$-a.e.\ sense, for which \eqref{eq:geq2} holds and
\begin{equation}
\label{eq:splp}
\frac{\sfd_\Y(f_{t+h}(x),f_t(x))}{|h|}\quad\to\quad H_t(x)\qquad\text{ in $L^p(\X\times[0,1])$ as $h\to 0$,}
\end{equation}
where the incremental ratios are defined to be 0 if $t+h\notin[0,1]$.
\end{lemma}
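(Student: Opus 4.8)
The plan is to deduce the statement from the real-valued Lemma~\ref{le:basequ} by composing with a countable family of $1$-Lipschitz functions realizing $\sfd_\Y$. Since $f\in L^p(\X\times[0,1],\Y_{\bar y})$ is essentially separably valued, after replacing $\Y$ by the closure of $\{\bar y\}$ together with a separable essential image of $f$ we may assume $(\Y,\sfd_\Y)$ separable; fix then a countable dense set $(y_n)\subset\Y$ and set $\psi_n(y):=\sfd_\Y(y,y_n)-\sfd_\Y(\bar y,y_n)$. Each $\psi_n$ is $1$-Lipschitz, $|\psi_n(y)|\le\sfd_\Y(y,\bar y)$, and $\sfd_\Y(y,y')=\sup_n\big(\psi_n(y)-\psi_n(y')\big)$ for all $y,y'\in\Y$. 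Writing $f^n_t(x):=\psi_n(f_t(x))$ we have $f^n\in L^p(\X\times[0,1])$, and $u\mapsto\psi_n\circ u$ is a $1$-Lipschitz map from $L^p(\X,\Y)$ to $L^p(\X)$ carrying $f_t$ to $f^n_t$ (since $\|\psi_n\circ u-\psi_n\circ v\|_{L^p(\X)}\le\|\sfd_\Y(u,v)\|_{L^p(\X)}=\sfd_{L^p(\X,\Y)}(u,v)$). We prove $(i)\Rightarrow(iii)\Rightarrow(ii)\Rightarrow(i)$ and then the minimality of $H$ and \eqref{eq:splp}; we only discuss the Sobolev case, the $AC^p$ one requiring only the obvious modifications together with Theorem~\ref{thm:sobaccurve}.

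\textbf{$(i)\Rightarrow(iii)$.} By Corollary~\ref{cor:chain} applied to $u\mapsto\psi_n\circ u$, each curve $t\mapsto f^n_t$ lies in $W^{1,p}([0,1],L^p(\X))$, so Lemma~\ref{le:basequ} yields, for $\mm$-a.e.\ $x$ (intersecting over $n$), that $f^n_\cdot(x)\in W^{1,p}((0,1))$ for all $n$ and that $h^n_t(x):=|\partial_tf^n_t(x)|$ defines an element of $L^p(\X\times[0,1])$. The key is a bound on $\sup_nh^n$. Since $\psi_n$ is $1$-Lipschitz, $|f^n_{t+\eps}(x)-f^n_t(x)|\le\sfd_\Y(f_{t+\eps}(x),f_t(x))$, and dividing by $|\eps|$ and letting $\eps\to0$ (the left side tends to $h^n_t(x)$ for a.e.\ $t$, $f^n_\cdot(x)$ being absolutely continuous) we obtain
\[
\sup_nh^n_t(x)\ \le\ \tilde H_t(x):=\limi_{\eps\to0}\frac{\sfd_\Y(f_{t+\eps}(x),f_t(x))}{|\eps|}\qquad\text{for }\mm\times\mathcal L^1\text{-a.e. }(x,t).
\]
Moreover $\int\big(\tfrac{\sfd_\Y(f_{t+\eps}(x),f_t(x))}{|\eps|}\big)^p\,\d(\mm\times\mathcal L^1)$ is the energy $\mathcal E_{p,\eps}$ of the $L^p(\X,\Y)$-valued curve $t\mapsto f_t$, hence bounded uniformly in $\eps$ by Corollary~\ref{cor:energy0} and hypothesis $(i)$; so Fatou's lemma gives $\tilde H\in L^p(\X\times[0,1])$. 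Thus for $\mm$-a.e.\ $x$ the curve $f_\cdot(x)$ satisfies the hypotheses of Lemma~\ref{le:spcount} relative to the family $(\psi_n)$ — all $\psi_n\circ f_\cdot(x)\in W^{1,p}((0,1))$, and $\sup_n h^n_\cdot(x)\le\tilde H_\cdot(x)\in L^p((0,1))$ — whence $f_\cdot(x)\in W^{1,p}([0,1],\Y)$ with $H_t(x):=|\partial_tf_t(x)|=\sup_nh^n_t(x)\le\tilde H_t(x)$, so $H\in L^p(\X\times[0,1])$. This is $(iii)$, the continuity in the $AC^p$ case being already contained in $(i)$.

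\textbf{$(iii)\Rightarrow(ii)\Rightarrow(i)$, minimality, \eqref{eq:splp}.} For $(iii)\Rightarrow(ii)$: by Theorem~\ref{thm:ms}, for $\mm$-a.e.\ $x$ one has $\sfd_\Y(f_s(x),f_t(x))\le\int_t^sH_r(x)\,\d r$ for a.e.\ $t<s$, and Fubini turns this into \eqref{eq:geq2} for a.e.\ $t<s$; in the $AC^p$ case both sides of \eqref{eq:geq2}, seen in $L^p(\X)$, are continuous in $(t,s)$ — the left by $(f_t)\in C([0,1],L^p(\X,\Y))$, the right by $H\in L^p(\X\times[0,1])$ — so the inequality holds for all $t<s$. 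For $(ii)\Rightarrow(i)$: taking $L^p(\X)$-norms in \eqref{eq:geq2} and using Minkowski's integral inequality gives $\sfd_{L^p(\X,\Y)}(f_s,f_t)\le\int_t^s\|G_r\|_{L^p(\X)}\,\d r$ with $r\mapsto\|G_r\|_{L^p(\X)}\in L^p((0,1))$, i.e.\ the defining estimate for $AC^p([0,1],L^p(\X,\Y_{\bar y}))$ (up to modifying $f$ on a null set of times in the Sobolev case). Minimality of $H$: if $G\ge0$ satisfies $(ii)$, then by Fubini and Theorem~\ref{thm:ms} one gets $H_\cdot(x)\le G_\cdot(x)$ a.e.\ for $\mm$-a.e.\ $x$, hence $H\le G$ a.e., and $H$ itself satisfies $(ii)$ by $(iii)\Rightarrow(ii)$. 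Finally, for \eqref{eq:splp}: Theorem~\ref{thm:sobaccurve}\,(ii-b) gives, for $\mm$-a.e.\ $x$, that $|h|^{-1}\sfd_\Y(f_{\cdot+h}(x),f_\cdot(x))\to H_\cdot(x)$ in $L^p((0,1))$ as $h\to0$, while \eqref{eq:geq2} with $G=H$ provides the domination $\big\||h|^{-1}\sfd_\Y(f_{\cdot+h}(x),f_\cdot(x))\big\|_{L^p((0,1))}\le\|H_\cdot(x)\|_{L^p((0,1))}$, whose $p$-th power is in $L^1(\X)$; dominated convergence over $\X$ then upgrades the convergence to $L^p(\X\times[0,1])$.

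\textbf{Main difficulty.} The only genuinely non-routine point is the uniform-in-$n$ estimate $\sup_nh^n\le\tilde H\in L^p(\X\times[0,1])$ established in $(i)\Rightarrow(iii)$: it is precisely this bound — obtained by comparing the increments of $\psi_n\circ f$ with the metric increments of $f$ and invoking the energy estimate of Corollary~\ref{cor:energy0} — that makes Lemma~\ref{le:spcount} applicable for $\mm$-a.e.\ $x$. Everything else is a routine combination of Fubini's theorem, the chain rule of Corollary~\ref{cor:chain}, and the already established real-valued Lemma~\ref{le:basequ}.
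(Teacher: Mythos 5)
Your overall architecture is sound and genuinely different from the paper's: you close the cycle as $(i)\Rightarrow(iii)\Rightarrow(ii)\Rightarrow(i)$, deducing $(iii)$ directly from $(i)$ by composing with the normalized distance functions $\psi_n=\sfd_\Y(\cdot,y_n)-\sfd_\Y(\bar y,y_n)$, whereas the paper goes $(i)\Rightarrow(ii)$ first, working with the functions $F_{n,t}=\sfd_\Y(f_t,f_n)$ for $(f_n)$ dense in $L^p(\X,\Y_{\bar y})$ and a weak $L^p$-limit of the metric difference quotients; your proof of \eqref{eq:splp} by dominated convergence in $x$ (using Theorem \ref{thm:sobaccurve}(ii-b) fibrewise plus the domination coming from \eqref{eq:geq2} with $G=H$) is also simpler than the paper's weak-limit/duality argument and is correct. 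Two small points of hygiene: Lemma \ref{le:spcount} is stated for $1$-Lipschitz functions with \emph{bounded support}, while your $\psi_n$ are unbounded; the proof of that lemma goes through verbatim because $|\psi_n\circ f_\cdot(x)|\le\sfd_\Y(f_\cdot(x),\bar y)\in L^p((0,1))$ for $\mm$-a.e.\ $x$, but you should say so (or truncate). Similarly your appeals to Theorem \ref{thm:ms} in $(iii)\Rightarrow(ii)$ and in the minimality argument really go through Theorem \ref{thm:sobaccurve} (Sobolev curves have an AC representative whose metric speed is the distributional derivative and is minimal); this is routine.

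There is, however, a genuine defect exactly at the step you single out as the crux. You define $\tilde H_t(x)=\limi_{\eps\to0}\eps^{-1}\sfd_\Y(f_{t+\eps}(x),f_t(x))$, a liminf over the \emph{continuum} of $\eps$'s computed on the fixed Borel representative, and justify $h^n\le\tilde H$ by asserting that $\eps^{-1}|f^n_{t+\eps}(x)-f^n_t(x)|\to h^n_t(x)$ for a.e.\ $t$. This is false for a general Borel representative of a $W^{1,p}$ function: modifying a curve on a Lebesgue-null set of times can deflate the pointwise liminf of its raw difference quotients at \emph{every} $t$ (e.g.\ take $f_t=t$ redefined on a countable dense set $\{e_{k,j}\}$, placed at distance $2^{-k}$ from points $v_{k,j}$ forming a $\delta 2^{-k}$-net of $[0,1]$ and given the values $f_{e_{k,j}}:=v_{k,j}$: then the liminf is $\le\delta/(1-\delta)$ a.e.\ while the derivative is $1$). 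Since the statement of the lemma is about an arbitrary Borel map $(x,t)\mapsto f_t(x)$, your inequality $\sup_n h^n\le\tilde H$ is not established as written, and it is precisely what feeds Lemma \ref{le:spcount}. The repair is standard and keeps your route intact: fix a sequence $\eps_k\downarrow0$; by \eqref{eq:splpu} of Lemma \ref{le:basequ} the quotients $\eps_k^{-1}|f^n_{\cdot+\eps_k}-f^n_\cdot|$ converge to $h^n$ in $L^p(\X\times[0,1])$ for each $n$, so by a diagonal extraction they converge $\mm\times\mathcal L^1$-a.e.\ for every $n$ along a common subsequence; define $\tilde H$ as the liminf of the metric quotients along that subsequence (Fatou plus Corollary \ref{cor:energy0} still gives $\tilde H\in L^p$), and the pointwise bound $\eps_k^{-1}|f^n_{t+\eps_k}-f^n_t|\le\eps_k^{-1}\sfd_\Y(f_{t+\eps_k},f_t)$, which holds everywhere, now yields $\sup_n h^n\le\tilde H$ a.e. Alternatively use a weak $L^p$-limit of the metric quotients, as the paper does. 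With this correction your argument is complete.
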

\begin{proof}\ We shall deal with the absolutely continuous case, as the Sobolev one can be obtained through very similar arguments taking also into account Theorem \ref{thm:sobaccurve}. Moreover, since $(f_t)\in L^p(\X\times[0,1],\Y_{\bar y})$, by definition there is a separable subset of $\Y$ containing, up to negligible sets, the image of $(f_t)$; thus up to replacing $\Y$ with the closure of such separable subset we can assume that $\Y$ is separable. Hence, without loss of generality we may assume that $L^{p}(\X,\Y_{\bar y})$ is separable.

\noindent{$\mathbf{(i)\Rightarrow(ii)}$} Let $(f_n)\subset L^p(\X,\Y_{\bar y})$ be countable and dense, put $F_{n,t}:=\sfd_\Y(f_t,f_n)\in L^p(\X)$ and notice that the triangle inequality in $\Y$ gives $|F_{n,s}-F_{n,t}|\leq \sfd_\Y(f_t,f_s)$ $\mm$-a.e.. On the other hand, the triangle inequality in $L^p(\X)$ gives $\|F_{n,s}-F_{n,t}\|_{L^p(\X)}\geq \sfd_{L^p(\X,\Y)}(f_s,f_n)-\sfd_{L^p(\X,\Y)}(f_t,f_n)$, so that the identity $\sfd_{L^p(\X,\Y)}(f_s,f_t)=\sup_n\sfd_{L^p(\X,\Y)}(f_s,f_n)-\sfd_{L^p(\X,\Y)}(f_t,f_n)$ (consequence of the density of the $f_n$'s) forces
\begin{equation}
\label{eq:densefn}
\sfd_Y(f_s,f_t)=\sup_n |F_{n,s}-F_{n,t}|\qquad\mm-a.e.\ \forall t,s\in[0,1].
\end{equation}
In particular, for every $n\in\N$ we have $(F_{n,t})\in AC^p([0,1],L^p(\X))$ so that by the Radon-Nikodym property of $L^p$ we obtain that $G_{n,t}:=\partial_tF_{n,t}$ is a well defined function in $L^p(\X)$ for a.e.\ $t\in[0,1]$.

Now observe that the assumption  $(f_t)\in AC^p([0,1],L^p(\X,\Y_{\bar y}))$ ensures (by arguing as in \eqref{eq:oneside}) that given a sequence $h_{i} \downarrow 0$ the incremental ratios $\frac{\sfd_\Y(f_{t+h_{i}},f_t)(x)}{h_{i}}$ are bounded in $L^p(\X\times[0,1])$ as $h_{i} \to 0$, hence up to subsequences they must converge to a limit $\tilde G$ weakly in $L^p(\X\times[0,1])$. Thus \eqref{eq:densefn} forces $|G_{n,t}|(x)\leq\tilde G_t(x)$ for $\mm\times\mathcal L^1$-a.e.\ $(x,t)$ and in turn this grants that $G:=\sup_n|G_n|$ belongs to $L^p(\X\times[0,1])$. The conclusion follows noticing that for any $t,s\in[0,1]$, $t<s$ it holds
\[
\sfd_Y(f_s,f_t)=\sup_n |F_{n,s}-F_{n,t}|\leq \sup_n\int_t^s|G_{n,r}|\,\d r\leq\int_t^s G_r\,\d r,\qquad\mm-a.e.,
\]
as desired.

\noindent{$\mathbf{(ii)\Rightarrow(i)}$} Directly from \eqref{eq:geq2} we obtain
\[
\sfd_{L^p(\X,\Y)}(f_s,f_t)=\|\sfd_\Y(f_s,f_t)\|_{L^p(\X)}\leq\Big\|\int_t^s G_r\,\d r\Big\|_{L^p(\X)}\leq\int_t^s \|G_r\|_{L^p(\X)}\,\d r
\]
for any $t,s\in[0,1]$, $t<s$, and since the identity $\int_0^1\|G_t\|_{L^p(\X)}^p\,\d t=\iint_0^1|G_t|^p\,\d t\,\d\mm<\infty$ shows that $(\|G_t\|_{L^p})\in L^p(0,1)$, this is sufficient to conclude.

\noindent{$\mathbf{(ii)\Rightarrow(iii)}$} Continuity is obvious from the implication ${(ii)\Rightarrow(i)}$ already proved. For any 1-Lipschitz function $\varphi:\Y\to \R$ the function $\varphi\circ f$ satisfies $(i)$ of Lemma \ref{le:basequ} with $g:=G$ and thus Lemma \eqref{le:basequ} ensures that for $\mm$-a.e.\ $x\in\X$ the function $t\mapsto\varphi\circ f_t(x)$ belongs to $W^{1,p}([0,1])$ and its distributional derivative is bounded above by $G_t(x)$. Letting $\varphi$ running over the countable set given by Lemma \ref{le:spcount} we conclude that $t\mapsto f_t(x)$ belongs to $W^{1,p}([0,1],\Y)$ with distributional derivative bounded above by $G_t(x)$ for $\mm$-a.e.\ $x$.

\noindent{$\mathbf{(iii)\Rightarrow(ii)}$} It is trivial to notice that for any 1-Lipschitz function $\varphi:\Y\to \R$ the function $\varphi\circ f$ belongs to $W^{1,p}((0,1))$ and $\partial_t(\varphi(f_t(x)))=: h^{\varphi}_{t}(x)\leq H_{t}(x)$. Thus Lemma \eqref{le:basequ} ensures that for every  $t,s$ and $\mm$-a.e.\ $x\in\X$ it holds
\[
|\varphi(f_s(x))-\varphi(f_t(x))|\leq\int_t^s H_r(x)\,dr
\]
hence taking the supremum as $\varphi$ varies over the countable family given by Lemma \ref{le:spcount}  we conclude.

\noindent{\bf Final statements.} The fact that $H$ is the minimal $G$ for which \eqref{eq:geq2} holds follows directly from the proof given. For what concerns \eqref{eq:splp}, notice that \eqref{eq:geq2} and the choice $G:=H$  give
\[
\frac{\sfd_\Y(f_{t+h}(x),f_t(x))}{|h|}\leq \frac1{|h|}\int_t^{t+h}H_r(x)\,\d r\quad\to\quad H_t(x)\qquad\text{ in $L^p(\X\times[0,1])$ as $h\to 0$,}
\]
where the claimed convergence is an easy consequence of the definition of Bochner integral. Hence   $\lims_{h\to 0}\|\frac{\sfd_\Y(f_{\cdot+h}(\cdot),f_\cdot(\cdot))}{|h|}\|_{L^p}\leq \|H\|_{L^p}$. Now let $\tilde H\in L^p$ be any $L^p$-weak limit of $\frac{\sfd_\Y(f_{\cdot+h}(\cdot),f_\cdot(\cdot))}{|h|}$ along some sequence  $h_n\to 0$, so that $\|\tilde H\|_{L^p}\leq\|H\|_{L^p}$, and notice that to conclude it is sufficient to prove that $\tilde H=H$. For any  $\varphi:\Y\to \R$ 1-Lipschitz, the function $\varphi\circ f$ satisfies \eqref{eq:geq} with $g:=H$, hence putting $h_\varphi(x):=\partial_t(\varphi( f_t(x)))$ as before, from the trivial bound
\[
\frac{\sfd_\Y(f_{t+h}(x),f_t(x))}{|h|}\geq \frac{|\varphi(f_{t+h}(x))-\varphi(f_t(x))|}{|h|}
\]
we deduce $\tilde H\geq h_\varphi$. Hence letting $\varphi$ run in the countable set given by Lemma  \ref{le:spcount} we deduce that $\tilde H\geq H$ and then the conclusion.
\end{proof}
We now want to prove a continuity result for $L^p$ functions valued in $\Y$ and to this aim it is convenient to first analyze the case $\Y:=\ell^\infty$.
\begin{lemma}\label{le:cbdense}
For every $p\in[1,\infty)$, the space $C_b(\X,\ell_\infty)$ is dense in $L^p(\X,\ell_\infty)$.
\end{lemma}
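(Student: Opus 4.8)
The plan is to reduce to simple maps via \eqref{eq:simpledense} and then to approximate a single ``atom'' $\nchi_E y$ by a scalar cut-off function times the constant vector $y$. First I would invoke \eqref{eq:simpledense}: the simple maps are dense in $L^p(\X,\ell_\infty)$, so it is enough to approximate in $C_b(\X,\ell_\infty)$ a given simple map $u=\sum_{i=1}^n\nchi_{E_i}y_i$, with the $E_i\subset\X$ Borel and pairwise disjoint and the $y_i\in\ell_\infty$. Discarding the indices with $y_i=0$ we may assume $y_i\neq 0$ for every $i$, and then the integrability built into $L^p(\X,\ell_\infty)$ forces $\mm(E_i)<\infty$ for each such $i$. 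By the triangle inequality for $\sfd_{L^p(\X,\ell_\infty)}$ it then suffices to treat a single term $\nchi_E y$ with $\mm(E)<\infty$ and $y\neq 0$, producing for every $\eta>0$ some $\Phi\in C_b(\X,\ell_\infty)$ with $\sfd_{L^p(\X,\ell_\infty)}(\Phi,\nchi_E y)\leq\eta$.

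Next I would build $\Phi:=\phi\,y$ for a suitable scalar cut-off $\phi\in C_b(\X,[0,1])$ with bounded support. Since $\mm$ is Radon there is a compact $K\subset E$ with $\mm(E\setminus K)$ arbitrarily small; replacing the target $\nchi_E y$ by $\nchi_K y$ costs at most $\|y\|_{\ell_\infty}\,\mm(E\setminus K)^{1/p}$ in $L^p$. For $\delta>0$ put $U_\delta:=\{x:\sfd(x,K)<\delta\}$; each $U_\delta$ is open and bounded, hence of finite $\mm$-measure (as $\mm$ gives finite mass to bounded sets), and $U_{1/n}\downarrow K$ since $K$ is closed, so by continuity from above $\mm(U_\delta\setminus K)\to 0$ as $\delta\downarrow0$. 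Fixing $\delta$ with $\mm(U_\delta\setminus K)$ small and setting $\phi(x):=\big(1-\sfd(x,K)/\delta\big)^+$ we get $\phi\in C_b(\X)$ with $0\leq\phi\leq 1$, $\phi\equiv1$ on $K$, $\phi\equiv0$ off the bounded set $U_\delta$ (so $\phi$ has bounded support), and $\{\phi\neq\nchi_K\}\subset U_\delta\setminus K$. Then $\Phi=\phi\,y$ is continuous from $\X$ to $\ell_\infty$ — being the composition of $\phi$ with the $\|y\|_{\ell_\infty}$-Lipschitz curve $t\mapsto t\,y$ in $\ell_\infty$ — is bounded, has bounded support, hence lies in $L^p(\X,\ell_\infty)$, and
\[
\sfd_{L^p(\X,\ell_\infty)}(\Phi,\nchi_K y)^p=\|y\|_{\ell_\infty}^p\int|\phi-\nchi_K|^p\,\d\mm\leq\|y\|_{\ell_\infty}^p\,\mm(U_\delta\setminus K).
\]
Choosing $K$ and $\delta$ appropriately the total error drops below $\eta$, which completes the reduction and hence the proof.

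The argument has no genuine obstacle: the only point deserving attention is that the pointwise product $x\mapsto\phi(x)y$ is continuous as a map into $\ell_\infty$ and that the result is $p$-integrable, which is precisely where the bounded support of $\phi$ — together with the standing assumption that $\mm$ is finite on bounded sets — enters. I note in passing that nothing particular to $\ell_\infty$ is used here: the same proof gives density of $C_b(\X,V)\cap L^p(\X,V)$ in $L^p(\X,V)$ for an arbitrary Banach space $V$; the special role of $\ell_\infty$ will appear only in the subsequent reduction of general complete metric targets to this case through an isometric Kuratowski-type embedding.
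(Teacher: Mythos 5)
Your proof is correct and follows essentially the same route as the paper: reduce to simple maps via \eqref{eq:simpledense} and approximate each atom $\nchi_E y$ by a continuous scalar function times the fixed vector $y$. The only difference is that you construct the scalar approximant of $\nchi_E$ explicitly (compact $K\subset E$ plus a Lipschitz cut-off), whereas the paper simply invokes the standard density of $C_b(\X,\R)$ in $L^p(\X,\R)$.
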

\begin{proof}
Let $E\subset\X$ Borel, $f\in\ell_\infty$ and $(g_n)\subset C_b(\X,\R)$ be converging to $\nchi_E$ in the $L^p(\X,\R)$-topology. Then $(g_nf)\subset C_b(\X,\ell_\infty)$ converges to $\nchi_Ef$ in the topology of $L^p(\X,\ell_\infty)$. Since linear combinations of functions of the form $\nchi_Ef$ with $E,f$ as above are dense in $L^p(\X,\ell_\infty)$  (recall \eqref{eq:simpledense}), the proof is completed.
\end{proof}

\begin{proposition}\label{le:contlpfl}
 Let $p\in[1,\infty)$, $Z$ a time dependent regular vector field on $\X$ and $u\in L^p(\X,\Y_{\bar y})$. Then the map $\R\ni t\mapsto u\circ\Fl^Z_t\in L^p(\X,\Y_{\bar y})$ is continuous.
\end{proposition}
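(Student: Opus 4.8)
The plan is to reduce to the case $\Y = \ell_\infty$ by a suitable isometric embedding, handle that case using the density of $C_b(\X,\ell_\infty)$, and then transfer back. First I would recall that any separable metric space embeds isometrically into $\ell_\infty$ (Kuratowski embedding): since $u \in L^p(\X,\Y_{\bar y})$, its essential image lies in a separable subset $\Y_0 \subset \Y$, so fixing a countable dense set $(y_n) \subset \Y_0$ we may define $\iota : \Y_0 \to \ell_\infty$ by $\iota(y) := \bigl(\sfd_\Y(y,y_n) - \sfd_\Y(\bar y, y_n)\bigr)_n$. This is an isometry onto its image, and it sends $L^p(\X,\Y_{\bar y})$-convergence to $L^p(\X,\ell_\infty)$-convergence and vice versa; moreover $\Fl^Z_t$ acts only on the source variable, so continuity of $t \mapsto u \circ \Fl^Z_t$ in $L^p(\X,\Y_{\bar y})$ is equivalent to continuity of $t \mapsto (\iota \circ u) \circ \Fl^Z_t$ in $L^p(\X,\ell_\infty)$. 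Thus it suffices to treat $\Y = \ell_\infty$ with $\bar y = 0$.

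Next I would establish an a priori bound on the operators $u \mapsto u \circ \Fl^Z_t$ that is uniform in $t$ on compact time intervals. From the compressibility estimate \eqref{eq:bcompr} in Theorem \ref{thm:AT} — combined with the fact that $Z$ is a regular vector field, so $\int_0^T \|(\div Z_r)^-\|_{L^\infty}\,\d r < \infty$ — we get $(\Fl^Z_t)_*\mm \leq C_T\,\mm$ for all $t \in [0,T]$ with $C_T$ independent of $t$. Consequently, for any Borel $w : \X \to \ell_\infty$,
\[
\int \|w \circ \Fl^Z_t\|_{\ell_\infty}^p \,\d\mm \;=\; \int \|w\|_{\ell_\infty}^p \,\d(\Fl^Z_t)_*\mm \;\leq\; C_T \int \|w\|_{\ell_\infty}^p \,\d\mm,
\]
so the maps $E_t : w \mapsto w \circ \Fl^Z_t$ are equibounded from $L^p(\X,\ell_\infty)$ to itself. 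By Lemma \ref{le:cbdense} the space $C_b(\X,\ell_\infty)$ is dense in $L^p(\X,\ell_\infty)$, so by a standard $3\eps$-argument it is enough to prove continuity of $t \mapsto E_t(v)$ for $v \in C_b(\X,\ell_\infty)$.

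So fix $v \in C_b(\X,\ell_\infty)$ and $t_0 \in \R$. By Theorem \ref{thm:AT}(iii), for $\mm$-a.e.\ $x$ the curve $t \mapsto \Fl^Z_t(x)$ is (absolutely) continuous, hence $\sfd(\Fl^Z_t(x),\Fl^Z_{t_0}(x)) \to 0$ as $t \to t_0$, and by continuity of $v$ we get $\|v(\Fl^Z_t(x)) - v(\Fl^Z_{t_0}(x))\|_{\ell_\infty} \to 0$ pointwise $\mm$-a.e. Since $v$ is bounded, say by $M$, the integrand $\|v \circ \Fl^Z_t - v \circ \Fl^Z_{t_0}\|_{\ell_\infty}^p$ is dominated by the constant $(2M)^p$, which is integrable on the bounded support region — but $v$ need not have bounded support, so instead I would note that $\|v\circ\Fl^Z_t\|^p_{\ell_\infty} \leq M^p$ only gives local integrability; the clean fix is to work locally in measure rather than globally, i.e.\ to prove $E_t(v) \to E_{t_0}(v)$ in $L^p_{\rm loc}$, which combined with the uniform bound and a truncation argument (approximating $v$ by compactly supported functions in $L^p$) yields the $L^p$ statement. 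Applying dominated convergence on each bounded set then gives $\|v \circ \Fl^Z_t - v \circ \Fl^Z_{t_0}\|_{L^p(B)} \to 0$ for every bounded $B$, and the equiboundedness/density reduction closes the argument. The main obstacle is bookkeeping the lack of compact support of general $L^p$ elements: one must interleave the truncation-in-space with the $C_b$-approximation carefully, but no single step is deep — the real content is just the compressibility bound \eqref{eq:bcompr} plus a.e.\ continuity of the flow curves from Theorem \ref{thm:AT}.
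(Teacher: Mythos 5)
Your proposal follows essentially the same route as the paper's proof: reduction to $\Y=\ell_\infty$ via the Kuratowski embedding, the equi-Lipschitz bound for $w\mapsto w\circ\Fl^Z_t$ coming from the compression estimate \eqref{eq:bcompr}, density of $C_b(\X,\ell_\infty)$ from Lemma \ref{le:cbdense}, and dominated convergence based on the $\mm$-a.e.\ continuity of the flow curves from Theorem \ref{thm:AT}. The extra care you take about functions without bounded support (cut-off/truncation, using that the flow displaces points by at most $\int\||Z_s|\|_{L^\infty}\,\d s$ on compact time intervals) is a legitimate and correct patch of a step the paper dispatches with a bare appeal to dominated convergence, but it does not change the structure of the argument.
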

\begin{proof}
Up to a left composition with a (Kuratowski) isometric embedding of $\Y$ in $\ell_\infty$ we can assume that $\Y=\ell_\infty$. Then observe that the trivial bound
\[
\int\sfd_\Y^p(u\circ\Fl^Z_t,v\circ\Fl^Z_t)\,\d\mm\leq\int \sfd_\Y^p(u,v)\,\d (\Fl^Z_t)_*\mm\leq e^{\int_0^t\|(\div Z_t)^-\|_{L^\infty}\,\d t}\int \sfd_\Y^p(u,v)\,\d \mm
\]
shows that the right composition with $\Fl^Z_t$ is a Lipschitz map from $L^p(\X,\ell_\infty)$ to $L^p(\X,\ell_\infty)$, thus to conclude it is sufficient to prove that there is a dense subset of $L^p(\X,\ell_\infty)$ made of functions $u$ such that $t\mapsto u\circ\Fl^Z_t\in L^p(\X,\ell_\infty)$ is continuous. An application of the dominated convergence theorem shows that this is the case for $u\in C_b(\X,\ell_\infty)$, thus the conclusion follows from Lemma \ref{le:cbdense}.
\end{proof}

\subsection{The Korevaar-Schoen space $\KS Z(\Omega,\Y_{\bar y})$}\label{se:ks}

Let us fix some regular vector field $Z$ not depending on time on the $\RCD(K,\infty)$ space $(\X,\sfd,\mm)$ and denote by $\Fl^{Z}$ the unique regular Lagrangian flow  associated to $Z$. Also, let $(\Y,\sfd_\Y,\bar y)$ be a pointed complete space.

Let  $p \in (1,\infty)$,  $u \in L^{p}(\X,\operatorname{Y}_{\bar y})$, $\Omega\subset\X$ open and   $\varepsilon > 0$ we set
\[
e^{Z}_{p,\varepsilon}[u,\Omega](x): =
\begin{cases}
\dfrac{\operatorname{d}^{p}_{\operatorname{Y}}\bigl(u(x), u(\Fl^{Z}_\eps(x))\bigr)}{\varepsilon^{p}},& \quad \text{\rm if }\ x, \Fl^{Z}_\eps(x) \in \Omega;\\
0, &\quad \hbox{otherwise}.
\end{cases}
\]
For every $\varphi \in C_{b}(\X)$ we set
\[
E^{Z}_{p,\varepsilon}[u,\Omega](\varphi):=\int \varphi(x) e^{Z}_{p,\varepsilon}[u,\Omega](x)\,\d\mathfrak{m}(x).
\]

\begin{definition}
\label{Def3.10}
We say that a Borel map $u:\X\to\Y$ belongs to Korevaar-Schoen space $\KS Z (\Omega,\operatorname{Y}_{\bar y})$ if  $u \in L^{p}(\X,\operatorname{Y}_{\bar y})$ and
\begin{equation}
\label{eq3.20}
E^{Z}_{p}[u,\Omega]:=\sup\varlimsup\limits_{\varepsilon \to 0}E^{Z}_{p,\varepsilon}[u,\Omega](\varphi)<+\infty.
\end{equation}
and the $\sup$ is taken among all $\varphi\in C_b(\X)$ with $0\leq \varphi\leq1$ and $\sfd(\supp(\varphi),\Omega^c)>0$ (if $\Omega=\X$ we interpret this last condition as automatically satisfied).

The quantity $E^{Z}_{p}[u,\Omega]$ will be called  energy of a map $u$ in the direction $Z$ on $\Omega$.
\end{definition}

\begin{theorem}\label{thm:baseks} Let $(\X,\sfd,\mm)$ be a $\RCD(K,\infty)$ space, $(\Y,\sfd_\Y,{\bar y})$ a pointed complete space, $p\in(1,\infty)$, $Z$ a regular vector field on $\X$, $\Omega\subset \X$ open and $u\in L^p(\Omega,\Y_{\bar y})$.

For $x\in\Omega$ we put $T_x:=\frac{1}{\|Z\|_{L^\infty}\sfd(x,\Omega^c)}$ (if $\Omega=\X$ we put $T_x:=+\infty$) and for $C\subset\Omega$ closed put
\begin{equation}
\label{eq:deftc}
T_C:=\inf_{x\in C}T_x.
\end{equation}
Then the following are equivalent:
\begin{itemize}
\item[i)] It holds $u\in \KS Z(\Omega,\Y_{\bar y})$.
\item[ii)] For every closed set $C\subset \Omega$ with  $T_C>0$  the curve $[0,1\wedge T_C)\ni t\mapsto u\circ\Fl^Z_t\in L^p(C,\Y_{\bar y})$ is Lipschitz with Lipschitz constant independent on $C$.
\item[iii)] There exists $G\in L^p(\Omega)$ such that the following holds. For every  closed set $C\subset \Omega$ with $T_C>0$  we have
\begin{equation}
\label{eq:g2}
\sfd_\Y(u\circ\Fl^Z_s,u\circ\Fl^Z_t)\leq \int_t^s G\circ\Fl^Z_r\,\d r\quad\mm-a.e.\ \text{\rm on }C\qquad\forall t,s\in[0,T_C),\ t\leq s,
\end{equation}
(and in particular the map $t\mapsto u\circ\Fl^Z_t$ belongs to $AC^p_{\rm loc}([0,T_C),L^p(C,\Y_{\bar y}))$).
\item[iv)]  For $\mm$-a.e.\ $x\in \Omega$ the map $t\mapsto u(\Fl^Z_t(x))$ belongs to $W^{1,p}([0,T_x],\Y)$  and for some  $H\in L^p(\Omega)$ the  distributional derivative $|\partial_tu(\Fl^Z_t(x))|$ satisfies the identity
\begin{equation}
\label{eq:g3}
|\partial_tu(\Fl^Z_t(x))|= H(\Fl^Z_t(x))\qquad a.e.\ t\in[0,T_x].
\end{equation}
\end{itemize}
Moreover if these hold the functions $(e^{Z}_{p,\varepsilon}[u,\Omega])^{1/p}$ converge to nonnegative $H$ in $L^p(\Omega)$ as $\eps\downarrow0$,   we have
\begin{equation}
\label{eq:energy}
E^{Z}_{p}[u,\Omega]=\int_\Omega |H|^p\,\d\mm,
\end{equation}
and the choice $G:=H$ is admissible in \eqref{eq:g2} and provides the least, in the $\mm$-a.e.\ sense, function $G\geq 0$ for which \eqref{eq:g2} holds.
\end{theorem}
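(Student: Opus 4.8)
The plan is to transfer the statement to Lemma~\ref{le:basequ2} applied to the curve $t\mapsto f_t:=u\circ\Fl^Z_t$, exploiting three structural facts about the autonomous Regular Lagrangian Flow. (After the usual reduction we may assume $\Y$ separable.) First, uniqueness in Theorem~\ref{thm:AT}(i) together with time-independence of $Z$ gives the semigroup identity $\Fl^Z_{t+s}=\Fl^Z_s\circ\Fl^Z_t$ $\mm$-a.e.; hence $f_{t+h}(x)=u(\Fl^Z_h(\Fl^Z_t(x)))$, so the increment of $f$ at time $t$ is the increment at time $0$ transported by the flow,
\[
\frac{\sfd_\Y\bigl(f_{t+h}(x),f_t(x)\bigr)}{h}=\bigl(e^Z_{p,h}[u,\X]\bigr)^{1/p}\!\bigl(\Fl^Z_t(x)\bigr)\qquad\text{for }\mm\text{-a.e.\ }x,\ h>0 .
\]
Second, Proposition~\ref{le:contlpfl} (and its local version on open subsets of $\Omega$) gives continuity of $r\mapsto u\circ\Fl^Z_r$ and of $r\mapsto G\circ\Fl^Z_r$ in $L^p$, so we may fix continuous representatives and pass time averages to the limit. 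Third, if $C\subset\Omega$ is closed with $T_C>0$ then $\Fl^Z_t(C)\subset\Omega$ for $t<T_C$ (since $\sfd(\Fl^Z_t(x),x)\le\|Z\|_{L^\infty}t$ by \eqref{eq:ms}), so $e^Z_{p,\eps}[u,\Omega]=e^Z_{p,\eps}[u,\X]$ on $C$ for $\eps<T_C$, and likewise for the test functions of Definition~\ref{Def3.10}.

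With these in hand the equivalences are organised as the cycle (iv)$\Rightarrow$(ii)$\Rightarrow$(i)$\Rightarrow$(iv), together with (iii)$\Leftrightarrow$(iv). \emph{(iii)$\Leftrightarrow$(iv):} fix closed $C\subset\Omega$ with $T_C>0$, $T:=1\wedge T_C$; condition \eqref{eq:g2} says precisely that $t\mapsto f_t\restr{C}$ is $AC^p$ on $[0,T')$ for every $T'<T$, so Lemma~\ref{le:basequ2} yields the pointwise $W^{1,p}$ description with minimal space-time speed $H_C\in L^p(C\times[0,T))$ satisfying $H_C\le G\circ\Fl^Z$; the semigroup identity shows $H_C(x,t)$ is the metric speed at $s=0$ of $s\mapsto u(\Fl^Z_s(\Fl^Z_t(x)))$, so $H_C$ factors as $H\circ\Fl^Z$ for a ($C$-independent) $H\in L^p_{\rm loc}(\Omega)$ with $H\le G$, which is \eqref{eq:g3}; conversely integrating \eqref{eq:g3} gives \eqref{eq:g2} with $G:=H$. \emph{(iv)$\Rightarrow$(ii):} by \eqref{eq:bcompr}, $\|\partial_t(f_t\restr{C})\|_{L^p(C)}=\|H\circ\Fl^Z_t\|_{L^p(C)}\le C_1^{1/p}\|H\|_{L^p(\Omega)}$ for a.e.\ $t<T_C$ (with $C_1$ the compression constant), uniformly in $C$, so the curve is Lipschitz with constant independent of $C$. \emph{(ii)$\Rightarrow$(i):} for admissible $\varphi$, putting $C:=\supp\varphi$ and using the third fact above, $\int\varphi\,e^Z_{p,\eps}[u,\Omega]\,\d\mm\le\int_C e^Z_{p,\eps}[u,\X]\,\d\mm=\eps^{-p}\,\sfd_{L^p(C,\Y)}^p(f_0,f_\eps)\le L^p$ for $\eps$ small, so $E^Z_p[u,\Omega]\le L^p<\infty$.

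\emph{(i)$\Rightarrow$(iv) and the final assertions:} for closed $C\subset\Omega$ with $T_C>0$, $T:=1\wedge T_C$, the semigroup identity and the change of variables $(\Fl^Z_t)_*(\mm\restr{C})\le C_1\mm\restr{C^*}$ (with $C^*$ a slightly enlarged closed subset of $\Omega$) give, for $\eps$ small,
\[
\mathcal E_{p,\eps}\bigl(f\restr{C}\bigr)=\int_0^{T-\eps}\!\!\int e^Z_{p,\eps}[u,\X]\,\d(\Fl^Z_t)_*(\mm\restr{C})\,\d t\ \le\ C_1\,T\!\int_{C^*} e^Z_{p,\eps}[u,\Omega]\,\d\mm ,
\]
whence by Corollary~\ref{cor:energy0}, $\mathcal E_p(f\restr{C})=\lim_{\eps\downarrow0}\mathcal E_{p,\eps}(f\restr{C})\le C_1\,T\,E^Z_p[u,\Omega]<\infty$; so $f\restr{C}$ is Sobolev on $[0,T')$ for all $T'<T$, Lemma~\ref{le:basequ2} applies, and produces $H_C=H\circ\Fl^Z$ as above. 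The global integrability $H\in L^p(\Omega)$ is then obtained by weak $L^p$-compactness: the functions $(e^Z_{p,\eps}[u,\X])^{1/p}$ are bounded in $L^p(C)$ for small $\eps$ by the displayed estimate, so along a subsequence they converge weakly to some $g_C\ge0$ with $\|g_C\|_{L^p(C)}^p\le E^Z_p[u,\Omega]$; testing the bound $\eps^{-1}|\phi(u(\Fl^Z_\eps(x)))-\phi(u(x))|\le(e^Z_{p,\eps}[u,\X])^{1/p}$ against the countable $1$-Lipschitz family of Lemma~\ref{le:spcount} applied to $\Y$ and passing to the limit (as in the proof of Theorem~\ref{thm:sobaccurve}(ii)) gives $g_C\ge H$ a.e.\ on $C$, so $\|H\|_{L^p(C)}^p\le E^Z_p[u,\Omega]$ uniformly in $C$, i.e.\ $H\in L^p(\Omega)$. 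This proves (iv). For the last statements: \eqref{eq:g3} yields $(e^Z_{p,\eps}[u,\Omega])^{1/p}\le\tfrac1\eps\int_0^\eps H\circ\Fl^Z_r\,\d r\to H$ in $L^p(\Omega)$ (continuity of $r\mapsto H\circ\Fl^Z_r$), so $\lims_{\eps\downarrow0}\|(e^Z_{p,\eps}[u,\Omega])^{1/p}\|_{L^p(\Omega)}\le\|H\|_{L^p(\Omega)}$, while the weak-limit argument gives the reverse inequality for $\limi_{\eps\downarrow0}$; uniform convexity of $L^p$ then upgrades this to strong convergence $(e^Z_{p,\eps}[u,\Omega])^{1/p}\to H$ in $L^p(\Omega)$, whence $E^Z_{p,\eps}[u,\Omega](\varphi)\to\int\varphi|H|^p\,\d\mm$ and, taking the $\sup$ over $\varphi$, the identity \eqref{eq:energy}; minimality of $G=H$ in \eqref{eq:g2} was established along the way.

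The main obstacle is upgrading the merely local information — the space-time speeds $H_C$ on closed sets $C$ with $T_C>0$ — to a single $H\in L^p(\Omega)$ rather than just $L^p_{\rm loc}$. Since a regular vector field need not have a two-sided density bound for its flow, a direct change of variables does not suffice; one has to argue by weak $L^p$-compactness over an exhaustion of $\Omega$ by such $C$, exploiting the uniform-in-$C$ constants supplied by (i)/(ii) and the representation of $H$ as a supremum over the countable Lipschitz family. A related recurring technical point is the passage from the slice-at-$t$ data of Lemma~\ref{le:basequ2} to slice-at-$0$ data via the semigroup flow (which makes the space-time picture ``flow-equivariant''), together with the careful bookkeeping near $\partial\Omega$: every change of variables along $\Fl^Z$ must be kept within the region where the flow stays in $\Omega$, and the difference between $e^Z_{p,\eps}[u,\Omega]$ and $e^Z_{p,\eps}[u,\X]$ must be tracked.
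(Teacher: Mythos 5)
Your overall architecture is the same as the paper's (transfer to Lemma \ref{le:basequ2} along the flow via the semigroup identity, a cycle of implications with the hard work in (i)$\Rightarrow$(iv), then identify the energy density), but there is a genuine gap at the one step that actually requires a new idea: the passage from the space-time derivative to a single function $H$ on $\Omega$. You write that ``the semigroup identity shows $H_C(x,t)$ is the metric speed at $s=0$ of $s\mapsto u(\Fl^Z_s(\Fl^Z_t(x)))$, so $H_C$ factors as $H\circ\Fl^Z$ for a ($C$-independent) $H$''. This is not a definition: for $\mm$-a.e.\ $x$ the curve is only $W^{1,p}$ in $t$, so its metric derivative exists for a.e.\ $t$ only, and evaluation at the single time $s=0$ is meaningless pointwise. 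The honest content of the semigroup identity is $F_s=F_t\circ\Fl^Z_{s-t}$ for a.e.\ $s,t$, which defines a candidate $H$ only up to negligible sets of the push-forward measure $\Fl^Z_*(\mathcal L^1\times\mm\restr C)$. To turn this into a function defined $\mm$-a.e.\ on $\Omega$ (which is what statement (iv) and formula \eqref{eq:g3} demand, and what you later use when you claim $g_C\geq H$ a.e.\ on $C$), one must show that sets negligible for this push-forward are $\mm$-negligible. The bounded-compression estimate \eqref{eq:bcompr} gives only the opposite implication; the converse is exactly \eqref{eq:mumm} in the paper, proved via the weak$^*$ convergence of the flow densities $\rho_t$ to $1$ as $t\downarrow0$. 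Nothing in your write-up supplies this, and your closing paragraph (``a direct change of variables does not suffice \ldots argue by weak $L^p$-compactness'') addresses the integrability of $H$, not its well-posedness, which is the real obstruction.

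The same gap propagates: your identification of the weak limits $g_C$ of $(e^Z_{p,\eps}[u,\Omega])^{1/p}$ with (a majorant of) $H$, and hence the uniform bound $\|H\|_{L^p(C)}^p\leq E^Z_p[u,\Omega]$ and the final strong convergence/energy identity, all compare a time-$0$ slice with an object that is only defined a.e.\ along the flow; the paper makes this comparison legitimate by again invoking \eqref{eq:mumm} (and, for the $L^p(\Omega)$ bound, by the averaged change-of-variables formula \eqref{eq:limma} instead of a slice at $t=0$). The remaining implications in your cycle ((iv)$\Rightarrow$(ii)$\Rightarrow$(i), (iv)$\Rightarrow$(iii), and the energy estimate in (i)$\Rightarrow$(iv) up to the application of Lemma \ref{le:basequ2}) are fine and essentially identical to the paper's. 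So the fix is not a different strategy but the missing measure-theoretic lemma: prove that for small $t$ the measures $(\Fl^Z_t)_*\mm$ charge every set of positive $\mm$-measure (via $\rho_t\weakto^*1$), deduce the equivalence of null sets for $\mm$ and for $\Fl^Z_*(\mathcal L^1\times\mm\restr C)$, and only then define $H$ and carry out your weak-limit comparisons.
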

\begin{proof}\ \\

\noindent{$\mathbf{(iv) \Rightarrow(iii)}$ } By Proposition \ref{le:contlpfl} we know that $t\mapsto u\circ \Fl^Z_t\in L^p(C,\Y_{\bar y})$ is continuous. Then the conclusion follows from Lemma \ref{le:basequ2}.

\noindent{$\mathbf{(iii) \Rightarrow(ii)}$ } The bound
\[
\begin{split}
\int_C\frac{\sfd_\Y^p(u\circ \Fl^Z_s,u\circ \Fl^Z_t)}{|s-t|^p}\,\d\mm\stackrel{\eqref{eq:g2}}\leq&\frac1{|s-t|}\int_C\int_t^s G^p\circ\Fl^Z_r\,\d r\,\d\mm\\
\stackrel{\phantom{\eqref{eq:g2}}}=&\frac1{|s-t|}\int_t^s\int G^p \,\d(\Fl^Z_r)_*(\mm\restr C)\,\d r\stackrel{\eqref{eq:bcompr}}\leq e^{|s-t|\|(\div Z)^-\|_{L^\infty}}\int_\Omega G^p\,\d\mm
\end{split}
\]
yields that the Lipschitz constant of $[0,1\wedge T_C)\ni t\mapsto u\circ\Fl^Z_t\in L^p(C,\Y_{\bar y})$ is bounded from above by $e^{\frac1p\|(\div Z)^-\|_{L^\infty}}\|G\|_{L^p(\Omega)}$ and in particular is independent on $C$, as desired.

\noindent{$\mathbf{(ii) \Rightarrow(i)}$ } Let $L$ be the uniform Lipschitz constant of $[0,1\wedge T_C)\ni t\mapsto u\circ\Fl^Z_t\in L^p(C,\Y_{\bar y})$. Now pick  $\varphi\in C_b(\X)$ with $0\leq \varphi\leq1$ and $\sfd(\supp(\varphi),\Omega^c)>0$, put $C:=\supp(\varphi)$ and simply notice that
\[
\int  \varphi \,e^{Z}_{p,\varepsilon}[u,\Omega]\,\d\mm\leq\int_C\frac{\sfd^p_\Y(u\circ\Fl^Z_\eps,u)}{\eps^p}\,\d\mm\leq  L^p ,
\]
so that  the claim follows letting $\eps\downarrow0$.

\noindent{\bf $\mathbf{(i) \Rightarrow(iv)}$} Let $\alpha>\beta>0$ be two parameters and consider the closed set $C_\alpha\subset \Omega$ defined as $C_\alpha:=\{x\in\Omega:\sfd(x,\Omega^c)\}\geq \alpha\|Z\|_{L^\infty}\}$ (if $\Omega=\X$ we pick $C_\alpha=\X$ as well - if $Z=0$ the set $C_\alpha$ might be not closed but in this case the claim is trivial). We start claiming that
\begin{equation}
\label{eq:claimi-iii}
\begin{split}
\frac{e^{-\alpha\|(\div Z)^-\|_{L^\infty}}}{\alpha-\beta}\int_{C_\alpha}\int_0^{\alpha-\beta}\sfd_\Y^p(u\circ\Fl^Z_t,\bar y)\,\d t\,\d\mm&\leq \int_\Omega\sfd_\Y^p(u,\bar y)\,\d\mm\qquad\forall \bar y\in\Y,\\
\frac{e^{-\alpha\|(\div Z)^-\|_{L^\infty}}}{\alpha-\beta}\int_{C_\alpha}\mathcal E_{p,[0,\alpha-\beta]}(u\circ\Fl^Z_\cdot) \,\d\mm&\leq E^{Z}_{p}[u,\Omega].
\end{split}
\end{equation}
To check the first, notice that
\[
\begin{split}
\int_{C_\alpha}\int_0^{\alpha-\beta}\sfd_\Y^p(u\circ\Fl^Z_t,\bar y)\,\d t\,\d\mm&=\int_0^{\alpha-\beta}\int\sfd_\Y^p(u,\bar y)\,\d(\Fl^Z_t)_*(\mm|_{C_\alpha})\,\d t\\
&\leq({\alpha-\beta}) e^{\alpha\|(\div Z)^-\|_{L^\infty}}\int_{\Omega}\sfd_\Y^p(u,\bar y)\,\d \mm.
\end{split}
\]
For the second,  start observing that Corollary \ref{cor:energy0} and the monotone convergence theorem gives
\[
\begin{split}
\int_{C_\alpha}\mathcal E_{p,[0,\alpha-\beta]}(u\circ\Fl^Z_\cdot) \,\d\mm&=\lim_{\eps\downarrow 0}\int_{C_\alpha}\int_0^{\alpha-\beta-\eps}\frac{\sfd_\Y^p(u\circ\Fl^Z_{t+\eps},u\circ\Fl_t^Z)}{\eps^p}\,\d t\,\d\mm\\
&=\lim_{\eps\downarrow 0}\int_0^{\alpha-\beta-\eps}\int\frac{\sfd_\Y^p(u\circ\Fl^Z_{\eps},u)}{\eps^p}\,\d(\Fl_t^Z)_*(\mm|_{C_\alpha})\,\d t\\
&\leq({\alpha-\beta}) e^{\alpha\|(\div Z)^-\|_{L^\infty}}\limi_{\eps\downarrow 0}\int_{C_{\alpha-\beta}}\frac{\sfd_\Y^p(u\circ\Fl^Z_{\eps},u)}{\eps^p}\,\d \mm.
\end{split}
\]
Now  let $\varphi\in C_b(\X)$ be with $\supp(\varphi)\subset\Omega$ and $\varphi\equiv 1$ on $C_{\alpha-\beta}$ and notice that
\[
\limi_{\eps\downarrow 0}\int_{C_{\alpha-\beta}}\frac{\sfd_\Y^p(u\circ\Fl^Z_{\eps},u)}{\eps^p}\,\d \mm\leq\lims_{\eps\downarrow 0}\int_\Omega\varphi\frac{\sfd_\Y^p(u\circ\Fl^Z_{\eps},u)}{\eps^p}\,\d \mm\leq E^{Z}_{p}[u,\Omega],
\]
thus our claim \eqref{eq:claimi-iii} is proved. It follows that
\begin{equation}
\label{eq:acae}
\textrm{ for $\mm$-a.e.\ $x\in C_\alpha$ the curve $[0,\alpha-\beta]\ni t\mapsto u(\Fl^Z_t(x))\in \Y$
 belongs to $W^{1,p}([0,\alpha-\beta],\Y)$. }
\end{equation}
Denote its distributional derivative by $t\mapsto F_{\alpha,\beta,t}(x)$ and notice that by the last part of Theorem \ref{thm:sobaccurve} the $\mm\times\mathcal L^1$-a.e.\ defined function $F_{\alpha,\beta}:[0,\alpha-\beta]\times C_\alpha\to\R$ is Borel and by the second in \eqref{eq:claimi-iii} belongs to $L^p([0,\alpha-\beta]\times C_\alpha)$.

 From the trivial identity $\sfd_\Y(u\circ\Fl^Z_{s+h},u\circ\Fl^Z_{s})=\sfd_\Y(u\circ\Fl^Z_{t+h},u\circ\Fl^Z_{t})\circ\Fl_{s-t}^Z$ it follows that $F_{\alpha,\beta,s}=F_{\alpha,\beta,t}\circ\Fl^Z_{s-t}$ $\mm$-a.e.\ for a.e.\ $s,t$, $s\geq t$. Thus letting $\mu_{\alpha,\beta}:=\Fl^Z_*(\mathcal L^1\restr{[0,\alpha-\beta]}\times\mm\restr C_\alpha)$ we see that there is a $\mu_{\alpha,\beta}$-a.e.\ uniquely defined Borel function $\bar F_{\alpha,\beta}:\Omega\to\R$ such that for a.e.\ $t\in[0,\alpha-\beta]$ it holds
\[
F_{\alpha,\beta,t}(x)=\bar F_{\alpha,\beta}(\Fl^Z_{t}(x))\quad \mathcal L^1\restr{[0,\alpha-\beta]}\times\mm\restr {C_\alpha}-a.e.\ (t,x).
\]
Notice  that since $\mathcal L^1\restr{[0,\alpha-\beta]\cap [0,\alpha'-\beta']}\times\mm\restr {C_\alpha \cap C_{\alpha'}}$-a.e.\ it holds $F_{\alpha,\beta,\cdot}(\cdot)=F_{\alpha',\beta',\cdot}(\cdot)$ , we have that
\begin{equation}
\label{eq:minmu}
\text{$\Fl^Z_*(\mathcal L^1\restr{[0,\alpha-\beta] \cap [0,\alpha'-\beta']}\times\mm\restr {C_\alpha \cap C_{\alpha'}})$-a.e.\ it holds $\bar F_{\alpha,\beta}=\bar F_{\alpha',\beta'}$}.
\end{equation}
Now observe that
\begin{equation}
\label{eq:mumm}
\text{For $E\subset\Omega$ Borel we have $\mm(E)=0$ if and only if $\mu_{\alpha,\beta}(E)=0$,}
\end{equation}
for every $\alpha>\beta>0$.

The 'easy' implication that $\mm(E)=0$ implies $\mu_{\alpha,\beta}(E)=0$ for every $\alpha,\beta$ obviously follows from definitions. To prove the converse implication we proceed as follows. Let $\rho_t$ be the density of $(F_t)_\ast \mm$ w.r.t. $\mm$, so that the functions $\rho_t$ are uniformly bounded in $L^\infty$ for $t\in[0,1]$. The measures $(F_t)_\ast \mm$ converge to $\mm$ weakly in duality with continuous functions with bounded support on $\X$  as $t\downarrow0$ (by the dominated convergence theorem and because the flow is concentrated on continuous curves). This weak convergence plus the uniform $L^\infty$ bound imply that $\rho_t$ converge to 1 in the weak$^\ast$ topology of $L^\infty$. Therefore for any $E$ of finite measure we have
$$
\mm(E)=\int \nchi_E \,\d\mm=\lim_{t\downarrow 0}\int \nchi_E\rho_t \,\d\mm=\lim_{t\downarrow 0}\int \nchi_E\, \d(F_t)_\ast \mm=\lim_{t\downarrow0}(F_t)_ \ast \mm(E).
$$
This proves that if $\mm(E)>0$, then for $t$ sufficiently small it holds $(F_t)_\ast \mm(E)>0$ as well. Then the conclusion follows from the definition of $\mu_{\alpha,\beta}$.

Thus from \eqref{eq:minmu} it follows   that  there exists and is $\mm\restr{\Omega}$-a.e.\ uniquely determined a Borel function $H$ such that
\[
H=\bar F_{\alpha,\beta}\quad\mu_{\alpha,\beta}-a.e.\qquad\forall \alpha>\beta>0.
\]
We claim that such $H$ has the required properties.  We start by proving that $H\in L^p(\Omega)$ and to this aim we start noticing that
\begin{equation}
\label{eq:limma}
\lim_{\alpha\downarrow0} \frac1\alpha\int_0^\alpha\int_{C_\alpha}f\circ\Fl^Z_t\,\d\mm\,\d t=\int_\Omega f \,\d\mm\qquad\forall f:\Omega\to\R^+\ \textrm{ Borel}.
\end{equation}
This can be easily proved for $f$ bounded and Lipschitz, then the  case of $f\in L^1(\Omega)$ follows by a density argument based on the bound  $(\Fl^Z_t)_*\mm\leq e^{t\|\div Z\|_{L^\infty}}\mm$ and finally the case of non-negative $f$'s comes by monotone approximation.

Now notice  that  by construction (and \eqref{eq:derint}) it holds $\int_{C_\alpha}\mathcal E_{p,[0,\alpha-\beta]}(u\circ\Fl^Z_\cdot) \,\d\mm=\int_{C_\alpha}\int_0^{\alpha-\beta}|H|^p\circ\Fl^Z_t\,\d t\,\d\mm$, hence from the second in \eqref{eq:claimi-iii} we obtain
\begin{equation}
\label{eq:Hen}
\begin{split}
\int_\Omega|H|^p\,\d\mm\stackrel{\eqref{eq:limma}}=&\lim_{\alpha\downarrow0} \frac1\alpha\int_0^\alpha\int_{C_\alpha}|H|^p\circ\Fl^Z_t\,\d\mm\,\d t\\
\stackrel{\phantom{\eqref{eq:limma}}}=&\lim_{\alpha\downarrow0}\lim_{\beta\downarrow0}\frac1{\alpha-\beta}\int_{C_\alpha}\int_0^{\alpha-\beta}|H|^p\circ\Fl^Z_t\,\d t\,\d\mm\\
\stackrel{\phantom{\eqref{eq:limma}}}=&\lim_{\alpha\downarrow0}\lim_{\beta\downarrow0}\frac1{\alpha-\beta}\int_{C_\alpha}\mathcal E_{p,[0,\alpha-\beta]}(u\circ\Fl^Z_\cdot) \,\d\mm\leq E^{Z}_{p}[u,\Omega]\\
\end{split}
\end{equation}
Now we prove \eqref{eq:g3}. Letting $\beta\downarrow0$ in \eqref{eq:acae} we see that for every $\alpha>0$ it holds: $\mm$-a.e.\ $x\in C_\alpha$ the curve $t\mapsto u(\Fl^Z_t(x))$ belongs to $W^{1,p}([0,\alpha],\Y)$ (e.g.\ by recalling the relation between Sobolev and AC curves stated in Theorem \ref{thm:sobaccurve}) and, by definition, its distributional derivative is given by $H\circ\Fl^Z_t$. Thus for $\mm$-a.e.\ $x\in\Omega$ we have that: for every $\alpha\in\Q$ with $\alpha<T_x$ the curve $t\mapsto u(\Fl^Z_t(x))$ belongs to $W^{1,p}([0,\alpha],\Y)$ and its distributional derivative is given by $H\circ\Fl^Z_t$.  Arguing as before by calling into play Theorem \ref{thm:sobaccurve} we conclude that for $\mm$-a.e.\ $x$ the curve $t\mapsto u(\Fl^Z_t(x))$ belongs to $W^{1,p}([0,T_x],\Y)$ and its distributional derivative is given by $H\circ\Fl^Z_t$, as desired.

\noindent{\bf Last statements} The fact that the choice $G:=H$ is the least for which \eqref{eq:g2} holds is a direct consequence of the analogous statement in Lemma \ref{le:basequ2}.  Inequality $\geq$ in \eqref{eq:energy} is proved in \eqref{eq:Hen} while the opposite comes with the proofs  $(iii)\Rightarrow(ii)$ and $(ii)\Rightarrow(i)$.

It remains to prove $L^p(\Omega)$-convergence of $(e^{Z}_{p,\varepsilon}[u,\Omega])^{1/p}$ to $H$. Extend $H$ to the whole $\X$ by putting it 0 outside $\Omega$ and  notice that what we already proved gives
\begin{equation}
\label{eq:ezabove}
(e^{Z}_{p,\varepsilon}[u,\Omega])^{1/p}\leq\frac1\eps\int_0^\eps H\circ\Fl^Z_t\,\d t\quad\to\quad H\qquad\text{ in $L^p(\Omega)$},
\end{equation}
where the claimed convergence can be proved along the same lines used to show \eqref{eq:limma}. Now notice that for $\alpha>\beta>0$, Lemma \ref{le:basequ2} applied to $\X:=C_\alpha$ and $f_t:=u\circ\Fl^Z_t$ in the interval $[0,\alpha-\beta]$ ensures that $(e^{Z}_{p,\varepsilon}[u,\Omega])^{1/p}\circ\Fl^Z\to H\circ\Fl^Z$ in $L^p(C\times[0,\alpha-\beta])$. This is the same as to say that $(e^{Z}_{p,\varepsilon}[u,\Omega])^{1/p}\to H$ in $L^p(\mu_{\alpha,\beta})$ and in particular any $L^p(\Omega)$-weak limit of $(e^{Z}_{p,\varepsilon}[u,\Omega])^{1/p}$ must coincide with $H$ $\mu_{\alpha,\beta}$-a.e.. Thus by \eqref{eq:mumm} we deduce that $(e^{Z}_{p,\varepsilon}[u,\Omega])^{1/p}\weakto H$ in $L^p(\Omega)$, which together with \eqref{eq:ezabove} gives the conclusion.
\end{proof}
Theorem \ref{thm:baseks} and its proof suggest the following definition:
\begin{definition}[The quantity $|\d u(Z)|$]\label{def:mwug} Let $p\in(1,\infty)$, $Z$ a regular vector field on $\X$, $\Omega\subset \X$ open and $u\in \KS {Z}(\Omega,\Y_{\bar y})$. We shall denote by $|\d u(Z)|\in L^p(\Omega)$ the function $H$ given by point $(iv)$ of Theorem \ref{thm:baseks} and appearing in  \eqref{eq:g3}.
\end{definition}
In the smooth category, the quantity $|\d u(Z)|$ is the norm of the differential of $u$ applied to $Z$, whence the notation chosen. Notice that for the moment we only defined $|\d u(Z)|$, not the underlying object $\d u(Z)$, so the notation chosen is purely formal. We will define $\d u(Z)$ in Section \ref{se:diff}.

\bigskip

We conclude this section with the following kind of regularity result which will be useful in what comes next.
\begin{proposition}\label{prop:regalongfl}
Let $p\in(1,\infty)$, $Z$ a regular vector field on $\X$, $\Omega\subset \X$ open and $u\in \KS Z(\Omega,\Y_{\bar y})$.  Then for every $C\subset \Omega$ closed for which $T_C>0$ (recall the definition \eqref{eq:deftc}) and $f\in\Lip_{bs}(\Y)$, the map $[0,T_C)\ni t\mapsto f\circ u\circ\Fl^Z_t\in L^p(C)$ is $C^1$ and for its derivative we have for every $t \in [0,T_{C})$
\begin{equation}
\label{eq:der0}
\Big|\frac\d{\d t}f\circ u\circ\Fl^Z_t\Big|\leq\big(\lip(f)\circ u\,|\d u(Z)|\big)\circ\Fl^Z_t \qquad\mm-a.e.\ on\ C.
\end{equation}
\end{proposition}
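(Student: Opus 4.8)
The plan is to transfer the statement pointwise along the flow lines of $Z$ via the chain rule, and then re-assemble the information at the level of $L^p(C)$-valued curves, using the semigroup property of the Regular Lagrangian Flow together with the continuity result of Proposition \ref{le:contlpfl}.

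\emph{Pointwise analysis along flow lines.} Fix $C\subset\Omega$ closed with $T_C>0$ and $f\in\Lip_{bs}(\Y)$, and write $\phi_t:=f\circ u\circ\Fl^Z_t$. For $t\in[0,T_C)$ the speed bound \eqref{eq:ms} forces $\Fl^Z_t(C)\subset\Omega$ up to $\mm$-negligible sets, so all compositions below make sense $\mm$-a.e.\ on $C$; moreover the increments lie in $L^p(C)$, since by Theorem \ref{thm:baseks}$(iii)$ one has $|\phi_s-\phi_t|\leq\Lip(f)\,\sfd_\Y(u\circ\Fl^Z_s,u\circ\Fl^Z_t)\leq\Lip(f)\int_t^s|\d u(Z)|\circ\Fl^Z_r\,\d r$ $\mm$-a.e.\ on $C$ for $t\le s$ (and it is these increments that the $C^1$ claim concerns). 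By Theorem \ref{thm:baseks}$(iv)$, for $\mm$-a.e.\ $x\in\Omega$ the curve $t\mapsto u(\Fl^Z_t(x))$ is in $W^{1,p}([0,T_x],\Y)$ with metric derivative $|\d u(Z)|(\Fl^Z_t(x))$; composing with the Lipschitz function $f$ and invoking the chain rule Corollary \ref{cor:chain} we obtain that, for $\mm$-a.e.\ $x$, the real function $t\mapsto\phi_t(x)$ is in $W^{1,p}((0,T_x))$ with $|\partial_t\phi_t(x)|\leq(\lip(f)\circ u\,|\d u(Z)|)(\Fl^Z_t(x))$ for a.e.\ $t$. Together with the increment bound above and with the continuity of $t\mapsto u\circ\Fl^Z_t\in L^p(C,\Y_{\bar y})$ from Proposition \ref{le:contlpfl}, Lemma \ref{le:basequ}, applied on the closed sets $C_\alpha$ exhausting $\Omega$ exactly as in the proof of Theorem \ref{thm:baseks}, shows that $t\mapsto\phi_t\in L^p(C)$ lies in $AC^p_{\loc}([0,T_C),L^p(C))$ and that its $L^p(C)$-valued a.e.\ derivative is the $L^p$-limit of the incremental ratios $\eps^{-1}(\phi_{t+\eps}-\phi_t)$.

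\emph{Construction of $w$.} Exactly as in the proof of the implication $(i)\Rightarrow(iv)$ of Theorem \ref{thm:baseks}, we use the semigroup property of $\Fl^Z$: for $t\leq s$ we have $\Fl^Z_{s+h}=\Fl^Z_{t+h}\circ\Fl^Z_{s-t}$ and $\Fl^Z_s=\Fl^Z_t\circ\Fl^Z_{s-t}$ $\mm$-a.e., hence $\phi_{s+h}-\phi_s=(\phi_{t+h}-\phi_t)\circ\Fl^Z_{s-t}$ $\mm$-a.e.; dividing by $h$ and letting $h\to0$ in $L^p$ (using the previous step and the continuity of composition with $\Fl^Z_{s-t}$) yields $\partial_s\phi_s=(\partial_t\phi_t)\circ\Fl^Z_{s-t}$ $\mm$-a.e., for a.e.\ $s,t$. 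Arguing with the measures $\mu_{\alpha,\beta}:=\Fl^Z_*(\mathcal L^1\restr{[0,\alpha-\beta]}\times\mm\restr{C_\alpha})$ and the identity ``$\mu_{\alpha,\beta}$-negligible $=\mm$-negligible'' from \eqref{eq:mumm}, this produces a single $\mm$-a.e.\ defined Borel function $w:\Omega\to\R$ with $\partial_t\phi_t(x)=w(\Fl^Z_t(x))$ for $\mm\times\mathcal L^1$-a.e.\ $(x,t)$. The pointwise bound of the previous step then gives $|w|\leq\lip(f)\circ u\,|\d u(Z)|$ $\mm$-a.e.\ on $\Omega$; in particular $w\in L^p(\Omega)$, being dominated by $\Lip(f)\,|\d u(Z)|$, and we extend $w$ by $0$ outside $\Omega$.

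\emph{Conclusion and main obstacle.} Since $w\in L^p(\X)$, Proposition \ref{le:contlpfl} applied with target $\R$ gives that $t\mapsto w\circ\Fl^Z_t\in L^p(C)$ is continuous on $[0,T_C)$. As $t\mapsto\phi_t$ is $AC^p_{\loc}$ with $\mm$-a.e.\ derivative $w\circ\Fl^Z_t$, we conclude that $\phi_t-\phi_s=\int_s^t w\circ\Fl^Z_r\,\d r$ with continuous integrand, so $t\mapsto\phi_t\in L^p(C)$ is of class $C^1$ with $\frac{\d}{\d t}\phi_t=w\circ\Fl^Z_t$ for every $t\in[0,T_C)$. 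Finally, since $(\Fl^Z_t)_*(\mm\restr C)\ll\mm\restr\Omega$ for $t\in[0,T_C)$, the inequality $|w|\leq\lip(f)\circ u\,|\d u(Z)|$, valid $\mm$-a.e.\ on $\Omega$, transfers to $|w\circ\Fl^Z_t|\leq(\lip(f)\circ u\,|\d u(Z)|)\circ\Fl^Z_t$ $\mm$-a.e.\ on $C$, which is precisely \eqref{eq:der0}. The delicate point is the passage from absolute continuity to $C^1$ regularity, i.e.\ identifying the a.e.\ derivative of $t\mapsto\phi_t$ with a single function $w$ evaluated along the flow: this is exactly where the semigroup structure of $\Fl^Z$ (to build $w$) and the $L^p$-continuity of Proposition \ref{le:contlpfl} (to see that $t\mapsto w\circ\Fl^Z_t$ is continuous) enter in an essential way.
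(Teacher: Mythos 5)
Your proof is correct and follows essentially the same route as the paper's: Lipschitz regularity of $t\mapsto f\circ u\circ\Fl^Z_t\in L^p(C)$ from the estimate in Theorem \ref{thm:baseks}, a.e.\ differentiability, the semigroup identity to transport the derivative in time, continuity via Proposition \ref{le:contlpfl} to upgrade to $C^1$, and the chain rule (Corollary \ref{cor:chain}, Lemma \ref{le:basequ}) for the bound \eqref{eq:der0}. The only difference is that you explicitly rebuild a representative $w$ of the derivative on $\Omega$ via the $\mu_{\alpha,\beta}$ machinery from the proof of Theorem \ref{thm:baseks}; this is harmless but not needed, since (as in the paper) the derivative at a single differentiability point composed with $\Fl^Z_{s-t}$ already provides the continuous representative.
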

\begin{proof} For any $t,s\in[0,T_C)$ we have
\[
|f\circ u\circ\Fl^Z_s-f\circ u\circ\Fl^Z_t|\leq \Lip(f)\,\sfd_\Y(u\circ\Fl^Z_s, u\circ\Fl^Z_t)\quad\mm-a.e.\ on\ C
\]
and thus  \eqref{eq:g2} yields that $t\mapsto f\circ u\circ\Fl^Z_t\in L^p(C)$ is Lipschitz. Since $L^p(C)$ has the Radon-Nikodym property, we deduce that such curve is differentiable for a.e.\ $t$. Then from the identity $\frac{f\circ u\circ\Fl^Z_{s+h}-f\circ u\circ\Fl^Z_{s}}{h}=\frac{f\circ u\circ\Fl^Z_{t+h}-f\circ u\circ\Fl^Z_{t}}{h}\circ\Fl^Z_{s-t}$ we deduce that
\[
(f\circ u\circ\Fl^Z_\cdot)'_s=(f\circ u\circ\Fl^Z_\cdot)'_t\circ\Fl^Z_{s-t}
\]
for every differentiability points $s>t$. Since Proposition \ref{le:contlpfl} grants continuity in $s$ with values in $L^p(C)$ of the right hand side, $C^1$ regularity follows. Then the bound \eqref{eq:der0} follows from the definition of $|\d u(Z)|$, Corollary \ref{cor:chain}  and Lemma \ref{le:basequ}.
\end{proof}

\subsection{Triangle inequality}

Aim of this section is to prove that under suitable natural assumptions it holds the following sort of triangle inequality:
\[
|\d u(\alpha_1Z_1+\alpha_2Z_2)|\leq|\alpha_1|\,|\d u(Z_1)|+|\alpha_2|\,|\d u(Z_2)|.
\]
The study of the above will be divided in two parts: a first (easy) one where we study the effect of multiplication of vector fields by constants and a second (more delicate) where we study sums of vector fields.

\bigskip

We start with the following simple lemma:
\begin{lemma}\label{le:scalato}
Let $(\X,\sfd,\mm)$ be a $\RCD(K,\infty)$ space and $Z$ a regular vector field. Then for every $\alpha,t\geq 0$ we have $\Fl^{\alpha Z}_t=\Fl^Z_{\alpha t}$ $\mm$-a.e.. If $-Z$ is also a regular vector field (i.e.\ if $\div Z\in L^\infty$), the same conclusion holds for any $\alpha\in \R$.
\end{lemma}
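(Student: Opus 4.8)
The plan is to use the uniqueness statement in Theorem \ref{thm:AT}(i): to prove $\Fl^{\alpha Z}_t = \Fl^Z_{\alpha t}$ $\mm$-a.e.\ for all $t$, it suffices to check that the map $t \mapsto \Fl^Z_{\alpha t}$ is a Regular Lagrangian Flow of the (constant-in-time) vector field $\alpha Z$, since the left-hand side is by definition \emph{the} unique such flow. First I would note that $\alpha Z$ is indeed a regular vector field whenever $\alpha \geq 0$ (all the defining bounds scale by $\alpha$ or $|\alpha|$), and whenever $\alpha < 0$ provided $-Z$ is regular, i.e.\ provided $\div Z \in L^\infty$ so that $(\div(\alpha Z))^- = |\alpha|(\div Z)^- \vee |\alpha|(\div(-Z))^-$ is in $L^\infty$; thus the flow $\Fl^{\alpha Z}$ exists and is unique in all the claimed cases.

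Next I would verify the two defining properties of a Regular Lagrangian Flow for the candidate $\tilde\Fl_t := \Fl^Z_{\alpha t}$ (restricting first to $\alpha \geq 0$, $t \geq 0$, which keeps $\alpha t$ in $[0,1]$ after possibly rescaling the time interval — one should simply check this on $[0, T]$ for $T$ small enough that $\alpha T \leq 1$, or equivalently reparametrize; the $\mm$-a.e.\ identity then propagates). Property (i), the bounded compression $(\tilde\Fl_t)_*\mm \leq C\mm$, is immediate since $(\Fl^Z_{\alpha t})_*\mm \leq C\mm$ for all $\alpha t \in [0,1]$. For property (ii), fix $f \in W^{1,2}(\X)$; for $\mm$-a.e.\ $x$ the curve $s \mapsto f(\Fl^Z_s(x))$ is in $W^{1,1}(0,1)$ with derivative $\d f(Z)(\Fl^Z_s(x))$ for a.e.\ $s$. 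The chain rule for $W^{1,1}$ functions composed with the affine map $t \mapsto \alpha t$ then gives that $t \mapsto f(\Fl^Z_{\alpha t}(x))$ is in $W^{1,1}$ with derivative $\alpha\, \d f(Z)(\Fl^Z_{\alpha t}(x)) = \d f(\alpha Z)(\tilde\Fl_t(x))$ for a.e.\ $t$, which is exactly property (ii) for the vector field $\alpha Z$. Hence $\tilde\Fl$ is a Regular Lagrangian Flow of $\alpha Z$ and uniqueness gives $\Fl^{\alpha Z}_t = \Fl^Z_{\alpha t}$ $\mm$-a.e.

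For the case $\alpha < 0$ (assuming $-Z$ regular) I would reduce to the previous case together with the negative-time theory: write $\alpha = -|\alpha|$ and use that $\Fl^{-Z}_s = \Fl^Z_{-s}$, which is the inversion property of Regular Lagrangian Flows when both $Z$ and $-Z$ are regular — this itself follows from the same uniqueness argument (check that $s \mapsto \Fl^Z_{-s}$ flows $-Z$), or can be cited from \cite{Ambrosio-Trevisan14}. Combining $\Fl^{\alpha Z}_t = \Fl^{|\alpha|(-Z)}_t = \Fl^{-Z}_{|\alpha| t} = \Fl^Z_{-|\alpha| t} = \Fl^Z_{\alpha t}$ $\mm$-a.e.\ finishes the proof. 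The only mildly delicate point is bookkeeping the time intervals so that all composite flows stay within $[0,1]$ (or working on a short interval and iterating), but this is routine; the conceptual content is entirely the uniqueness of the RLF plus the affine chain rule.
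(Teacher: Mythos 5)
Your argument is correct. It rests on the same pillar as the paper's proof -- the uniqueness part of Theorem \ref{thm:AT} applied to the time-rescaled object -- but you carry out the verification on the Lagrangian side, checking directly that $t\mapsto \Fl^Z_{\alpha t}$ satisfies the two defining properties of a Regular Lagrangian Flow for $\alpha Z$ (bounded compression trivially, and property (ii) via the chain rule for $W^{1,1}$ curves under the affine reparametrization $t\mapsto\alpha t$), and then invoke uniqueness of the RLF in Theorem \ref{thm:AT}(i). The paper instead works on the Eulerian side: it observes that if $(\rho_t)$ solves the continuity equation for $Z$ then $(\rho_{\alpha t})$ solves it for $\alpha Z$, and concludes via Theorem \ref{thm:AT}(ii), i.e.\ uniqueness of bounded solutions of the continuity equation together with the characterization of the RLF as the only flow producing those densities. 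The paper's route is shorter because the reparametrization of the continuity equation is immediate, with no pointwise-in-$x$ chain-rule bookkeeping; your route is slightly more hands-on but equally valid, and it has the small virtue of making the negative-$\alpha$ case transparent through the inversion identity $\Fl^{-Z}_s=\Fl^Z_{-s}$, which you correctly note follows from the very same uniqueness scheme (the paper leaves this case implicit). One cosmetic slip: your formula $(\div(\alpha Z))^-=|\alpha|(\div Z)^-\vee|\alpha|(\div(-Z))^-$ is not an identity for a fixed sign of $\alpha$ (for $\alpha<0$ one simply has $(\div(\alpha Z))^-=|\alpha|(\div Z)^+$); what you need, and what is true, is the bound $(\div(\alpha Z))^-\leq|\alpha|\,|\div Z|\in L^\infty$, so the conclusion that $\alpha Z$ is regular is unaffected.
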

\begin{proof}
Start noticing that $\alpha Z$ is also a regular vector field, so that the statement makes sense. To conclude, according to Theorem \ref{thm:AT} it is sufficient to show that if $t\mapsto \rho_t$ solves the continuity equation for $v_t\equiv Z$, then $t\mapsto \rho_{\alpha t}$ solves the continuity equation for $v_t\equiv \alpha Z$. But this is obvious, whence the conclusion follows.
\end{proof}
As a direct consequence of the above we obtain:
\begin{proposition}[Multiplication of the vector field by a constant]\label{prop:mult}
Let $K\in\R$, $(\X,\sfd,\mm)$ be $\RCD(K,\infty)$ space, $\Omega\subset\X$ open and $Z$ a  regular vector field on it. Let $(\Y,\sfd_\Y)$ be a complete metric space and $u\in \KS {Z}(\Omega,\Y_{\bar y})$.

Then for every $\alpha\geq 0$ we also have $u\in \KS {\alpha Z}(\Omega,\Y_{\bar y})$ and $|\d u(\alpha Z)|=|\alpha||\d u(Z)|$. If $-Z$ is also a regular vector field,  the same conclusion holds for any $\alpha\in \R$.

\end{proposition}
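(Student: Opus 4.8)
The plan is to deduce everything from Lemma~\ref{le:scalato} together with the last part of Theorem~\ref{thm:baseks}, which guarantees that $(e^{Z}_{p,\varepsilon}[u,\Omega])^{1/p}\to|\d u(Z)|$ in $L^{p}(\Omega)$ whenever $u\in\KS{Z}(\Omega,\Y_{\bar y})$. Throughout, by an \emph{admissible} test function I mean $\varphi\in C_{b}(\X)$ with $0\le\varphi\le1$ and $\sfd(\supp\varphi,\Omega^{c})>0$.

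\emph{The case $\alpha\ge0$.} By Lemma~\ref{le:scalato} we have $\Fl^{\alpha Z}_\varepsilon=\Fl^{Z}_{\alpha\varepsilon}$ $\mm$-a.e.\ for every $\varepsilon>0$ (so that the sets $\{x,\Fl^{\alpha Z}_\varepsilon(x)\in\Omega\}$ and $\{x,\Fl^{Z}_{\alpha\varepsilon}(x)\in\Omega\}$ coincide up to $\mm$-null sets), whence straight from the definition
\[
e^{\alpha Z}_{p,\varepsilon}[u,\Omega]=\alpha^{p}\,e^{Z}_{p,\alpha\varepsilon}[u,\Omega]\qquad\mm\text{-a.e.}
\]
(for $\alpha=0$ this is just $e^{0}_{p,\varepsilon}[u,\Omega]\equiv 0$, since $\Fl^{0}_\varepsilon=\Id$). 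Multiplying by an admissible $\varphi$, integrating, passing to $\varlimsup_{\varepsilon\downarrow0}$ (substituting $\delta:=\alpha\varepsilon$ when $\alpha>0$) and taking the supremum over $\varphi$ gives $E^{\alpha Z}_{p}[u,\Omega]=\alpha^{p}E^{Z}_{p}[u,\Omega]<\infty$, i.e.\ $u\in\KS{\alpha Z}(\Omega,\Y_{\bar y})$. The same identity also shows $(e^{\alpha Z}_{p,\varepsilon}[u,\Omega])^{1/p}=\alpha\,(e^{Z}_{p,\alpha\varepsilon}[u,\Omega])^{1/p}\to\alpha|\d u(Z)|$ in $L^{p}(\Omega)$ as $\varepsilon\downarrow0$; comparing with the convergence in Theorem~\ref{thm:baseks} applied to $\alpha Z$ forces $|\d u(\alpha Z)|=\alpha|\d u(Z)|=|\alpha|\,|\d u(Z)|$.

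\emph{The case $\alpha<0$ (with $-Z$ regular).} I would reduce this to the symmetry statement ``$u\in\KS{-Z}(\Omega,\Y_{\bar y})$ and $|\d u(-Z)|=|\d u(Z)|$'': granting it, the claim follows by applying the case $\alpha\ge0$ to the regular vector field $-Z$ with multiplier $|\alpha|$, since $|\alpha|(-Z)=\alpha Z$. To prove the symmetry I would change variables along the flow: writing $\Fl^{-Z}_\varepsilon=\Fl^{Z}_{-\varepsilon}$ and using the semigroup identity $\Fl^{Z}_\varepsilon\circ\Fl^{Z}_{-\varepsilon}=\Id$ $\mm$-a.e., one gets $(\Id,\Fl^{-Z}_\varepsilon)_{*}\mm=(\Fl^{Z}_\varepsilon,\Id)_{*}\nu_\varepsilon$ with $\nu_\varepsilon:=(\Fl^{-Z}_\varepsilon)_{*}\mm\le C\mm$ (because $-Z$ is regular); inserting this in the definition of $E^{-Z}_{p,\varepsilon}[u,\Omega](\varphi)$ and using the symmetry of $\sfd_\Y$ yields, for every admissible $\varphi$,
\[
\int\varphi\,e^{-Z}_{p,\varepsilon}[u,\Omega]\,\d\mm=\int(\varphi\circ\Fl^{Z}_\varepsilon)\,e^{Z}_{p,\varepsilon}[u,\Omega]\,\sigma_\varepsilon\,\d\mm ,\qquad\sigma_\varepsilon:=\tfrac{\d\nu_\varepsilon}{\d\mm}\le C .
\]
Dominating $\varphi\circ\Fl^{Z}_\varepsilon$ from above by a fixed admissible cutoff $\tilde\varphi$ (legitimate for small $\varepsilon$, since $\Fl^{Z}_\varepsilon$ displaces points by at most $\varepsilon\|Z\|_{L^{\infty}}$) bounds the right-hand side by $C\,E^{Z}_{p,\varepsilon}[u,\Omega](\tilde\varphi)$, so that $\varlimsup_{\varepsilon\downarrow0}$ and then the supremum over $\varphi$ give $E^{-Z}_{p}[u,\Omega]\le C\,E^{Z}_{p}[u,\Omega]<\infty$, hence $u\in\KS{-Z}(\Omega,\Y_{\bar y})$. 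For the equality of the differentials I would pass to the limit $\varepsilon\downarrow0$ in the displayed identity: $\varphi\circ\Fl^{Z}_\varepsilon\to\varphi$ $\mm$-a.e.\ and boundedly (as $\Fl^{Z}_\varepsilon\to\Id$ $\mm$-a.e.), $e^{Z}_{p,\varepsilon}[u,\Omega]\to|\d u(Z)|^{p}$ in $L^{1}(\Omega)$ by Theorem~\ref{thm:baseks}, so their product converges to $\varphi|\d u(Z)|^{p}$ in $L^{1}(\Omega)$, while $\sigma_\varepsilon\rightharpoonup 1$ in the weak$^{*}$ topology of $L^{\infty}$ (the $\nu_\varepsilon$ converge to $\mm$ in duality with $C_{c}(\X)$ and are $L^{\infty}$-equibounded, exactly as in the proof of \eqref{eq:mumm}); the weak$^{*}$--strong pairing sends the right-hand side to $\int\varphi|\d u(Z)|^{p}\,\d\mm$, whereas the left-hand side tends to $\int\varphi|\d u(-Z)|^{p}\,\d\mm$ by Theorem~\ref{thm:baseks} applied to $-Z$. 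Arbitrariness of $\varphi$ yields $|\d u(-Z)|=|\d u(Z)|$ $\mm$-a.e.\ on $\Omega$.

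The case $\alpha\ge0$ is essentially bookkeeping around Lemma~\ref{le:scalato}; the genuine obstacle is the identity $|\d u(-Z)|=|\d u(Z)|$, i.e.\ passing from the ``forward'' incremental quotients defining $e^{Z}_{p,\varepsilon}$ to the ``backward'' ones defining $e^{-Z}_{p,\varepsilon}$. This is a change of variables along $\Fl^{Z}$ and hinges on the semigroup property of Regular Lagrangian Flows and the two-sided compressibility bounds $(\Fl^{Z}_{\pm\varepsilon})_{*}\mm\le C\mm$; the two points to be checked with some care are the weak$^{*}$ convergence $\sigma_\varepsilon\rightharpoonup 1$ and the uniform-in-$\varepsilon$ choice of the cutoff $\tilde\varphi$.
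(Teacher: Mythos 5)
Your argument is correct, but it runs along a genuinely different track than the paper's. The paper proves Proposition \ref{prop:mult} in three lines at the level of curves: by Lemma \ref{le:scalato} the flow lines of $\alpha Z$ are the reparametrizations $t\mapsto\Fl^Z_{\alpha t}(x)$ of those of $Z$, reparametrization scales the metric speed/distributional derivative by $|\alpha|$ (Theorem \ref{thm:sobaccurve}), and then the curve-wise characterization, point $(iv)$ of Theorem \ref{thm:baseks}, gives both membership in $\KS{\alpha Z}(\Omega,\Y_{\bar y})$ and the identity $|\d u(\alpha Z)|=|\alpha||\d u(Z)|$. You instead stay at the level of the approximate energy densities: the scaling identity $e^{\alpha Z}_{p,\eps}=\alpha^p e^{Z}_{p,\alpha\eps}$ plus the $L^p(\Omega)$-convergence of $(e^Z_{p,\eps})^{1/p}$ from the last part of Theorem \ref{thm:baseks} settles $\alpha\ge0$, and for $\alpha<0$ you perform an explicit change of variables along the flow, using the two-sided compression bound \eqref{eq:bcompr}, the bound \eqref{eq:ms} for the cutoff $\tilde\varphi$, and the weak$^*$ convergence of the densities $\sigma_\eps$ exactly as in the paper's proof of \eqref{eq:mumm}. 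What the paper's route buys is brevity and a uniform treatment of both signs of $\alpha$; what your route buys is that the delicate point for $\alpha<0$ -- passing from forward to backward incremental quotients, which the paper's one-line reparametrization argument silently absorbs -- is made fully explicit and quantitative.

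One ingredient you invoke without proof is the $\mm$-a.e.\ identity $\Fl^Z_\eps\circ\Fl^{-Z}_\eps=\Id$ (equivalently $(\Id,\Fl^{-Z}_\eps)_*\mm=(\Fl^Z_\eps,\Id)_*\nu_\eps$). This is not stated in the paper, but it follows from the uniqueness part of Theorem \ref{thm:AT} by the same kind of argument used for Lemma \ref{le:scalato} and for the identity $\Fl^{2Z_1}_t=\Fl^{Z_1}_{2t}$ in Lemma \ref{le:sum} (the curve $s\mapsto\Fl^{-Z}_{\eps-s}$ pushes forward bounded densities to solutions of the continuity equation for $Z$, and both $Z$ and $-Z$ have bounded compression since $\div Z\in L^\infty$); you should add a sentence to this effect, but it is not a gap in the argument.
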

\begin{proof} It is clear that if $t\mapsto\gamma_t$ is absolutely continuous then so is $t\mapsto\gamma_{\alpha t}$ and with metric speed which changes by a factor $|\alpha|$. Then by Theorem \ref{thm:sobaccurve} the same holds for Sobolev curves and distributional derivatives. Then conclusion easily follows from Theorem \ref{thm:baseks}.
\end{proof}
We now turn to the study of the effect of the sum of vector fields on Regular Lagrangian Flows and start with a simple result about stability of convergence  in measure under left composition:
\begin{lemma}\label{le:compleft} Let $T_n:\X\to\X$, $n\in\N\cup\{\infty\}$ be Borel and such that $T_n\to T_\infty$ locally in measure as $n\to\infty$. Assume also that the measures $(T_n)_*\mm$ are locally equi-absolutely continuous w.r.t.\ $\mm$, i.e.\ that: for every $\eps>0$ and $B\subset\X$ bounded there is $\delta>0$ such that for every $E\subset B$ Borel with $\mm(E)<\delta$ we have $(T_n)_*\mm(E)\leq\eps$ for every $n\in\N$.

Then for every complete metric space $\Y$ and every Borel map $u:\X\to \Y$ which is essentially separably valued we have that $(u\circ T_n)$ converges locally in measure to $u\circ T_\infty$.
\end{lemma}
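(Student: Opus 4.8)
The plan is to show that for every bounded Borel set $B\subset\X$ and every $\eps>0$ one has $\mm\bigl(\{x\in B:\sfd_\Y(u(T_n(x)),u(T_\infty(x)))>\eps\}\bigr)\to0$ as $n\to\infty$. First I would reduce to a convenient setting. Since $u$ is essentially separably valued, after modifying it on an $\mm$-negligible set we may assume $\Y$ is separable; this does not affect any $u\circ T_n$ up to $\mm$-a.e.\ equality, because $\mm$-negligible sets are negligible for all the pushforwards $(T_n)_*\mm$, $n\in\N\cup\{\infty\}$ (absolute continuity of $(T_\infty)_*\mm$ w.r.t.\ $\mm$ itself follows from the hypotheses, e.g.\ by Fatou's lemma along an $\mm$-a.e.\ convergent subsequence). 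Next, since convergence in measure on the finite measure space $(B,\mm\restr B)$ is metrizable, it is enough to prove that every subsequence admits a further subsequence along which the convergence holds; so I fix such a subsequence and, using $T_n\to T_\infty$ locally in measure, pass to a further subsequence (not relabeled) with $T_n\to T_\infty$ $\mm$-a.e.\ on $B$.

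Now fix $\theta>0$; the key is to extract a set on which $u$ is continuous and whose complement is small simultaneously for $\mm\restr B$ pushed through $T_\infty$ and, uniformly in $n$, through each $T_n$. Since $(T_\infty)_*(\mm\restr B)$ is a finite Borel measure on the Polish space $\X$ it is tight, so there is a compact $K_0\subset\X$ with $\mm(\{x\in B:T_\infty(x)\notin K_0\})<\theta$. Put $\widehat K_0:=\{y\in\X:\sfd(y,K_0)\leq1\}$, which is closed and bounded, and apply the equi-absolute continuity hypothesis to $\widehat K_0$ to obtain $\eta>0$ such that $(T_n)_*\mm(E)<\theta$ for all $n$ whenever $E\subset\widehat K_0$ is Borel with $\mm(E)<\eta$. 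Finally, apply Lusin's theorem to $u$ restricted to $\widehat K_0$, with respect to the finite measure $\lambda:=(\mm+(T_\infty)_*(\mm\restr B))\restr{\widehat K_0}$, to get a closed set $K_1\subset\widehat K_0$ with $u\restr{K_1}$ continuous and $\lambda(\widehat K_0\setminus K_1)<\min\{\eta,\theta\}$.

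Then I would decompose the bad set. If $x\in B$ satisfies $\sfd_\Y(u(T_n(x)),u(T_\infty(x)))>\eps$, then either $T_\infty(x)\notin K_0$, or $\sfd(T_n(x),T_\infty(x))>1$ (note that $T_\infty(x)\in K_0$ together with $\sfd(T_n(x),T_\infty(x))\leq1$ forces $T_n(x)\in\widehat K_0$), or else $T_n(x),T_\infty(x)\in\widehat K_0$ and then at least one of $\{T_n(x)\in\widehat K_0\setminus K_1\}$, $\{T_\infty(x)\in\widehat K_0\setminus K_1\}$ occurs, or both of $T_n(x),T_\infty(x)$ lie in $K_1$. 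The first event has $\mm$-measure $<\theta$ by the choice of $K_0$; the second has $\mm$-measure $\to0$ by the a.e.\ (hence in measure) convergence $T_n\to T_\infty$ on $B$; the third and fourth have $\mm$-measure $(T_n)_*\mm(\widehat K_0\setminus K_1)<\theta$ and $(T_\infty)_*(\mm\restr B)(\widehat K_0\setminus K_1)<\theta$ respectively, by the choices of $\eta$ and $\lambda$. For the last event, note that for $\mm$-a.e.\ $x\in B$ (namely the convergence points $T_n(x)\to T_\infty(x)$), if $T_n(x),T_\infty(x)\in K_1$ infinitely often then $T_\infty(x)\in K_1$ and $T_n(x)\to T_\infty(x)$ inside $K_1$ along that subsequence, so continuity of $u\restr{K_1}$ gives $u(T_n(x))\to u(T_\infty(x))$, contradicting $\sfd_\Y(\cdot,\cdot)>\eps$; hence such $x$ lies in the last event for only finitely many $n$, and dominated convergence (the indicators being dominated by $\nchi_B$, $\mm(B)<\infty$) shows that event's $\mm$-measure tends to $0$. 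Collecting the bounds yields $\lims_n\mm\bigl(\{x\in B:\sfd_\Y(u(T_n(x)),u(T_\infty(x)))>\eps\}\bigr)\leq3\theta$, and letting $\theta\downarrow0$ gives the claim.

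The main obstacle is exactly what this decomposition resolves: because $u$ is merely Borel, a.e.\ convergence $T_n\to T_\infty$ does not pass through $u$, so one must cut out a set where $u$ is continuous and control its complement against all the measures $(T_n)_*(\mm\restr B)$ at once — which is precisely the role of the local equi-absolute continuity hypothesis (together with the tightness of $(T_\infty)_*(\mm\restr B)$, used to confine the action of $T_\infty$ to a bounded region where equi-absolute continuity is available).
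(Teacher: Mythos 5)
Your argument is correct, but it follows a genuinely different route from the paper's. The paper first reduces to $\Y=\ell^\infty$ via a Kuratowski embedding, invokes the density of $C_b(\X,\ell^\infty)$ in the class of essentially separably valued Borel maps for the local convergence in measure (Lemma \ref{le:cbdense} plus a cut-off), fixes a probability measure $\mm'$ with $\mm\ll\mm'\ll\mm$ metrizing that convergence, and concludes by a three-term triangle inequality, using the (upgraded) equi-absolute continuity to control the terms $\sfd_0(u\circ T_n,v\circ T_n)$ on the bad set where $u$ and the continuous approximant $v$ differ. You instead work directly with the metric target, replace the global continuous approximation by a Lusin restriction of $u$ to a closed set $K_1$, and combine tightness of $(T_\infty)_*(\mm\restr B)$ with the convergence $T_n\to T_\infty$ (to confine $T_n(x)$ to the bounded enlargement $\widehat K_0$ up to small measure) and an a.e.\ sub-subsequence argument for the remaining event; the equi-absolute continuity enters only in its literal form, on the bounded set $\widehat K_0$ and for finite $n$, while the limit map is handled through the measure $\lambda$ in the Lusin step. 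What your approach buys is self-containedness and economy of hypotheses: no embedding, no density lemma, and no need to upgrade the equi-absolute continuity to a global statement for the normalized measure including $n=\infty$ (which the paper asserts in \eqref{eq:equiacp} with little comment); what the paper's approach buys is brevity given the machinery already in place and a quantitative estimate in the metrizing distance $\sfd_0$. The shared core in both proofs is the same: reduce $u$ to continuous behaviour up to an exceptional set that is small simultaneously for all the pushforwards, which is exactly where the equi-absolute continuity is used. One small gloss in your write-up: the claim $(T_\infty)_*\mm\ll\mm$ does follow from the hypotheses, but not by Fatou applied directly to indicators of Borel null sets (these are not lower semicontinuous along $T_n(x)\to T_\infty(x)$); one should first envelope the null set by a bounded open set of small $\mm$-measure (outer regularity) and then apply Fatou to the open indicator together with the equi-absolute continuity bound for finite $n$ -- this is routine and does not affect the validity of your proof.
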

\begin{proof} Replacing $\Y$ with a closed separable subset containing, up to negligible sets, the image of $u$ we can assume that $\Y$ is separable. Then up to a  (Kuratowski) isometric embedding of $\Y$ in $\ell^\infty$ we can replace the former with the latter. Then Lemma \ref{le:cbdense} and a simple cut-off argument shows that $C_b(\X,\ell^\infty)$ is dense in the space of essentially separably valued Borel maps from $\X$ to $\ell^\infty$ w.r.t.\ to local convergence in measure.

Now let $\mm'\in\mathscr P(\X)$ be such that $\mm\ll\mm'\ll\mm$ and notice that the assumption on equi-absolute continuity of  $(T_n)_*\mm$  implies
\begin{equation}
\label{eq:equiacp}
\text{\rm $\forall \eps>0 \ \exists\delta>0\ $ s.t.\   $\forall E\subset \X$ Borel the bound $\mm'(E)\leq \delta$ implies $(T_n)_*\mm'(E)\leq\eps$\ $\forall n\in\N\cup\{\infty\}$}
\end{equation}
and recall that the distance $\sfd_0(u,v):=\int1\wedge\sfd_{\ell^\infty}(u,v)\,\d\mm'$ metrizes the  local convergence in measure.

Then for $u:\X\to\ell^\infty$ Borel and essentially separably valued and  $\eps>0$ let first $\delta$ be given by \eqref{eq:equiacp} and then $v\in C_b(\X,\ell^\infty)$ be such that for $E:=\{\sfd_{\ell^\infty}(u,v)>\eps\}$ it holds $\mm'(E)<\delta$. We have
\begin{equation}
\label{eq:tr}
\sfd_0(u\circ T_\infty,u\circ T_n)\leq\sfd_0(u\circ T_\infty,v\circ T_\infty)+\sfd_0(v\circ T,v\circ T_n)+\sfd_0(u\circ T_n,v\circ T_n)\qquad\forall n\in\N
\end{equation}
and for every $n\in\N\cup\{\infty\}$ it holds
\[
\begin{split}
\sfd_0(u\circ T_n,v\circ T_n)&= \int_E1\wedge\sfd_{\ell^\infty}(u,v)\,\d(T_n)_*\mm'+\int_{\X\setminus E}1\wedge\sfd_{\ell^\infty}(u,v)\,\d(T_n)_*\mm'\stackrel{\eqref{eq:equiacp}}\leq\eps+\eps.
\end{split}
\]
Since the  continuity of $v$ and the dominated convergence theorem give that $\sfd_0(v\circ T,v\circ T_n)\to0$ as $n\to\infty$, from \eqref{eq:tr} we obtain
\[
\lims_{n\to\infty}\sfd_0(u\circ T_\infty,u\circ T_n)\leq 4\eps
\]
and by the arbitrariness of $\eps>0$ we conclude.
\end{proof}
The core of the matter for what concerns the triangle inequality is the following lemma: here we make crucial use of the stability results for Regular Lagrangian Flows that we obtained in Section \ref{se:rcdrlf}.
\begin{lemma}\label{le:sum}
Let $K\in\R$, $(\X,\sfd,\mm)$ be $\RCD(K,\infty)$ space, $\Omega\subset\X$ open and $Z_1,Z_2$ two regular vector fields on it. Let $(\Y,\sfd_\Y,{\bar y})$ be a pointed complete  space and $u\in \KS {Z_1}(\Omega,\Y_{\bar y})\cap \KS {Z_2}(\Omega,\Y_{\bar y})$. For $C\subset \Omega$ closed put $T_C:=\frac12\min\{T^{1}_{C},T^{2}_{C}\}$, where $T^{j}_{C}$, $j=1,2$ is defined as in \eqref{eq:deftc} for the vector field $Z_j$, $j=1,2$ and the set $C$.

Then for every $f\in \Lip_{bs}(\Y)$ we have that the map $[0,T_C)\ni t\mapsto f\circ u\circ  \Fl^{Z_1+Z_2}_t\in L^p(C)$ is $C^1$ and for its derivative at time 0 we have
\begin{equation}
\label{eq:dersum0}
\frac{\d}{\d t}\big(f\circ u\circ  \Fl^{Z_1+Z_2}_t\big)\restr{t=0}=\frac{\d}{\d t}\big(f\circ u\circ  \Fl^{Z_1}_t\big)\restr{t=0}+\frac{\d}{\d t}\big(f\circ u\circ  \Fl^{Z_2}_t\big)\restr{t=0}\,.
\end{equation}
\end{lemma}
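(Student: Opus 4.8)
The plan is to prove, for every closed $C\subset\Omega$ with $T_C>0$, the identity
\begin{equation}
\label{eq:intidsum}
f\circ u\circ\Fl^{Z_1+Z_2}_h-f\circ u=\int_0^h(g_1+g_2)\circ\Fl^{Z_1+Z_2}_s\,\d s\qquad\text{in }L^p(C),\ \ h\in[0,T_C),
\end{equation}
where $g_j:=\frac{\d}{\d t}\big(f\circ u\circ\Fl^{Z_j}_t\big)\restr{t=0}$. First, $Z_1+Z_2$ is again a regular vector field, so $\Fl^{Z_1+Z_2}$ is well defined; moreover $g_j\in L^p(C)$ exists by Proposition \ref{prop:regalongfl} applied to $Z_j$ (recall $T_C<T^j_C$), and the cocycle relation in its proof gives $f\circ u\circ\Fl^{Z_j}_\delta-f\circ u=\int_0^\delta g_j\circ\Fl^{Z_j}_t\,\d t$ as an $L^p$-identity on the closed enlargements of $C$ along which the flow stays in $\Omega$. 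Granting \eqref{eq:intidsum}, its right-hand side is $C^1$ in $L^p(C)$ because $s\mapsto(g_1+g_2)\circ\Fl^{Z_1+Z_2}_s$ is $L^p(C)$-continuous by Proposition \ref{le:contlpfl}, and evaluating the derivative at $t=0$ gives $g_1+g_2$, which is \eqref{eq:dersum0} (and en passant $u\in\KS{Z_1+Z_2}(\Omega,\Y_{\bar y})$). It is harmless to assume $h$ small, so that all flows of $Z_1,Z_2,Z_1+Z_2$ over times $\leq 2h$ keep $C$ inside one fixed closed set contained in $\Omega$; the general case $h\in[0,T_C)$ then follows by concatenating small steps through the cocycle relation.

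To prove \eqref{eq:intidsum} I would use the Trotter scheme of Section \ref{se:rcdrlf}. For $m\in\N$ set $\Psi_{m,h}:=(\Fl^{Z_2}_{h/m}\circ\Fl^{Z_1}_{h/m})^m$ and let $\Psi^{(j)}_{m,h}$, $0\le j\le m$, be the composition of the first $j$ elementary steps. By Lemma \ref{le:scalato}, $\Psi^{(j)}_{m,h}=\Fl^{W_m}_{j/m}$ where $W_m$ is the piecewise-constant-in-time field on $[0,1]$ equal alternately to $2hZ_1$ and $2hZ_2$ on the $2m$ equal subintervals; arguing as in Proposition \ref{prop:sumvect}, $(W_m)$ converges weakly in time and strongly in space to the time-independent field $h(Z_1+Z_2)$ (the uniform bounds of Theorem \ref{thm:stabfl} being immediate since $Z_1,Z_2$ are regular and $h$ fixed), so Theorem \ref{thm:stabfl} yields $\Psi^{(j)}_{m,h}\to\Fl^{Z_1+Z_2}_{h\theta}$ locally in measure whenever $j/m\to\theta\in[0,1]$, in particular $\Psi_{m,h}\to\Fl^{Z_1+Z_2}_h$. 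By the uniform compression bounds of Theorem \ref{thm:AT} the measures $(\Psi^{(j)}_{m,h})_*\mm$ are locally equi-absolutely continuous w.r.t.\ $\mm$, so Lemma \ref{le:compleft} upgrades this to $f\circ u\circ\Psi^{(j)}_{m,h}\to f\circ u\circ\Fl^{Z_1+Z_2}_{h\theta}$ locally in measure, and — for a \emph{fixed} $g\in L^p$ — to $g\circ\Psi^{(j)}_{m,h}\to g\circ\Fl^{Z_1+Z_2}_{h\theta}$ in $L^p(C)$ (uniform integrability being provided again by the compression bounds).

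On the discrete side I telescope
\[
f\circ u\circ\Psi_{m,h}-f\circ u=\sum_{j=0}^{m-1}\big(\,f\circ u\circ\Fl^{Z_2}_{h/m}\circ\Fl^{Z_1}_{h/m}-f\circ u\,\big)\circ\Psi^{(j)}_{m,h},
\]
split each elementary increment as $(f\circ u\circ\Fl^{Z_2}_{h/m}-f\circ u)\circ\Fl^{Z_1}_{h/m}+(f\circ u\circ\Fl^{Z_1}_{h/m}-f\circ u)$, and insert the cocycle formulas for $g_1,g_2$. Replacing $\frac mh\int_0^{h/m}g_j\circ\Fl^{Z_j}_t\,\d t$ by $g_j$ and $g_2\circ\Fl^{Z_1}_{h/m}$ by $g_2$ produces errors whose $L^p$-norms on a fixed enlargement of $C$ are $\leq\frac hm\,\omega(h/m)$, with $\omega(\delta):=\sup_{0\le t\le\delta}\big(\|g_1\circ\Fl^{Z_1}_t-g_1\|_{L^p}+\|g_2\circ\Fl^{Z_2}_t-g_2\|_{L^p}+\|g_2\circ\Fl^{Z_1}_t-g_2\|_{L^p}\big)\to0$ as $\delta\downarrow0$ by Proposition \ref{le:contlpfl}; composing with $\Psi^{(j)}_{m,h}$ costs only a uniform factor from the compression bounds, so the $m$ error terms add up to something of $L^p(C)$-norm $\le C\,h\,\omega(h/m)\to0$ as $m\to\infty$ ($h$ fixed). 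Hence $\lim_m(f\circ u\circ\Psi_{m,h}-f\circ u)=\lim_m\frac hm\sum_{j=0}^{m-1}(g_1+g_2)\circ\Psi^{(j)}_{m,h}$ in $L^p(C)$; writing the right-hand side as $h\int_0^1(g_1+g_2)\circ\Psi^{(\lfloor m\tau\rfloor)}_{m,h}\,\d\tau$, the integrand converges in $L^p(C)$ for each $\tau$ to $(g_1+g_2)\circ\Fl^{Z_1+Z_2}_{h\tau}$ (last point of the previous step) and is uniformly $L^p(C)$-bounded, so dominated convergence in $\tau$ identifies the limit with $\int_0^h(g_1+g_2)\circ\Fl^{Z_1+Z_2}_s\,\d s$. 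Since the left-hand side also converges locally in measure to $f\circ u\circ\Fl^{Z_1+Z_2}_h-f\circ u$ (Lemma \ref{le:compleft}), the two limits coincide and \eqref{eq:intidsum} follows.

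The main obstacle is that the Trotter approximation carries \emph{no quantitative rate}: only the qualitative stability of Theorem \ref{thm:stabfl} is available. The argument is therefore organized so that the passage $m\to\infty$ (for fixed $h$) rests solely on this qualitative convergence, while all quantitative content is carried by (a) the uniform speed and compression estimates of Theorem \ref{thm:AT}, which make the $m$ per-step errors summable with an overall gain of a factor $h$, and (b) the $L^p$-continuity of the auxiliary curves $t\mapsto g_j\circ\Fl^{Z_j}_t$. The remaining book-keeping — the finitely many closed enlargements $C\subset C'\subset\cdots\subset\Omega$ that must contain all the flows appearing above, which is where the hypothesis $h<T_C$ enters — is routine.
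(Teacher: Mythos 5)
Your proof is correct, and it rests on the same pillars as the paper's argument — the Trotter-type approximation of Proposition \ref{prop:sumvect} (rescaled in time via Lemma \ref{le:scalato}), the stability Theorem \ref{thm:stabfl}, Lemma \ref{le:compleft}, the derivative/cocycle formula from Proposition \ref{prop:regalongfl} and the continuity statement of Proposition \ref{le:contlpfl} — but the implementation is genuinely different. The paper works with the continuous-time alternating flow $\Fl^{Z_n}$, tests everything against an arbitrary bounded probability density $\rho_0$ supported in $C$, and identifies the limit of the resulting time integrals through the weak$^*$ convergence of the intermediate densities $\rho^1_{n,t},\rho^2_{n,t}$ established in Proposition \ref{prop:pertr}; the $\mm$-a.e.\ integral identity is then recovered from the arbitrariness of $\rho_0$, and $C^1$ regularity follows via Lipschitz continuity, the Radon--Nikodym property of $L^p(C)$ and the semigroup property. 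You instead telescope the discrete products $\Psi^{(j)}_{m,h}$ directly as $L^p(C)$-valued quantities, control the $m$ per-step errors by a modulus of continuity supplied by Proposition \ref{le:contlpfl}, and upgrade the locally-in-measure convergence of $(g_1+g_2)\circ\Psi^{(\lfloor m\tau\rfloor)}_{m,h}$ to $L^p(C)$-convergence by uniform integrability; this bypasses Proposition \ref{prop:pertr} and the duality with $\rho_0$ altogether and yields the integral identity, hence the $C^1$ statement, in one stroke by the fundamental theorem of calculus. The paper's route only ever needs weak$^*$ information on the approximating densities; yours is a more self-contained strong formulation at the price of the explicit error bookkeeping. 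Two spots deserve a word more of care, though both are fillable with tools already in the paper: the convergence $\Psi^{(\lfloor m\tau\rfloor)}_{m,h}\to\Fl^{Z_1+Z_2}_{h\tau}$ at varying times uses the $C([0,1],\X)$-level convergence in Theorem \ref{thm:stabfl} together with the uniform speed bound \eqref{eq:ms}, not just the fixed-time statement; and when $C$ has infinite measure the Vitali upgrade from convergence in measure to $L^p(C)$-convergence requires, besides the uniform compression bound, the uniform displacement bound (again \eqref{eq:ms}) to ensure tightness of $|g_1+g_2|^p\circ\Psi^{(j)}_{m,h}$ at infinity.
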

\begin{proof} Let $\rho_0$ be a bounded probability density with support in $C$ and $T>0$. Define $Z_n$ as in Proposition \ref{prop:sumvect} and let $(\Fl^+_t),(\Fl^n_t),(\Fl^1_t),(\Fl^2_t)$ be the Regular Lagrangian Flows of $Z_1+Z_2,Z_n,Z_1,Z_2$ respectively and $(\rho^+_t),(\rho^n_t),(\rho^1_t),(\rho^2_t)$ the corresponding solutions of the continuity equation starting from $\rho_0$.

We know from \eqref{eq:bcompr} and our assumptions that these Regular flows have locally uniformly bounded compression in $t\in[0,T_C)$, thus from the stability result Theorem \ref{thm:stabfl} (coupled with Proposition \ref{prop:sumvect}) and Lemma \ref{le:compleft} above we deduce that $(f\circ u\circ\Fl^n_t)$ converges in measure to $(f\circ u\circ\Fl^+_t)$ as $n\to\infty$ for any $t>0$. Since these functions are uniformly bounded (because $f$ is bounded) and $\rho_0$ has bounded support, we deduce that
\begin{equation}
\label{eq:forlim}
\int \big(f\circ u\circ\Fl^+_{t_1}-f\circ u\circ\Fl^+_{t_0}\big)\,\rho_0\,\d\mm=\lim_{n\to\infty}\int\big( f\circ u\circ\Fl^n_{t_1}-f\circ u\circ\Fl^n_{t_0}\big)\,\rho_0\,\d\mm\qquad\forall t_1\geq t_0\geq 0.
\end{equation}
Fix $t_1\geq t_0\geq 0$ and for $n\in\N$ let $I_1({n}),I_0({n})\in \N$ be such that $t_j\in[\frac{2I_j(n)}{2^n},\frac{2I_j(n)+2}{2^n})$, $j=0,1$, and use the very definition of $\Fl^n_t$ and the regularity property stated in Proposition \ref{prop:regalongfl} to write
\begin{equation}
\label{eq:nsplit}
\begin{split}
\int \big(f\circ u\circ\Fl^n_{t_1}&-f\circ u\circ\Fl^n_{t_0}\big)\,\rho_0\,\d \mm\\
&=\iint_\frac{2I_1(n)}{2^n}^{t_1} \frac\d{\d t}\big(f\circ u\circ \Fl^{n}_{t}\big)\,\rho_0\,\d t\,\d\mm-\iint_\frac{2I_0(n)}{2^n}^{t_0} \frac\d{\d t}\big(f\circ u\circ \Fl^{n}_{t}\big)\,\rho_0\,\d t\, \d\mm\\
&\qquad+\sum_{i=I_0(n)}^{I_1(n)-1}\underline{\iint_{\frac{2i}{2^n}}^{\frac{2i+1}{2^n}}\frac{\d}{\d t}\big(f\circ u\circ\Fl^{2 Z_1}_{t-\frac{2i}{2^n}}\circ\Fl^n_{\frac{2i}{2^n}}\big)\,\rho_0\,\d t\, \d\mm}\\
&\qquad\qquad+\iint_{\frac{2i+1}{2^n}}^{\frac{2i+2}{2^n}}\frac\d{\d t}\big(f\circ u\circ \Fl^{2 Z_2}_{t-\frac{2i+1}{2^n}}\circ\Fl^n_{\frac{2i+1}{2^n}}\big)\,\rho_0\,\d t\, \d\mm.
\end{split}
\end{equation}
We shall study the underlined term in the above. It is easy to verify that the uniqueness of Regular Lagrangian Flows yields $\Fl^{2Z_1}_t=\Fl^{Z_1}_{2t}$ for every $t\geq 0$, hence recalling the definition of $\rho^1_{n,t}$ given in Proposition \ref{prop:pertr} we have
\[
\begin{split}
\iint_{\frac{2i}{2^n}}^{\frac{2i+1}{2^n}}\frac{\d}{\d t}\big(f\circ u\circ\Fl^{2 Z_1}_{t-\frac{2i}{2^n}}\circ\Fl^n_{\frac{2i}{2^n}}\big)\,\rho_0\,\d t\,\d\mm&=\iint_{\frac{2i}{2^n}}^{\frac{2i+1}{2^n}}\frac{\d}{\d t}\big(f\circ u\circ\Fl^{ Z_1}_{2(t-\frac{2i}{2^n})}\circ\Fl^n_{\frac{2i}{2^n}}\big)\,\rho_0\,\d t\,\d\mm\\
&=2\iint_{\frac{2i}{2^n}}^{\frac{2i+1}{2^n}}\frac{\d}{\d s}\big(f\circ u\circ\Fl^{Z_1}_s\big)\restr{s=0}\circ \Fl^{ Z_1}_{2(t-\frac{2i}{2^n})}\circ\Fl^n_{\frac{2i}{2^n}}\,\rho_0\,\d t\, \d\mm\\
&=\iint_{\frac{2i}{2^n}}^{\frac{2(i+1)}{2^n}}\frac{\d}{\d s}\big(f\circ u\circ\Fl^{Z_1}_s\big)\restr{s=0}\rho^1_{n,t}\,\d t\,\d\mm.
\end{split}
\]
The convergence properties stated in Proposition \ref{prop:pertr} ensure that
\[
\int\frac{\d}{\d s}\big(f\circ u\circ\Fl^{Z_1}_s\big)\restr{s=0}\rho^1_{n,t}\,\d\mm\quad\to\quad\int\frac{\d}{\d s}\big(f\circ u\circ\Fl^{Z_1}_s\big)\restr{s=0}\rho^+_{t}\,\d\mm\qquad\forall t\in[0,T_C).
\]
Handling analogously the other two terms in the right-hand-side of \eqref{eq:nsplit} and recalling \eqref{eq:forlim}, by the dominated convergence theorem (recall that the densities $\rho^n_t$ are uniformly bounded) we obtain
\[
\int\big( f\circ u\circ\Fl^+_{t_1}-f\circ u\circ\Fl^+_{t_0}\big)\,\rho_0\,\d\mm=\iint_{t_0}^{t_1}\Big(\frac{\d}{\d s}\big(f\circ u\circ\Fl^{Z_1}_s\big)\restr{s=0}+\frac{\d}{\d s}\big(f\circ u\circ\Fl^{Z_2}_s\big)\restr{s=0}\Big)\circ\Fl^+_t\,\rho_0\,\d t\,\d\mm.
\]
Then the arbitrariness of $\rho_0$ forces
\[
 f\circ u\circ\Fl^+_{t_1}-f\circ u\circ\Fl^+_{t_0} =\int_{t_0}^{t_1}\Big(\frac{\d}{\d s}\big(f\circ u\circ\Fl^{Z_1}_s\big)\restr{s=0}+\frac{\d}{\d s}\big(f\circ u\circ\Fl^{Z_2}_s\big)\restr{s=0}\Big)\circ\Fl^+_t\,\d t,
\]
which (recalling the bound \eqref{eq:der0}) shows that $t\mapsto f\circ u\circ\Fl^+_{t}\in L^p(C)$ is Lipschitz. We then conclude as for Proposition \ref{prop:regalongfl}: since $L^p(C)$ has the Radon-Nikodym property we deduce that the formula
\[
\frac{\d }{\d t}f\circ u\circ\Fl^+_{t}=\Big(\frac{\d}{\d s}\big(f\circ u\circ\Fl^{Z_1}_s\big)\restr{s=0}+\frac{\d}{\d s}\big(f\circ u\circ\Fl^{Z_2}_s\big)\restr{s=0}\Big)\circ\Fl^+_t
\]
holds for a.e.\ $t\in[0,T_C)$, then we use the semigroup property of $(\Fl^+_t)$, we recall the continuity property stated in Proposition \ref{le:contlpfl} to obtain the  desired $C^1$ regularity and the validity of \eqref{eq:dersum0}.
\end{proof}
The main result of this section is now easily obtainable:
\begin{proposition}\label{prop:triangle}
Let $K\in\R$, $(\X,\sfd,\mm)$ be $\RCD(K,\infty)$ space, $\Omega\subset\X$ open and $Z_1,Z_2$ two regular vector fields on it. Let $(\Y,\sfd_\Y,{\bar y})$ be a pointed complete space and $u\in \KS {Z_1}(\Omega,\Y_{\bar y})\cap \KS {Z_2}(\Omega,\Y_{\bar y})$.

Then $u\in \KS {Z_1+Z_2}(\Omega,\Y_{\bar y})$ and
\begin{equation}
\label{eq:triangle}
|\d u(Z_1+Z_2)|\leq |\d u(Z_1)|+|\d u(Z_2)|\qquad\mm-a.e..
\end{equation}
\end{proposition}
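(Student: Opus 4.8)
The plan is to verify condition $(iii)$ of Theorem \ref{thm:baseks} for the vector field $Z_1+Z_2$ — which is again regular, since $(\div(Z_1+Z_2))^-\leq(\div Z_1)^-+(\div Z_2)^-\in L^\infty$ — with candidate function $G:=|\d u(Z_1)|+|\d u(Z_2)|\in L^p(\Omega)$. Granting this, the `Moreover' part of Theorem \ref{thm:baseks} immediately gives both $u\in\KS{Z_1+Z_2}(\Omega,\Y_{\bar y})$ and, by minimality of $|\d u(Z_1+Z_2)|$ among the admissible $G$'s, the inequality \eqref{eq:triangle}.

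\emph{Step 1: a pointwise speed bound on a short interval.} Fix a closed $C\subset\Omega$ and, in the notation of Lemma \ref{le:sum}, put $T_C^{1,2}:=\tfrac12\min\{T_C^1,T_C^2\}$, which we assume positive (hence $T_C^1,T_C^2>0$ too). Let $(f_n)\subset\Lip_{bs}(\Y)$ be a countable family of $1$-Lipschitz functions with $\sfd_\Y(y,y')=\sup_n(f_n(y)-f_n(y'))$, as in Lemma \ref{le:spcount}. By Lemma \ref{le:sum} each curve $t\mapsto f_n\circ u\circ\Fl^{Z_1+Z_2}_t\in L^p(C)$ is $C^1$ on $[0,T_C^{1,2})$; the semigroup identity $\Fl^{Z_1+Z_2}_s=\Fl^{Z_1+Z_2}_{s-t}\circ\Fl^{Z_1+Z_2}_t$ forces its derivative at time $t$ to equal its derivative at time $0$ precomposed with $\Fl^{Z_1+Z_2}_t$, so from \eqref{eq:dersum0}, the bound \eqref{eq:der0} of Proposition \ref{prop:regalongfl} applied to $Z_1,Z_2$ at $t=0$, and $\lip(f_n)\leq 1$, we obtain
\[
\Big|\tfrac{\d}{\d t}\big(f_n\circ u\circ\Fl^{Z_1+Z_2}_t\big)\Big|\leq G\circ\Fl^{Z_1+Z_2}_t\qquad\mm\text{-a.e.\ on }C,\quad\forall t\in[0,T_C^{1,2}).
\]
By Fubini, for $\mm$-a.e.\ $x\in C$ each $t\mapsto f_n(u(\Fl^{Z_1+Z_2}_t(x)))$ lies in $W^{1,p}_{\rm loc}([0,T_C^{1,2}))$ with distributional derivative $\leq G(\Fl^{Z_1+Z_2}_t(x))$; taking the supremum over $n$ and invoking Lemma \ref{le:spcount} (as in the proof of part $(ii)$ of Theorem \ref{thm:sobaccurve}) upgrades this to $t\mapsto u(\Fl^{Z_1+Z_2}_t(x))\in W^{1,p}_{\rm loc}([0,T_C^{1,2}),\Y)$, i.e.
\[
\sfd_\Y\big(u\circ\Fl^{Z_1+Z_2}_s,u\circ\Fl^{Z_1+Z_2}_t\big)\leq\int_t^s G\circ\Fl^{Z_1+Z_2}_r\,\d r\qquad\mm\text{-a.e.\ on }C,\quad 0\leq t\leq s<T_C^{1,2}.
\]

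\emph{Step 2: globalization in time, and conclusion.} This is the step I expect to require the only real care: condition $(iii)$ for $Z_1+Z_2$ asks for the last bound on the interval $[0,T_C)$ of \eqref{eq:deftc} relative to $\|Z_1+Z_2\|_{L^\infty}$, and in general $T_C^{1,2}<T_C$ (e.g.\ $\|Z_1+Z_2\|_{L^\infty}$ may be far smaller than $\|Z_1\|_{L^\infty}+\|Z_2\|_{L^\infty}$). To bridge the gap I would use the semigroup property: for $s_0\in[0,T_C)$ the map $\Fl^{Z_1+Z_2}_{s_0}$ moves points at speed $\leq\|Z_1+Z_2\|_{L^\infty}$ (by \eqref{eq:ms}), so it keeps $\mm$-a.e.\ $x\in C$ at a positive distance from $\Omega^c$, hence sends $C$ into a closed set $\bar C_{s_0}\subset\Omega$ with $T_{\bar C_{s_0}}^{1,2}>0$; applying Step 1 to $\bar C_{s_0}$ and using $(\Fl^{Z_1+Z_2}_{s_0})_*\mm\ll\mm$ together with $\Fl^{Z_1+Z_2}_\tau\circ\Fl^{Z_1+Z_2}_{s_0}=\Fl^{Z_1+Z_2}_{\tau+s_0}$ transfers the bound to $[s_0,s_0+T_{\bar C_{s_0}}^{1,2})$, for $\mm$-a.e.\ $x\in C$. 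Choosing $s_0=s_k$ along a sequence $0=s_0<s_1<\cdots\uparrow T_C$ with $s_{k+1}-s_k<T_{\bar C_{s_k}}^{1,2}$ (the step sizes shrinking geometrically as $s_k\uparrow T_C$) and concatenating the resulting estimates — the overlap of consecutive pieces making the concatenation a routine triangle-inequality argument — yields the bound on all of $[0,T_C)$; this is the same globalization device as in the proof of $(i)\Rightarrow(iv)$ of Theorem \ref{thm:baseks}. Condition $(iii)$ being thus established with $G=|\d u(Z_1)|+|\d u(Z_2)|$, Theorem \ref{thm:baseks} completes the proof.
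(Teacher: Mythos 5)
Your proposal is correct and Step 1 is precisely the paper's own proof: Lemma \ref{le:sum} together with the bound \eqref{eq:der0}, followed by the supremum over the countable $1$-Lipschitz family of Lemma \ref{le:spcount}, yields the integral estimate with $G=|\d u(Z_1)|+|\d u(Z_2)|$, and one concludes via Theorem \ref{thm:baseks}. The paper does not carry out your Step 2: it invokes Theorem \ref{thm:baseks} directly with the time horizon furnished by Lemma \ref{le:sum}, leaving implicit the discrepancy with the interval $[0,T_C)$ of \eqref{eq:deftc} for $Z_1+Z_2$ that you bridge by the semigroup/concatenation argument --- a harmless gap, since both membership in the Korevaar--Schoen space and the identification of the minimal $G$ only require the small-time estimate, but your patch is a legitimate way to make the application of criterion $(iii)$ literal.
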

\begin{proof} By Lemma \ref{le:sum} and inequality \eqref{eq:der0}  we know that for any $C\subset\Omega$ closed, $f\in\Lip_{bs}(\Y)$ and $t,s\in[0,T_C)$, $t<s$ it holds
\[
|f\circ u\circ  \Fl^{Z_1+Z_2}_s-f\circ u\circ  \Fl^{Z_1+Z_2}_t|\leq \Lip(f)\int_t^s\big(|\d u(Z_1)|+|\d u(Z_2)|\big)\circ  \Fl^{Z_1+Z_2}_r\,\d r\quad\mm-a.e.\ on\ C.
\]
Taking the supremum as  $f$ runs on the countable set given by Lemma \ref{le:spcount}, by \eqref{eq:distsup} we get
\[
\sfd_\Y(u\circ  \Fl^{Z_1+Z_2}_s,u\circ  \Fl^{Z_1+Z_2}_t)\leq\int_t^s\big(|\d u(Z_1)|+|\d u(Z_2)|\big)\circ  \Fl^{Z_1+Z_2}_r\,\d r\quad\mm-a.e.\ on\ C
\]
and since $|\d u(Z_1)|+|\d u(Z_2)|\in L^p(\Omega)$, according to Theorem \ref{thm:baseks} this is sufficient to conclude.
\end{proof}

\subsection{The differential $\d u(Z)$}\label{se:diff}

In this section we show that `behind' the definition of $|\d u(Z)|$ there is an actual object which we can rightfully call 'differential of $u$ in the direction of $Z$' whose norm coincides a.e.\ with $|\d u(Z)|$. The kind of construction that we use is strongly reminiscent of  - and deeply motivated by -  the one proposed in \cite{GPS18}; the difference here is that our function $u$  is differentiable only in the direction of $Z$. For this reason we won't define the differential $\d u$ of $u$, but only its action on $Z$.

\bigskip

Throughout this section we fix $p\in(1,\infty)$, a regular vector field $Z$  on the $\RCD(K,\infty)$ space $\X$, an open set $\Omega\subset\X$ and  a map $u\in \KS Z(\Omega,\Y_{\bar y})$. Moreover, we shall assume henceforth that $\Y$ is separable (this is helpful - albeit not really necessary - because we shall consider measures on $\Y$ and then Sobolev functions on it, whose theory usually assumes separability).

We then put
\[
\mu:=u_*(|\d u(Z)|^p\mm\restr\Omega)
\]
and notice that $\mu$ is a finite Radon measure on $\Y$. We shall consider Sobolev functions on $(\Y,\sfd_\Y,\mu)$ and to emphasize the choice of the measure (which can occasionally be different from $\mu$) we shall denote the corresponding notion of cotangent module and differential by $L^0_\mu(T^*\Y)$ and $|\d_\mu f|$.

Notice that if  $f:\Y\to\R$ is defined up to $\mu$-a.e.\ equality, then $f\circ u$ is defined up to $\mm$-a.e.\ equality on $\{|\d u(Z)|>0\}$; hence the function $f\circ u|\d u(Z)|$ is defined up to $\mm$-a.e.\ equality on $\Omega$. Then the trivial identity
\[
\int_\Omega \big|f\circ u|\d u(Z)| \big|^p\,\d\mm=\int |f|^p\circ u\,\d (|\d u(Z)|^p\mm\restr\Omega)=\int |f|^p\,\d\mu
\]
shows that
\begin{equation}
\label{eq:pbnew}
\text{the map $\qquad L^p(\mu)\ni f\quad\mapsto\quad f\circ u|\d u(Z)| \in L^p(\mm\restr\Omega)\qquad $ is linear and continuous.}
\end{equation}
The relation between Sobolev functions on $(\Y,\sfd_\Y,\mu)$ and their pullback obtained by right composition with $u$ is described in the next proposition:
\begin{proposition}\label{prop:chain}  Let $p\in(1,\infty)$, $Z$ a regular vector field on $\X$, $\Omega\subset \X$ open, $u\in \KS Z (\Omega,\Y_{\bar y})$ and $\mu:=u_*(|\d u(Z)|^p\mm)$.

For every $f\in L^\infty\cap W^{1,p}(\Y,\sfd_\Y,\mu)$ there exists $g\in L^0(\mm\restr\Omega)$ which coincides with  $f\circ u$ $\mm$-a.e.\ on $\{|\d u(Z)|>0\}$ and for which the following holds: for any $C\subset \Omega$ closed with $T_C>0$,  the map $t\mapsto (g\circ\Fl^Z_t-g)$ belongs to $C^1([0,T_C),L^p(C))$ and for its derivative at time 0 it holds the bound
 \begin{equation}
\label{eq:boundder0}
\Big|\frac\d{\d t}\big(g\circ\Fl^Z_t\big)\restr{t=0}\Big|\leq |\d_\mu f|\circ u |\d u(Z)|\qquad\mm-a.e.\ \textrm{on }C.
\end{equation}
Moreover, if $\tilde g\in L^0(\mm\restr\Omega)$ is another function which coincides with  $f\circ u$ $\mm$-a.e.\ on $\{|\d u(Z)|>0\}$ and
such that $t\mapsto g\circ\Fl^Z_t$ belongs to $C^1([0,T_C),L^p(C))$ and so that
\begin{equation}
\label{eq:compg}
\Big|\frac\d{\d t}\big(\tilde g\circ\Fl^Z_t\big)\restr{t=0}\Big|\leq h\circ u|\d u(Z)|\qquad\mm-a.e.\ \textrm{on }C\qquad\text{ for some }h\in L^p(\mu),
\end{equation}
then
\[
\frac\d{\d t}\big(g\circ\Fl^Z_t\big)\restr{t=0}=\frac\d{\d t}\big(\tilde g\circ\Fl^Z_t\big)\restr{t=0}.
\]
\end{proposition}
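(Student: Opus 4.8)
\emph{Existence of $g$.} The plan is to reduce, by an exhaustion argument, to the case where $C$ runs over a fixed sequence $C_1\subset C_2\subset\cdots$ of bounded closed subsets of $\Omega$ with $T_{C_k}>0$ and $\bigcup_kC_k=\Omega$; the candidate derivative $D$ produced below will lie in $L^p(\Omega)$, so the integral identity obtained on each $C_k$ holds $\mm$-a.e.\ on all of $\Omega$ and, via the estimate $(\Fl^Z_r)_*(\mm\restr C)\le e^{r\|(\div Z)^-\|_{L^\infty}}\mm$ (recall \eqref{eq:bcompr}) and the continuity from Proposition~\ref{le:contlpfl}, yields the $C^1$ property on every admissible $C$. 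When $f\in\Lip_{bs}(\Y)$ one simply takes $g:=f\circ u$ and invokes Proposition~\ref{prop:regalongfl}. For general $f$ I would use \eqref{eq:optlip} for $W^{1,p}(\Y,\sfd_\Y,\mu)$ to pick $(f_n)\subset\Lip_{bs}(\Y)$ with $f_n\to f$ in $L^p(\mu)$, $\lip(f_n)\to|\d_\mu f|$ in $L^p(\mu)$ and, after truncating, $\sup_n\|f_n\|_{L^\infty(\Y)}<\infty$, and set $D_n:=\frac{\d}{\d t}(f_n\circ u\circ\Fl^Z_t)\restr{t=0}$. By Proposition~\ref{prop:regalongfl} one has $|D_n|\le\lip(f_n)\circ u\,|\d u(Z)|$ $\mm$-a.e., which by \eqref{eq:pbnew} is a bounded sequence in $L^p(\Omega)$, and the semigroup property of the flow gives $f_n\circ u\circ\Fl^Z_t-f_n\circ u=\int_0^tD_n\circ\Fl^Z_r\,\d r$. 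Passing to a (diagonal) subsequence one may assume $D_n\weakto D$ in $L^p(\Omega)$ and $f_n\circ u\weakto g$ in each $L^p(C_k)$. Since $u_*(\mm\restr{\{|\d u(Z)|>0\}})\ll\mu$, a further subsequence makes $f_n\circ u\to f\circ u$ $\mm$-a.e.\ on $\{|\d u(Z)|>0\}$, hence strongly in $L^p$ there by dominated convergence, so $g=f\circ u$ $\mm$-a.e.\ on $\{|\d u(Z)|>0\}$, as needed. Right composition with $\Fl^Z_t$ is bounded with bounded adjoint, hence weakly continuous, so taking weak limits in the displayed identity yields $g\circ\Fl^Z_t-g=\int_0^tD\circ\Fl^Z_r\,\d r$ in $L^p(C_k)$; by Proposition~\ref{le:contlpfl} the integrand is continuous in $r$, so the curve is $C^1$ with derivative at $0$ equal to $D$, and Mazur's lemma applied to $(D_n)$ — together with $\lip(f_n)\circ u\,|\d u(Z)|\to|\d_\mu f|\circ u\,|\d u(Z)|$ in $L^p$ — gives $|D|\le|\d_\mu f|\circ u\,|\d u(Z)|$ $\mm$-a.e.

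\emph{Independence of the derivative on the admissible choice.} Given another admissible $\tilde g$ with derivative bound governed by $h$, I would put $w:=g-\tilde g$, so that $w=0$ $\mm$-a.e.\ on $\{|\d u(Z)|>0\}$, the curve $t\mapsto w\circ\Fl^Z_t-w$ is $C^1$ in $L^p(C)$ as a difference of two such curves, and its derivative $D_w$ at $0$ satisfies $|D_w|\le(|\d_\mu f|+h)\circ u\,|\d u(Z)|$ $\mm$-a.e., whence $D_w=0$ $\mm$-a.e.\ on $\{|\d u(Z)|=0\}$. By the semigroup property $w\circ\Fl^Z_t-w=\int_0^tD_w\circ\Fl^Z_r\,\d r$, and a Fubini argument (in the spirit of Lemma~\ref{le:basequ}) shows that for $\mm$-a.e.\ $x$ the curve $t\mapsto w(\Fl^Z_t(x))$ admits an absolutely continuous representative whose a.e.\ derivative is $t\mapsto D_w(\Fl^Z_t(x))$. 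Using that $w$ vanishes $\mm$-a.e.\ on $\{|\d u(Z)|>0\}$ and that the flow has bounded compression, for $\mm$-a.e.\ $x$ this representative vanishes for a.e.\ $t$ in the Borel set $E_x:=\{t:\Fl^Z_t(x)\in\{|\d u(Z)|>0\}\}$, hence so does its derivative on $E_x$; on the complement of $E_x$ one has $D_w(\Fl^Z_t(x))=0$ for a.e.\ $t$ because $D_w$ vanishes on $\{|\d u(Z)|=0\}$. Therefore $D_w\circ\Fl^Z_t=0$ $\mm$-a.e.\ on $C$ for a.e.\ $t$, and letting $t\downarrow0$ along such times, using the continuity from Proposition~\ref{le:contlpfl}, gives $D_w=0$, i.e.\ $\frac{\d}{\d t}(g\circ\Fl^Z_t)\restr{t=0}=\frac{\d}{\d t}(\tilde g\circ\Fl^Z_t)\restr{t=0}$.

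\emph{Main obstacle.} The delicate part is the existence of $g$: the function is pinned down only on $\{|\d u(Z)|>0\}$, and one must genuinely exhibit an extension for which $t\mapsto g\circ\Fl^Z_t-g$ is $C^1$ in $L^p(C)$. Passing to the weak limit in $f_n\circ u\circ\Fl^Z_t-f_n\circ u=\int_0^tD_n\circ\Fl^Z_r\,\d r$ is exactly what simultaneously selects a legitimate extension and supplies its regularity, and one has to keep track throughout of the fact that $D_n$ is controlled by $\lip(f_n)$ rather than by the abstract minimal weak upper gradient — which is the reason for working with the sequence from \eqref{eq:optlip} directly, rather than with an arbitrary $W^{1,p}$-approximation of $f$.
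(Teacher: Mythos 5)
Your proof is correct and shares the paper's skeleton: truncated optimal approximations $(f_n)\subset\Lip_{bs}(\Y)$ from \eqref{eq:optlip}, the a priori bound of Proposition \ref{prop:regalongfl}, the map \eqref{eq:pbnew}, the semigroup property together with Proposition \ref{le:contlpfl} for the $C^1$ regularity, and, for the uniqueness of the derivative, locality along flow lines combined with the vanishing of both derivatives on $\{|\d u(Z)|=0\}$ and a limit $t\downarrow 0$ --- this second half is essentially the paper's argument. Where you deviate is in how the limit object is produced: the paper applies Mazur's lemma to the compositions $f_n\circ u$ to get strong $L^p$-convergence to $g$, passes to the limit in the pointwise integral \emph{inequality} coming from Proposition \ref{prop:regalongfl}, and only afterwards recovers the derivative from the Lipschitz regularity of $t\mapsto g\circ\Fl^Z_t$ via the Radon--Nikodym property of $L^p$; you instead extract a weak $L^p(\Omega)$-limit $D$ of the derivatives at time zero, pass to the weak limit in the identity $f_n\circ u\circ\Fl^Z_t-f_n\circ u=\int_0^t D_n\circ\Fl^Z_r\,\d r$, so that the derivative is exhibited directly as $D\circ\Fl^Z_t$, and invoke Mazur only for the bound \eqref{eq:boundder0}; your identification of $g$ with $f\circ u$ on $\{|\d u(Z)|>0\}$ through $u_*(\mm\restr{\{|\d u(Z)|>0\}})\ll\mu$ is also fine (the paper obtains it implicitly from \eqref{eq:pbnew}). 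Both routes work; yours trades the differentiation-via-RNP step for bookkeeping of the derivative sequence, and is in fact a bit more careful than the paper in working with weak compactness on sets of finite measure rather than on all of $\Omega$.

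One small repair: a single increasing exhaustion $C_1\subset C_2\subset\cdots$ does not quite deliver the claim for every admissible $C$, since for fixed $t$ the identity on $C_k$ is available only when $t<T_{C_k}$, and $T_{C_k}$ shrinks as the $C_k$ grow. Use instead the countable doubly-indexed family of closed sets obtained by intersecting $\{x\in\Omega:\sfd(x,\Omega^c)\geq\delta\}$ with closed balls of radius $R$ about a fixed point, for rational $\delta,R>0$, and diagonalize the subsequence over this family; then for any closed $C\subset\Omega$ with $T_C>0$ and any $t<T_C$, $\mm$-a.e.\ point of $C$ lies in a member of the family whose corresponding time threshold exceeds $t$, which yields the integral identity, hence the $C^1$ property, on every such $C$. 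This is a purely cosmetic adjustment, not a gap in the argument.
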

\begin{proof} Let   $(\tilde f_n)\subset \LIP_{bs}(\Y)$ be an optimal sequence as in \eqref{eq:optlip}. Observe that since $f$ is bounded, by truncation we can assume the $\tilde f_n$'s to be uniformly bounded. Thus the functions $\tilde f_n\circ u$ are also uniformly bounded and hence up to pass to a subsequence - not relabeled - they converge to some limit function $g$ weakly in $L^p(\Omega)$. By Mazur's lemma, there is a sequence $(f_n)$ of convex combinations of the $\tilde f_n$'s such that $f_n\circ u\to g$ strongly in $L^p(\Omega)$ and it is easy to verify that $(f_n)$ is still optimal for $f$ as in \eqref{eq:optlip}.

Now notice that from Proposition \ref{prop:regalongfl} we know that for every $t,s\in[0,T_C)$,  $t<s$ it holds
\[
|f_n\circ u\circ \Fl^Z_s-f_n\circ u\circ \Fl^Z_t|\leq \int_t^s \big(\lip(f_n)\circ u |\d u(Z)|\big)\circ\Fl^Z_r\,\d r\qquad\mm-a.e.\ on\ C.
\]
The fact that $(\Fl^Z_t)$ has bounded compression and the construction give that the left hand side in the above converges to $|g\circ\Fl^Z_s-g\circ\Fl^Z_t|$ in  $L^p(C)$, while the same fact and \eqref{eq:pbnew} give that $(\lip(f_n)\circ u |\d u(Z)|)\circ\Fl^Z_r\to (|\d_\mu f|\circ u|\d u(Z)|)\circ\Fl^Z_r$ in $L^p(C)$ uniformly on $r\in[t,s]$. Thus passing to the limit in the above we obtain
\begin{equation}
\label{eq:forder0}
|g\circ\Fl^Z_s-g\circ\Fl^Z_t|\leq \int_t^s\big( |\d_\mu f|\circ u\,|\d u(Z)|\big)\circ\Fl^Z_r\,\d r\qquad\mm-a.e.\ on\ C
\end{equation}
which in turn gives local Lipschitz regularity for $t\mapsto g\circ\Fl^Z_t\in L^p(C)$. Then $C^1$ regularity follows, as in Proposition \ref{prop:regalongfl}, from the semigroup property of $(\Fl^Z_t)$, which implies that
\[
\frac\d{\d t}(g\circ\Fl^Z_t)\restr{t=t_1}=\frac\d{\d t}(g\circ\Fl^Z_t)\restr{t=t_0}\circ\Fl^Z_{t_1-t_0}
\]
for any two differentiability points $t_0,t_1\in [0,T_C)$, and the continuity statement Proposition \ref{le:contlpfl} which grants that the derivative of $g\circ\Fl^Z_t$ has a continuous representative. The bound \eqref{eq:boundder0} is then a direct consequence of \eqref{eq:forder0}.

For the second part of the statement, we start noticing that \eqref{eq:boundder0} and \eqref{eq:compg} grant that $\frac\d{\d t}\big(g\circ\Fl^Z_t\big)\restr{t=0}=\frac\d{\d t}\big(\tilde g\circ\Fl^Z_t\big)\restr{t=0}=0$ $\mm$-a.e.\ on $\{|\d u(Z)|=0\}$, so to conclude we need to prove that these two derivatives coincide on $\{|\d u(Z)|>0\}$.

 By the properties of $g,\tilde g$ and Lemma \ref{le:basequ}  we know that for $\mm$-a.e.\ $x\in C$ the map $[0,T_C)\ni t\mapsto g(\Fl^Z_t(x))$ belongs to $W^{1,p}_{loc}([0,T_C)$ and similarly for  $t\mapsto \tilde g(\Fl^Z_t(x))$. For any such $x$, by the locality property of the distributional differential we see that $\partial_tg(\Fl^Z_t(x))=\partial_t \tilde g(\Fl^Z_t(x))$ for a.e.\ $t\in\{s:g(\Fl^Z_s(x))=\tilde g(\Fl^Z_s(x))\}$.

Using again Lemma \ref{le:basequ} we see that $\frac{\d}{\d t}(g\circ\Fl^Z_t)(x)=\frac{\d}{\d t}(\tilde g\circ\Fl^Z_t)(x)$ $\mm\times\mathcal L^1$-a.e.\ $(x,t)\in C\times[0,T_C)$ such that $\Fl^Z_t(x)\in\{g=\tilde g\}$. Then since $g=f\circ u=\tilde g$ on $\{|\d u(Z)|>0\}$, using the $C^1$ regularity of $t\mapsto g\circ \Fl^Z_t, \tilde g\circ \Fl^Z_t\in L^p(C)$ we conclude that $\frac{\d}{\d t}(g\circ\Fl^Z_t)\restr{t=0}(x)=\frac{\d}{\d t}(\tilde g\circ\Fl^Z_t)\restr{t=0}(x)$ $\mm$-a.e.\ $x\in\{|\d u(Z)|>0\}$, as desired.
\end{proof}
Ideally, $\d u(Z)$ should be defined as the element of the dual of the pullback of $L^0_\mu(T^*\Y)$ via $u$ characterized by its action on $[u^*\d_\mu f]$ via the formula
\[
[u^*\d_\mu f](\d u(Z))=\frac\d{\d t}( g\circ\Fl^Z_t)\restr{t=0}\qquad a.e.\ on\ C
\]
for any $C,g$ as in Proposition \ref{prop:chain}. From the technical point of view, the above approach has the problem that it does not really define any object $\mm$-a.e., but only $\mm\restr{\{|\d u(Z)|>0\}}$-a.e.\ (notice that both the pullback $u^*L^0_\mu(T^*\Y)$ and its dual are not $L^0(\mm\restr\Omega)$-normed modules, but  in fact $L^0(\mm\restr{\{|\d u(Z)|>0\}})$-normed). This is not a crucial issue, because it is natural to impose that $\d u(Z)$ is 0 outside the set $\{|\d u(Z)|>0\}$, but needs to be taken care of:  we shall proceed as in \cite{GPS18} by using the extension functor discussed in Section \ref{se:sobcal}.

\begin{definition}[The object $\d u(Z)$] \label{def:duz}
 Let $p\in(1,\infty)$, $Z$ a regular vector field on $\X$, $\Omega\subset \X$ open, $u\in \KS Z(\Omega,\Y)$ and $\mu:=u_*(|\d u(Z)|^p\mm)$. Then $\d u(Z)$ is the element of  $({\rm Ext}(u^*L^0_\mu(T^*\Y)))^*$ characterized by: for any $f\in L^\infty\cap W^{1,p}(\Y,\sfd_\Y,\mu)$ and $C\subset\Omega $ with $T_C>0$ it holds
\begin{equation}
\label{eq:defduz}
\big({\rm ext}([u^*\d_{\mu} f])(\d u(Z))\big)\restr C=\Big(\frac\d{\d t}( g\circ\Fl^Z_t)\restr{t=0}\Big)\restr C,
\end{equation}
where $g$ is related to $f$ and $C$ via Proposition \ref{prop:chain} and the derivative is intended in $L^p(C)$.
\end{definition}
We now prove that the definition is well-posed:
\begin{proposition}\label{prop:basediff}
 Let $p\in(1,\infty)$, $Z$ a regular vector field on $\X$, $\Omega\subset \X$ open, $u\in \KS Z(\Omega,\Y_{\bar y})$. Then the definition of $\d u(Z)$ is well-posed. Moreover, the expression $|\d u (Z)|$ is unambiguous, i.e.\ its value in the sense of Definition \ref{def:mwug} coincides with the pointwise norm of $\d u(Z)$, which is defined as
 \begin{equation}
\label{eq:normduz}
|\d u(Z)|:= \esssup \omega(\d u(Z)),
\end{equation}
where the $\esssup$ is taken among all $\omega\in {\rm Ext}(u^*L^0_\mu(T^*\Y))$ with $|\omega|\leq 1$ $\mm$-a.e.\ on $\Omega$.
\end{proposition}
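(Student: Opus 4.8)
The statement has two parts: well-posedness of Definition~\ref{def:duz}, and the identity $|\d u(Z)|=\esssup_{|\omega|\le 1}\omega(\d u(Z))$ where the left side is the function $H$ from Definition~\ref{def:mwug}. For well-posedness I would proceed in three moves. First, since $\{[u^*\d_\mu f]:f\in L^\infty\cap W^{1,p}(\Y,\sfd_\Y,\mu)\}$ generates $u^*L^0_\mu(T^*\Y)$ (because $\{\d_\mu f\}$ generates the cotangent module by Theorem~\ref{thm:defdif} and bounded Sobolev functions are dense enough), and since the extension functor preserves generating sets, it suffices to check that the prescription \eqref{eq:defduz} is consistent: that the right-hand side, as $C$ varies over closed sets with $T_C>0$, glues to a well-defined element of $L^0(\mm\restr\Omega)$, and that the assignment $[u^*\d_\mu f]\mapsto\frac{\d}{\d t}(g\circ\Fl^Z_t)|_{t=0}$ extends by $L^0$-linearity and continuity. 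The gluing over $C$'s is immediate from the semigroup property of $(\Fl^Z_t)$ exactly as in Proposition~\ref{prop:regalongfl}: on the overlap of two sets the two derivatives agree $\mm$-a.e., and the sets $C_\alpha$ from the proof of Theorem~\ref{thm:baseks} exhaust $\Omega$ up to $\mm$-null sets. The independence of the choice of $g$ for a given $f$ is precisely the second half of Proposition~\ref{prop:chain}.

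**The linearity/well-definedness step.** The more delicate point is that the map $f\mapsto\frac{\d}{\d t}(g\circ\Fl^Z_t)|_{t=0}$ factors through $[u^*\d_\mu f]$, i.e.\ that it depends only on $\d_\mu f$ and not on $f$ itself, and extends to the whole pullback module. For this I would invoke the universal property of the pullback, Proposition~\ref{prop:univprop}: take $V:=\{\d_\mu f:f\in L^\infty\cap W^{1,p}(\Y,\sfd_\Y,\mu)\}\subset L^0_\mu(T^*\Y)$, which is generating, define $T:V\to L^0(\mm\restr{\{|\d u(Z)|>0\}})$ sending $\d_\mu f$ to (the restriction to $\{|\d u(Z)|>0\}$ of) $\frac{\d}{\d t}(g\circ\Fl^Z_t)|_{t=0}/|\d u(Z)|$ — here dividing is legitimate on $\{|\d u(Z)|>0\}$ — and observe that the bound \eqref{eq:boundder0} of Proposition~\ref{prop:chain} gives exactly $|T(\d_\mu f)|\le |\d_\mu f|\circ u$ $\mm$-a.e., which is the hypothesis of Proposition~\ref{prop:univprop} with $f$ (the weight) equal to $1$. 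One must first check $T$ is well-defined, i.e.\ $\d_\mu f_1=\d_\mu f_2$ implies the derivatives agree: this again follows from the second part of Proposition~\ref{prop:chain} applied to $f_1-f_2$, whose differential vanishes so the bound \eqref{eq:compg} holds with $h=0$. Then Proposition~\ref{prop:univprop} produces a unique $L^0$-linear continuous $\tilde T:u^*L^0_\mu(T^*\Y)\to L^0(\mm\restr{\{|\d u(Z)|>0\}})$, and passing through the extension functor and the identification ${\rm Ext}(\mathscr M^*)\sim{\rm Ext}(\mathscr M)^*$ recalled in Section~\ref{se:sobcal}, this $\tilde T$ is precisely the element $\d u(Z)\in({\rm Ext}(u^*L^0_\mu(T^*\Y)))^*$ satisfying \eqref{eq:defduz}. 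I expect the bookkeeping with the extension functor (making sure the object is genuinely defined $\mm\restr\Omega$-a.e., being zero on $\{|\d u(Z)|=0\}$) to be the main fiddly obstacle, though it is conceptually routine and modeled on \cite{GPS18}.

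**The norm identity.** For the unambiguity of $|\d u(Z)|$, write $\bar H$ for the pointwise norm of $\d u(Z)$ defined by \eqref{eq:normduz} and $H$ for the function of Definition~\ref{def:mwug}. The inequality $\bar H\le H$ follows from \eqref{eq:pb1} (equivalently the bound \eqref{eq:boundder0}): for any $\omega=\sum_i g_i\,{\rm ext}([u^*\d_\mu f_i])$ with $|\omega|\le 1$ we get $|\omega(\d u(Z))|\le H$ $\mm$-a.e., and such $\omega$ are dense, so the $\esssup$ is $\le H$. For the reverse inequality $\bar H\ge H$ one needs to exhibit, $\mm$-a.e.\ on $\{H>0\}$, a competitor $\omega$ with $|\omega|\le 1$ and $\omega(\d u(Z))$ close to $H$. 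Here I would use the standard trick: on $\Y$, the identity $\sfd_\Y(\cdot,\cdot)=\sup_n(f_n(\cdot)-f_n(\cdot))$ from Lemma~\ref{le:spcount} together with Proposition~\ref{prop:regalongfl} shows that $\sup_n|\frac{\d}{\d t}(f_n\circ u\circ\Fl^Z_t)|_{t=0}|$ recovers $|\partial_t u(\Fl^Z_t(x))|=H(\Fl^Z_t(x))$ (this is essentially how $H$ was identified in Theorem~\ref{thm:baseks}); since each $\frac{\d}{\d t}(f_n\circ u\circ\Fl^Z_t)|_{t=0}=\text{ext}([u^*\d_\mu f_n])(\d u(Z))$ and $|\text{ext}([u^*\d_\mu f_n])|=|\d_\mu f_n|\circ u\le \lip(f_n)\circ u\le 1$, normalizing appropriately gives a sequence of admissible $\omega$'s whose pairing with $\d u(Z)$ has supremum $\ge H$. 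Combining the two inequalities yields $\bar H=H$, completing the proof.
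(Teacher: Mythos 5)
Your proposal is correct and follows essentially the same route as the paper: well-posedness via the second part of Proposition \ref{prop:chain}, linearity plus the bound \eqref{eq:boundder0}, and the universal property of the pullback (Proposition \ref{prop:univprop}) followed by the extension functor, and the norm identity via the bound \eqref{eq:pb1} for one inequality and the family $(f_n)$ of Lemma \ref{le:spcount} — using that $g:=f_n\circ u$ is admissible, Proposition \ref{prop:regalongfl} and the semigroup/continuity argument to evaluate at $t=0$ — for the other. The only deviation is cosmetic: you divide by $|\d u(Z)|$ on $\{|\d u(Z)|>0\}$ so as to apply the universal property with weight $1$, whereas the paper applies it directly with the weight $f:=|\d u(Z)|$, which Proposition \ref{prop:univprop} already permits.
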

\begin{proof} The second part of Proposition \ref{prop:chain} ensures that  the right hand side in \eqref{eq:defduz} depends only on $f,C$ and not on the particular choice of $g$ as given by the first part of the same statement.  In particular, for given $f\in L^\infty\cap W^{1,p}(\Y,\sfd_\Y,\mu)$ there is a unique function $T(f)\in L^0(\mm\restr{\{|\d u(Z)|>0\}})$ such that $T(f)\restr C=\frac\d{\d t}( g\circ\Fl^Z_t)\restr{t=0}$ for any $C,g$ as in Proposition \ref{prop:chain}. It is clear that $T(f)$ depends linearly on $f$ and this fact together with  \eqref{eq:boundder0} show that $T(f)=T(f')$ $\mm$-a.e.\ on $u^{-1}(\{\d_\mu f=\d_\mu f'\})$, thus $T$ passes to the quotient and defines a linear operator $\tilde T:\{\d_\mu f:f\in  L^\infty\cap W^{1,p}(\Y,\sfd_\Y,\mu)\}\to L^0(\mm\restr{\{|\d u(Z)|>0\}})$, which satisfies
\[
|\tilde T(\d_\mu f)|\leq |\d_\mu f|\circ u|\d u(Z)|\qquad\mm-a.e.\ on\ \{|\d u(Z)|>0\}.
\]
Then the universal property of the pullback ensures that there is a unique $L^0(\mm\restr{\{|\d u(Z)|>0\}})$-linear and continuous operator $S:u^*(L^0_\mu(T^*\Y))\to L^0(\mm\restr{\{|\d u(Z)|>0\}})$  satisfying $S([u^*\d_\mu f])=\tilde T(\d_{\mu} f)$ for every $f\in W^{1,p}(\Y,\sfd_\Y,\mu)$, i.e.\ such that
\[
S([u^*\d_\mu f])\restr C=\frac\d{\d t}( g\circ\Fl^Z_t)\restr{t=0}
\]
for any $C,g$ as above, and such $S$ satisfies
\begin{equation}
\label{eq:snorm}
|S(\omega )|\leq  |\omega|\,|\d u(Z)|\qquad\mm-a.e.\ on\ \{|\d u(Z)|>0\}\qquad\forall \omega\in u^*(L^0_\mu(T^*\Y)).
\end{equation}
It is then clear that we can uniquely extend $S$ to an $L^0(\mm\restr\Omega)$-linear and continuous operator $\d u(Z)$ from ${\rm Ext}(u^*(L^0_\mu(T^*\Y)))$ to ${\rm Ext}(L^0(\mm\restr{\{|\d u(Z)|>0\}}))\subset L^0(\mm\restr\Omega)$ (i.e.\ to an element of $({\rm Ext}(u^*L^0_\mu(T^*\Y)))^*$) and since we have already showed that $\frac\d{\d t}( g\circ\Fl^Z_t)\restr{t=0}=0$ $\mm$-a.e.\ on $\{|\d u(Z)|=0\}$, we proved existence and uniqueness of $\d u (Z)\in ({\rm Ext}(u^*L^0_\mu(T^*\Y)))^*$ satisfying \eqref{eq:defduz}.

We now turn to the claim about $|\d u(Z)|$ and temporarily denote the quantity defined in \eqref{eq:normduz} by $|\d u(Z)|'$, keeping the notation $|\d u(Z)|$ for the one in Definition \ref{def:mwug}. Notice that by \eqref{eq:snorm} it easily follows that
\[
|\omega(\d u(Z))|\leq   |\omega|\,|\d u(Z)|\qquad\mm-a.e.\ on\ \Omega \qquad\forall \omega\in {\rm Ext}(u^*(L^0_\mu(T^*\Y))),
\]
which in turn implies  $|\d u(Z)|'\leq |\d u(Z)|$.

For the converse inequality, we start claiming that for $f\in \Lip_{bs}(\Y)$ the choice $g:=f\circ u$ in Proposition \ref{prop:chain} is admissible for any $C$. This follows from the second part of Proposition \ref{prop:chain} and the observation that the arguments already used in the proof of  Proposition \ref{prop:chain} show that $t\mapsto g\circ\Fl^Z_t\in L^p(C)$ is $C^1$ with
\[
\big|\frac\d{\d t}( g\circ\Fl^Z_t)\restr{t=0}\big|\leq \lip(f)\circ u\,|\d u(Z)|.
\]
Now let $(f_n)\subset\Lip_{bs}(\Y)$ be given by Lemma \ref{le:spcount}, notice that $|\d_\mu f_n|\leq 1$ $\mu$-a.e.\ for every $n$ and thus
\begin{equation}
\label{eq:normp}
|\d u(Z)|'\geq \sup_n\big({\rm ext}([u^*\d_\mu f_n])\big)(\d u(Z))\qquad\mm-a.e.\ on\ \Omega.
\end{equation}
On the other hand, recalling the Definition \ref{def:mwug}, point $(iv)$ in Theorem \ref{thm:baseks}, Lemma \ref{le:spcount} and the semigroup property of $(\Fl^Z_t)$ we see that for any $C\subset\Omega$ with $T_C>0$ it holds
\[
|\d u(Z)|\circ\Fl^Z_t=\sup_n\frac\d{\d t}(f_n\circ u\circ\Fl^Z_t)=\Big(\sup_n\big({\rm ext}([u^*\d_\mu f_n])\big)(\d u(Z))\Big)\circ\Fl^Z_t
\]
$\mm$-a.e.\ on $C$ for a.e.\ $t\in[0,T_C)$. By the continuity property in Proposition \ref{le:contlpfl} we can pick $t=0$ in the above, so that from \eqref{eq:normp} and the arbitrariness of $C$ we conclude $|\d u(Z)|'\geq |\d u(Z)|$, as desired.
\end{proof}
Albeit we cannot define the differential $\d u$ of $u$, if $u$ belongs to the Korevaar-Schoen spaces for two different vector fields $Z_1,Z_2$, and thus also for $Z_1+Z_2$ as proved in Proposition \ref{prop:triangle}, we expect that  $\d u(Z_1+Z_2)=\d u(Z_1)+\d u(Z_2)$ in some sense. This is indeed true, the precise formulation being:
\begin{proposition}[`Linearity' of the differential]\label{prop:lindif}
 Let $p\in(1,\infty)$, $Z_1,Z_2$ two regular vector fields on $\X$, $\Omega\subset \X$ open, $u\in \KS {Z_1}(\Omega,\Y_{\bar y})\cap \KS {Z_2}(\Omega,\Y_{\bar y})$.
We define $\mu_1,\mu_2,\mu_+$ as $u_*(|\d u(Z_1)|^p\mm\restr\Omega),u_*(|\d u(Z_2)|^p\mm\restr\Omega),u_*(|\d u(Z_1+Z_2)|^p\mm\restr\Omega)$ respectively. Then for every $f\in \Lip_{bs}(\Y)$ and $\alpha_1,\alpha_2\geq 0$ we have
\begin{equation}
\label{eq:lindiff}
{\rm ext}([u^*\d_{\mu_+} f])(\d u(\alpha_1Z_1+\alpha_2Z_2))=\alpha_1{\rm ext}([u^*\d_{\mu_1} f])(\d u(Z_1))+\alpha_2{\rm ext}([u^*\d_{\mu_2} f])(\d u(Z_2)).
\end{equation}
If $-Z_i$ is regular as well, $i=1,2$, then the same conclusion holds for any  $\alpha_i\in\R$.
\end{proposition}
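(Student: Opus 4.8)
The plan is to reduce \eqref{eq:lindiff} to the derivative identity of Lemma~\ref{le:sum} together with the scaling relation $\Fl^{\alpha Z}_t=\Fl^Z_{\alpha t}$ of Lemma~\ref{le:scalato}. First I would record that everything in \eqref{eq:lindiff} is well defined: since $\alpha_iZ_i$ is again regular (for $\alpha_i\geq 0$, and for every $\alpha_i\in\R$ once $-Z_i$ is regular too), Proposition~\ref{prop:mult} gives $u\in\KS{\alpha_iZ_i}(\Omega,\Y_{\bar y})$, and then Proposition~\ref{prop:triangle} gives $u\in\KS{\alpha_1Z_1+\alpha_2Z_2}(\Omega,\Y_{\bar y})$. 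The other ingredient I would extract from the proof of Proposition~\ref{prop:basediff} is that for $f\in\Lip_{bs}(\Y)$ the choice $g:=f\circ u$ is admissible in Proposition~\ref{prop:chain} for every closed $C$ and every regular field; hence, by Definition~\ref{def:duz}, for any regular $W$ with $u\in\KS W(\Omega,\Y_{\bar y})$ and any closed $C\subset\Omega$ with $T_C>0$ (relative to $W$) one has
\[
\big({\rm ext}([u^*\d_{\mu}f])(\d u(W))\big)\restr C=\Big(\tfrac{\d}{\d t}\big(f\circ u\circ\Fl^W_t\big)\restr{t=0}\Big)\restr C,
\]
an identity whose right-hand side does not involve the auxiliary measure $\mu$ attached to $\d_\mu f$; this is exactly what lets one pair $\d u(\alpha_1Z_1+\alpha_2Z_2)$ with $[u^*\d_{\mu_+}f]$.

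Next I would fix a bounded closed set $C\subset\Omega$ and a $C$-dependent $\bar T>0$ lying below $T_C$ for each of $Z_1,Z_2,\alpha_1Z_1,\alpha_2Z_2,\alpha_1Z_1+\alpha_2Z_2$ (the flows of the scaled fields being tied to those of $Z_1,Z_2$ by Lemma~\ref{le:scalato}). On $C$, the displayed formula with $W=\alpha_1Z_1+\alpha_2Z_2$ identifies the left-hand side of \eqref{eq:lindiff} with $\tfrac{\d}{\d t}(f\circ u\circ\Fl^{\alpha_1Z_1+\alpha_2Z_2}_t)\restr{t=0}$. Applying Lemma~\ref{le:sum} to the pair of regular vector fields $\alpha_1Z_1,\alpha_2Z_2$ (so that $\alpha_1Z_1+\alpha_2Z_2$ plays the role of the sum there) splits this into $\tfrac{\d}{\d t}(f\circ u\circ\Fl^{\alpha_1Z_1}_t)\restr{t=0}+\tfrac{\d}{\d t}(f\circ u\circ\Fl^{\alpha_2Z_2}_t)\restr{t=0}$. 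Using $\Fl^{\alpha_iZ_i}_t=\Fl^{Z_i}_{\alpha_it}$ and the fact that $t\mapsto f\circ u\circ\Fl^{Z_i}_t\in L^p(C)$ is $C^1$ near $0$ (Proposition~\ref{prop:regalongfl}; in the signed case $-Z_i$ regular makes this curve $C^1$ on a two-sided neighbourhood of $0$), the chain rule for $C^1$ Banach-valued curves gives $\tfrac{\d}{\d t}(f\circ u\circ\Fl^{\alpha_iZ_i}_t)\restr{t=0}=\alpha_i\,\tfrac{\d}{\d t}(f\circ u\circ\Fl^{Z_i}_t)\restr{t=0}$, and a last application of the displayed formula (now with $W=Z_i$, $\mu=\mu_i$) rewrites this as $\alpha_i\big({\rm ext}([u^*\d_{\mu_i}f])(\d u(Z_i))\big)\restr C$. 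Chaining these equalities yields \eqref{eq:lindiff} restricted to $C$.

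Finally, since every point of $\Omega$ lies in a bounded closed $C\subset\Omega$ and $T_C>0$ for each such $C$ (because $\sup_{x\in C}\sfd(x,\Omega^c)<\infty$), letting $C$ exhaust $\Omega$ yields \eqref{eq:lindiff} $\mm$-a.e.\ on $\Omega$; the signed case is verbatim the same, Lemmas~\ref{le:scalato} and~\ref{le:sum} being applicable to $\alpha_iZ_i$ for any $\alpha_i\in\R$ precisely because $-Z_i$ is regular. The argument is essentially bookkeeping on top of the genuinely hard input, Lemma~\ref{le:sum} (which itself rests on the stability results of Section~\ref{se:rcdrlf}); the only point demanding real care is having all the flows, differentials and $C^1$-curves simultaneously defined and well-behaved on one common set $C$ and interval $[0,\bar T)$, and this — together with the irrelevance of the reference measure in $[u^*\d_\mu f]$ when $f$ is Lipschitz — is exactly what the observation ``everything is computed through $g=f\circ u$'' provides.
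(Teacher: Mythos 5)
Your proposal is correct and is essentially the paper's own argument: the paper proves this proposition exactly by combining Lemma \ref{le:sum}, the scaling relation of Lemma \ref{le:scalato}, and the observation (from the proof of Proposition \ref{prop:basediff}) that for $f\in\Lip_{bs}(\Y)$ the choice $g:=f\circ u$ is admissible in Definition \ref{def:duz}, so that all three terms in \eqref{eq:lindiff} are computed as derivatives of $t\mapsto f\circ u\circ\Fl^{W}_t$ at $t=0$. Your write-up merely makes explicit the bookkeeping (well-posedness via Propositions \ref{prop:mult} and \ref{prop:triangle}, a common set $C$ and time interval, exhaustion of $\Omega$) that the paper leaves implicit.
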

\begin{proof}Direct consequence of Lemma \ref{le:sum}, Lemma \ref{le:scalato} and the fact, already observed in the proof of Proposition \ref{prop:basediff} above, that for $f \in\Lip_{bs}(\Y)$ the choice $g:=f\circ u$ is admissible in the definition of $\d u(Z)$.
\end{proof}
Notice that in principle one could use this last proposition to prove the triangle inequality \eqref{eq:triangle}, but this would provide no real save of time, being also this last statement fully based on the crucial Lemma \ref{le:sum}.

\bigskip

We conclude pointing out a duality formula for the pointwise norm of $\d u(Z)$; we remark that the interesting part of the formula is in the fact that 2 different (ordered) measures come into play when computing the differentials of functions on $\Y$:
\begin{corollary}\label{cor:dualnorm}  Let $p\in(1,\infty)$, $Z$ a regular vector field on $\X$, $\Omega\subset \X$ open, $u\in \KS {Z}(\Omega,\Y_{\bar y})$ and $\mu:=u_*(|\d u(Z)|^p\mm)$. Also, let $w\in L^1(\Omega)$ be such that $w\geq |\d u(Z)|^p$ $\mm$-a.e.\   and put $\mu_w:=u_*(w\mm)$. Then
\begin{equation}
\label{eq:dualduz}
\frac1p|\d u(Z)|^p=\esssup_{f\in\Lip_{bs}(\Y)} \ {\rm ext}([u^*\d_\mu f])(\d u(Z))-\frac1q{\rm ext}\big(|\d_{\mu_w}f|^q\circ u\big ),
\end{equation}
where $\frac1p+\frac1q=1$.
\end{corollary}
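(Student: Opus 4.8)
The plan is to prove the two inequalities in \eqref{eq:dualduz} separately: the bound $\leq$ is a pointwise application of Young's inequality, while the bound $\geq$ is obtained by testing against a well-chosen countable family of functions on $\Y$ scaled by rational constants.

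For the inequality $\leq$, fix $f\in\Lip_{bs}(\Y)$. Since $w\geq|\d u(Z)|^p$ $\mm$-a.e.\ we have $\mu\leq\mu_w$, so \eqref{eq:gradord} applied on $(\Y,\sfd_\Y)$ gives $|\d_\mu f|\leq|\d_{\mu_w}f|$ $\mu$-a.e.; composing with $u$ and recalling $\mu=u_*(|\d u(Z)|^p\mm\restr\Omega)$ this yields $|\d_\mu f|\circ u\leq|\d_{\mu_w}f|\circ u$ $\mm$-a.e.\ on $\{|\d u(Z)|>0\}$. Combining this with the pointwise bound \eqref{eq:snorm} coming from Proposition \ref{prop:basediff} — and with the fact that $\d u(Z)$ vanishes on $\{|\d u(Z)|=0\}$ — we obtain
\[
{\rm ext}([u^*\d_\mu f])(\d u(Z))\leq {\rm ext}\big(|\d_{\mu_w}f|\circ u\big)\,|\d u(Z)|\qquad\mm-a.e.\ \textrm{on }\Omega.
\]
Writing $a:={\rm ext}(|\d_{\mu_w}f|\circ u)\geq0$ and $b:=|\d u(Z)|$, and noting that the extension functor commutes with $q$-th powers so that ${\rm ext}(|\d_{\mu_w}f|^q\circ u)=a^q$, Young's inequality $ab\leq\frac1p b^p+\frac1q a^q$ gives ${\rm ext}([u^*\d_\mu f])(\d u(Z))-\frac1q{\rm ext}(|\d_{\mu_w}f|^q\circ u)\leq\frac1p|\d u(Z)|^p$ $\mm$-a.e.; taking the essential supremum over $f$ yields $\leq$.

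For the inequality $\geq$, I would use the countable family $(f_n)\subset\Lip_{bs}(\Y)$ furnished by Lemma \ref{le:spcount}: these are $1$-Lipschitz, hence $|\d_{\mu_w}f_n|\leq\lip f_n\leq1$ $\mu_w$-a.e.\ and so ${\rm ext}(|\d_{\mu_w}f_n|^q\circ u)\leq1$ $\mm$-a.e.\ on $\Omega$; moreover, exactly as shown in the proof of Proposition \ref{prop:basediff} (see \eqref{eq:normp} and the lines after it), $\sup_n{\rm ext}([u^*\d_\mu f_n])(\d u(Z))=|\d u(Z)|$ $\mm$-a.e.\ on $\Omega$. Set $g_n:={\rm ext}([u^*\d_\mu f_n])(\d u(Z))$. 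For every rational $c\geq0$ and $n\in\N$ the function $cf_n$ belongs to $\Lip_{bs}(\Y)$ and the corresponding term in the right-hand side of \eqref{eq:dualduz} equals $c\,g_n-\frac1q c^q\,{\rm ext}(|\d_{\mu_w}f_n|^q\circ u)\geq c\,g_n-\frac1q c^q$ $\mm$-a.e.. Hence the right-hand side of \eqref{eq:dualduz}, being $\geq$ the essential supremum of the countable subfamily $\{cf_n\}_{c\in\Q\cap[0,\infty),\,n\in\N}$, is bounded below $\mm$-a.e.\ by $\sup_{c\in\Q\cap[0,\infty)}\sup_n\big(c\,g_n-\frac1q c^q\big)=\sup_{c\geq0}\big(c\,|\d u(Z)|-\frac1q c^q\big)=\frac1p|\d u(Z)|^p$, where we used that for fixed $c\geq0$ the supremum over $n$ commutes with multiplication by $c$, that $\sup_n g_n=|\d u(Z)|$, and finally the equality case of Young's inequality. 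This proves $\geq$ and hence \eqref{eq:dualduz}.

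The main obstacle is precisely the lower bound. A naive approach would try to select, measurably in $x$, both an index $n(x)$ and a pointwise-varying scaling factor $c(x)\approx|\d u(Z)|(x)^{1/(q-1)}$, but such a choice cannot be realized by a single Lipschitz function on $\Y$. The point that makes the argument work is that it suffices to scale the \emph{fixed} countable family $(f_n)$ by \emph{constant} rationals: because the $q$-homogeneous penalization enters only through the quantities $|\d_{\mu_w}f_n|$, which are bounded by $1$ uniformly in $n$, no localization in $\X$ is needed, and the pointwise supremum over the pairs $(c,n)$ already reconstructs the Legendre transform of $t\mapsto\frac1q t^q$. Apart from this, one only needs to keep careful track of the extension functor and of the fact that all functions involved are understood to vanish on $\{|\d u(Z)|=0\}$, in accordance with Definitions \ref{def:mwug} and \ref{def:duz}.
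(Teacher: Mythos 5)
Your proposal is correct and follows essentially the same route as the paper: the upper bound for the essential supremum is the same application of \eqref{eq:gradord}, the norm bound \eqref{eq:snorm} and Young's inequality, while the lower bound rests on the same two ingredients (the $1$-Lipschitz family of Lemma \ref{le:spcount} with $\sup_n{\rm ext}([u^*\d_\mu f_n])(\d u(Z))=|\d u(Z)|$, and scaling by constants so that $|\d_{\mu_w}f_n|\leq 1$ reduces matters to the Legendre transform of $t\mapsto \tfrac1q t^q$). The only difference is cosmetic: you realize the near-optimal scaling through a countable supremum over rational multiples of the $f_n$'s, whereas the paper fixes $\eps>0$ and selects piecewise-constant scalings $c_n^{p-1}$ on a Borel partition of $\Omega$.
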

\begin{proof}
From the bound \eqref{eq:gradord} we deduce
\[
\frac1q|{\rm ext}([u^*\d_\mu f])|^q=\frac1q{\rm ext}(|\d_\mu f|^q\circ u)\leq\frac1q{\rm ext}(|\d_{\mu_w} f|^q\circ u)
\]
for any $f\in \Lip_{bs}(\Y)$ and thus Young's inequality gives
\[
{\rm ext}([u^*\d_\mu f])(\d u(Z))\leq |\d u(Z)|\,|{\rm ext}([u^*\d_\mu f])|\leq\frac1p|\d u(Z)|^p+\frac1q{\rm ext}(|\d_{\mu_w} f|^q\circ u),
\]
which is $\geq$ in \eqref{eq:dualduz}. For the opposite inequality, notice that by the very Definition \ref{def:mwug} and Lemma \ref{le:spcount} for every $\eps>0$ we can find constants $(c_n)\subset\R$, a Borel partition $(A_n)$ of $\Omega$ and 1-Lipschitz functions $(f_n)\subset\Lip_{bs}(\Y)$ such that $|\d u(Z)|(1-\eps)\leq c_n\leq {\rm ext}([u^*\d_\mu f_n])(\d u(Z))$ $\mm$-a.e.\ on $A_n$. Then for every $n\in\N$ we put $\tilde f_n:=c_n^{p-1}f_n\in \Lip_{bs}(\Y)$ and notice that
\[
\begin{split}
{\rm ext}&([u^*\d_\mu \tilde f_n])(\d u(Z))-\frac1q{\rm ext}(|\d_{\mu_w} \tilde f_n|^q\circ u)\\
&=c^{p-1}_n{\rm ext}([u^*\d_\mu f_n])(\d u(Z))-\frac{c_n^p}q{\rm ext}(|\d_{\mu_w} f_n|^q\circ u)\geq c_n^p-\frac{c_n^p}q=\frac{c_n^p}p\geq \frac1p\big(|\d u(Z)|(1-\eps)\big)^p,
\end{split}
\]
$\mm$-a.e.\ on $A_n$, so that the conclusion follows by the arbitrariness of $\eps>0$ and $n\in\N$.
\end{proof}

\subsection{Link with Sobolev maps defined via post-composition}

There is a well-established notion of metric-valued Sobolev map based on post-composition (see \cite{HKST15} and the references therein for more on the topic and detailed bibliography), in this short section we prove some natural relation between such notion and the one studied in the rest of the paper, see Propositions \ref{prop:link1}, \ref{prop:link2} for the precise statements. Notice that both for simplicity and to keep consistency with results  presented in relevant literature, we will stick to the case $\Omega=\X$ and $p=2$, but everything can be easily adapted to cover the case of arbitrary open sets and Sobolev exponents. Finally, to handle the (lack of) integrability of some functions, we will need to deal with functions in the Sobolev class $S^2(\X)$, rather than in the Sobolev space $W^{1,2}(\X)$: functions in the former class have differential in $L^2$, but no a priori condition on their integrability is imposed. We refer to \cite{Gigli14} and references therein for all the definitions and properties of $S^2(\X)$ we shall use.

\bigskip

\begin{definition}\label{def:sobmet}Let $(\X,\sfd,\mm)$ be a metric measure space, $(\Y,\sfd_\Y,\bar y)$ a pointed complete and separable space and $u\in L^{2}(\X,\Y_{\bar y})$. We say that $u\in W^{1,2}(\X,\Y_{\bar y})$ provided for any $1$-Lipschitz function $f:\Y\to\R$ with $f(\bar y)=0$ we have $f\circ u\in W^{1,2}(\X)$ and there is a function $G\in L^2(\X)$ (not depending on $f$) such that
\begin{equation}
\label{eq:dd}
|\d(f\circ u)|\leq G\qquad\mm-a.e..
\end{equation}
It is clear that for  $u\in W^{1,2}(\X,\Y_{\bar y})$  there is a minimal, in the $\mm$-a.e.\ sense, $G\geq 0$ for which \eqref{eq:dd} holds: to avoid any risk of confusion with other forms of differential used in this paper, it will be denoted $|\d'u|$.
\end{definition}
Our first result of this section is the following:
\begin{proposition}[From Sobolev to `directionally Sobolev']\label{prop:link1} Let $(\X,\sfd,\mm)$ be a $\RCD(K,\infty)$ space, $Z$ a regular vector field on it and $(\Y,\sfd_\Y,\bar y)$ a pointed complete and separable space.

Then $W^{1,2}(\X,\Y_{\bar y})\subset \KSt Z(\X,\Y_{\bar y})$ and for every  $u\in W^{1,2}(\X,\Y_{\bar y})$ we have
\begin{equation}
\label{eq:ineqdiff}
|\d u(Z)|\leq |\d' u|\,|Z|\qquad\mm-a.e..
\end{equation}
\end{proposition}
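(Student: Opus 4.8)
The plan is to verify condition $(iii)$ of Theorem \ref{thm:baseks} with the explicit choice $G:=|\d' u|\,|Z|$, and then to read off \eqref{eq:ineqdiff} from the final assertion of that theorem, namely that $|\d u(Z)|=H$ is the least admissible $G$ in \eqref{eq:g2}. First I would record the bookkeeping: $G\in L^2(\X)$ since $|\d' u|\in L^2(\X)$ by Definition \ref{def:sobmet} and $|Z|\in L^\infty(\X)$ because $Z$ is a regular vector field; moreover $\Omega=\X$, so $T_x=+\infty$ for every $x$ and hence $T_C=+\infty$ for every closed $C\subset\X$, which means \eqref{eq:g2} has to be checked simply as an $\mm$-a.e.\ inequality on all of $\X$, for all $t\leq s$.

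Next I would fix a countable dense set $(y_k)\subset\Y$ (possible since $\Y$ is separable) and set $f_k:=\sfd_\Y(\cdot,y_k)-\sfd_\Y(\bar y,y_k)$: each $f_k$ is $1$-Lipschitz, $f_k(\bar y)=0$, and letting $y_k$ approach the second argument gives $\sfd_\Y(y,y')=\sup_k|f_k(y)-f_k(y')|$ for all $y,y'\in\Y$ (alternatively one could take the family of Lemma \ref{le:spcount} and subtract the constants $f_n(\bar y)$). By Definition \ref{def:sobmet} we then have $f_k\circ u\in W^{1,2}(\X)$ with $|\d(f_k\circ u)|\leq |\d' u|$ $\mm$-a.e., hence $|\d(f_k\circ u)(Z)|\leq |\d(f_k\circ u)|\,|Z|\leq G$ $\mm$-a.e.\ on $\X$.

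Then I would invoke property $(ii)$ in the definition of Regular Lagrangian Flow applied to $f_k\circ u\in W^{1,2}(\X)$: for $\mm$-a.e.\ $x$ the map $t\mapsto (f_k\circ u)(\Fl^Z_t(x))$ lies in $W^{1,1}(0,1)$ with derivative $\d(f_k\circ u)(Z)(\Fl^Z_t(x))$. Since $(\Fl^Z_t)_*\mm\ll\mm$ and $\{|\d(f_k\circ u)(Z)|>G\}$ is $\mm$-negligible, a Fubini argument shows that for $\mm$-a.e.\ $x$ one has $|\d(f_k\circ u)(Z)(\Fl^Z_t(x))|\leq G(\Fl^Z_t(x))$ for a.e.\ $t$; integrating, for $\mm$-a.e.\ $x$ and all $t\leq s$,
\[
|f_k(u(\Fl^Z_s(x)))-f_k(u(\Fl^Z_t(x)))|\leq\int_t^s G(\Fl^Z_r(x))\,\d r .
\]
Taking the union over $k\in\N$ of the exceptional $\mm$-null sets and the supremum over $k$ yields $\sfd_\Y(u\circ\Fl^Z_s,u\circ\Fl^Z_t)\leq\int_t^s G\circ\Fl^Z_r\,\d r$ $\mm$-a.e.\ on $\X$ for all $t\leq s$, i.e.\ \eqref{eq:g2} for every closed $C\subset\X$. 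Thus condition $(iii)$ of Theorem \ref{thm:baseks} holds, giving $u\in\KSt Z(\X,\Y_{\bar y})$, and since $|\d u(Z)|$ is the least $G\geq 0$ for which \eqref{eq:g2} holds we conclude $|\d u(Z)|\leq |\d' u|\,|Z|$ $\mm$-a.e.

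The only mildly delicate point is the measure-theoretic bookkeeping in the third paragraph: combining the countably many $\mm$-a.e.\ statements (one per $f_k$, plus the a.e.-in-$t$ qualifier inside each) into a single full-measure set of $x$'s valid for all $k$ and all $t\leq s$, and transferring the $\mm$-a.e.\ bound $|\d(f_k\circ u)(Z)|\leq G$ into a bound along $\mm$-a.e.\ trajectory $t\mapsto\Fl^Z_t(x)$ using the bounded compression $(\Fl^Z_t)_*\mm\leq C\mm$. Everything else is a direct application of Theorem \ref{thm:baseks} and of the defining property of the Regular Lagrangian Flow, so I do not expect any genuine obstacle.
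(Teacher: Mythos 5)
Your proposal follows essentially the same route as the paper: test against a countable family of $1$-Lipschitz functions vanishing at $\bar y$ that recovers the distance, use Definition \ref{def:sobmet} together with property (ii) of the Regular Lagrangian Flow to get an integral bound along trajectories with $G:=|\d'u|\,|Z|$, verify point (iii) of Theorem \ref{thm:baseks}, and conclude \eqref{eq:ineqdiff} from the minimality of $H=|\d u(Z)|$. The one step you gloss over is the passage from ``a.e.\ $t\leq s$'' to ``all $t\leq s$'': property (ii) of the flow only tells you that (the equivalence class of) $t\mapsto f_k(u(\Fl^Z_t(x)))$ lies in $W^{1,1}(0,1)$, so the fundamental theorem of calculus yields
\[
|f_k(u(\Fl^Z_s(x)))-f_k(u(\Fl^Z_t(x)))|\leq\int_t^s G(\Fl^Z_r(x))\,\d r
\]
only for a.e.\ $t,s$ (the pointwise map agrees with its absolutely continuous representative merely a.e.\ in $t$, since $f_k\circ u$ is just an a.e.-defined Sobolev function, not continuous along the flow), whereas condition (iii) of Theorem \ref{thm:baseks} asks for the bound $\mm$-a.e.\ for \emph{every} pair $t\leq s$. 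This is not a fatal problem, but it does require the extra argument the paper supplies: both sides of \eqref{eq:g2}, viewed as maps of $(t,s)$ with values in $L^2(\X)$, are continuous -- the left-hand side by Proposition \ref{le:contlpfl}, the right-hand side by bounded compression and $G\in L^2$ -- so the a.e.-in-$(t,s)$ inequality upgrades to all $t,s$. With that one citation inserted, your argument coincides with the paper's proof.
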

\begin{proof} Let $f:\Y\to\R$ be 1-Lipschitz with $f(\bar y)=0$. Then we know that $f\circ u\in W^{1,2}(\X)$ and thus by the very definition of Regular Lagrangian Flow we know that $\mm$-a.e.\ the inequality
\[
|f\circ u\circ \Fl^Z_s-f\circ u\circ \Fl^Z_t|\leq \int_t^s(|\d'u|\,|Z|)\circ\Fl^Z_r\,\d r
\]
holds for a.e.\ $t,s\in[0,1]$, $t<s$. It is not hard to see that the construction in Lemma \ref{le:spcount} can be modified to ensure that $f_n(\bar y)=0$ for every $n\in\N$, so that the conclusion of the lemma and what already proved give that $\mm$-a.e.\ the bound
\[
\sfd_\Y\big(u\circ \Fl^Z_s, u\circ \Fl^Z_t\big)\leq \int_t^s(|\d'u|\,|Z|)\circ\Fl^Z_r\,\d r
\]
holds for a.e.\ $t,s\in[0,1]$, $t<s$. Since both sides of this inequality are continuous in $t,s$ with values in $L^2(\X)$ (recall also Proposition \ref{le:contlpfl}), we conclude that $\mm$-a.e.\ such bound holds for every $t,s\in[0,1]$, $t<s$.  According to point $(iii)$ in Theorem \ref{thm:baseks}, this proves that $u\in \KSt Z(\X,\Y_{\bar y})$ and together with point $(iv)$ and the last part of the statement we also get  \eqref{eq:ineqdiff}.
\end{proof}
Having clarified the relation between Sobolev spaces we turn to the one between the corresponding notions of differential. Recall that in \cite{GPS18} it has been proved that to any $u\in W^{1,2}(\X,\Y_{\bar y})$ it is canonically associated a differential $\d' u:L^0(T\X)\to {\rm Ext}\big((u^*L^0_\mu(T^*\Y))^{*}\big)$ (here we used the notation $\d'u$ in place of $\d u$ to avoid risk of confusion with previously defined differentials), let us briefly review the definition of such object.  We need the following lemma:
\begin{lemma}\label{le:compo}
Let $u\in W^{1,2}(\X,\Y_{\bar y})$, put $\mu:=u_\ast(|\d'u|^2\mm)$ and let $f\in {\rm S}^2(\Y,\sfd_\Y,\mu)$. Then there is $g'\in S^2(\X)$ such that $g'=f\circ u$ $\mm$-a.e.\ on $\{|\d'u|>0\}$ and
\begin{equation}
\label{eq:chain}
|\d g'|\leq |\d_\mu f|\circ u|\d' u|\qquad\mm-a.e..
\end{equation}
More precisely, there is  $g'\in S^2(\X)$ and a sequence $(f_n)\subset \LIP_{bs}(\Y)$ such that
\begin{equation}
\label{eq:conv}
\begin{array}{rllrll}
f_n& \to\ f\qquad& \mu-a.e.&\qquad\qquad\lip_a(f_n)& \to\  |\d_\mu f|\qquad &\text{\rm in }L^2(\mu),\\
f_n\circ u& \to\ g'\qquad& \mm-a.e.&\qquad\qquad\lip_a(f_n)\circ u|\d'u|& \to\  |\d_\mu f|\circ u|\d' u|&\text{\rm in }L^2(\mm).
\end{array}
\end{equation}
\end{lemma}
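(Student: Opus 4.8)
The strategy I would follow is the one of \cite{GPS18}: produce a good Lipschitz approximation of $f$ on the target, transfer it to $\X$ via the chain rule for real-valued Sobolev functions, and pass to the limit. Since $|\d'u|\in L^2(\X)$ the measure $\mu=u_*(|\d'u|^2\mm)$ is a finite Radon measure on $\Y$, so the density in energy of Lipschitz functions in $S^2(\Y,\sfd_\Y,\mu)$ is available. To get the approximants in $\LIP_{bs}(\Y)$ I would first truncate $f$ to the bounded functions $f^M:=(f\wedge M)\vee(-M)$, then for each $M$ approximate by globally Lipschitz functions multiplied by cut-offs $\chi_k$ equal to $1$ on $B_k(\bar y)$, supported in $B_{k+1}(\bar y)$ and $1$-Lipschitz: since $\mu(B_{k+1}(\bar y)\setminus B_k(\bar y))\to 0$ the extra term coming from $\lip_a(\chi_k)$ is negligible in $L^2(\mu)$, and a diagonal argument yields $(f_n)\subset\LIP_{bs}(\Y)$ with $f_n\to f$ $\mu$-a.e.\ and $\lip_a(f_n)\to|\d_\mu f|$ in $L^2(\mu)$ (this produces the first and third convergence of \eqref{eq:conv}).

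The second step is the pointwise chain rule $|\d(f_n\circ u)|\le\lip_a(f_n)\circ u\,|\d'u|$ $\mm$-a.e. Each $f_n$ is globally Lipschitz, so writing $f_n=\LIP(f_n)\varphi_n+f_n(\bar y)$ with $\varphi_n$ $1$-Lipschitz and $\varphi_n(\bar y)=0$, the very definition of $W^{1,2}(\X,\Y_{\bar y})$ gives $f_n\circ u\in S^2(\X)$ with $|\d(f_n\circ u)|\le\LIP(f_n)\,|\d'u|$. To replace $\LIP(f_n)$ by the asymptotic Lipschitz constant I would, for every $\eps>0$, cover $\Y$ by countably many closed balls $\bar B_\eps(y_i)$ (possible by separability of $\Y$), extend $f_n\restr{\bar B_\eps(y_i)}$ to a function $\tilde f_i$ on $\Y$ with the same Lipschitz constant $L_i:=\LIP(f_n\restr{\bar B_\eps(y_i)})$, and use the locality of the differential on the Borel partition $A_i:=u^{-1}\bigl(\bar B_\eps(y_i)\setminus\bigcup_{j<i}\bar B_\eps(y_j)\bigr)$, on which $f_n\circ u=\tilde f_i\circ u$: this gives $|\d(f_n\circ u)|\le L_i\,|\d'u|$ $\mm$-a.e.\ on $A_i$. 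Since $u(A_i)\subset\bar B_\eps(y_i)$ forces $\bar B_\eps(y_i)\subset\bar B_{2\eps}(u(x))$, hence $L_i\le\LIP\bigl(f_n\restr{\bar B_{2\eps}(u(x))}\bigr)$, for $x\in A_i$, we obtain $|\d(f_n\circ u)|\le\LIP\bigl(f_n\restr{\bar B_{2\eps}(u(\cdot))}\bigr)|\d'u|$ $\mm$-a.e., and letting $\eps\downarrow0$ the right-hand side decreases to $\lip_a(f_n)\circ u\,|\d'u|$.

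Finally I would pass to the limit. The change-of-variables identity for the push-forward defining $\mu$ gives $\|\lip_a(f_n)\circ u\,|\d'u|-|\d_\mu f|\circ u\,|\d'u|\|_{L^2(\mm)}=\|\lip_a(f_n)-|\d_\mu f|\|_{L^2(\mu)}\to0$, which is the fourth convergence in \eqref{eq:conv}; in particular the differentials $\d(f_n\circ u)$ are bounded in $L^2(T^*\X)$, so up to a subsequence $\d(f_n\circ u)\weakto\omega$ with $|\omega|\le|\d_\mu f|\circ u\,|\d'u|$ $\mm$-a.e.\ (lower semicontinuity of the pointwise norm together with the strong convergence of the bounding functions). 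On $\{|\d'u|>0\}$ the $\mu$-null sets pull back, via $u$, to $\mm\restr{\{|\d'u|>0\}}$-null sets, so $f_n\circ u\to f\circ u$ $\mm$-a.e.\ there; the closure of the differential operator then produces $g'\in S^2(\X)$ with $g'=f\circ u$ $\mm$-a.e.\ on $\{|\d'u|>0\}$, $\d g'=\omega$, and $f_n\circ u\to g'$ $\mm$-a.e. The delicate point — the one that requires the same care as in \cite{GPS18} — is the behaviour on $\{|\d'u|=0\}$: there $|\d(f_n\circ u)|=0$ and $\omega=0$, so $\d g'$ is forced to vanish, but one must still arrange (passing to a further subsequence and/or to suitable convex combinations, and exploiting that the extension functor renders the value of $g'$ on $\{|\d'u|=0\}$ immaterial for the statement) that $f_n\circ u$ genuinely converges $\mm$-a.e.\ on that set and that $g'$ lands in $S^2(\X)$. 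This is where essentially all of the work lies; everything else is a routine combination of the chain rule for Lipschitz functions, the push-forward identity, and the closure of the differential.
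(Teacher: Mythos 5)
First, a remark on the comparison itself: the paper does not prove Lemma \ref{le:compo} at all --- it is recalled verbatim from \cite{GPS18} and immediately used to state Definition \ref{def:du} --- so your proposal is measured against the argument of \cite{GPS18} rather than against an in-paper proof. Your first two steps are fine and are indeed the intended route: truncation plus cut-offs to get an energy-dense sequence $(f_n)\subset\LIP_{bs}(\Y)$ with $f_n\to f$ $\mu$-a.e.\ and $\lipa(f_n)\to|\d_\mu f|$ in $L^2(\mu)$, the chain-rule bound $|\d(f_n\circ u)|\le \Lip(f_n)|\d'u|$ from Definition \ref{def:sobmet}, its upgrade to $\lipa(f_n)\circ u\,|\d'u|$ via McShane extensions on small balls and locality of the differential, and the push-forward identity giving the fourth convergence in \eqref{eq:conv}.

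The genuine gap is the final step, which you name but do not carry out. The lemma asserts the existence of $g'\in S^2(\X)$ with $f_n\circ u\to g'$ $\mm$-a.e.\ \emph{on all of} $\X$, and the only mechanism available to produce $g'$ and the bound \eqref{eq:chain} is the closure/lower semicontinuity of minimal weak upper gradients, which needs an $\mm$-a.e.\ (or at least local-in-measure) limit of the compositions; on $\{|\d'u|=0\}$ nothing you have established controls the values $f_n(u(x))$, since $\mu=u_*(|\d'u|^2\mm)$ simply does not see those points, and passing to subsequences or convex combinations cannot manufacture a.e.\ convergence without some compactness of $(f_n\circ u)$ on that set. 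Moreover, your remark that the extension functor makes the behaviour of $g'$ on $\{|\d'u|=0\}$ immaterial is not correct at the level of this statement: it becomes immaterial only later, in Definition \ref{def:du}; here the membership $g'\in S^2(\X)$ and the full a.e.\ convergence are exactly what must be proved, so the ``delicate point'' you defer is the proof, not a technicality. Note that in the only case this paper actually uses (Proposition \ref{prop:link2}, where $f\in L^\infty\cap W^{1,2}(\Y,\sfd_\Y,\mu)$) the gap can be closed precisely as in the paper's own Proposition \ref{prop:chain}: truncate so that the $f_n$ are uniformly bounded, observe that the compositions $f_n\circ u$ are then uniformly bounded and supported in a set of finite $\mm$-measure, extract an $L^2$-weak limit and apply Mazur's lemma to get strong, hence (along a subsequence) $\mm$-a.e., convergence everywhere, checking that convex combinations preserve the properties in \eqref{eq:conv}; for genuinely unbounded $f\in S^2(\Y,\sfd_\Y,\mu)$ an additional truncation-and-gluing argument is required, for which one has to follow \cite{GPS18}.
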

Such lemma is used to define $\d'u$ as follows:
\begin{definition}\label{def:du}Let $(\X,\sfd,\mm)$ be a metric measure space, $(\Y,\sfd_\Y,\bar y)$ a pointed complete and separable space and $u\in W^{1,2}(\X,\Y_{\bar y})$. Put $\mu:=u_*(|\d'u|^2\mm)$. Then the differential $\d' u:L^0(T\X)\to {\rm Ext}\big((u^*L^0_\mu(T^*\Y))^{*}\big)$ is defined as follows. For $Z\in L^0(T\X)$ the object $\d' u(Z)\in  {\rm Ext}\big((u^*L^0_\mu(T^*\Y))^{*}\big)$ is characterized by the following property: for any $f\in S^2(\Y,\sfd_\Y,\mu)$ and $g$ as in Lemma \ref{le:compo}, it holds
\[
{\rm ext}\big([u^*\d_\mu f]\big)(\d' u(Z))=\d g(Z)
\]
\end{definition}
In \cite{GPS18} it has been proved that this definition is well posed and that the value of $|\d' u|$ is unambiguous, i.e.\ the quantity given by Definition \ref{def:sobmet} coincides with the pointwise norm of $\d' u$.

We then have the following compatibility statement:
\begin{proposition}[From differential to `directional derivative']\label{prop:link2}
With the same assumptions and notation of Proposition \ref{prop:link1} we have
\[
\d u(Z)=\d'u(Z).
\]
and in particular $|\d u(Z)|=|\d'u(Z)|$ holds $\mm$-a.e..
\end{proposition}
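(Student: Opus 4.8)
### Proof plan

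\textbf{Overall strategy.} The plan is to compare the two objects $\d u(Z)$ (Definition \ref{def:duz}) and $\d'u(Z)$ (Definition \ref{def:du}) by testing both against the same generating family of elements $\mathrm{ext}([u^*\d_\nu f])$ and checking they give the same $L^0$-function. The key conceptual point to keep straight is that the two differentials are built over \emph{different} base measures on $\Y$: $\d u(Z)$ lives over $\mu:=u_*(|\d u(Z)|^p\mm)=u_*(|\d u(Z)|^2\mm)$ (here $p=2$), while $\d'u(Z)$ lives over $\mu':=u_*(|\d' u|^2\mm)$. By \eqref{eq:ineqdiff} we have $|\d u(Z)|\leq |\d' u|\,|Z|$ $\mm$-a.e., hence $\mu\ll\mu'$ (more precisely $\mu\leq\|Z\|_{L^\infty}^2\,\mu'$ after absorbing the bounded factor $|Z|$, or one works on level sets of $|Z|$). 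So there is a natural comparison map between the pullback modules $u^*L^0_\mu(T^*\Y)$ and $u^*L^0_{\mu'}(T^*\Y)$, and the cleanest route is to reduce both sides to the \emph{same} test functions $f\in\Lip_{bs}(\Y)$, for which $g:=f\circ u$ is an admissible choice in \emph{both} Proposition \ref{prop:chain} and Lemma \ref{le:compo}.

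\textbf{Step 1: reduce to Lipschitz test functions.} First I would recall from the proof of Proposition \ref{prop:basediff} that for $f\in\Lip_{bs}(\Y)$ the function $g:=f\circ u$ is admissible in the definition of $\d u(Z)$, so that
\[
\mathrm{ext}\big([u^*\d_\mu f]\big)(\d u(Z))\restr C=\Big(\frac{\d}{\d t}(f\circ u\circ\Fl^Z_t)\restr{t=0}\Big)\restr C
\]
for every closed $C\subset\X$ with $T_C>0$. On the other side, by Lemma \ref{le:compo} (applied with $f\in\Lip_{bs}(\Y)$, noting $\lip_a(f_n)\to|\d_{\mu'}f|$ and that one may take $f_n=f$ constant in the approximating sequence, so $g':=f\circ u\in S^2(\X)$ works) and Definition \ref{def:du}, we get
\[
\mathrm{ext}\big([u^*\d_{\mu'} f]\big)(\d' u(Z))=\d(f\circ u)(Z).
\]
So the task becomes: show $\frac{\d}{\d t}(f\circ u\circ\Fl^Z_t)\restr{t=0}=\d(f\circ u)(Z)$ $\mm$-a.e., at least on $\{|\d u(Z)|>0\}$, for every $f\in\Lip_{bs}(\Y)$.

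\textbf{Step 2: identify the directional derivative of $f\circ u$ along the flow with $\d(f\circ u)(Z)$.} Since $f\circ u\in W^{1,2}(\X)$ (because $u\in W^{1,2}(\X,\Y_{\bar y})$ and $f$ is Lipschitz, up to subtracting the constant $f(\bar y)$), property $(ii)$ in the definition of Regular Lagrangian Flow gives precisely that for $\mm$-a.e.\ $x$ the map $t\mapsto (f\circ u)(\Fl^Z_t(x))$ is in $W^{1,1}(0,1)$ with pointwise a.e.\ derivative $\d(f\circ u)(Z)(\Fl^Z_t(x))$. Combining this with the $C^1$-in-$L^p(C)$ regularity and the semigroup identity $\frac{\d}{\d t}(g\circ\Fl^Z_t)\restr{t=t_1}=\frac{\d}{\d t}(g\circ\Fl^Z_t)\restr{t=t_0}\circ\Fl^Z_{t_1-t_0}$ (used as in Proposition \ref{prop:chain} and Proposition \ref{prop:regalongfl}), together with the fact from \eqref{eq:limma}-type arguments that $(\Fl^Z_t)_*\mm\to\mm$ weakly-$*$ so that pointwise-a.e.\ identities at a.e.\ time $t$ propagate to $t=0$ against $\mm$, one concludes
\[
\frac{\d}{\d t}(f\circ u\circ\Fl^Z_t)\restr{t=0}=\d(f\circ u)(Z)\qquad\mm\text{-a.e. on }\X.
\]

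\textbf{Step 3: reconcile the two base measures and conclude.} The remaining subtlety is that $\mathrm{ext}([u^*\d_\mu f])(\d u(Z))$ is a priori only defined $\mm\restr{\{|\d u(Z)|>0\}}$-a.e., while $\mathrm{ext}([u^*\d_{\mu'}f])(\d'u(Z))$ is only defined $\mm\restr{\{|\d' u|>0\}}$-a.e.; both objects are declared to vanish outside these sets. Since $\{|\d u(Z)|>0\}\subset\{|\d' u|>0\}$ up to $\mm$-null sets (again by \eqref{eq:ineqdiff}), and since on $\{|\d u(Z)|=0\}$ we have $\d u(Z)=0$ while Step 2 shows $\d(f\circ u)(Z)=\frac{\d}{\d t}(f\circ u\circ\Fl^Z_t)\restr{t=0}=0$ there as well (by \eqref{eq:boundder0} with the minimal choice, i.e.\ $|\frac{\d}{\d t}(f\circ u\circ\Fl^Z_t)\restr{t=0}|\leq\lip(f)\circ u\,|\d u(Z)|$), the two $L^0(\mm)$-functions agree everywhere. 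By the universal property of the pullback (Proposition \ref{prop:univprop}) the elements $\{\mathrm{ext}([u^*\d_\mu f]):f\in\Lip_{bs}(\Y)\}$ generate $\mathrm{Ext}(u^*L^0_\mu(T^*\Y))$; identifying this with the sub-$L^0$-module of $\mathrm{Ext}(u^*L^0_{\mu'}(T^*\Y))$ it embeds into via $\mu\ll\mu'$, we conclude that $\d u(Z)$ and $\d'u(Z)$ agree as elements of $(\mathrm{Ext}(u^*L^0_\mu(T^*\Y)))^*$. Taking pointwise norms and invoking the unambiguity of $|\d u(Z)|$ (Proposition \ref{prop:basediff}) and of $|\d'u(Z)|$ (from \cite{GPS18}) gives $|\d u(Z)|=|\d'u(Z)|$ $\mm$-a.e.

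\textbf{Expected main obstacle.} The genuinely delicate point is the bookkeeping in Step 3 around the two different base measures $\mu$ and $\mu'$ on $\Y$ and the corresponding extension-functor domains: one must make precise the natural module morphism $u^*L^0_\mu(T^*\Y)\to u^*L^0_{\mu'}(T^*\Y)$ (coming from $L^0_{\mu'}(T^*\Y)\to L^0_\mu(T^*\Y)$ via $\mu\ll\mu'$) and verify that $\d'u(Z)$ restricts along it to an object satisfying the characterizing property \eqref{eq:defduz} of $\d u(Z)$ — i.e.\ that testing $\d'u(Z)$ against $\mathrm{ext}([u^*\d_\mu f])$ really reproduces $\frac{\d}{\d t}(g\circ\Fl^Z_t)\restr{t=0}$ and not merely against $\mathrm{ext}([u^*\d_{\mu'}f])$. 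Once the identity of Step 2 is in hand this is essentially formal, but it is where care is needed to avoid a circular or ill-typed statement; I would present it by explicitly writing down the comparison of pullback modules and then quoting the uniqueness clause in Definition \ref{def:duz} / Proposition \ref{prop:basediff}.
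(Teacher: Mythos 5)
Your strategy is essentially the paper's: both arguments reduce the identity to checking that the defining formulas in Definition \ref{def:duz} and Definition \ref{def:du} return the same $L^0$-function when tested against pullbacks of differentials of functions on $\Y$, and both rest on the fact that for a Sobolev function $g$ on $\X$ the curve $t\mapsto g\circ\Fl^Z_t$ is $C^1$ with values in $L^2$ and derivative $\d g(Z)\circ\Fl^Z_t$ (the paper isolates this as the closing lemma for $g\in S^2(\X)$; your Step 2 reproves it for $f\circ u\in W^{1,2}(\X)$ via property (ii) of Regular Lagrangian Flows, the semigroup identity and Proposition \ref{le:contlpfl}). The only structural difference is that the paper tests against arbitrary $f\in L^\infty\cap W^{1,2}(\Y,\sfd_\Y,\mu)$ and verifies that the function $g'$ of Lemma \ref{le:compo} is an admissible choice of $g$ in Proposition \ref{prop:chain} (re-running that proof with the sequence $(f_n)$ of Lemma \ref{le:compo} to obtain \eqref{eq:boundder0}), while you test only against $f\in\Lip_{bs}(\Y)$ and conclude by generation; this reduction is legitimate, since differentials of Lipschitz functions generate the cotangent module (by \eqref{eq:optlip}) and both objects act as $L^0$-linear continuous functionals.

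Two points in your write-up need repair. First, your justification of ${\rm ext}([u^*\d_{\mu'} f])(\d'u(Z))=\d(f\circ u)(Z)$ for Lipschitz $f$ (with $\mu':=u_*(|\d'u|^2\mm)$), namely that ``one may take $f_n=f$ constant in the approximating sequence'' of Lemma \ref{le:compo}, is false as stated: admissibility there requires $\lipa(f_n)\to|\d_{\mu'}f|$ in $L^2(\mu')$, and for a fixed Lipschitz $f$ the asymptotic Lipschitz constant is in general strictly larger than the minimal weak upper gradient. The conclusion is salvageable: the $g'$ of Lemma \ref{le:compo} agrees with $f\circ u$ $\mm$-a.e.\ on $\{|\d'u|>0\}$, both $g'$ and $f\circ u$ are in $S^2(\X)$ with $|\d g'|,\ |\d(f\circ u)|\leq \Lip(f)\,|\d'u|$ (see \eqref{eq:chain}), so locality of the differential gives $\d g'(Z)=\d(f\circ u)(Z)$ $\mm$-a.e.; alternatively, simply work with $g'$ itself, as the paper does. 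Second, in your Step 3 the natural map induced by $\mu\ll\mu'$ (where $\mu:=u_*(|\d u(Z)|^2\mm)$) is the norm-nonincreasing morphism from the $\mu'$-pullback to the $\mu$-pullback coming from \eqref{eq:gradord} ($|\d_\mu f|\leq|\d_{\mu'}f|$), not an embedding of ${\rm Ext}(u^*L^0_\mu(T^*\Y))$ into ${\rm Ext}(u^*L^0_{\mu'}(T^*\Y))$; the clean formulation of the equality is that $\d u(Z)$ precomposed with this morphism coincides with $\d'u(Z)$, and then agreement on the generators ${\rm ext}([u^*\d_{\mu'}f])$, $f\in\Lip_{bs}(\Y)$, suffices. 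With these adjustments (and noting that on $\{|\d u(Z)|=0\}$ both sides vanish by \eqref{eq:boundder0} and \eqref{eq:ineqdiff}, as you observe) your argument is correct and runs parallel to the paper's.
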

\begin{proof} Let $f\in L^\infty\cap W^{1,2}(\Y,\sfd_\Y,\mu)$ be arbitrary.  By Definitions \ref{def:duz} and \ref{def:du} it is sufficient to prove that any function $g'$ associated to such $f$ by Lemma \ref{le:compo} can be chosen as $g$ in Proposition \ref{prop:chain}. By \eqref{eq:ineqdiff} we know that $\{|\d u(Z)|>0\}\subset \{|\d' u|>0\}$ and thus we know that $g'$ agrees with $f\circ u$ $\mm$-a.e.\ on $\{|\d u(Z)|>0\}$. Also, the simple lemma below ensures that $t\mapsto (g\circ\Fl^Z_t-g)$ is $C^1$ with values in $L^2(\X)$, so that to conclude it is sufficient to prove the bound \eqref{eq:boundder0}. This follows by picking  $(f_n)$ as in Lemma \ref{le:compo} above and noticing that this choice is viable in the proof of Proposition \ref{prop:chain}, so that the conclusion follows by repeating verbatim the arguments already present in such proof.
\end{proof}

\begin{lemma}
Let $(\X,\sfd,\mm)$ be a $\RCD(K,\infty)$ space, $Z$ a regular vector field on it and $g\in S^2(\X)$. Then the map $[0,1]\ni t\mapsto (g\circ\Fl^Z_t-g)$ belongs to $C^1([0,1],L^2(\X))$.
\end{lemma}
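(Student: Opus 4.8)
The plan is to identify the derivative of the curve and then to realize the curve itself as the primitive of a continuous $L^2(\X)$-valued map. The only algebraic input needed is that $\d g(Z)\in L^2(\X)$: indeed $|\d g|\in L^2(\X)$ because $g\in S^2(\X)$, and $|\d g(Z)|\le\|Z\|_{L^\infty}|\d g|$ because $Z$ is regular. Hence, by Proposition \ref{le:contlpfl} applied with $\Y=\R$ and $p=2$, the map $[0,1]\ni t\mapsto\d g(Z)\circ\Fl^Z_t\in L^2(\X)$ is continuous; therefore $\Psi(t):=\int_0^t\d g(Z)\circ\Fl^Z_s\,\d s$ defines a curve in $C^1([0,1],L^2(\X))$ with $\Psi'(t)=\d g(Z)\circ\Fl^Z_t$. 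The whole content of the lemma thus reduces to the identity $g\circ\Fl^Z_t-g=\Psi(t)$ in $L^2(\X)$ for every $t\in[0,1]$.

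I would first prove this identity with $g$ replaced by $g_{N,k}:=\big((-N)\vee g\wedge N\big)\chi_k$, where $(B_k)$ is a bounded exhaustion of $\X$ and $\chi_k\in\Lip_{bs}(\X)$ equals $1$ on $B_k$. By the chain and Leibniz rules in $S^2(\X)$, $g_{N,k}$ is a bounded, boundedly supported function in $W^{1,2}(\X)$, and $\d g_{N,k}(Z)\in L^2(\X)$. For such a function property $(ii)$ in the definition of the Regular Lagrangian Flow gives, for $\mm$-a.e.\ $x$, that $t\mapsto g_{N,k}(\Fl^Z_t(x))$ lies in $W^{1,1}(0,1)$ with derivative $\d g_{N,k}(Z)(\Fl^Z_t(x))$. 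Integrating in $x$, using that the $L^2(\X)$-valued curve $t\mapsto g_{N,k}\circ\Fl^Z_t$ is continuous (Proposition \ref{le:contlpfl}) and that $\Fl^Z_0=\Id$, one identifies it with the continuous curve $t\mapsto g_{N,k}+\int_0^t\d g_{N,k}(Z)\circ\Fl^Z_s\,\d s$; this gives the desired identity for $g_{N,k}$, and in particular its $C^1$-regularity.

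Next I would pass to the limit, first in $k$ and then in $N$. By \eqref{eq:ms} the trajectory $\{\Fl^Z_t(x):t\in[0,1]\}$ is, for $\mm$-a.e.\ $x$, contained in the closed ball of radius $\|Z\|_{L^\infty}$ about $x$; so, working on a fixed bounded set and taking $k$ large, along these trajectories $\chi_k\equiv1$ and $|\d\chi_k|=0$, whence $g_{N,k}\circ\Fl^Z_t=g_N\circ\Fl^Z_t$ and $\d g_{N,k}(Z)\circ\Fl^Z_s=\d g_N(Z)\circ\Fl^Z_s$ there, $g_N$ denoting the plain truncation; this removes $\chi_k$. For the truncation, $g_N\to g$ $\mm$-a.e.\ (hence, since $(\Fl^Z_t)_*\mm\ll\mm$, also $\mm$-a.e.\ along $\Fl^Z_t$ for each $t$), while $|\d g_N(Z)-\d g(Z)|\le 2\|Z\|_{L^\infty}|\d g|\,\nchi_{\{|g|\ge N\}}\to0$ in $L^2(\X)$; by the bounded compression \eqref{eq:bcompr} this yields $\d g_N(Z)\circ\Fl^Z_\cdot\to\d g(Z)\circ\Fl^Z_\cdot$ in $L^2([0,1]\times\X)$, so along a subsequence $\int_0^t\d g_N(Z)(\Fl^Z_s(x))\,\d s\to\int_0^t\d g(Z)(\Fl^Z_s(x))\,\d s$ uniformly in $t$ for $\mm$-a.e.\ $x$. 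Passing to the limit in $g_N\circ\Fl^Z_t-g_N=\int_0^t\d g_N(Z)\circ\Fl^Z_s\,\d s$ and recognising the right-hand side as $\Psi(t)$ via Fubini gives $g\circ\Fl^Z_t-g=\Psi(t)$, which is the claim.

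The main obstacle is the lack of integrability of $g$: since $g$ need not be in $L^2(\X)$ (nor in $W^{1,2}(\X)$), neither $g$ nor $g\circ\Fl^Z_t$ is individually an $L^2$-function, yet the defining property of the Regular Lagrangian Flow is only at our disposal for genuine $W^{1,2}(\X)$ functions. The truncation-and-cutoff reduction is what circumvents this; the delicate points are to carry out the limits only on the well-behaved difference $g\circ\Fl^Z_t-g$ (not on $g\circ\Fl^Z_t$ alone), and to treat the endpoint $t=0$ through $\Fl^Z_0=\Id$ together with the $L^2$-continuity of Proposition \ref{le:contlpfl} rather than through a pointwise value.
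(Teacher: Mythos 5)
Your proposal is correct and follows essentially the same route as the paper: truncate and cut off to reduce to bounded, compactly supported $W^{1,2}(\X)$ functions, apply the defining property of the Regular Lagrangian Flow there, pass to the limit to identify the derivative as $\d g(Z)\circ\Fl^Z_t$, and invoke Proposition \ref{le:contlpfl} to upgrade absolute continuity to $C^1$ regularity. The only cosmetic difference is that you remove the cutoff exactly via the finite-speed bound \eqref{eq:ms} and write out the integral identity $g\circ\Fl^Z_t-g=\int_0^t\d g(Z)\circ\Fl^Z_s\,\d s$, whereas the paper lets the cutoff radius tend to infinity and concludes through point $(iii)$ of Lemma \ref{le:basequ2}.
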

\begin{proof}
It is sufficient to prove that such curve is absolutely continuous with values in $L^p(\X)$ and with derivative given by $\d g(Z)\circ\Fl^Z_t$ as then the fact that such derivative is continuous in $L^p(\X)$ (Proposition \ref{le:contlpfl}) gives the claim.

Now notice that the very definition of Regular Lagrangian Flow and Lemma \ref{le:basequ2} ensure that this is true if $g\in W^{1,2}(\X)$. For the general case put $g_n:=n\wedge g\vee(-n)$ for $n\in\N$ and $g_{n,R}:=\nchi_R g_n$, where $\nchi_R(x):=(1-\sfd(x,B_R(\bar x)))^+$ for any $R>0$, with $\bar x\in\X$ being a given, fixed point. Then conclude noticing that $\lim_n\lim_R g_{n,R}(\Fl^Z_t(x))=g(\Fl^Z_t(x))$ for any $t,x$ and that for $\mm$-a.e.\ $x\in\X$ the curves $t\mapsto \d g_{n,R}(Z)(\Fl^Z_t(x))$ converge to $t\mapsto \d g(Z)(\Fl^Z_t(x))$ in $L^2(0,1)$ when we first let $R\to\infty$ and then $n\to\infty$. This is sufficient to ensure that point $(iii)$ in Lemma \ref{le:basequ2} holds with $H_t(x):=\d g(Z)(\Fl^Z_t(x))$, which was the claim.
\end{proof}

\subsection{The case of $\Y$ universally infinitesimally Hilbertian}

We already know that $|\d u(Z)|$ satisfies a natural triangle inequality. Here we ask whether in the case $p=2$ it also holds the parallelogram identity
\begin{equation}
\label{eq:par}
|\d u(Z_1+Z_2)|^2+|\d u(Z_1-Z_2)|^2=2\big(|\d u(Z_1)|^2+|\d u(Z_2)|^2\big)
\end{equation}
in $\Omega$ provided  $u\in \KSt {Z_1}(\Omega,\Y_{\bar y})\cap \KSt {Z_2}(\Omega,\Y_{\bar y})$. Having in mind the smooth category we see that the answer must depend on $\Y$ being somehow Hilbert on small scales: if, say, $\Y$ is $\R^d$ equipped with some norm $\|\cdot\|$, $\X$ is the Euclidean space and $u,Z$ are smooth,  then $|\d u(Z)|(x)=\|\d u(Z)(x)\|$ a.e.. Hence \eqref{eq:par} holds if and only if the norm $\|\cdot\|$ comes from a scalar product.

\bigskip

For metric measure spaces $(\Y,\sfd_\Y,\mm_\Y)$ a notion of `being Hilbert on small scales' has been proposed in \cite{Gigli12}, the requirement being that $W^{1,2}(\Y,\sfd_\Y,\mm_\Y)$ is a Hilbert space (in general it is only Banach). In our setting there is no measure assigned a priori on $\Y$, but actually, as seen in the previous section, each map $u\in \KS Z(\Omega,\Y_{\bar y})$ induces its own measure on $\Y$. We are therefore lead to:
\begin{definition}[Universally infinitesimally Hilbertian]\label{def:uih} Let $(\Y,\sfd_\Y)$ be a complete and separable metric space. We say that it is universally infinitesimally Hilbertian provided for any Radon measure $\mu$ which gives finite mass to bounded sets the space $(\Y,\sfd_\Y,\mu)$ is infinitesimally Hilbertian, i.e.\ $W^{1,2}(\Y,\sfd_\Y,\mu)$ is Hilbert.
\end{definition}
It is not trivial to check that a metric space is  universally infinitesimally Hilbertian space. The first result in this direction has been obtained in \cite{GP16-2}, where it has been proved the `base case'  that $\R^d$ equipped with the Euclidean norm has such property. This result has been vastly generalized in \cite{DMGSP18} where it has been proved that spaces which are locally $\CAT(k)$, and in particular $\CAT(0)$ spaces, are universally infinitesimally Hilbertian.

\bigskip

A duality argument based on Corollary \ref{cor:dualnorm} and the linearity property \eqref{eq:lindiff} allow to get the parallelogram identity for targets which are infinitesimally Hilbertian:
\begin{theorem}\label{thm:parid}
Let $K\in\R$, $(\X,\sfd,\mm)$ be $\RCD(K,\infty)$ space, $\Omega\subset\X$ open and $Z_1,Z_2$ two regular vector fields on it. Let $(\Y,\sfd_\Y,\bar y)$ be a pointed universally infinitesimally Hilbertian space and $u\in \KSt {Z_1}(\Omega,\Y_{\bar y})\cap \KSt {Z_2}(\Omega,\Y_{\bar y})$.

Then
\[
|\d u(Z_1+Z_2)|^2+|\d u(Z_1-Z_2)|^2=2\big(|\d u(Z_1)|^2+|\d u(Z_2)|^2\big)\qquad\mm-a.e.\ on\ \Omega.
\]
\end{theorem}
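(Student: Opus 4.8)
The plan is to reduce the parallelogram identity on $\X$ to the infinitesimal-Hilbertianity of the induced measures on $\Y$, using the linear differential $\d u(Z)$ and the duality formula of Corollary~\ref{cor:dualnorm}. First I would set $w:=|\d u(Z_1)|^2+|\d u(Z_2)|^2+|\d u(Z_1+Z_2)|^2+|\d u(Z_1-Z_2)|^2\in L^1(\Omega)$ (which makes sense by Proposition~\ref{prop:triangle}) and put $\nu:=u_*(w\mm\restr\Omega)$, a finite Radon measure on $\Y$. By construction $w\geq|\d u(Z_i)|^p$ for each of the four vector fields $Z_1,Z_2,Z_1\pm Z_2$, so $\nu$ dominates each of the measures $\mu_1,\mu_2,\mu_+,\mu_-$ attached to these vector fields in the sense needed to apply Corollary~\ref{cor:dualnorm}. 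The point of working with the single measure $\nu$ is that all the gradients $|\d_\nu f|$ appearing below are computed with respect to one fixed Sobolev space $W^{1,2}(\Y,\sfd_\Y,\nu)$, which by hypothesis is Hilbert.

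Next I would rewrite, via Corollary~\ref{cor:dualnorm} applied with the choice $w$ (hence $\mu_w=\nu$), each term $\tfrac12|\d u(Z)|^2$ as an essential supremum over $f\in\Lip_{bs}(\Y)$ of $\operatorname{ext}([u^*\d_{\mu_Z}f])(\d u(Z))-\tfrac12\operatorname{ext}(|\d_\nu f|^2\circ u)$. Using the linearity property \eqref{eq:lindiff} of the differential — which lets me replace $\operatorname{ext}([u^*\d_{\mu_+}f])(\d u(Z_1+Z_2))$ by $\operatorname{ext}([u^*\d_{\mu_1}f])(\d u(Z_1))+\operatorname{ext}([u^*\d_{\mu_2}f])(\d u(Z_2))$, and similarly for $Z_1-Z_2$ — the four duality formulas become suprema of linear-minus-quadratic expressions in the single "test slot" built from $f$. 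At this stage I would introduce the abbreviations $L_i(f):=\operatorname{ext}([u^*\d_{\mu_i}f])(\d u(Z_i))$ and $Q(f):=\operatorname{ext}(|\d_\nu f|^2\circ u)$; then I must show
\begin{equation*}
\esssup_f\bigl(L_1(f)+L_2(f)-\tfrac12Q(f)\bigr)+\esssup_f\bigl(L_1(f)-L_2(f)-\tfrac12Q(f)\bigr)=2\esssup_f\bigl(L_1(f)-\tfrac12Q(f)\bigr)+2\esssup_f\bigl(L_2(f)-\tfrac12Q(f)\bigr),
\end{equation*}
$\mm$-a.e.\ on $\Omega$.

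The heart of the argument, and the step I expect to be the main obstacle, is precisely this identity of essential suprema, and it is here that infinitesimal Hilbertianity of $(\Y,\sfd_\Y,\nu)$ enters: because $W^{1,2}(\Y,\sfd_\Y,\nu)$ is Hilbert, the pullback module $u^*L^0_\nu(T^*\Y)$ is an $L^0$-Hilbert module, so its pointwise norm $|\cdot|$ satisfies the pointwise parallelogram rule and the functional $f\mapsto Q(f)$ behaves like a (pointwise) quadratic form; equivalently, the pointwise duality $\tfrac12|v|^2=\esssup_\omega(\omega(v)-\tfrac12|\omega|^2)$ in a Hilbert module is achieved at a genuine Riesz-type element, and sums of maximizers for $L_1$ and $L_2$ are still admissible. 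Concretely I would argue $\geq$ in the displayed identity by the elementary inequality $L_1+L_2-\tfrac12Q\leq(L_1-\tfrac12Q)+(L_2-\tfrac12Q)+\tfrac12Q$... — more carefully, by polarization: pick near-optimal $f_1$ for $L_1-\tfrac12Q$ and $f_2$ for $L_2-\tfrac12Q$ on a Borel partition (as in the proof of Corollary~\ref{cor:dualnorm}), form $f_1\pm f_2$, and use that $|\d_\nu(f_1\pm f_2)|^2$ obeys the parallelogram law $\mu$-a.e.\ (this is exactly infinitesimal Hilbertianity) together with bilinearity of $\langle\d_\nu f_1,\d_\nu f_2\rangle$ to see that $f_1+f_2$ and $f_1-f_2$ are jointly near-optimal for the two left-hand suprema; summing gives $\geq$. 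The reverse inequality $\leq$ is the reflexive/dual version of the same computation, obtained by testing the two right-hand suprema with $\tfrac12$ of the near-optimizers of the left-hand ones. Both directions are purely formal manipulations inside the fixed $L^0$-Hilbert module once the reduction above is in place; I would finally invoke Proposition~\ref{prop:basediff} to identify each $\esssup$ with the corresponding $\tfrac12|\d u(Z)|^2$ and conclude. The one genuinely delicate point to get right is the measure bookkeeping — ensuring every differential $\d_{\mu_i}f$ and $\d_\nu f$ is taken with respect to the correct measure, that the ordering $\mu_i\leq\nu$ gives $|\d_{\mu_i}f|\leq|\d_\nu f|$ via \eqref{eq:gradord} exactly where needed, and that the extension-functor identifications on the sets $\{|\d u(Z_i)|>0\}$ are compatible — but these are routine given Section~\ref{se:diff}.
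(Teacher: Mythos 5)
Your proposal is correct and follows essentially the same route as the paper's proof: a common dominating weight $w$ (the paper takes the maximum rather than the sum, which is immaterial), the duality formula of Corollary \ref{cor:dualnorm} with that fixed reference measure, the linearity of the differential from Proposition \ref{prop:lindif}, and the pointwise parallelogram identity for $|\d_{\mu_w}\cdot|$ coming from universal infinitesimal Hilbertianity, combined through the substitution $(f,g)\mapsto(f+g,f-g)$. The only differences are organizational — the paper handles a single two-variable essential supremum and changes variables there, while you split it into one-variable suprema and polarize near-optimizers on Borel partitions — and your aside about Riesz-type maximizers in the Hilbert module is unnecessary, since your concrete polarization argument already closes the proof.
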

\begin{proof} Recall that by Propositions \ref{prop:mult} and \ref{prop:triangle} we know that $u\in \KSt {Z_1+Z_2}(\Omega,\Y_{\bar y})\cap \KSt {Z_1-Z_2}(\Omega,\Y_{\bar y})$ so that the statement makes sense. Put for brevity $Z_+:=Z_1+Z_2$, $Z_-:=Z_1-Z_2$, define
\[
w:=\max\big\{|\d u(Z_1)|^2,|\d u(Z_2)|^2,|\d u(Z_+)|^2,|\d u(Z_-)|^2\big\}
\]
and $\mu_w:=u_*(w\mm\restr\Omega)$ and similarly $\mu_i:=u_*(|\d u(Z_i)|^2\mm)$ for $i\in\{1,2,+,-\}$. By formula \eqref{eq:dualduz} applied with $p=q=2$, $\mu:=\mu_1$ and $\mu_w:=\mu_w$ we get
\[
\frac12|\d u(Z_1)|^2=\esssup_{f \in\Lip_{bs}(\Y)}\  {\rm ext}([u^*\d_{\mu_1} f])(\d u(Z_1))-\frac12{\rm ext}\big(|\d_{\mu_w}f|^2\circ u\big ),
\]
Since an analogous formula holds for $Z_2$ we obtain
\begin{equation}
\label{eq:firststep}
\begin{split}
2|\d u(Z_1)|^2+2|\d u(Z_2)|^2=\esssup_{f,g \in\Lip_{bs}(\Y)}\  4\,{\rm ext}([u^*\d_{\mu_1} f])&(\d u(Z_1))+ 4\,{\rm ext}([u^*\d_{\mu_2} g])(\d u(Z_2))\\
&-2\,{\rm ext}\big((|\d_{\mu_w}f|^2+|\d_{\mu_w}g|^2)\circ u\big ).
\end{split}
\end{equation}
Now notice that   Proposition \ref{prop:lindif}  gives
\[
\begin{split}
4\,{\rm ext}([u^*\d_{\mu_1} f])(\d u(Z_1))+ &4\,{\rm ext}([u^*\d_{\mu_2} g])(\d u(Z_2))\\
&=2\,{\rm ext}([u^*\d_{\mu_+} (f+g)])(\d u(Z_+))+ 2\,{\rm ext}([u^*\d_{\mu_-}(f- g)])(\d u(Z_-))
\end{split}
\]
for any $f\in\Lip_{bs}(\Y)$ and that since $(\Y,\sfd_\Y,\mu_w)$ is infinitesimally Hilbertian it holds
\[
2|\d_{\mu_w} f|^2+2|\d_{\mu_w} g|^2= |\d_{\mu_w} (f+g)|^2+ |\d_{\mu_w} (f-g)|^2.
\]
Using these two identities in \eqref{eq:firststep} we deduce
\[
\begin{split}
2|\d u(Z_1)|^2+2|\d u(Z_2)|^2=&-\esssup_{f,g\in \Lip_{bs}(\Y) }{\rm ext}\big( (|\d_{\mu_w} (f+g)|^2+ |\d_{\mu_w} (f-g)|^2)\circ u\big)\\
&+\qquad 2\,{\rm ext}([u^*\d_{\mu_+} (f+g)])(\d u(Z_+))+ 2\,{\rm ext}([u^*\d_{\mu_-}(f- g)])(\d u(Z_-))
\end{split}
\]
and since as $f,g$ range over $[\Lip_{bs}(\Y)]^2$ the functions $f+g,f-g$ also range over $[\Lip_{bs}(\Y)]^2$, using again the duality formula \eqref{eq:dualduz} for $p=q=2$ and for the vectors $Z_+,Z_-$ we conclude.
\end{proof}

\def\cprime{$'$} \def\cprime{$'$}

\end{document}